\newcommand{\specialcell}[2][c]{%
	\begin{tabular}[#1]{@{}c@{}}#2\end{tabular}}
\newcommand{\R}{\mathbb{R}}
\newcommand{\Sbb}{\mathbb{S}}
\newcommand{\Nbb}{\mathbb{N}}
\newcommand{\io}{\iota}
\newcommand{\nbf}{\mathbf{n}}
\newcommand{\PHf}[2]{\mathrm{PH}_{#1}\left( {#2} \right)}
\newcommand{\Barc}[2]{\mathrm{Bar}_{#1}\left( {#2} \right)}
\renewcommand{\Bar}{\mathrm{Bar}}
\newcommand{\Dgm}{\mathrm{Dgm}}
\newcommand{\PH}{\mathrm{PH}}
\newcommand{\birth}{\mathrm{birth} \,}
\newcommand{\death}{\mathrm{death} \,}
\newcommand{\sgn}{\mathrm{sgn}}
\newcommand{\cutlocus}[1]{\overline{\mathcal{M}_{#1}}}
\newcommand{\W}{\Omega}
\newcommand{\eps}{\epsilon}
\newcommand{\kap}{\kappa}
\newcommand{\kapmax}{\kappa_{\text{max}}}
\newcommand{\kapmin}{\kappa_{\text{min}}}
\newcommand{\Kb}{K_\bullet}
\newcommand{\Xb}{X_\bullet}
\newcommand{\bord}{\partial}
\newcommand{\inter}{\cap}
\newcommand{\union}{\cup}
\newcommand{\Emb}{\mathrm{Emb}^k(M,\R^3)} 
\newcommand{\idx}{\mathrm{index}}
\newcommand{\dist}{\mathrm{dist}}
\newcommand{\med}{\mathcal{M}}
\newcommand{\Surf}{\mathcal{S}}
\newcommand{\grad}{\nabla}
\newcommand{\Hess}{\mathrm{Hess}}
\newcommand{\Id}{\mathrm{Id}}
\newcommand{\level}[2]{{#1}^{-1}\left({#2}\right)}
\newcommand{\sublevel}[2]{{#1}^{-1}\left(-\infty, {#2}\right]}
\newcommand{\intlevel}[3]{{#1}^{-1}\left[{#2}, {#3}\right]}
\newcommand{\SW}{\mathrm{SW}}
\newcommand{\NW}{\mathrm{NW}}
\newcommand{\NE}{\mathrm{NE}}
\newcommand{\Ker}{\mathrm{Ker} \,}
\renewcommand{\Im}{\mathrm{Im} \,}
\newcommand{\rank}{\mathrm{rank} \,}
\renewcommand{\H}{\mathrm{H}}
\newcommand\topstrut[1][1.2ex]{\setlength\bigstrutjot{#1}{\bigstrut[t]}}
\newcommand\botstrut[1][1.2ex]{\setlength\bigstrutjot{#1}{\bigstrut[b]}}
\theoremstyle{plain}
\newtheorem{thm}{Theorem}
\newtheorem{lem}{Lemma}
\newtheorem{prop}{Proposition}
\newtheorem{cor}{Corollary}
\theoremstyle{definition}
\newtheorem{defi}{Definition}
\theoremstyle{remark}
\newtheorem{rmk}{Remark}
\providecommand{\keywords}[1]{\textbf{\textit{Keywords---}} #1}
\title{Generalized Morse Theory of Distance Functions to Surfaces\\ for Persistent Homology}
\author[1,2]{Anna Song\thanks{a.song19@imperial.ac.uk}}
\author[3]{Ka Man Yim\thanks{YimKM@cardiff.ac.uk}}
\author[1]{Anthea Monod\thanks{a.monod@imperial.ac.uk}}
\affil[1]{\small Department of Mathematics, Imperial College London, London, UK}
\affil[2]{\small Haematopoietic Stem Cell Laboratory, The Francis Crick Institute, London, UK}
\affil[3]{\small School of Mathematics, Cardiff University, Cardiff, UK}
\begin{document}

	\maketitle
	
	\begin{abstract}
		This paper brings together three distinct theories with the goal of quantifying shape textures with complex morphologies. Distance fields are central objects in shape representation, while topological data analysis uses algebraic topology to characterize geometric and topological patterns in shapes. The most well-known and widely applied tool from this approach is persistent homology, which tracks the evolution of topological features in a dynamic manner as a barcode.  Morse theory is a framework from differential topology that studies critical points of functions on manifolds; it has been used to characterize the birth and death of persistent homology features. However, a significant limitation to Morse theory is that it cannot be readily applied to distance functions because distance functions lack smoothness, which is required in Morse theory. Our contributions to addressing this issue is two fold. First, we generalize Morse theory to Euclidean distance functions of bounded sets with smooth boundaries. We focus in particular on distance fields for shape representation and we study the persistent homology of shape textures using a sublevel set filtration induced by the signed distance function. We use transversality theory to prove that for generic embeddings of a smooth compact surface in $\R^3$, signed distance functions admit finitely many non-degenerate critical points.  This gives rise to our second contribution, which is that shapes and textures can both now be quantified and rigorously characterized in the language of persistent homology: signed distance persistence modules of generic shapes admit a finite barcode decomposition whose birth and death points can be classified and described geometrically. We use this approach to quantify shape textures on both simulated data and real vascular data from biology.
	\end{abstract}
	
	\keywords{Distance fields; Morse theory; persistent homology; texture; shape analysis}
	
	\vfill\eject

	\tableofcontents

	\section{Introduction} \label{sec:intro}
	
	Information on the shape and texture of objects captured in images can provide valuable insight to the context of study.  For example, in a study to predict the survival of patients diagnosed with glioblastoma multiforme---an extremely aggressive brain cancer---it has been shown that both the shape and texture of tumor images is even more informative in the prediction task than gene expression \citep{crawford2020predicting}, which is a type of molecular data obtained from surgery following biopsy and sequencing and is a mainstream type of data used to study cancer.  However, the task of feasibly quantifying shape and texture information for computational, statistical, and machine learning tasks in the most informative yet interpretable manner remains challenging.  This is especially true when considering complex morphologies entailing porosity and branching behavior, which are not easily represented by meshes \citep{song_generation_2022}. Quantifying shape and texture in an interpretable manner is the driving motivation and practical task of interest in our work.  To answer this practical question, we contribute novel, fundamental theoretical results in differential topology with respect to distance functions.
	
	Distance functions (or fields) are fundamental objects in geometry and topological data analysis, \citep{osher_signed_2003, lieutier_any_2003, chazal_stability_2004, attali_stability_2009}, PDEs \citep{albano_singular_2013}, as well as non-smooth analysis and singularity theory \citep{cheeger_critical_1991,birbrair_medial_2017}. 
	They have become popular tools to represent geometry in multiple fields of applications, ranging from computer graphics to shape analysis \citep{lee_medial_1982, lindquist_medial_1996, sigg_signed_2003, park_deepsdf_2019}.
	Given a subset $A \subset \R^n$, the unsigned distance function assigns to each point in space its distance to $A$. The \emph{signed} distance function for a set $A$ with non-empty interior is defined by modifying the distance function to $\partial A$, distinguishing points in the interior from those outside by attributing a negative sign or a positive sign respectively. Signed distance fields are useful in shape representation and analysis \citep{osher_signed_2003}; since an object can be implicitly represented as the zero sublevel set of its corresponding signed distance field, an equivalent amount of geometric and topological information is contained in both the shape and the field. Signed distance fields are useful tools to represent complex porous and branching structures, which is a particular application interest in this work. 
	
	In this paper, we focus on a method combining signed distance fields and persistent homology---\emph{signed distance persistent homology} (SDPH) \citep{delgado-friedrichs_morse_2014, delgado-friedrichs_skeletonization_2015, herring_topological_2019, moon_statistical_2019}---to rigorously define and quantify the texture of materials. Persistent homology is the leading algebraic method in topological data analysis that measures multiscale features in data and fundamentally depends on a distance function \citep{edelsbrunner_persistent_2008,ghrist_barcodes_2008,carlsson_topology_2009}.  By measuring the scale at which components, cycles, voids, appear or disappear, persistent homology intrinsically captures heuristic geometric notions such as ``shape'' and ``size'' as topological (specifically, homological) features. The lifetimes and algebraic relations between these features are summarized in a \emph{barcode} or \textit{persistence diagram}, a collection of birth--death intervals where each interval corresponds to a topological feature. Moreover, barcodes can be further processed as features in statistical analysis, using either a variety of vectorization methods available or methodology developed to intake persistence diagrams.
	
	For the case of SDPH barcodes, there is a need for a precise interpretation of birth and death events in terms of the geometry of the shape from which they are derived. This lack of interpretability is a major obstacle to using SDPH in applications where interpretability and geometric understanding are key. To achieve interpretability of birth and death events of the barcode, we appeal to \emph{Morse theory} \citep{milnor_morse_1963}, which is a framework relating the gradient dynamics of a smooth function to the topology of the underlying manifold.  For smooth functions with only non-degenerate critical points---\emph{Morse functions}---Morse theory implies that birth and death values in their persistence diagrams correspond to critical values of the function. Furthermore, critical points of a Morse function are classified by their \emph{Morse index}, $\lambda$: crossing a critical point with index $\lambda$ is topologically equivalent to attaching a $\lambda$-dimensional ``handle,'' which in terms of persistence, corresponds to the  creation or destruction of a homology generator, i.e., endpoints of a birth--death interval. Moreover, smooth functions are generically Morse, in the sense that Morse functions form a dense subset of smooth functions \cite[Chapter 6]{hirsch_differential_1976}.
	
	An immediate impasse to applying Morse theory to achieve interpretability in our setting is that distance functions are not smooth.  This leads to our main theoretical contribution, which is a generalization of Morse theory of smooth functions to the class of general Euclidean distance functions $f$ to smooth compact boundaries in $\mathbb{R}^n$.  In particular, our results also adapt to the case of signed distance functions and we leverage these results to systematically interpret SDPH diagrams, giving rise to an interpretable quantifier of both shape and texture of complex morphologies.

	\textbf{Contributions.}
	We generalize Morse theory to Euclidean distance functions (signed or unsigned) of smooth compact boundaries in $\R^n$. 
	To do this, we first show that the fundamental results of smooth Morse theory---the isotopy and handle attachment lemmas---can also be generalized to topological Morse functions \citep{morse_topologically_1959}. Such functions $f$ admit either a regular form (up to a homeomorphism) at topological regular points,
	\[f \underset{\mathrm{homeo}}{\sim} \mathrm{const.} + x_n,\] 
	or a normal form at topological critical points with index $\lambda$,
	\[ f \underset{\mathrm{homeo}}{\sim} \mathrm{const.} - \sum_{i = 1}^{\lambda} x_i^2 + \sum_{i = \lambda + 1}^{n} x_i^2.\]
	Similar to the smooth case, we obtain the topological counterparts of the two fundamental Morse lemmas, the isotopy lemma (\Cref{thm:topological_isotopy_lemma}) and the handle attachment lemma (\Cref{thm:topological_handle_lemma}). They state that the topology of the sublevel sets changes exactly when the level crosses a critical value, at which a $\lambda$-cell is attached.

	Distance functions are, in fact, special cases of topological Morse functions, under suitable geometric assumptions on the surface that we provide in \Cref{lem:dist_as_min_type}. The essential idea is to express distance functions locally as Min-type functions \citep{gershkovich_morse_1997}, for which the notion of non-degenerate critical points is well-defined and provides a topological normal form to distance functions (\Cref{thm:normal_form_dist}). This sequence of ideas can be figuratively summarized as
	\[\text{distance functions} \xrightarrow[\text{conditions}]{\text{geometric}} \text{Min-type functions} \xrightarrow[\text{form}]{\text{normal}} \text{topological Morse functions}.\]
	
	As a key result, we prove that signed distance functions of generically embedded compact surfaces in $\R^3$ are topological Morse functions with finitely many critical points (\Cref{thm:generic}). We use transversality theory \citep{abraham_transversal_1967} to prove this result which echoes the fact that smooth functions are generically Morse.

	This theoretical investigation has important consequences on the interpretation of SDPH in applications to data, which we present as our applied contribution. Our previous theoretical contributions imply that, for generic shapes in $\R^3$, the signed distance persistence modules are tame and thus admit a finite barcode decomposition. Furthermore, we give a geometric description of the Min-type index of critical points and obtain a classification of critical points as well as of birth--death points in persistence diagrams.
	We demonstrate how the SDPH approach quantifies texture in shapes by applying it to both simulated and real data: in particular, realizations of Gaussian random fields; synthetic porous shapes generated by ``curvatubes" \citep{song_generation_2022}; and confocal images of bone marrow vessels. From these case studies, we derive a general interpretation of SDPH diagrams that is useful for shape texture analysis.

	\textbf{Outline.}
	In \Cref{sec:background}, we provide the necessary background and prove results associated with foundational concepts necesssary for our work; we recall the essential facts linking persistent homology to Morse theory and introduce the Min-type framework. In particular, we focus on describing non-degenerate Min-type critical points of functions. \Cref{sec:morse_theory_signdist} is the theoretical pillar of this work and contains most of our theoretical contributions. In \Cref{sec:topological_morse_theory}, we first show that the fundamental lemmas of smooth Morse theory also hold in the more general setting of topological Morse functions.
	Then, in \Cref{sec:generalization_dist_functions}, we describe in \Cref{def:ndg_crit_dist} the geometric conditions where signed distance functions are topological Morse functions. Concluding with \Cref{sec:genericity}, we prove that signed distance functions induced by \emph{generic} embeddings of surfaces are topological Morse functions in \Cref{thm:generic} by satisfying the conditions of \Cref{def:ndg_crit_dist}.

	\Cref{sec:morse_theory_signdist} lays the foundations for \Cref{sec:SDPH}, where we define the SDPH approach rigorously. In particular, \Cref{thm:generic} allows us to show for generic embeddings of surfaces, the persistence module of the signed distance filtration admits a barcode decomposition into finitely-many interval modules (\Cref{cor:gendistPH_finite}). Moreover, as the topological critical points of the signed distance filtration are shown to be generically non-degenerate, we can classify and geometrically interpret birth--death pairs in the barcode using the index of the critical points. We use this fact in \Cref{sec:quantifying_texture_applis}, where we apply SDPH to analyze synthetic and real data describing textures and derive a general interpretation of the SDPH diagrams for applications.
	Readers solely interested in the practical applications of SDPH may go to Sections \ref{sec:SDPH} and \ref{sec:quantifying_texture_applis} directly.

	\section{Background and Foundational Concepts}
	\label{sec:background}
	
	In this section, we focus on outlining foundational concepts in persistent homology, smooth Morse theory, and Min-type Morse theory. The ideas and results described in this section motivate our study of topological Morse functions in \Cref{sec:topological_morse_theory}, and give us the theoretical framework for analyzing signed distance persistent homology in \Cref{sec:generalization_dist_functions}. 
	
	Morse theory \citep{milnor_morse_1963} is an important tool in differential topology for studying topological properties of manifolds, grounded on a fundamental relationship between  gradient dynamics of smooth functions and the homology of the underlying manifold. 
	A thorough account of applications of Morse theory to topology, differential geometry, and mathematical physics is given by \cite{bott1988morse}. Within the scope of topological data analysis, Morse theory also has foundational consequences; first, for theoretically understanding sublevel sets filtrations; second, for deriving new computational algorithms; and third, for generating new methods for data analysis. We outline below how Morse theory describes the persistence modules of filtrations of smooth functions in terms of relations between critical points (see also Chapter 7 of \cite{ghrist2014elementary}). \emph{Discrete} Morse Theory---an extension of smooth Morse theory to monotone functions on simplicial complexes---underpins efficient algorithms for computing the barcodes of filtered simplical complexes \citep{harker2014discrete} and other topological invariants \citep{curry2016discrete}. Morse-theoretic concepts have also been applied to methods for clustering data \citep{chazal2013persistence}, shape analysis \citep{cazals2003molecular}, image analysis and partitioning \citep{edelsbrunner2001hierarchical}, and topographical function simplification \citep{bauer2012optimal,tierny2012generalized}.
	
	Min-type Morse theory---a variant of Morse theory for \emph{Min-type functions} \citep{gershkovich_morse_1997}---has been instrumental in quantifying and bounding the homological behavior of geometric complexes on point clouds, a natural setting in many applications that topological data analysis is geared towards. A rich vein of research in this topic has been contributed by \cite{bobrowski_distance_2014,bobrowski2017vanishing,bobrowski2019random,de2022random}, among others. In contrast to those approaches, which cast distance functions to point clouds as Min-type functions, we focus on distance functions to surfaces which may not be globally Min-type. Nevertheless, we show in \Cref{sec:generalization_dist_functions} that the Min-type Morse theory framework is a key tool for describing the critical points of distance functions to generic surfaces. We give a summary of the relevant ideas from Min-type theory in this section to build up to that result. However, we note that for a full Morse theory for distance functions, we will require the more general framework of \emph{topological Morse theory} described in \Cref{sec:topological_morse_theory}.

	\subsection{Persistent Homology and Morse Theory}
	\label{sec:PH_and_Morse_theory}
	
	We begin our discussion with a brief summary of basic notions in persistent homology, the leading framework in topological data analysis. In particular, we highlight how classical Morse theory allows us to relate the barcode of a persistence module with critical points of a smooth function, setting the stage for generalizations to persistence modules induced by filtrations of Min-type or topological Morse functions. 
	
	We present how persistent homology relates to Morse theory in the  smooth setting. To do this, we introduce some essential facts and basic terminology related to persistent homology and Morse theory in the smooth setting.
	
	\textbf{Sublevel Set Filtrations and Persistence Diagrams.}
	Given a topological space $X$ and a continuous function $f : X \to \R$, the \textit{sublevel set filtration} of $f$ is the sequence of inclusion of sublevel sets $X_a = \sublevel{f}{a}$, given by
	\[\Xb : \quad X_a \subseteq X_b, \quad \forall\ a \leq b \in \R.\]
	The inclusion maps $\iota_{a,b} : X_a \to X_b$ induce by functoriality a sequence of homology groups
	\begin{equation*}
		\PH_k(f) : \quad \H_k(X_a) \xrightarrow{\H_k(\iota_{a,b})} \H_k(X_b), \quad \forall\ a \leq b \in \R
	\end{equation*}
	that describes the changes in the topology of sublevel sets. Here, $k \geq 0$ refers to the homology dimension.
	If the homology groups are taken with coefficients in a field $\mathbb{F}$, this sequence is called the \emph{persistence module} of $f$ \citep{zomorodian2004computing, edelsbrunner_persistent_2008}. Furthermore, if the vector spaces $\H_k(\sublevel{f}{a})$ are finite-dimensional for all $a \in \R$, the persistence module is said to be \emph{pointwise finite dimensional} (p.f.d.); in this case, the persistence module can be specified up to isomorphism by a \emph{barcode}, which is a multiset of (open--open, closed--closed, closed--open, or open--closed) intervals in the extended real number line $\bar \R = \R \union \{-\infty, +\infty\}$,
	\begin{equation*}
		\Bar_k(f) = \{ \langle b, d \rangle \subset \bar\R \},
	\end{equation*}
	where $\R \union \{-\infty\} \ni b < d \in \R \union \{+\infty\}$.
	Alternatively, the module can be characterized by a \textit{persistence diagram}, which is a multiset of \textit{birth--death points} in the upper-diagonal extended half-plane
	\begin{equation*}
		\Dgm_k(f) = \{ (b,d) \in (\bar \R)^2\}.
	\end{equation*}
	Each interval (or bar, or birth--death point) describes a $k$-dimensional homology class being born at filtration value $b$ and being trivialized at value $d$. This characterization is possible due to the interval decomposition theorem of \cite{crawley2015decomposition} for p.f.d.~persistence modules: algebraically, a persistence module can be uniquely decomposed into a direct sum of \emph{interval modules}, where each interval module is in one-to-one correspondence with an interval in the barcode. Details can be found in \cite{gabriel1972unzerlegbare} and \cite{crawley2015decomposition}.
	
	\textbf{Stability.}
	Persistence diagrams exhibit a property which is fundamental to their use in data analysis, namely, their \textit{stability} with respect to perturbations of the input data, in terms of the bottleneck distance.
	Let $\Dgm^{(1)}$ and $\Dgm^{(2)}$ be two persistence diagrams, and $\Pi$ denote the set of all possible bijections $\pi : \Dgm^{(1)} \to \Dgm^{(2)}$ between them, where it is assumed that any point on the diagonal $\{x = y\}$ in the diagrams has infinite multiplicity. 
	The \emph{bottleneck distance} between two persistence diagrams is
	\[d_B(\Dgm^{(1)}, \Dgm^{(2)}) = \inf \limits_{\pi \in \Pi} \sup \limits_{x \in \Dgm^{(1)}} \|x - \pi(x) \|_\infty.\]
	Here, we use the convention that $\infty - \infty = 0$ to deal with infinite death times.
	
	The stability result holds for a broad class of functions called tame.
	A function $f : X \to \R$ on a topological space $X$ is \emph{tame} if the persistent homology of the sublevel set filtration $\PH(f) = \PH(\Xb)$, $X_t = \{f \leq t\}$, has \textit{finite type}. 
	Equivalently, the persistence module is p.f.d.~and has finitely many \emph{homological critical values}, at which, by definition, there exists $k$ such that for all sufficiently small $\eps > 0$, the map $\H_k(f^{-1}(-\infty,t-\eps]) \to H_k(f^{-1}(-\infty,t+\eps])$) is \textit{not} an isomorphism.
	
	We suppose that $X$ is \emph{triangulable}, i.e., there is a (finite) simplicial complex $K$ whose underlying space $|K|$ is homeomorphic to $X$.
	\begin{thm}[Stability Theorem \citep{cohen-steiner_stability_2007}]
		\label{thm:stability_thm}
		Let $X$ be a triangulable space with continuous tame functions $f, g : X \to \R$.
		Then the persistence diagrams of the sublevel set filtrations they induce on $X$ satisfy
		\[d_B(\Dgm(f), \Dgm(g)) \leq \|f - g\|_\infty.\]
	\end{thm}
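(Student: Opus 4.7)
The plan is to reduce the geometric stability statement to a purely algebraic interleaving statement between persistence modules, and then invoke a combinatorial matching argument on barcodes. Let $\epsilon = \|f-g\|_\infty$. The pointwise inequality $|f(x) - g(x)| \leq \epsilon$ immediately yields nested inclusions of sublevel sets
\[\sublevel{f}{a} \subseteq \sublevel{g}{a+\epsilon} \subseteq \sublevel{f}{a+2\epsilon}\]
for every $a \in \R$, and symmetrically with $f$ and $g$ swapped. Applying $\H_k$ functorially turns these inclusions into homomorphisms that, together with the internal persistence maps $\H_k(\iota_{a,b})$, fit into commuting squares. In the modern language, this exactly says that $\PH_k(f)$ and $\PH_k(g)$ are $\epsilon$-interleaved.

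The second task is algebraic: show that two $\epsilon$-interleaved tame persistence modules have barcodes at bottleneck distance at most $\epsilon$. The classical route is via the \emph{box lemma}. For a rectangle $R = [b_1, b_2] \times [d_1, d_2]$ in the open half-plane $\{b < d\}$, define a multiplicity $\mu_R(\PH_k(f))$ by an alternating sum of ranks of the four structure maps $\H_k(\sublevel{f}{b_i}) \to \H_k(\sublevel{f}{d_j})$; standard inclusion--exclusion shows that $\mu_R$ counts the barcode points of $\PH_k(f)$ lying inside $R$. The interleaving then forces each $\mu_R$ computed for $f$ to be controlled by the corresponding $\mu_{R'}$ computed for $g$ on the $\epsilon$-thickened rectangle $R'$, and vice versa. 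This rank-control is converted into a partial matching between $\Dgm(f)$ and $\Dgm(g)$ through a Hall-type combinatorial argument.

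The main obstacle is this matching step: each off-diagonal point of $\Dgm(f)$ must be paired with a point of $\Dgm(g)$ within $\ell_\infty$-distance $\epsilon$, while bars shorter than $2\epsilon$ are allowed to be absorbed into the diagonal, and one must verify that Hall's condition is satisfied globally from the local box inequalities. Tameness is exactly what makes this tractable---barcodes are finite multisets and the alternating rank counts above are finite integers, so the matching is a finite combinatorial object. To handle arbitrary continuous tame $f, g$, one first treats the piecewise-linear case on a fixed triangulation of $X$ (where the barcodes change only at finitely many simplicial filtration steps), and then passes to the continuous case via simplicial approximation on successively finer triangulations, using that both $\|\cdot\|_\infty$ and $d_B$ are continuous under uniform convergence of the underlying functions.
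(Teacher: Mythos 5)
The paper does not prove this theorem; it is quoted verbatim from \cite{cohen-steiner_stability_2007} and used as a black box, so there is no in-paper proof to compare your attempt against. Your sketch is a recognizable outline of a correct line of argument (closer in spirit to the modern interleaving reformulation than to the original 2007 presentation, which phrases the sublevel-set nesting directly without the interleaving vocabulary), and the first two steps are sound: $\|f-g\|_\infty \le \epsilon$ does give the nested inclusions $\sublevel{f}{a} \subseteq \sublevel{g}{a+\epsilon} \subseteq \sublevel{f}{a+2\epsilon}$, and the alternating-rank multiplicity $\mu_R$ does count diagram points in a rectangle $R$ for tame modules.

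The genuine gap is in the matching step. You assert that the box-lemma rank inequalities, via ``a Hall-type combinatorial argument,'' produce the required $\epsilon$-matching, but the box lemma as you state it controls only a single rectangle at a time, and single-rectangle inequalities do not directly verify Hall's condition: that condition requires, for every \emph{finite subset} $S$ of off-diagonal points of $\Dgm(f)$, that the union of $\epsilon$-balls around $S$ contain at least $|S|$ points of $\Dgm(g) \cup \Delta$, and a union of rectangles is not itself a rectangle. This is precisely where the difficulty of the theorem resides. The original Cohen-Steiner--Edelsbrunner--Harer argument circumvents it with an interpolation lemma: one walks along $h_\lambda = (1-\lambda)f + \lambda g$ in steps small compared to the minimum gap between critical values, so that at each step the matching is forced uniquely by the box lemma, and then sums the per-step bounds. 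Alternative modern routes (e.g.\ the Bauer--Lesnick induced-matching theorem) replace the combinatorial matching with an explicit algebraic construction. Either way, some additional device beyond the single-rectangle box lemma is needed, and your proposal should either include the interpolation step or explicitly invoke a stronger matching lemma rather than appeal to Hall's theorem directly.
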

	The Stability Theorem states that, if two input functions are geometrically close to each other in $L_\infty$ norm, then necessarily their diagrams are also close to each other in bottleneck distance $d_B$.  
	This is a crucial result for applicability of persistent homology to real data: if the input is perturbed by noise, sampling error, and so on, the output diagram will not be too different from the actual diagram.
	There are many other stability results in different settings; an account of these works can be found in \cite[Section 2.3]{cao_approximating_2022}.

	\textbf{Smooth Morse Theory and Persistent Homology.}
	If $f$ is a Morse function \citep{milnor_morse_1963}, there is a correspondence between the critical values of $f$ (values where $\nabla f = 0$), and the end points of the intervals in the barcode of $f$. A \textit{Morse function} is a smooth, proper function on a smooth manifold $M$, where any critical point $q$ is non-degenerate (i.e., the Hessian evaluated at $q$ has full rank). The correspondence between the end points of intervals in the barcode of $f$ and its critical values stem from the following fundamental theorems of Morse theory.

	Recall that a $\lambda$-cell is a closed ball of dimension $\lambda$,
	$e^\lambda = \{y \in \R^\lambda ~|~ \|y\| \leq 1 \}$,
	and its boundary is denoted $\bord e^\lambda = \{y \in \R^\lambda ~|~ \|y\| = 1 \}$.
	Attaching a $\lambda$-cell to a topological space $Y$ consists of taking first the disjoint union of $Y$ and $e^\lambda$, and then identifying points $y \in \bord e^\lambda$ to $\Phi(y) \in Y$ via a continuous map $\Phi : \bord e^\lambda \to Y$. The resulting space is endowed with the quotient topology and denoted by $Y \union_\Phi e^\lambda$.
	
	\begin{thm}[\cite{milnor_morse_1963}] \label{thm:Morsebasic} Let $f: M \to \R$ be a smooth, proper function on an $n$-dimensional manifold $M$ and let $\mathrm{Crit}(f)$ denote the set of critical points of $f$. 
		\begin{enumerate}[label=(M\arabic*), ref=(M\arabic*), start=0]
			\item If $q$ is a non-degenerate critical point, there is a diffeomorphism $\phi : \R^n \to U$ onto an open neighborhood $U$ of $q$, such that $\phi(0) = q$, and
			\begin{equation} \label{eq:Morsebasic}
				f \circ \phi (x) = f(q) - \sum_{i = 1}^{\lambda(q)} x_i^2 + \sum_{i = \lambda(q) + 1}^{n} x_i^2.
			\end{equation}
			The number of negative coefficients $\lambda(q)$ is the \emph{index} of $q$.
			\item  \label{thm:Morsebasic_1} If $\intlevel{f}{a}{b}$ contains no critical points of $f$, then $\sublevel{f}{a}$ is a deformation retract of $\sublevel{f}{b}$ to ; thus they are homotopy equivalent.
			\item  \label{thm:Morsebasic_2}  Furthermore, if $\mathrm{Crit}(f) \cap \intlevel{f}{a}{b} = \{q_1, \ldots, q_k\}$ is a finite set of non-degenerate critical points, and $\mathrm{Crit}(f) \cap \intlevel{f}{a}{b} \subset \level{f}{c}$,  then $\sublevel{f}{b}$ has the homotopy type of $\sublevel{f}{a}$ with cells $e^{\lambda(q_i)}$ of dimension $\lambda(q_i)$ attached along the boundaries of the cells:
			\begin{equation*}
				\sublevel{f}{b} \simeq \sublevel{f}{a} \cup e^{\lambda(q_1)} \cup \cdots \cup e^{\lambda(q_k)}.
			\end{equation*}
		\end{enumerate}
	\end{thm}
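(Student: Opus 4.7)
The plan is to prove the three statements in order, since (M1) and (M2) depend on the local normal form established in (M0). Throughout, I will fix an auxiliary Riemannian metric on $M$ so that $\grad f$ is well defined.

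For (M0), the Morse lemma, the strategy is a two-step change of coordinates. First I would translate so that $q = 0$ and subtract $f(q)$. Using the fundamental theorem of calculus twice (writing $f(x) = \int_0^1 \tfrac{d}{dt} f(tx)\,dt$, extracting the vanishing of $\grad f(0)$, and then repeating on each $\prt_i f$), I obtain smooth functions $h_{ij}$ with $f(x) = \sum_{i,j} x_i x_j h_{ij}(x)$ and $h_{ij}(0) + h_{ji}(0) = \prt_i\prt_j f(0)$. Symmetrizing, the matrix $H(0)$ is one-half the Hessian $\Hess f(q)$, which is invertible by non-degeneracy. I would then diagonalize $(h_{ij}(x))$ by an inductive coordinate change (a smoothly parameterized Gram--Schmidt / square-completion), using invertibility of $H(0)$ near the origin to ensure each step remains a local diffeomorphism. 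This gives the signature-normal form with $\lambda$ minus signs, where $\lambda$ is the number of negative eigenvalues of $\Hess f(q)$.

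For (M1), the isotopy lemma, the idea is to integrate a rescaled negative gradient. Since $f$ is proper and has no critical point in $\intlevel{f}{a}{b}$, there exists $\delta > 0$ such that $\|\grad f\| \geq \delta$ on the compact slab $\intlevel{f}{a}{b}$. I would define the vector field $X = -\rho(x)\, \grad f / \|\grad f\|^2$, where $\rho$ is a smooth cutoff equal to $1$ on $\intlevel{f}{a}{b}$ and supported in a slightly larger slab. Because $\rho$ is compactly supported on each compact sublevel set, the flow $\phi_t$ of $X$ exists for all $t$, and on its active region satisfies $\tfrac{d}{dt} f(\phi_t(x)) = -1$. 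The map $(x,t) \mapsto \phi_{t(b-a)}(x)$ then gives an explicit strong deformation retract of $\sublevel{f}{b}$ onto $\sublevel{f}{a}$.

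For (M2), the handle attachment lemma, which I expect to be the main obstacle, the plan is to localize to a single critical point and then combine. By choosing a small enough $\eps > 0$, the slab $\intlevel{f}{c-\eps}{c+\eps}$ contains disjoint Morse charts $U_i$ around each $q_i$ in which $f$ takes the quadratic normal form from (M0). Inside each $U_i$ I would introduce the standard Morse model: the descending disk $D^{\lambda_i}_- = \{x : \sum_{j \le \lambda_i} x_j^2 \le \eps,\ x_j = 0 \text{ for } j > \lambda_i\}$ attached along its boundary sphere to $\sublevel{f}{c-\eps}$, and verify by a direct homotopy inside the chart that $\sublevel{f}{c-\eps} \union_{\Phi_i} e^{\lambda_i}$ deformation retracts onto the local piece $\sublevel{f}{c-\eps} \cup (U_i \cap \sublevel{f}{c+\eps})$. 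The delicate step here is producing an explicit deformation that collapses the "handle region" onto the descending disk while fixing $\sublevel{f}{c-\eps}$; the cleanest way is to modify $f$ by subtracting a bump supported in $U_i$ and depending only on the quadratic normal form, so that the modified function $\tilde f$ has no critical point in $\intlevel{\tilde f}{c-\eps}{c+\eps}$, apply (M1) to $\tilde f$, and then compare $\sublevel{f}{c+\eps}$ with $\sublevel{\tilde f}{c+\eps} \union e^{\lambda_i}$ using the explicit geometry of level sets of the quadratic form. Once a single critical point is handled, I extend to the finite set $\{q_1,\dots,q_k\}$ by disjointness of the charts, and finally combine with two applications of (M1) (for $\intlevel{f}{a}{c-\eps}$ and $\intlevel{f}{c+\eps}{b}$) to conclude the global homotopy equivalence.
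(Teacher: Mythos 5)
The paper does not give its own proof of this statement; it is quoted directly from \cite{milnor_morse_1963} as a classical background result (the paper's original proofs are of the \emph{topological} generalizations in \Cref{thm:topological_isotopy_lemma} and \Cref{thm:topological_handle_lemma}, where no gradient flow is available and a different open-cover argument is used). Your sketch is essentially Milnor's own argument: integral Taylor remainder plus a smoothly parameterized square-completion for (M0), a rescaled negative gradient flow with compactly supported cutoff for (M1), and Milnor's auxiliary function (subtract a bump depending on the quadratic normal form so that the critical value is pushed below $c-\eps$, apply (M1) to the modified function, and retract the excess region onto the descending $\lambda$-cell) for (M2). The plan to localize to disjoint Morse charts and finish with two applications of (M1) on $\intlevel{f}{a}{c-\eps}$ and $\intlevel{f}{c+\eps}{b}$ is also what Milnor does.

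One correction is needed in (M1). With $\rho$ smooth, equal to $1$ on $\intlevel{f}{a}{b}$, and compactly supported, its support must extend below the level $a$, so the flow $\phi_t$ moves points $x \in \sublevel{f}{a}$ with $f(x)$ slightly below $a$. Consequently $(x,t) \mapsto \phi_{t(b-a)}(x)$ does not restrict to the identity on $\sublevel{f}{a}$ and is therefore not a deformation retraction onto it. The fix (as in Milnor) is to let the flow time depend on the point: take
\[
r_t(x) \;=\; \phi_{t\,\max\{0,\,f(x)-a\}}(x),
\]
which is the identity on $\sublevel{f}{a}$ for all $t$, is continuous because $\tfrac{d}{dt}f(\phi_t(x)) = -1$ on the slab, and carries each level $s \in [a,b]$ to level $a$ at $t=1$. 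A secondary small point in (M2): the comparison should run $\sublevel{f}{c+\eps} = \sublevel{\tilde f}{c+\eps} \simeq \sublevel{\tilde f}{c-\eps} = \sublevel{f}{c-\eps} \cup H$, where $H$ is the handle region, and then $H$ is retracted onto the $\lambda_i$-cell attached to $\sublevel{f}{c-\eps}$; the phrase ``compare $\sublevel{f}{c+\eps}$ with $\sublevel{\tilde f}{c+\eps}\cup e^{\lambda_i}$'' has the level shifted (those two sets are already equal before attaching anything). Both are details, not gaps in the strategy.
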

	The two statements \ref{thm:Morsebasic_1} and \ref{thm:Morsebasic_2} are informally known as the ``isotopy lemma" and the ``handle attachment lemma". \Cref{eq:Morsebasic} is known as the normal form of the function at the critical point $q$.
	
	As a consequence of the description of topological changes in sublevel sets of Morse functions in  \Cref{thm:Morsebasic}, we can characterize the persistence modules of Morse functions where the following finiteness conditions hold. Note that the conditions always hold if the underlying manifold is compact. 
	
	\begin{cor} 
		\label{cor:PM_morse_function_tame} Let $f: M \to \R$ be a smooth, proper Morse function on a manifold $M$ with finite dimensional homology groups $\H_\bullet(M)$ with field coefficients. If $f$ only has finitely many critical points, then:
		\begin{enumerate}[label = (\roman*)]
			\item The persistence module of the sublevel set filtration $f$ is pointwise finite dimensional (p.f.d.);
			\item For all $k$, the barcode $\Bar_k(f)$ is a finite multiset of intervals $\{ [b,d) \subset \bar{\R} \}$;
			\item A critical point with index $\lambda$ either corresponds to the birth of an interval in $\Barc{\lambda}{f}$ with homology dimension $\lambda$, or the death of an interval in $\Barc{\lambda-1}{f}$ with homology dimension $\lambda - 1$.
		\end{enumerate} 
	\end{cor}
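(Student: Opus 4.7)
My plan is to prove the three parts in order, bootstrapping each from the preceding one and from the two fundamental lemmas of smooth Morse theory already stated in \Cref{thm:Morsebasic}.

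For part (i), I would enumerate the (finitely many) critical values $c_1 < c_2 < \cdots < c_N$ and argue by induction on $a \in \R$ that $\H_k(\sublevel{f}{a})$ is finite-dimensional for every $k$. For $a < c_1$, the sublevel set is empty by properness (or retracts to one after absorbing components of $M$ below $c_1$, using finite-dimensionality of $\H_\bullet(M)$). By \ref{thm:Morsebasic_1}, the homotopy type is constant on each open interval $(c_i, c_{i+1})$ and on $(c_N,+\infty)$. At each $c_i$, \ref{thm:Morsebasic_2} attaches finitely many cells (one per critical point at level $c_i$); each cell attachment changes each $\H_k$ by at most one dimension via the cofiber long exact sequence. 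Induction yields finite-dimensionality at every level. (A brief check is needed to confirm that $\sublevel{f}{a}$ for $a > c_N$ has finite-dimensional homology; since $M$ is a deformation retract of such a sublevel set when $f$ is proper with only finitely many critical values, this uses the hypothesis that $\H_\bullet(M)$ is finite-dimensional.)

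For part (ii), I would invoke the interval decomposition theorem cited in the excerpt to get a barcode from the p.f.d. property established in (i). To see that the barcode is finite and consists of intervals of the form $[b,d)$, note first that the homological critical values of $f$ are contained in $\{c_1,\ldots,c_N\}$: away from these values \ref{thm:Morsebasic_1} gives an isomorphism between small-neighborhood sublevel homologies. Hence births and deaths of bars occur only at critical values, and the barcode is finite. For the left-closed, right-open form, I would argue at each $c_i$ that the attaching map from the handle lemma produces an inclusion $\sublevel{f}{c_i-\eps} \subset \sublevel{f}{c_i}$ that is already a homotopy equivalence onto $\sublevel{f}{a}$ for $a$ slightly below the next critical value, but the jump in homology happens \emph{at} $c_i$: the new class lies in $\H_\bullet(\sublevel{f}{c_i})$ (so birth is closed), while a dying class maps to zero at $\sublevel{f}{c_i}$ (so death is open). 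A short diagram chase with the pair $(\sublevel{f}{c_i-\eps},\sublevel{f}{c_i})$ makes this precise.

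For part (iii), the crux is the standard cell-attachment analysis of the long exact sequence. When a single $\lambda$-cell $e^{\lambda}$ is attached to $Y=\sublevel{f}{c_i-\eps}$ along $\Phi:\bord e^{\lambda}\to Y$, the pair $(Y\cup_\Phi e^\lambda, Y)$ has relative homology concentrated in degree $\lambda$ with rank one. The associated long exact sequence then forces exactly one of two outcomes in each critical value passage: either $\H_\lambda$ increases by one (a new generator is born in $\Barc{\lambda}{f}$) or $\H_{\lambda-1}$ decreases by one (a generator in $\Barc{\lambda-1}{f}$ dies), depending on whether the attaching class $[\Phi]\in\H_{\lambda-1}(Y)$ is zero or nonzero. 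If multiple critical points share the value $c_i$, I would order them arbitrarily and attach the cells one at a time, applying the above argument at each step. The main subtlety to watch for is this bookkeeping with coincident critical values and the careful identification of ``birth at $c_i$'' with the filtration index $c_i$ (as opposed to $c_i-\eps$); this is the step I expect will take the most care to write down cleanly, but it is essentially standard and introduces no new machinery.
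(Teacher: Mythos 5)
Your proof is correct and takes essentially the same route as the paper: both rely on the isotopy and handle-attachment lemmas (\Cref{thm:Morsebasic}) plus the fact that a single $\lambda$-cell attachment perturbs only $\H_\lambda$ and $\H_{\lambda-1}$, each by at most one dimension, via the long exact sequence of the pair. The one cosmetic difference is in part (i): you argue ``bottom-up,'' inducting on critical values from the lowest sublevel set, whereas the paper argues ``top-down'' by observing that the map $\H_\bullet(\sublevel{f}{t}) \to \H_\bullet(M)$ has finite rank (because $\H_\bullet(M)$ is finite-dimensional) and finite-dimensional kernel (because only finitely many cells are attached above level $t$), which immediately forces $\H_\bullet(\sublevel{f}{t})$ to be finite-dimensional. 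Your version is slightly more delicate at the base case when $f$ is unbounded below (your parenthetical about ``absorbing components'' gestures at the right idea but would need to be made precise, e.g.\ by noting that below the lowest critical value the sublevel set has constant homotopy type with finite-dimensional homology by properness and the hypothesis on $\H_\bullet(M)$); the paper's top-down phrasing sidesteps that edge case entirely. Otherwise the two arguments are interchangeable.
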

	\begin{proof}
		Since $f$ only has finitely many critical points, \ref{thm:Morsebasic_1} and \ref{thm:Morsebasic_2} together imply $\H_\bullet(\sublevel{f}{t}) \to \H_\bullet(M)$ only has finite rank and kernel. Therefore $\H_\bullet(\sublevel{f}{t})$ is finite dimensional, and $\PHf{\bullet}{f}$ are p.f.d.~persistence modules. As each critical point $p$ of index $\lambda$ corresponds to attaching a $\lambda$-cell in the sublevel set filtration, the filtration passing $p$ will either correspond to the birth of an interval in $\Barc{\lambda}{f}$ with homology dimension $\lambda$, or the death of an interval in $\Barc{\lambda-1}{f}$ with homology dimension $\lambda - 1$. Finally \ref{thm:Morsebasic_1} and \ref{thm:Morsebasic_2} together with finite $\mathrm{Crit}(f)$ imply that homological features can only be born and destroyed at finitely many critical values; thus any interval in the barcode of $f$ can only be of the form $ [b,d) $. 
	\end{proof}
	
	\subsection{Min-Type Functions and their Critical Points}
	\label{sec:min-type}
	
	Two major components are needed here to generalize Morse theory to Euclidean distance functions: \textit{criticality} and \textit{index}. \cite{grove_generalized_1977} introduced a definition of \textit{critical points} for distance functions $d_p$ w.r.t.~a single point on a Riemannian manifold, precisely to generalize Morse's isotopy lemma (see \citep{cheeger_critical_1991, grove_critical_1993}). 
	However, no explicit notion of index was declared. 
	Later, \cite{gershkovich_singularity_1997} and \cite{gershkovich_morse_1997} 
	generalized Morse theory to the broad class of \emph{Min-type} functions, which allowed for the definition of an appropriate notion of \textit{index} and \textit{non-degeneracy} of critical points.
	
	Essentially, Min-type functions are functions that can be expressed as the minimum of a finite number of smooth functions.
	\cite{gershkovich_morse_1997} studied Min-type distance functions $d_p$ on non-positively curved Riemannian manifolds $M$ and showed that for compact $M$, the $d_p$ are even Morse-Min-type (i.e., have non-degenerate regular and non-degenerate critical points only) for generic non-positively curved Riemannian metrics on $M$.
	Later, \cite{itoh_cut_2007} obtained similar results by rephrasing Min-type properties of the distance function $d_p$ using more direct geometric arguments.
	
	In topological data analysis, the Min-type framework had an important application to distance functions $d_{\mathcal{P}}$ defined w.r.t.~a finite point cloud $\mathcal{P}$ in  $\R^n$ \citep{bobrowski_distance_2014}. In particular,
	the $\eps$-\v{C}ech complexes of $\mathcal{P}$ are homotopy equivalent to the sublevel sets $\{d_{\mathcal{P}} \leq \eps \}$ of the distance function, by the Nerve Theorem \citep{borsuk_imbedding_1948}. Therefore, if $\mathcal{P}$ has a generic configuration, the critical points of $d_{\mathcal{P}}$ are non-degenerate and determine the exact homological changes in the \v{C}ech filtration.
	Yet, to the best of our knowledge, there has been no generalization of Morse theory for Euclidean distance fields w.r.t.~smooth boundaries, especially in the context of persistent homology.

	\subsubsection{Min-Type Functions and their Representations}
	
	We begin with an introduction to Min-type functions. For the sake of clarity and conciseness, we only cover the necessary content from the Min-type function theory for our goals; moreover, we rephrase it in simpler terms for the Euclidean case only. See \citep{gershkovich_morse_1997} for complete details of the Min-type framework in full generality.  In what follows, we borrow from \citep{gershkovich_morse_1997} and provide some proofs with the goal of making this work self-contained.
	
	As mentioned previously, a Min-type function is a function which can be locally written as the minimum of a finite number of smooth functions. As such, a Min-type function itself may not be smooth even though it can be constructed from a collection of smooth functions. 
	
	As Min-type functions are locally constructed, we first review some terminology regarding local equivalences of sets and functions before formally giving the definition of Min-type functions. The properties stated below are only concerned with the behavior of functions and sets in arbitrarily small neighborhoods of a point. Given a topological space $X$, two maps $f, g : X \to \R$ define the same germ at $x \in X$ if there is a neighborhood $U$ of $x$, such that the restrictions of $f$ and $g$ to $U$ are equal: $\forall\ u \in U, f(u) = g(u)$.
	Similarly, if $A$ and $B$ are two subsets of $X$, then they define the same germ at $x$ if there is a neighborhood $V$ of $x$ such that $A \inter V = B \inter V$.
	Defining the same germ at $x$ is an equivalence relation (on maps or sets) and the equivalence classes are called \textit{germs} (of maps or of sets). If $f$ and $g$ define the same germ, then the terminology is that $f = g$ ``as germs of functions"; and, similarly, $A = B$ ``as germs of sets".
	
	Hereafter, we use the notation $\{f = g\}$ in place of $\{x ~|~ f(x) = g(x)\}$, $\{f < g\}$ in place of $\{x ~|~ f(x) < g(x)\}$, and $\{f \leq g\}$ in place of $\{x ~|~ f(x) \leq g(x)\}$.
	
	\begin{defi}[Min-type functions \citep{gershkovich_morse_1997}]
		\label{def:min_type}
		Let $f : \R^n \to \R$ be a continuous function. $f$ is a $C^k$-smooth \emph{Min-type function at $q$} if there exists a finite family of $C^k$-smooth functions $\alpha_1, \ldots, \alpha_m$, called a \emph{representation} of $f$, such that 
		\[f = \min \{\alpha_1,\ldots,\alpha_m\} \]
		as germs of functions.
	\end{defi}
	
	The \emph{active set} of $\alpha_i$ is the germ of set
	\[A_i = \{x ~|~ f(x) = \alpha_i(x)\}.\]
	The germ of the interior of $A_i$, $A_i^\circ$, is called the \emph{open active set} of $\alpha_i$.
	
	The statement that $f$ is a Min-type function at $q$ is equivalent to saying there exists a neighborhood $N$ of $q$ on which $\forall\ x \in N, f(x) = \min_{i} \alpha_i(x)$.
	In particular, $N = \union_i ~ A_i$ (as germs of sets) and $f$ is $C^k$-smooth on $A_i^\circ$.
	
	In general, a Min-type function can have more than one representation $\{ \alpha_i \}$. A ``most concise'' representation that uses the fewest constituent functions $\alpha_i$ is said to be \emph{efficient}.
	
	\begin{defi}[Efficient representation]
		The representation $f = \min \{ \alpha_1, \ldots, \alpha_m\}$ as germs of functions is \emph{efficient at $q$} if $m$ is the minimal number of functions necessary to represent $f$ around $q$. \label{def:efficient_repr}
	\end{defi}
	
	Efficient representations were originally referred to as ``minimal representations'' by \cite{gershkovich_morse_1997}. To avoid confusing terminology, we use the term ``efficient'' instead of minimal in this work. 
	
	While efficient representations may not be unique, it can be shown that if an efficient representation admits \emph{linearly-independent} gradients (LIG), then the efficient-LIG representation is unique as a set of functions \citep{gershkovich_morse_1997}. 
	
	Recall that a set of vectors $v_1,\ldots,v_m \in \R^n$ is said to be in \emph{general position} in $\R^n$ if the dimension of the affine subspace spanned by $\{v_1,\ldots,v_m\}$ is equal to $m-1$.  Equivalently, this means the collection $v_1,\ldots, v_m$ are in general position if $v_2 - v_1, \ldots, v_m - v_1$ are linearly independent.  Notice that the origin may or may not belong to the affine subspace (depending on whether $v_1,\ldots,v_m$ are linearly dependent or not).
	
	\begin{defi}
		\label{def:LIG}
		A representation $f = \min \{\alpha_1,\ldots,\alpha_m\}$ at $x$ is \emph{efficient-LIG} if it is efficient and if the gradients $\{\grad \alpha_1,\ldots,\grad \alpha_m\}$ are in general position at $q$.
	\end{defi}

	\begin{lem}
		\label{lem:efficient_non_empty}
		Let $f = \min \{\alpha_1,\ldots,\alpha_m\}$ be an efficient representation at $q$. Then $A_i^\circ \neq \emptyset$ as germs of sets.
	\end{lem}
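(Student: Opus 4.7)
The strategy is proof by contradiction: if some $A_i^\circ$ is empty as a germ at $q$, then I can remove $\alpha_i$ from the representation, producing a representation with $m-1$ functions and contradicting efficiency.

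First, I would fix an index $i$ with $A_i^\circ = \emptyset$ as a germ at $q$, and choose a neighborhood $U$ of $q$ on which $f = \min\{\alpha_1,\ldots,\alpha_m\}$ holds pointwise and on which $A_i \cap U$ has empty interior. Next, I would establish the key observation that any $x \in U$ satisfying $\alpha_i(x) < \alpha_j(x)$ for all $j \neq i$ must lie in the interior of $A_i$: by continuity of the finitely many functions $\alpha_j - \alpha_i$, there exists a neighborhood $V \subset U$ of $x$ on which the strict inequalities $\alpha_i < \alpha_j$ persist for every $j \neq i$; hence $f \equiv \alpha_i$ on $V$, so $V \subset A_i$ and $x \in A_i^\circ$.

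Since $A_i^\circ \cap U = \emptyset$, the previous observation implies that no such $x$ exists in $U$. Equivalently, for every $x \in U$ there is some $j \neq i$ with $\alpha_j(x) \leq \alpha_i(x)$, so
\[ \min_{j \neq i} \alpha_j(x) \leq \alpha_i(x) \quad \text{for all } x \in U. \]
This lets me drop $\alpha_i$ from the minimum: for every $x \in U$,
\[ f(x) = \min_{1 \leq j \leq m} \alpha_j(x) = \min_{j \neq i} \alpha_j(x). \]
Thus $f = \min\{\alpha_j : j \neq i\}$ as germs of functions at $q$, giving a representation of $f$ by $m-1$ smooth functions and contradicting \Cref{def:efficient_repr}. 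Therefore $A_i^\circ \neq \emptyset$ as germs of sets.

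The argument is essentially elementary and the only delicate point is the germ-level bookkeeping: one must be careful that ``$A_i^\circ = \emptyset$ as germs of sets'' is used in the sense that there exists some neighborhood of $q$ in which $A_i$ has empty interior, and that the conclusion ``$f = \min_{j \neq i}\alpha_j$ as germs'' is obtained on the same neighborhood $U$. No obstacle beyond this; the main ``hard part'' is simply recognizing that strict inequality of the $\alpha_i$ against all other $\alpha_j$ at a point propagates to an open neighborhood and therefore witnesses interior points of $A_i$.
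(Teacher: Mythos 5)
Your argument is correct and is essentially the contrapositive reorganization of the paper's own proof: both rely on the observation that $\bigcap_{j \neq i}\{\alpha_i < \alpha_j\} \subset A_i^\circ$, and both derive a contradiction with efficiency from the (would-be) possibility of writing $f = \min_{j \neq i}\alpha_j$ on a neighborhood of $q$. The germ-level bookkeeping is handled carefully and matches the paper's usage.
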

	\begin{proof}
		Since the representation is efficient, for any $i$, there exists $y$ such that $\forall\ j \neq i,\, \alpha_i(y) < \alpha_j(y)$ in any neighborhood of $q$. Otherwise, we could write $f = \min_{j \neq i} \{ \alpha_j\}$, which contradicts the efficiency of the representation. Since $\inter_{j \neq i} \{\alpha_i < \alpha_j\} \subset A_i^\circ$, we have $A_i^\circ \neq \emptyset$ as germs of sets.
	\end{proof}

	\begin{lem}
		\label{lem:connected}
		The germs of the open active sets of an efficient-LIG representation are connected.
	\end{lem}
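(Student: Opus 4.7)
The plan is to exploit the linear-independence-of-gradients (LIG) hypothesis to flatten the representation via a local diffeomorphism so that each open active set becomes a convex open region, which is automatically connected.

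First I would check that efficiency forces $\alpha_i(q) = f(q)$ for every $i$: otherwise some $\alpha_i(q) > \alpha_j(q)$, and continuity gives $\alpha_i > \alpha_j$ on a whole neighborhood of $q$, so $\alpha_i$ could be dropped from the representation, contradicting \Cref{def:efficient_repr}. Setting $g_j := \alpha_j - \alpha_1$ for $j = 2, \ldots, m$, the LIG condition is equivalent to $\grad g_2(q), \ldots, \grad g_m(q)$ being linearly independent, so $G := (g_2, \ldots, g_m) : \R^n \to \R^{m-1}$ is a submersion at $q$ with $G(q) = 0$. By the submersion normal form, I get an open neighborhood $U$ of $q$ and a $C^k$-diffeomorphism $\phi : U \to V \times W \subset \R^{m-1} \times \R^{n-m+1}$ with $\phi(q) = 0$ and $G \circ \phi^{-1}(v, w) = v$; after shrinking $U$, I may take $V \times W$ to be a product of open balls, hence convex.

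Next I would establish that, as germs at $q$, $A_i^\circ = \{x : \alpha_i(x) < \alpha_j(x) \text{ for all } j \neq i\}$. The inclusion $\supseteq$ is immediate since the strict inequalities are open conditions. For $\subseteq$, suppose $x \in A_i^\circ$ but $\alpha_j(x) = \alpha_i(x)$ for some $j \neq i$; then $\alpha_j - \alpha_i \geq 0$ on a neighborhood of $x$ with equality at $x$, forcing $\grad \alpha_j(x) = \grad \alpha_i(x)$. But the LIG condition still holds near $q$ by continuity of the gradients, so $\grad \alpha_j \neq \grad \alpha_i$ there, a contradiction. This characterization is the key geometric input, and I expect it to be the main technical hurdle, since without LIG two $\alpha_j$'s could conceivably coincide on an open subset of $A_i^\circ$ and break the normal-form picture.

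Finally, translating through $\phi$, the set $A_1^\circ$ corresponds to $\{(v,w) \in V \times W : v_j > 0 \text{ for all } j \in \{2,\ldots,m\}\}$, and for $i \geq 2$, $A_i^\circ$ corresponds to $\{(v,w) \in V \times W : v_i < 0 \text{ and } v_i < v_j \text{ for all } j \in \{2,\ldots,m\} \setminus \{i\}\}$. Each is the intersection of finitely many open half-spaces with the convex set $V \times W$, hence convex and in particular connected. Since $\phi$ is a homeomorphism, $A_i^\circ$ is connected as a germ at $q$, which is the desired conclusion.
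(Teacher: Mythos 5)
Your proof is correct and follows essentially the same strategy as the paper: subtract $\alpha_1$, use the LIG condition to see that $(\alpha_2-\alpha_1,\ldots,\alpha_m-\alpha_1)$ is a submersion at $q$, and deduce that each $A_i^\circ$ is a transverse intersection of half-spaces near $q$. The one place you improve on the paper's exposition is the final connectedness step: the paper observes that each $\{\alpha_i > \alpha_1\}$ is individually connected and then writes ``Thus $A_1^\circ = \bigcap_{i\geq 2}\{\alpha_i > \alpha_1\}$ is connected,'' which on its face is a non sequitur since a finite intersection of connected sets need not be connected; your explicit use of the submersion normal form to put all the hypersurfaces simultaneously into coordinate-hyperplane position, making $A_i^\circ$ convex, is exactly the missing justification and is a worthwhile clarification.
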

	\begin{proof}
		By subtracting $\alpha_1$ from any $\alpha_i$, we may suppose that $\alpha_1 \equiv 0$, so then $f = \min\{0, \alpha_2, \ldots, \alpha_m\}$ and the LIG condition means that $\grad \alpha_2, \ldots, \grad \alpha_m$ are linearly independent. In particular, $\grad \alpha_i \neq 0$ for $i \geq 2$ and we can apply the implicit function theorem. Locally, for $i \geq 2$, the equation $\alpha_i(x) = 0$ represents a submanifold of dimension $n-1$ at $q$, which partitions a ball at $q$ into two connected open sets, $\{\alpha_i > 0\}$ and $\{\alpha_i < 0\}$. Thus $A_1 = \{\forall\ i \geq 2,\, \alpha_i \geq \alpha_1 \} = \inter_{i \geq 2} \{\alpha_i \geq 0\}$ is connected as a germ of set, as well as $A_1^\circ = \inter_{i \geq 2} \{\alpha_i > 0\}$. We can similarly show  that $A_i^\circ$ is connected for any $i$.
	\end{proof}
	
	\begin{lem}
		\label{lem:active_sets_maximal}
		Let $f = \min \{\alpha_1,\ldots,\alpha_m\}$ be an efficient-LIG representation at $q$. Denote by $\Xi$ the set of germs of connected open subsets on which $f$ is $C^k$-smooth.
		Then $\{A_i^\circ\}$ are the maximal elements of $\Xi$.
	\end{lem}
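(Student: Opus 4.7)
The plan is to prove two things: first, that each $A_i^\circ$ belongs to $\Xi$, and second, that any $V \in \Xi$ is contained in some $A_i^\circ$ (so that $A_i^\circ$ cannot be strictly enlarged within $\Xi$). The first part is essentially bookkeeping. Each $A_i^\circ$ is open by definition and connected by \Cref{lem:connected}; on $A_i^\circ$ the function $f$ agrees with $\alpha_i$ by construction, hence is $C^k$-smooth there.

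For the maximality part, I would first shrink to a neighborhood $N$ of $q$ so small that the LIG condition persists throughout $N$ (the gradients $\grad \alpha_i$ are continuous and linear independence is an open condition on finite families). The key geometric input is that on the ``crease set''
\[C = \{x \in N : f(x) = \alpha_i(x) = \alpha_j(x) \text{ for some } i \neq j\},\]
the function $f$ fails to be $C^1$: approaching a point $p \in C$ from the open active sets $A_i^\circ$ and $A_j^\circ$ (both of which accumulate at $p$ by \Cref{lem:efficient_non_empty} and the implicit function argument in \Cref{lem:connected}), the gradient of $f$ would have to equal both $\grad \alpha_i(p)$ and $\grad \alpha_j(p)$, which differ by LIG. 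A short point-set argument then shows $N \setminus \bigcup_i A_i^\circ \subseteq C$: any $p$ with $f(p) = \alpha_i(p)$ but $p \notin A_i^\circ$ admits, in every neighborhood, a point where some $\alpha_j$ strictly undercuts $\alpha_i$, so by continuity $\alpha_j(p) = \alpha_i(p)$ and $p \in C$.

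Consequently, if $V \in \Xi$, then smoothness of $f$ on $V$ forces $V \cap C = \emptyset$, hence $V \subseteq \bigcup_i A_i^\circ$. Since the $A_i^\circ$ are pairwise disjoint open sets and $V$ is connected, $V$ must lie inside a single $A_i^\circ$. Maximality follows: if $A_i^\circ \subseteq V \in \Xi$, then $V \subseteq A_j^\circ$ for some $j$, and the non-emptiness of $A_i^\circ$ together with disjointness of distinct open active sets forces $j = i$, so $V = A_i^\circ$ as germs.

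The main obstacle I anticipate is the careful handling of germs at $q$ itself, in particular the case where $q \in C$ (several $\alpha_i$ simultaneously active at $q$). In that situation $q$ may fail to lie in any single $A_i^\circ$, but $q$ also cannot lie in any $V \in \Xi$ since $f$ is not $C^1$ at $q$ there; one must phrase the statement and argument at the level of germs (sets agreeing on a neighborhood of $q$, not necessarily containing $q$) to avoid this becoming a true obstruction rather than a bookkeeping issue.
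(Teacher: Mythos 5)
Your proof is correct and takes a genuinely different route from the paper's. The paper argues by contradiction at the germ level: assuming $V \supsetneq A_i^\circ$ with $V \in \Xi$, it uses connectedness of $V$ to locate a crossing point $y \in A_i \cap A_\ell \cap V$ on a path within $V$, does this in every neighborhood of $q$, extracts a fixed index $\ell_0$ and a sequence $y_\nu \to q$, then passes to the limit to contradict LIG \emph{at $q$}. You instead shrink $N$ so that general position of the gradients persists throughout, introduce the crease set $C$, show $f$ fails to be $C^1$ at every point of $C$ (both relevant $A^\circ_i, A^\circ_j$ accumulating there and their gradients disagreeing), and deduce that any $V \in \Xi$ avoids $C$ and hence lands in a single $A_i^\circ$; maximality is then a one-line consequence of disjointness and nonemptiness. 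Both arguments ultimately rest on the same fact--that active sets are the closures of their interiors, cut out by transversal hypersurfaces $\{\alpha_i = \alpha_j\}$, so that crossing a crease breaks $C^1$--but your approach trades the sequence-to-$q$ limiting argument for an up-front shrinking of $N$, which yields the cleaner intermediate statement that every $V \in \Xi$ is contained in one $A_i^\circ$. One small imprecision: you attribute the accumulation of $A_i^\circ$ and $A_j^\circ$ at an arbitrary crease point $p$ to \Cref{lem:efficient_non_empty} and \Cref{lem:connected}, but those lemmas are stated as germs at $q$; the correct justification is rerunning the same implicit-function/submersion argument at $p$ (which works precisely because you shrank $N$ to keep the gradients in general position), or equivalently the observation $A_i = \overline{A_i^\circ}$ near $q$, which the paper also uses without an explicit proof. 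This is a matter of citation rather than a gap.
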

	\begin{proof}
		The germ of set $A_i^\circ$ is connected (\Cref{lem:connected}) and $f$ is smooth on it, so $A_i^\circ \in \Xi$. There also exists $a \in A_i^\circ \neq \emptyset$ (\Cref{lem:efficient_non_empty}), with $\alpha_i(a) = f(a)$.
		Let $V \in \Xi$ be such that $A_i^\circ \subset V$. We need to show that $V = A_i^\circ$.  Suppose not, then $V \setminus A_i \neq \emptyset$ and there exists $b \in V \setminus A_i$ and $j \neq i$ such that $\alpha_i(b) > f(b) = \alpha_j(b)$.
		Then since $V$ is connected, consider a continuous path $\gamma(t) \in V$ joining $a \in A_i^\circ$ to $b$. $f$ coincides with $\alpha_i$ on $a$ but then $\alpha_i$ becomes strictly greater than $f$ around $b$. By continuity and definition of Min-type functions, we can then find $\ell \neq i$ (which is not necessarily $j$) and a point $y$ on the path such that $\alpha_\ell(y) = \alpha_i(y) = f(y)$, so $y \in A_\ell \inter A_i \inter V$.
		
		The previous arguments apply to any neighborhood of $q$. By extracting an index $\ell_0 \neq i$, we can build a sequence $(y_\nu) \in V \inter A_i \inter A_{\ell_0}$ converging to $q$. But since $f$, $\alpha_i$, and $\alpha_{\ell_0}$ are $C^k$-smooth on $V$ and $\forall\ j,\, A_j = \overline{A_j^\circ}$, we get $\grad f(y_\nu) = \grad \alpha_i(y_\nu) = \grad \alpha_{\ell_0}(y_\nu)$. At the limit, we obtain $\grad \alpha_i(q) = \grad \alpha_{\ell_0}(q)$, which contradicts the LIG property. Therefore $V = A_i^\circ$.
		
		To show that the sets $A_i^\circ$ are the only maximal elements of $\Xi$, consider $V \in \Xi$. There exists $i$ such that $V \inter A_i \neq \emptyset$ as germs of sets. We can re-use the previous argument to show that $V \setminus A_i = \emptyset$, hence $V \subset A_i^\circ$.
	\end{proof}
	
	It then follows that germs of efficient-LIG representations are uniquely defined up to permutation since they induce the same maximal connected open sets on which $f$ is $C^k$-smooth.
	\begin{cor}[Uniqueness]
		\label{cor:uniqueness_minirep}
		Let $f = \min \{\alpha_1,\ldots,\alpha_m\} = \min \{\beta_1,\ldots,\beta_m\}$ be two efficient-LIG representations at $q$. Then there exists a permutation of indices $\sigma$ such that the germs of active sets and functions $A_i = B_{\sigma(i)}$ and $\alpha_i = \beta_{\sigma(i)}$ are equal.
	\end{cor}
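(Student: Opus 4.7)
The plan is to leverage the previous lemma (active sets are maximal smooth pieces) to pin down the active sets uniquely, and then deduce the agreement of the constituent functions from the fact that each $\alpha_i$ is forced to equal $f$ on its active set.

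First I would apply \Cref{lem:active_sets_maximal} to both efficient-LIG representations $\{\alpha_i\}_{i=1}^m$ and $\{\beta_j\}_{j=1}^m$. Writing $A_i = \{f = \alpha_i\}$ and $B_j = \{f = \beta_j\}$, the lemma tells us that the collections $\{A_i^\circ\}_{i=1}^m$ and $\{B_j^\circ\}_{j=1}^m$ are both exactly the set of maximal elements of the same poset $\Xi$ of germs of connected open subsets on which $f$ is $C^k$-smooth. Since the set of maximal elements of a poset is an intrinsic invariant (it does not depend on which representation we started from), the two collections coincide as unordered sets of germs. Both collections have $m$ elements (here I would briefly note that the $A_i^\circ$ are pairwise distinct as germs, which follows from efficiency together with \Cref{lem:efficient_non_empty}; otherwise, one $\alpha_i$ could be dropped from the representation), so there exists a permutation $\sigma \in S_m$ with $A_i^\circ = B_{\sigma(i)}^\circ$ as germs at $q$.

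Next I would upgrade this to equality of the closed active sets and of the functions. Passing to closures, $A_i = \overline{A_i^\circ} = \overline{B_{\sigma(i)}^\circ} = B_{\sigma(i)}$ as germs (using $A_j = \overline{A_j^\circ}$, which holds for efficient-LIG representations since each $A_j$ is cut out by smooth equalities/inequalities with linearly independent gradients after subtracting $\alpha_1$, so $A_j$ has no isolated or lower-dimensional components). For the functions, on the common open set $A_i^\circ = B_{\sigma(i)}^\circ$ one has $\alpha_i = f = \beta_{\sigma(i)}$ by definition of the active sets, hence $\alpha_i$ and $\beta_{\sigma(i)}$ agree on a nonempty open germ accumulating at $q$; by continuity of all derivatives this propagates to equality on $A_i = \overline{A_i^\circ}$ of all relevant data. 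This gives $\alpha_i = \beta_{\sigma(i)}$ as germs of functions on $A_i = B_{\sigma(i)}$, which is the content of the corollary.

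The main obstacle I foresee is the bookkeeping in the first paragraph: namely, justifying carefully that the $\{A_i^\circ\}$ listed are pairwise distinct as germs of sets, so that one really does obtain a permutation and not just a surjection onto a smaller collection. This is where efficiency is essential, and it must be invoked together with \Cref{lem:efficient_non_empty} to rule out repetitions. Once the permutation is in hand, the remaining steps are purely formal, since agreement of smooth functions on a germ of a set with nonempty interior, combined with the identification $\alpha_i = f = \beta_{\sigma(i)}$ on the active set itself, yields the claimed identity.
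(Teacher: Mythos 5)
Your proof is correct and follows essentially the same route as the paper, which simply observes that the corollary ``follows'' from \Cref{lem:active_sets_maximal} because both representations pick out the same collection of maximal elements of $\Xi$. You have usefully fleshed out the two details the paper leaves implicit: the pairwise-distinctness of the germs $A_i^\circ$ (so the identification is a genuine bijection), and the passage from equality on the open active sets to equality of the active sets and functions. A small remark on the distinctness step: besides your efficiency-based argument, an even more direct justification is available from the LIG hypothesis itself — if $A_i^\circ = A_j^\circ \neq \emptyset$, then $\alpha_i = \alpha_j$ on this open set, hence $\grad\alpha_i = \grad\alpha_j$ there, and letting points of $A_i^\circ$ tend to $q$ (using $q \in \overline{A_i^\circ}$) yields $\grad\alpha_i(q)=\grad\alpha_j(q)$, contradicting general position. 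Your interpretation of the final claim as equality of germs restricted to $A_i = B_{\sigma(i)}$ is the correct reading: equality as germs on a full neighborhood of $q$ would in fact fail (e.g.\ $\min\{x,-x\}=\min\{x+g(x),-x\}$ near $0$ for a flat bump $g\geq0$ supported in $x>0$), so your restraint here is well placed.
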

	
	In subsequent discussions, we will find it useful to characterize the intersection of active sets $A_i = \{ f(x) = \alpha_i \}$ given an efficient-LIG representation. 
	
	\begin{defi}
		\label{def:G(x)}
		Given an efficient-LIG representation $f = \min \{\alpha_1, \ldots \alpha_m\}$ at $q$,  $G_f(q)$ is defined as the \emph{germ of set}
		$$
		G_f(q) = \{\alpha_1 = \ldots = \alpha_m = f\} = \bigcap_i A_i,
		$$
		where recall that $A_i = \{f = \alpha_i\}$. 
	\end{defi}
	Notice that $q \in G_f(q)$ and that this definition does not depend on the choice of representation due to \Cref{cor:uniqueness_minirep}.
	
	The form of $G_f(q)$ is locally a smooth submanifold of dimension $n-m+1$ due to the LIG property. In particular, if the number of functions $m$ over which the minimum is taken is equal to $n+1$, then $G_f(q)$ may reduce to the point $\{q\}$ itself.
	
	\begin{lem} \label{lem:G(q)_mfd} Given an efficient-LIG representation $f = \min \{\alpha_1, \ldots \alpha_m \}$ at $q$, the germ of set $G_f(q)$ is an $(n-m+1)$-dimensional manifold. 
	\end{lem}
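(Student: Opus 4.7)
The plan is to recognize $G_f(q)$ as the zero set of a submersion, and then apply the implicit function theorem (in its regular value form) to conclude the dimension.

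First, I would rewrite $G_f(q)$ as a level set. By definition, $G_f(q)=\{\alpha_1=\alpha_2=\cdots=\alpha_m=f\}$, but whenever all the $\alpha_i$ agree at a point $x$, one automatically has $f(x)=\min_i\alpha_i(x)=\alpha_1(x)$, so as germs of sets at $q$,
\[
G_f(q)=\{x : \alpha_1(x)=\alpha_2(x)=\cdots=\alpha_m(x)\}=\bigcap_{i=2}^{m}\{\alpha_i-\alpha_1=0\}.
\]
Define $\Phi:\R^n\to\R^{m-1}$ by $\Phi(x)=(\alpha_2(x)-\alpha_1(x),\ldots,\alpha_m(x)-\alpha_1(x))$; then $G_f(q)=\Phi^{-1}(0)$ as a germ at $q$, and $\Phi$ is $C^k$-smooth since each $\alpha_i$ is.

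Next, I would verify that $0$ is a regular value of $\Phi$ at $q$. The Jacobian $D\Phi(q)$ has rows $\grad\alpha_i(q)-\grad\alpha_1(q)$ for $i=2,\ldots,m$. The efficient-LIG hypothesis (\Cref{def:LIG}) says precisely that $\grad\alpha_1(q),\ldots,\grad\alpha_m(q)$ are in general position, which by the definition recalled just before \Cref{def:LIG} means that the $m-1$ vectors $\grad\alpha_2(q)-\grad\alpha_1(q),\ldots,\grad\alpha_m(q)-\grad\alpha_1(q)$ are linearly independent. Hence $D\Phi(q)$ has full rank $m-1$, so $\Phi$ is a submersion at $q$.

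Finally, the regular value theorem yields a neighborhood $U$ of $q$ in which $\Phi^{-1}(0)\cap U$ is a $C^k$-submanifold of $\R^n$ of dimension $n-(m-1)=n-m+1$, which proves the claim. The only conceptual subtlety is to make sure that the defining condition ``$\alpha_1=\cdots=\alpha_m=f$'' in \Cref{def:G(x)} collapses to the level set condition above, and that the ``general position'' condition in \Cref{def:LIG} translates to the differences of gradients being linearly independent rather than the gradients themselves; both are essentially bookkeeping, so I do not anticipate any real obstacle.
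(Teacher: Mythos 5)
Your proof is correct and follows essentially the same route as the paper's: rewrite $G_f(q)$ as the common zero set of $\alpha_i-\alpha_1$ for $i=2,\ldots,m$, use the LIG hypothesis to see that the differences of gradients are linearly independent, and invoke the implicit function theorem to conclude that the zero set is an $(n-m+1)$-dimensional submanifold near $q$. Packaging the differences into a single map $\Phi$ and stating the regular-value form is just a cleaner phrasing of the same argument.
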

	
	\begin{proof}
		Let $U$ be a neighborhood of $q$ where $f = \min \{\alpha_1, \ldots \alpha_m \}$. As the representation is efficient, $G_f(q) = \cap_{i = 1, \ldots, m} \{\alpha_i = f\} = \cap_{i= 2,\ldots,m}\{\alpha_i - \alpha_1 = 0\}$. Thus, we can express $G_f(q)$ as an intersection of $(m-1)$ level sets of smooth functions $\cap_{i= 2,\ldots,m}\{\alpha_i - \alpha_1 = 0\}$. Since the representation is LIG, the set of $(m-1)$ $\{\nabla (\alpha_i - \alpha_1)\}_{i = 2,\ldots, m}$ conists of linearly independent vectors. Thus, due to the implicit function theorem, there is an open neighborhood $V \subset U$ on which $G_f(q)$ is a $(n-(m-1))$-dimensional submanifold.
	\end{proof}
	
	We also remark that the germ of the restriction $f_{|G_f(q)}$ is a smooth function, since it coincides with the restriction of  $\alpha_i$ to $G_f(q)$ for any $i$.
	
	\subsubsection{Non-Degenerate Critical Points and Index}
	We now define the critical points of Min-type functions.
	
	\begin{defi}[Min-type Critical Point]\label{def:q_mintype_crit}  Let $f$ be a $C^k$ Min-type function that admits an efficient-LIG representation $f = \min \{ \alpha_1, \ldots, \alpha_m\}$ at $q$. If $0$ is contained in the convex hull of gradients $\{ \nabla \alpha_1 ,\ldots  \nabla \alpha_m \}$ at $q$, then $q$ is said to be a \emph{Min-type critical point} of $f$.
	\end{defi}
	
	We use the terminology of ``critical points" for points described in \Cref{def:q_mintype_crit} because by restricting to the submanifold on which $f$ is smooth, those points are indeed critical points of the restricted smooth function.
	
	\begin{lem} \label{lem:q_mintype_crit} If $q$ is a Min-type critical point of $f$ then $\nabla f_{\rvert G_f(q)}\rvert_q= 0$ on the submanifold $G_f(q)$.
	\end{lem}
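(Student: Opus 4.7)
The plan is to exploit the equality $f_{|G_f(q)} = \alpha_i{}_{|G_f(q)}$, which holds for every $i$ because $G_f(q) = \bigcap_i A_i \subset \{f = \alpha_i\}$. Consequently, the intrinsic gradient $\nabla f_{|G_f(q)}{}|_q$ is the orthogonal projection of each ambient gradient $\nabla \alpha_i(q)$ onto the tangent space $T_q G_f(q)$. The strategy is therefore to identify $T_q G_f(q)$ explicitly and then show that all $\nabla \alpha_i(q)$ lie in its orthogonal complement.

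For the first step, I would recycle the proof of \Cref{lem:G(q)_mfd}: near $q$, the submanifold $G_f(q)$ is cut out by the $(m-1)$ smooth equations $\alpha_i - \alpha_1 = 0$ for $i = 2, \ldots, m$, and the LIG hypothesis guarantees that the differentials $\nabla(\alpha_i - \alpha_1)(q)$ are linearly independent. Hence
\[
T_q G_f(q) = V^\perp, \quad \text{where} \quad V = \mathrm{span}\bigl\{ \nabla \alpha_i(q) - \nabla \alpha_1(q) \ : \ i = 2, \ldots, m \bigr\}.
\]
So to prove $\nabla f_{|G_f(q)}{}|_q = 0$ it suffices to show $\nabla \alpha_j(q) \in V$ for some (equivalently, every) $j$, since then its projection onto $V^\perp$ vanishes.

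The key input is the Min-type criticality of $q$: there exist $\lambda_1, \ldots, \lambda_m \geq 0$ with $\sum_i \lambda_i = 1$ and $\sum_i \lambda_i \nabla \alpha_i(q) = 0$. Substituting $\lambda_1 = 1 - \sum_{i \geq 2} \lambda_i$ rearranges this identity into
\[
\nabla \alpha_1(q) = -\sum_{i=2}^{m} \lambda_i \bigl( \nabla \alpha_i(q) - \nabla \alpha_1(q) \bigr),
\]
which exhibits $\nabla \alpha_1(q) \in V$. Since $\nabla \alpha_j(q) = \nabla \alpha_1(q) + (\nabla \alpha_j(q) - \nabla \alpha_1(q))$ for each $j \geq 2$, every $\nabla \alpha_j(q)$ lies in $V$, and the projection onto $T_q G_f(q) = V^\perp$ is zero.

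I do not anticipate any serious obstacle: the argument is essentially linear algebra after the tangent-space description from \Cref{lem:G(q)_mfd} is in hand. The only point requiring mild care is making explicit why the intrinsic gradient of $f_{|G_f(q)}$ coincides with the orthogonal projection of the ambient $\nabla \alpha_i(q)$ — this uses that $G_f(q)$ is endowed with the induced Euclidean metric and that $\alpha_i$ is smooth on a neighborhood of $q$, so the projection formula for gradients of restrictions applies directly.
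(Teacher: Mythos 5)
Your proof is correct and follows essentially the same route as the paper's: express $G_f(q)$ as the zero set of the $\alpha_i - \alpha_1$, identify the normal space $T_q^\perp G_f(q)$ with $\mathrm{span}\{\nabla(\alpha_i-\alpha_1)(q)\}$ via the LIG hypothesis, and then rearrange the convex-combination identity $\sum_i \lambda_i \nabla \alpha_i(q) = 0$ into $\nabla \alpha_1(q) = -\sum_{i\geq 2}\lambda_i\nabla(\alpha_i-\alpha_1)(q)$ to conclude that $\nabla \alpha_1(q)$ is normal to $G_f(q)$. The only cosmetic difference is that the paper first disposes of the subcase $\nabla \alpha_i(q) = 0$ separately, which your uniform projection argument renders unnecessary.
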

	
	\begin{proof}
		Recall $G_f(q)$ is the germ of the $(n-m+1)$ dimensional submanifold (\Cref{lem:G(q)_mfd}) locally defined by the solution set  $\alpha_i - \alpha_1 = 0$ for $i= 2,\ldots, m$. Since $f_{\rvert G_f(q)} = \alpha_i {\ }_{\rvert G_f(q)}$ and $\alpha_i$ is smooth, the restriction $f_{\rvert G_f(q)}$ of $f$ to $G_f(q)$ is smooth. 
		
		If $\nabla \alpha_i\rvert_q = 0$,  then $\nabla f_{\rvert G_f(q)} = 0$ (notice that since the representation is LIG at most one of $ \nabla \alpha_i$ vanish at $q$). Thus, consider the case where none of $\alpha_i$ are critical at $q$. Then $q$ is a critical point of $f_{\rvert G_f(q)}$ if and only if $\nabla \alpha_i\rvert_{q} \perp T_q G_f(q)$. Since $G_f(q)$ is locally defined by the solution set  $\alpha_i - \alpha_1 = 0$ and the representation is LIG, the set of linearly independent vectors $\{\nabla (\alpha_i -  \alpha_1)\rvert_{q}\}_{i= 2,\ldots, m}$ is a basis of $N_qG_f(q)$. Therefore, $q$ is a critical point of $f_{\rvert G(q)}$ if and only if $\nabla  \alpha_1\rvert_{q}$ is in the span of $\{\nabla (\alpha_i -  \alpha_1)\rvert_{q}\}_{i= 2,\ldots, m}$. 
		
		Now assume $0$ is contained in the convex hull of gradients $\{ \nabla \alpha_1 ,\ldots  \nabla \alpha_m \}$ at $q$: i.e., there is a set of non-negative coefficients $t_i$ that sum to one, such that
		\begin{align*}
			0 &= \sum_{i=1}^m t_i \nabla \alpha_i \rvert_{q} =  \sum_{i= 2}^m t_i \nabla (\alpha_i - \alpha_1) \rvert_{q} + \nabla  \alpha_1\rvert_{q}.
		\end{align*}
		Hence, we have $\nabla  \alpha_1\rvert_{q} \in N_q G(q)$ and thus $q$ is a critical point of the smooth function $f_{\rvert G(q)}$ on $G_f(q)$.
	\end{proof}
	
	We now turn to the notions of non-degeneracy and index in the context of Min-type functions. 
	
	\begin{defi}[Non-degenerate critical point \citep{gershkovich_morse_1997}]
		\label{def:non_degen_crit}
		Let $f$ be a $C^k$-Min-type function. If $q$ is a Min-type critical point with an efficient-LIG representation $f = \min \{ \alpha_1, \ldots, \alpha_m\}$ at $q$, such that 
		\begin{enumerate}
			\item The origin lies strictly in the \emph{interior} of the convex hull of the gradients $\grad \alpha_1, \ldots, \grad \alpha_m$ at $q$; and
			\item The restriction $f_{|G_f(q)}$ is a Morse function.
		\end{enumerate}
		Then $q$ is said to be a \emph{non-degenerate critical point} (in the Min-type sense) of $f$. 
	\end{defi}
	
	Notice that the definition of non-degenerate critical points does not depend on the choice of efficient representation $f = \min \{ \alpha_1, \ldots, \alpha_m\}$, due to \Cref{cor:uniqueness_minirep}.
	Moreover, a non-degenerate critical point $q$ of $f$ is a non-degenerate critical point for the restriction $f_{|G_f(q)}$ in the sense of smooth Morse theory.
	
	\begin{defi}[Index of a Min-type function]
		\label{def:index_min_type}
		Let $f : \R^n \to \R$ be a $C^k$-Min-type function with efficient representation $f = \min\limits_{i = 1,\ldots,m} \alpha_i$ at $q$,
		and suppose that $q$ is a non-degenerate critical point. The \emph{index} of $f$ at $q$ is
		\[ \idx(q;f) := (m-1) + \idx(q;\, f_{|G_f(q)}).\]
	\end{defi}
	The index has a geometric interpretation: it is the maximal dimension of a submanifold in a neighborhood of $q$ such that the restriction of $f$ on it has a strict maximum at $q$ \cite[Section 3.1, Proposition 3]{gershkovich_morse_1997}.
	
	The Min-type framework is interesting for our purpose of extending Morse theory to distance functions because it allows us to write Min-type functions into a normal form around non-degenerate critical points, similar to the setting of smooth Morse theory (see \Cref{thm:Morsebasic}).
	In classical Morse theory \citep{milnor_morse_1963}, smooth functions $f$ have simple forms at regular points and non-degenerate critical points. 
	These forms are on the same orbit as $f$ under the action of the group of local diffeomorphisms of $\R^n$, and we either have $f \sim \mathrm{const.} + x_n$ at regular points, or $f \sim \mathrm{const.} - \sum_{i = 1}^{\lambda} x_i^2 + \sum_{i = \lambda + 1}^{n} x_i^2$ at non-degenerate critical points. However, for Min-type functions, another group of transformations is needed \citep{matov_singularities_1986, gershkovich_singularity_1997}.
	
	\begin{defi}[Group of almost smooth homeomorphisms]
		A homeomorphism $\phi : U \to V$, where $U$ and $V = \phi(U)$ are open neighborhoods of $0$ in $\R^n$ and $\phi(0) = 0$, is said to be \textit{almost smooth} if $\phi$ and $\phi^{-1}$ are smooth except on a submanifold of codimension $\geq 1$.
	\end{defi}
	
	The following normal form theorem for Min-type functions is due to \cite{gershkovich_morse_1997} (Section 3.2, Theorem 1) and \cite{gershkovich_singularity_1997} (Theorem 1.3). 
	\begin{thm}[Normal form \citep{gershkovich_morse_1997}]
		\label{thm:normal_form}
		Let $f : \R^n \to \R$ be a Min-type function with efficient representation $f = \min \{ \alpha_1, \ldots, \alpha_m\}$ at $q$, and suppose that $q$ is a non-degenerate critical point. Then there exists an almost smooth homeomorphism $\phi : U \to V$, where $U$ and $V = \phi(U)$ are open neighborhoods of $0$ and $q$ in $\R^n$, such that $\phi(0) = q$ and for all $x \in U$, 
		\[ f \circ \phi \,(x) = f(q) - \sum_{i = 1}^{\lambda} x_i^2 + \sum_{i = \lambda + 1}^{n} x_i^2,\]
		where $\lambda = \idx(q; f)$. 
	\end{thm}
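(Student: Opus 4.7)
The plan is to split the normal form into a smooth Morse contribution along $G_f(q)$ and a piecewise-linear Min-type contribution in the $(m-1)$ transverse directions, then assemble the two pieces into a single almost smooth homeomorphism. With $\mu := \idx(q; f_{|G_f(q)})$, the target index $\lambda = (m-1) + \mu$ from \Cref{def:index_min_type} breaks as $\mu$ negative squares tangent to $G_f(q)$ plus $(m-1)$ negative squares normal to it.

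Since the representation is efficient-LIG, the smooth functions $\alpha_i - \alpha_1$ ($i = 2, \ldots, m$) vanish on $G_f(q)$ with linearly independent gradients at $q$. The submersion theorem produces a smooth chart on a neighborhood of $q$ giving coordinates $(y, z) \in \R^{n-m+1} \times \R^{m-1}$ with $q \leftrightarrow 0$, $G_f(q) = \{z = 0\}$, and $\alpha_{i+1} - \alpha_1 = z_i$. In these coordinates $f(y, z) = \alpha_1(y, z) + \min\{0, z_1, \ldots, z_{m-1}\}$ as germs, and $\alpha_1|_{\{z = 0\}}$ coincides with $f_{|G_f(q)}$, which by condition 2 of \Cref{def:non_degen_crit} is a smooth Morse function with a non-degenerate critical point of index $\mu$ at $0$. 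Applying \Cref{thm:Morsebasic} along $\{z = 0\}$ and extending the resulting change of $y$-coordinates trivially in $z$, I may further assume $\alpha_1(y, 0) = f(q) - \sum_{i=1}^\mu y_i^2 + \sum_{i=\mu+1}^{n-m+1} y_i^2$.

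For the transverse model, note that all $\nabla_y \alpha_i(q) = 0$ (since the $\alpha_i$ agree on $G_f(q)$ and $f_{|G_f(q)}$ is critical there), so the full gradients $\nabla \alpha_i(q)$ lie in the $z$-direction. Condition 1 of \Cref{def:non_degen_crit} thus becomes $0 \in \mathrm{int}(\mathrm{conv}\{w, w+e_1, \ldots, w+e_{m-1}\})$ in $\R^{m-1}$, where $w := \nabla_z \alpha_1(q)$. Consequently the linearization $L(z) := \langle w, z\rangle + \min\{0, z_1, \ldots, z_{m-1}\} = \min_i\langle v_i, z\rangle$ with $v_1 = w$, $v_{i+1} = w + e_i$, is concave and $1$-homogeneous with a strict local (in fact global) maximum at $z = 0$; its superlevel sets $\{L \geq -c\}$, $c > 0$, are compact convex polytopes shrinking to $\{0\}$ as $c \to 0^+$. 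A fiberwise radial rescaling identifies each polytope boundary with the sphere of radius $\sqrt{c}$, yielding a germ of homeomorphism $\tau : (\R^{m-1}, 0) \to (\R^{m-1}, 0)$ with $L(z) = -\|\tau(z)\|^2$ that is smooth on the interior of each open active region and only continuous across the codimension-$\geq 1$ walls $\{\alpha_i = \alpha_j\}$, hence almost smooth. Combining the tangential Morse chart with the fibered transverse rescaling via $\phi(y, z) := (y, \tau(z))$ produces the candidate almost smooth homeomorphism conjugating the split linear-quadratic model to the desired $f(q) - \sum_{i=1}^\lambda x_i^2 + \sum_{i = \lambda + 1}^n x_i^2$.

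The main obstacle is that the construction above reduces $f$ only to its split linear-quadratic model $\alpha_1(y, 0) + L(z)$, ignoring the higher-order terms of $\alpha_1(y, z) - \alpha_1(y, 0)$ and of each $\alpha_i$ in the $z$-direction beyond linearization. Absorbing these is the heart of the argument: one interpolates between $f$ and its split model via a one-parameter family $f_t$ and integrates a Moser-type vector field that is smooth on each open active region and tangent to the stratification $\{\alpha_i = \alpha_j\}$. The LIG condition enters crucially here, ensuring that the stratification is regular and that the time-$1$ flow is a homeomorphism whose nonsmooth locus remains a submanifold of codimension $\geq 1$, thereby preserving the almost smoothness demanded by the statement.
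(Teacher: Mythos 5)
The paper does not actually prove this theorem: it is imported verbatim from Gershkovich and Rubinstein (their Section~3.2, Theorem~1) and Gershkovich (Theorem~1.3), so there is no in-text argument of the authors to compare your sketch against.

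Taken on its own merits, your construction is well aimed through the first three steps: the chart $(y,z)$ with $G_f(q)=\{z=0\}$ and $\alpha_{i+1}-\alpha_1=z_i$, the smooth Morse lemma applied to $f_{|G_f(q)}$ and extended trivially in $z$, and the radial homeomorphism turning the concave $1$-homogeneous cone $L(z)=\min_i\langle v_i,z\rangle$ into $-\|\tau(z)\|^2$. But your final paragraph concedes that this only normalizes the ``split linear-quadratic model'' $\alpha_1(y,0)+L(z)$ and defers the actual reduction of $f$ to an unconstructed Moser-type isotopy. That is precisely the nontrivial content of the theorem, so as written this is a plan, not a proof: you would still have to build the time-dependent vector field, show it is tangent to every stratum $\{\alpha_i=\alpha_j=f\}$, control it near $q$ where the defining equation $\nabla f_t\cdot X_t=-\dot f_t$ degenerates, and check that the time-one flow is an almost smooth homeomorphism. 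None of this is carried out.

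The detour is also avoidable. Instead of linearizing to $L$ and then trying to absorb the error by isotopy, apply the radial rescaling directly to the full fiber function $h_y(z):=\alpha_1(y,z)-\alpha_1(y,0)+\min\{0,z_1,\ldots,z_{m-1}\}$, so that $f=\alpha_1(y,0)+h_y(z)$. Its one-sided radial derivative at $0$ is $\langle\nabla_z\alpha_1(y,0),\hat z\rangle+\min_i\langle e_i,\hat z\rangle$, which is bounded above by some $-\epsilon<0$ uniformly for $\hat z\in\Sbb^{m-2}$ and $y$ near $0$ by the open convex-hull condition (N2). Hence $h_y$ is strictly decreasing along rays and pinched between $-Ct$ and $-\epsilon t$, so $\tau_y(z):=\sqrt{-h_y(z)}\,z/\|z\|$ is a fiberwise homeomorphism with $h_y=-\|\tau_y(\cdot)\|^2$, smooth off the walls $\{z_i=0\}$, $\{z_i=z_j\}$ and the origin. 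Composing $(y,z)\mapsto(y,\tau_y(z))$ with the tangential Morse chart already gives the required almost smooth $\phi$, with the index $\lambda=\mu+(m-1)$ coming out as you computed and with no Moser argument needed.
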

	This implies that non-degenerate critical points of a Min-type function are \emph{isolated}.
	
	\begin{rmk}
		\cite{gershkovich_morse_1997} also defined a notion of \emph{non-degenerate regular points}. The definition is similar to \Cref{def:non_degen_crit}, where Condition 2 must be replaced by 2': the origin does not belong to the convex hull of the gradients $\grad \alpha_1, \ldots, \grad \alpha_m$ at $q$, and a Condition 4' must be added, which states that: any $m-1$ gradients among $\grad \alpha_1, \ldots, \grad \alpha_m$ are linearly independent at $q$.
		Then they defined \emph{Morse Min-type functions} to be Min-type functions that only admit non-degenerate regular points and non-degenerate critical points. These refinements will not be needed for our purpose, as we are only concerned by non-degenerate critical points. 
	\end{rmk}
	
	\begin{rmk} \label{rmk:clarke1}
		It can be shown that Min-type critical points (\Cref{def:q_mintype_crit}) are also critical points of the \emph{Clarke subgradient} \citep{clarke1990optimization,agrachev1997morse}, a generalization of gradients for functions that need only be locally Lipschitz continuous. For a compact interlevel set $\intlevel{f}{a}{b}$ that contains only regular points of the Clarke subgradient, it can be shown that $\sublevel{f}{a}$ is a strong deformation retract of $\sublevel{f}{b}$ using the theory of \emph{weak slopes} \citep{corvellec2001Second} (another related generalization of gradients).
	\end{rmk}

	\section{Morse Theory for Signed Distance Functions}
	\label{sec:morse_theory_signdist}
	
	In this section, we generalize Morse theory to \emph{signed} distance functions $d : \R^n \to \R$ to smooth compact surfaces bounding an open set in $\R^n$. With the goal of computing SDPH to analyze shapes, on the theoretical front, we investigate conditions under which the persistence module $\PH_k(d)$ is p.f.d.~(see \Cref{sec:PH_and_Morse_theory}), and aim to find a correspondence between homological births or deaths and geometric features of the surface.
	Since $d$ is not continuously differentiable on all of its domain, it is not a Morse function in the classical sense. Therefore we cannot apply \Cref{thm:Morsebasic}. 
	However, we will show that $d$ can be framed as a \emph{topological Morse function} that shares properties of smooth Morse functions described by classical Morse theory. Topological Morse functions either have the form $f \sim \mathrm{const.} + x_n$ at topological regular points, or $f \sim \mathrm{const.} - \sum_{i = 1}^{\lambda} x_i^2 + \sum_{i = \lambda + 1}^{n} x_i^2$ at non-degenerate topological critical points, up to a homeomorphism between open neighborhoods.
	
	We first show in \Cref{sec:topological_morse_theory} that the fundamental Morse lemmas formulated in the smooth setting also hold in the more general setting of topological Morse functions.
	This allows us to generalize the Morse lemmas to signed distance functions later on in \Cref{sec:generalization_dist_functions}: the idea is to successively reframe them as $C^k$-Min-type functions, under some geometric assumptions, and then as topological Morse functions.
	Finally, in \Cref{sec:genericity}, we show that these geometric assumptions are not restrictive and hold for generic signed distance functions.
	
	\subsection{Topological Morse Functions}
	\label{sec:topological_morse_theory}

	The main obstruction to generalizing classical Morse theory to continuous functions is the requirement of continuous gradients and gradient flows that underpin the results of \Cref{thm:Morsebasic}. This motivated the study of continuous functions that carry over generalized notions of critical and regular points from smooth functions. One such family are \emph{topological Morse functions} (originally called topological non-degenerate functions) introduced by \cite{morse_topologically_1959}.
	
	\begin{defi}[Topological regular point \citep{morse_topologically_1959}]
		\label{def:topological_regular_point}
		Let $f : X \to \R$ be a continuous function. A point $y \in X$ is a \emph{topological regular point} of $f$ if there is a homeomorphism $\varphi : M_1 \to M_2$ between open neighborhoods $M_1$ of $0$ in $\R^n$ and $M_2$ of $y$ with $\varphi(0) = y$, such that for all $x = (x_1, \ldots, x_n) \in M_1 \subset \R^n$, 
		\begin{equation}\label{eq:topregpoint}
			f \circ \varphi \ (x) = f(y) + x_n.
		\end{equation}
	\end{defi}
	
	For notational convenience, we write $f \overset{\varphi}{\sim} f(y) + x_n$ where $\varphi : (M_1, 0) \to (M_2, y)$.
	
	\begin{defi}[Topological critical point \citep{morse_topologically_1959}]
		\label{def:topological_critical_point}
		Let $f : \R^n \to \R$ be a continuous function. 
		A point $y \in \R^n$ is said to be a \emph{topological critical point} of $f$ if $y$ is not a topological regular point of $f$. Furthermore, $y$ is a \emph{non-degenerate topological critical point} of $f$ if there exists an integer $0 \leq \lambda \leq n$ and a homeomorphism $\phi : N_1 \to N_2$ where $N_1$ and $N_2 = \phi(N_1)$ are open neighborhoods of $0$ and $y$ in $\R^n$, such that $\phi(0) = y$ and for all $x \in N_1$,
		\[f \circ \phi \,(x) = f(y) - \sum_{i = 1}^{\lambda} x_i^2 + \sum_{i = \lambda + 1}^n x_i^2.\]
	\end{defi}
	For notational convenience, we write $f \overset{\phi}{\sim} f(y) - \sum_{i = 1}^{\lambda} x_i^2 + \sum_{i = \lambda + 1}^n x_i^2$ where $\phi : (N_1, 0) \to (N_2, y)$.
	Notice that a non-degenerate topological critical point cannot be a topological regular point \citep{morse_topologically_1959}. We also note that non-degenerate topological critical points are \textit{isolated}.

	\begin{defi}[Topological Morse function \citep{morse_topologically_1959}]
		\label{def:topological_morse_function}
		A continuous function $f : \R^n \to \R$ is a \emph{topological Morse function} if all the points $y \in \R^n$ are either topological regular points or non-degenerate topological critical points of $f$.
	\end{defi}

	The literature dealing with topological Morse functions appears to be sparse (see for instance \cite{cantwell_topological_1968,morse_f_1973,landis_tractions_1975}); in particular, there does not currently exist full counterparts to the isotopy and handle attachment lemmas in the same form as that given by \cite{milnor_morse_1963}. Essay III by \cite{kirby_foundational_1977} asserts that the handle attachment lemma is still true for topological Morse functions, by mentioning that a proof by \cite{siebenmann_deformation_1972} based on mutually transverse foliations exists. However some details of the proofs appear to be missing to allow their complete reconstruction.
	For completeness, we provide the full proofs based on purely topological arguments; some of them are similar to those found in \cite{cantwell_topological_1968} (see Lemma 3.1) and \cite{corvellec_deformation_1993} (see Theorem 2.8).
	
	\subsubsection{The Isotopy Lemma}
	
	We now show that the isotopy lemma that describes deformation retractions between sublevel sets of smooth functions over intervals of regular values  (\Cref{thm:Morsebasic}\ref{thm:Morsebasic_1}) generalizes to topological Morse functions. 
	
	Unlike the smooth setting, we do not have gradient vector fields that generate a flow to deform sublevel sets onto one another. To create a deformation retraction from $\sublevel{f}{b}$ onto $\sublevel{f}{a}$ without a vector field, we form an open cover of the interlevel set $\intlevel{f}{a}{b}$, and perform successive $f$-decreasing deformations on the individual cover elements, on each of which $f$ is simply the projection onto a Euclidean axis as specified in \cref{eq:topregpoint} of \Cref{def:topological_regular_point}. 
	
	Recall that a continuous function $f : \R^n \to \R$ is \emph{proper} if for any compact subset $K \subset \R$, $f^{-1}(K) \subset \R^n$ is compact.
	\begin{thm}[Isotopy Lemma, topological version]
		\label{thm:topological_isotopy_lemma}
		Let $f : \R^n \to \R$ be a continuous proper function, and $a < b$. Suppose that all points $y \in f^{-1}[a,b]$ are topological regular points for $f$.
		
		Then there exists a strong deformation retraction sending $f^{-1}(-\infty, b]$ onto $f^{-1}(-\infty, a]$. As a consequence, $f^{-1}(-\infty, b]$ and $f^{-1}(-\infty, a]$ are homotopy equivalent.
	\end{thm}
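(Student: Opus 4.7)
The plan is to mimic the classical smooth-Morse proof by replacing the downward gradient flow with a patched flow built from the local coordinate charts of \Cref{def:topological_regular_point}. Because $f$ is proper, the interlevel set $K := \intlevel{f}{a}{b}$ is compact. Around each $y \in K$, choose a regular chart $\varphi_y : M_1(y) \to M_2(y)$ with $f \circ \varphi_y(x) = f(y) + x_n$; by compactness, finitely many such charts $U_1, \ldots, U_N$ cover $K$, and after shrinking I may assume each $\overline{U_i}$ sits inside a slightly larger regular chart so the constructions below can be cut off with bump functions without escaping the chart.

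In each chart $U_i$, pulling back the straight downward motion $(x_1,\ldots,x_{n-1},x_n) \mapsto (x_1,\ldots,x_{n-1},x_n - s)$ via $\varphi_{y_i}$ and damping it by a compactly supported bump $\chi_i$ in $U_i$ yields a continuous family of local self-homeomorphisms $h_i^s$ of $\R^n$ (for $s$ in a small interval) that equal the identity outside $U_i$ and, on a core neighborhood of $K \cap U_i$, strictly decrease $f$-values. Composing these in some fixed order yields a continuous one-parameter family $\Phi^t : \R^n \to \R^n$, $t \in [0,T]$, with $\Phi^0 = \Id$, each $\Phi^t$ a homeomorphism, $\Phi^t$ equal to the identity outside a small open thickening of $K$, and a uniform estimate $f(\Phi^t(y)) \leq f(y) - ct$ for some $c > 0$ on the part of an orbit that still lies above level $a$. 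Since $b-a$ is finite, taking $T \geq (b-a)/c$ guarantees that every orbit starting in $\sublevel{f}{b}$ reaches level $a$ within time $T$.

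To promote this flow to a strong deformation retraction $r : [0,1] \times \sublevel{f}{b} \to \sublevel{f}{b}$ onto $\sublevel{f}{a}$, I halt each orbit at the first instant it reaches level $a$: set $\tau(y) = \inf\{t \geq 0 : f(\Phi^t(y)) \leq a\}$ and define $r(s,y) = \Phi^{\min(Ts,\,\tau(y))}(y)$. Then $r(0,\cdot) = \Id$, $r(1,\cdot)$ maps $\sublevel{f}{b}$ into $\sublevel{f}{a}$, and $r(s,y) = y$ for every $s$ whenever $y \in \sublevel{f}{a}$, since then $\tau(y) = 0$, which yields the strong-retraction condition. The main obstacle I anticipate is the joint continuity of $r$, which reduces to continuity of $\tau$ on $\sublevel{f}{b}$: this uses strict monotonicity of $t \mapsto f(\Phi^t(y))$ coming from the uniform decrease estimate, the continuity of $(t,y) \mapsto f(\Phi^t(y))$, and properness of $f$ to keep orbits inside the compact region where the estimate is valid. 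A secondary care point is ensuring that each combined $\Phi^t$ really is a homeomorphism, which is arranged by taking the bump amplitudes small enough that the $x_n$-component stays strictly monotone. Once continuity of $r$ is established, $\sublevel{f}{b}$ strong-deformation-retracts onto $\sublevel{f}{a}$ and homotopy equivalence is immediate.
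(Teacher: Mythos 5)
Your approach is correct and follows the same broad strategy as the paper (finite cover of the compact interlevel set by regular charts, local downward pushes, composition, iteration), but it differs in two structurally significant ways. First, the paper's local deformations are explicitly \emph{non-injective}: each $H_i(x,t)$ sends $x_n \mapsto \max(-\eps,\, x_n - t\,\delta_i(x))$, so the $\max$-clipping absorbs points onto the target level and the halting of orbits is built into the deformation itself. You instead keep each local move a homeomorphism (a damped translation with amplitude small enough to preserve monotonicity in $x_n$) and then halt each orbit \emph{by hand} at the first-hitting time $\tau(y) = \inf\{t : f(\Phi^t(y)) \le a\}$. Second, you run the entire band $[a,b]$ in one pass, while the paper retracts narrow interlevel bands $[c_j-\eps_j, c_j+\eps_j]$ one at a time and strings the resulting retractions together. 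The trade-off is that the paper's $\max$-clipping makes continuity of the final homotopy automatic, whereas your construction shifts that burden onto proving continuity of $\tau$ on $\sublevel{f}{b}$; you correctly identify this as the crux and the argument you sketch — strict monotonicity of $t \mapsto f(\Phi^t(y))$ above level $a$ (uniform decrease rate $c$), continuity of $(t,y) \mapsto f(\Phi^t(y))$, and the bound $\tau(y) \le (f(y)-a)/c$ to handle the boundary case $\tau(y)=0$ — is exactly what is needed. One small remark: requiring each $\Phi^t$ to be a homeomorphism is not actually used anywhere downstream, since the final deformation retraction $r(s,y) = \Phi^{\min(Ts,\tau(y))}(y)$ is necessarily many-to-one on the interlevel set; the properties that matter are continuity, $\Phi^0 = \mathrm{Id}$, non-increase of $f$, and the uniform decrease estimate. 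Your insistence on invertibility is harmless but slightly overbuilt. A point worth spelling out if you write this fully: the uniform decrease estimate $f(\Phi^t(y)) \le f(y) - ct$ requires that, by the time the $i$-th factor $h_i^s$ acts within one cycle of the composition, the intermediate point still lies inside the shrunk core where $\chi_i$ is bounded below; this needs a small-$s$ threshold chosen uniformly over $K$ via the Lebesgue number of the cover (a compactness step the paper carries out explicitly with its closed-cylinder refinement $\{K_i\}$ and the constant $\eta$).
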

	
	\begin{proof}
		The strategy of the proof is to consider arbitrary level sets $f^{-1}(c)$, small enough $\eps$, and then apply successive deformations of $\R^n$ that only move points inside finitely many subsets that form an open cover of the interlevel set $f^{-1}[c-\eps, c+ \eps]$. The aim of these deformations is to deform the interlevel set $f^{-1}[c-\eps, c+ \eps]$ into the level set $f^{-1}(c-\eps)$ by lowering the values of $f$. To facilitate this construction, we consider the \emph{open solid cylinder} $CL(b,s) \subset \R^n$, which is the open set bounded by the cylinder whose base disk is $D^{n-1}(0, b)$ along the $n-1$ first coordinates and side is $D^1(0, s)$ along the last coordinate: 
		\[ CL(b,s) = \{x = (x_1,\ldots, x_n) \in \R^n ~|~ x_1^2 + \ldots + x_{n-1}^2 < b^2 \text{ and } x_n^2 < s^2 \}.\] 
		
		\textbf{Finite Open and Compact Covers of $f^{-1}[c-\eps, c+\eps]$.}
		Let $c \in [a,b]$ and consider the compact set $f^{-1}(c)$.
		It can be covered by open sets $U_{y}$ that are neighborhoods of points $y \in f^{-1}(c)$ where $f$ can be written in a regular form over $U_y$: 
		$f \overset{\varphi_y}{\sim} c + x_n$ with $\varphi_y : (V_y, 0) \to (U_y, y)$.
		By choosing small enough $\eps_y > 0$ and $b_y > 0$, we may suppose that $V_y = CL(b_y, \eps_y)$ are of open solid cylinder form. In particular, we have $f(U_y) = f(\varphi_y(V_y)) = (c-\eps_y, c+\eps_y)$.
		
		By compactness, $f^{-1}(c)$ may be covered by a finite number of the sets $U_y$:
		\[f^{-1}(c) \subset W = \bigcup\limits_{i = 1, \ldots, I} U_{y_i}.\]
		Then there exists $\eps > 0$ small enough such that $f^{-1}[c-\eps, c+ \eps] \subset W$. Otherwise, there exists sequences $(y_\nu)_\nu \subset \R^n$, $(\eps_\nu)_\nu \to 0$ such that $y_\nu \in f^{-1}[c-\eps_\nu, c+ \eps_\nu] \setminus W$, i.e., $f(y_\nu) \in [c-\eps_\nu, c+ \eps_\nu]$ and $y_\nu \notin W$. This implies that $f(y_\nu) \to c$. Because $f$ is proper, there exists a compact set $K$ such that for all $\nu$, $y_\nu \in K$. Up to extracting a subsequence, $y_\nu$ converges to $y \in K \setminus W$. By continuity $f(y) = \lim f(y_\nu) = c$, so then $y \in f^{-1}(c) \setminus W$, a contradiction. Therefore, such an $\eps$ exists and we have
		\begin{equation}
			\label{eq:open_covering}
			f^{-1}[c-\eps, c+ \eps] \subset W = \bigcup\limits_{i = 1, \ldots, I} U_{y_i}.
		\end{equation}
		
		We can furthermore suppose that $\eps < \min\limits_{i = 1, \ldots, I} \eps_{y_i}$. Hence, for $y = y_i$, $\varphi_{y}^{-1}( f^{-1}[c-\eps, c+ \eps] \inter U_{y})$ is a smaller closed solid cylinder $\overline{CL(b_y, \eps)}$ included in $V_y = CL(b_y, \eps_y)$.
		
		For simplicity, we drop the $y$ in the subscript of $U_{y_i}$ and simply write $U_i$ or $V_i$ instead, where the understanding is that we are referring to $y_i$.
		
		In fact, the open covering in \eqref{eq:open_covering} can be refined into a compact covering. Indeed, there exists a family of closed cylinders $\overline{CL(b_i', \eps_i')}$ with $0 < b_i' < b_i$ and $\eps < \eps_i' < \eps$, with $\mu = b_i - b_i' = \eps_i - \eps_i'$ small enough such that they still cover the interlevel set up to the homeomorphisms $\varphi_i$:
		\begin{equation}
			\label{eq:compact_covering}
			f^{-1}[c-\eps, c+ \eps] \subset \bigcup\limits_{i = 1, \ldots, I} K_i,
		\end{equation}
		where $K_i = \varphi_i(\overline{CL(b_i', \eps_i')}) \subset U_i$ are compact sets.
		Otherwise, there exist sequences $(\mu_\nu)_\nu \to 0$, $(y_\nu)_\nu \in \R^n$ such that we successively obtain the following:
		
		\begin{align*}
			y_\nu &\in f^{-1}[c-\eps, c+ \eps] \setminus \bigcup\limits_{i = 1, \ldots, I} \varphi_i(\overline{CL((b_i')_\nu,\, (\eps_i')_\nu)}) \\
			y_\nu &\in \bigcup\limits_{i = 1, \ldots, I} \varphi_i(CL(b_i, \eps_i)) \setminus \varphi_i(\overline{CL((b_i')_\nu,\,  (\eps_i')_\nu)})\\
			y_\nu &\in \bigcup\limits_{i = 1, \ldots, I} \varphi_i(DL_{i, \nu}),
		\end{align*}
		where $DL_{i, \nu} = CL(b_i, \eps_i) \setminus \overline{CL((b_i')_\nu,\,  (\eps_i')_\nu)}$. This implies that $\dist(y_\nu, (\union_i U_i)^C) \to 0$.
		Because $f$ is proper, up to extracting a converging subsequence, we obtain $y_\nu \to y \in f^{-1}[c-\eps, c+ \eps] \setminus \union_i U_i$, a contradiction. Therefore \eqref{eq:compact_covering} holds for small enough $\mu$.
		
		\textbf{Local Deformations.}
		We now define a family of local deformations corresponding to each of the $U_{i}$.

		By convention, if $x \in \R^n$, we write $x = (\zeta, x_n)$ where $\zeta = (x_1,\ldots,x_{n-1})$.
		
		Consider the following map: For $x \in \R^n$, $t \in [0,1]$, let
		\[ H_i(x,t) =
		\begin{cases}
			x & \text{ if } x \notin V_i \text{ or } x \in V_i \inter \{x_n \leq - \eps\} \\
			(\zeta,\, \max(-\eps, x_n - t \, \delta_i(x) )) & \text{ if } x \in V_i \inter \{x_n > - \eps\}
		\end{cases}
		\]
		where $\delta_i$ is defined similarly to the function introduced by \cite{cantwell_topological_1968}:
		\[\forall\ x \in V_i, \quad \delta_i(x) = \frac{\eps_i}{2} \, \left(1 - \frac{|\zeta|}{b_i} \right) \left(1 - \frac{x_n^2}{\eps_i^2} \right) > 0 .\]
		Note that $H_i(x,0) = x$. The quantity $\delta_i(x)$ can be understood as a maximal potential amount of decrease in the last coordinate applied to $x$. 
		For a given starting point $x$ at $t = 0$, the path formed by the points $H_i(x, \cdot)$ moves down linearly in time along the last dimension; at $t = 1$ the last coordinate has decreased by $\delta_i(x)$, unless the path hits the level $\{x_n = -\eps\} \inter V_i$ at an earlier timepoint $t_{\mathrm{hit}} \leq 1$ (which means that $H_i(x,t_\mathrm{hit}) = (\xi, -\eps$)), in which case it stays there for $t \geq t_{\mathrm{hit}}$.
		
		By compactness, there exists some $\eta > 0$ such that $\forall\ i = 1, \ldots, I$, $\forall\ x \in \varphi_i^{-1}(K_i)$, $\delta_i(x) \geq \eta$.
		
		Let $\Pi_n(x') = x_n'$ denote the projection onto the last coordinate.
		
		$H_i : \R^n \times [0,1] \to \R^n$ is continuous and satisfies the following properties:
		
		\begin{itemize}
			\item $\forall\ x \in \R^n, H_i(x,0) = x$;
			\item $\forall\ x \in V_i^C \union (V_i \inter \{x_n \leq - \eps\}), \forall\ t \in [0,1],\, H_i(x,t) = x$;
			\item $\forall\ x \in V_i \inter \{x_n > - \eps\}$, \,
			$ -\eps \leq \Pi_n ( H_i(x,1) ) < x_n$;
			\item $\forall\ x \in \varphi_i^{-1}(K_i)$,\,
			$ -\eps \leq \Pi_n ( H_i(x,1) ) \leq \max(-\eps,\, x_n - \eta).$
		\end{itemize}
		
		We can carry this deformation $H_i$ into the original domain through the homeomorphism $\varphi_i$. We define:
		\[G_i(y,t) :=
		\begin{cases}
			\varphi_i \circ H_i(\cdot, t) \circ \varphi_i^{-1} &\text{ if } y \in \varphi_i(V_i) \\
			y &\text{ if } y \notin \varphi_i(V_i).
		\end{cases}
		\]
		
		$G_i : \R^n \times [0,1] \to \R^n$ is continuous and satisfies the following:
		\begin{itemize}
			\item $\forall\ y \in \R^n,\, G_i(y,0) = y$;
			\item $\forall\ y \in U_i^C \union (U_i \inter \{f \leq c - \eps\}),\, \forall\ t \in [0,1],\, G_i(y,t) = y$;
			\item $\forall\ y \in U_i \inter \{f > c - \eps\}, c -\eps \leq f( G_i(y,1) ) < f(y)$;
			\item $\forall\ y \in K_i$,\,  
			$ c-\eps \leq f(G_i(y,1)) \leq \max(c - \eps,\, f(y) - \eta)$.
		\end{itemize}
		
		\textbf{Composition of Local Deformations.}
		Now consider the map $G : \R^n \times [0,I] $, which 
		results from the successive concatenation of these finitely many local deformations: if $t \in [0,1]$, we set $G(x,t) = G_1(x,t)$; if $t \in [1,2]$, we set $G(x,t) = G_2(\cdot, t - 1) \circ G_1(x,1)$, and so on.  If $t \in [I-1, I]$, we set $G(x,t) = G_{I}(\cdot,\, t - (I-1)) \circ G_{I-1}(\cdot, 1) \circ \cdots \circ G_1(x, 1)$. 
		This successive concatenation is denoted by $G = G_{I} \square \cdots \square G_1$.
		
		$G$ is continuous, and furthermore, using the compact covering $f^{-1}[c-\eps, c+\eps] \subset \bigcup\limits_{i = 1, \ldots, I} K_i$,
		\begin{itemize}
			\item $\forall\ y \in \R^n$, $G(y, 0) = y$;
			\item $\forall\ y \in f^{-1}(-\infty, c-\eps], \forall\ t \in [0, I]$, $G(y, t) = y$;
			\item $\forall\ y \in f^{-1}[c-\eps, c+\eps]$, 
			\[c - \eps \leq f(G(y,I)) \leq \max(c-\eps,\, c + \eps - \eta)\] because $y$ belongs to some $K_i$ and then the value of $f$ cannot be increased after applying the other deformations.
		\end{itemize}
		
		The last inequalities mean that $G( f^{-1}[c-\eps, c+\eps], I ) \subset f^{-1}[c-\eps, \max(c-\eps,c+\eps - \eta)]$.
		In fact, for the $k$-times self-concatenation of $G$ denoted by $G^{\square k}$, we similarly obtain that $G^{\square k}( f^{-1}[c-\eps, c+\eps], I ) \subset f^{-1}[c-\eps,\max(c-\eps, c+\eps - k\eta)]$. Then, for $k$ large enough, we have
		\[G^{\square k}( f^{-1}[c-\eps, c+\eps], I ) \subset \level{f}{c-\eps}.\]
		On the other hand, since the values of $f$ cannot increase over time through deformation, we have
		\[\forall\ t \in [0, k \, I], \quad G^{\square k}(f^{-1}(-\infty, c+\eps], t) \subset f^{-1}(-\infty, c+\eps].\]
		Therefore, $G^{\square k} : \R^n \times [0, k \, I] \to \R^n$, when restricted to $f^{-1}(-\infty, c+\eps]$ on the source and target spaces, provides a strong deformation retraction of the space sending $f^{-1}(-\infty, c+\eps]$ onto $f^{-1}(-\infty, c-\eps]$:
		\begin{itemize}
			\item $\forall\ y \in \R^n$,\, $G^{\square k}(y, 0) = y$;
			\item $\forall\ y \in f^{-1}(-\infty, c-\eps], \forall\ t \in [0, I]$,\, $G^{\square k}(y, t) = y$;
			\item $\forall\ y \in f^{-1}(-\infty, c+\eps]$,\, $G^{\square k}(y, k\,I) \in f^{-1}(-\infty, c-\eps]$.
		\end{itemize}
		
		\textbf{Homotopy Equivalence of $f^{-1}(-\infty, a]$ and $f^{-1}(-\infty, b]$.}
		We return to the original problem and see that for any value $c \in [a,b]$ and sufficiently small $\epsilon > 0$, the sublevel set $\sublevel{f}{c-\epsilon}$ is a deformation retract of $\sublevel{f}{c+\epsilon}$.
		
		The compact interval $[a,b]$ may be covered by a finite number of such intervals associated to $c_1, \ldots, c_N$. We may suppose that the value $a$ belongs to the last interval $[c_N-\eps_{c_N}, c_N+ \eps_{c_N}]$ only. Then we obtain the desired strong deformation retraction, by successively applying the strong deformation retractions associated to $c_1, \ldots, c_{N-1}$, and then adapting the definition of the local deformations for the last value $c_N$, for instance:
		\[ H_i(x,t) =
		\begin{cases}
			x & \text{ if } x \notin V_i \text{ or } x \in V_i \inter \{x_n \leq -(c_N - a)\} \\
			(\zeta,\, \max(-(c_N - a), x_n - t \, \delta_i(x) )) & \text{ if } x \in V_i \inter \{x_n > -(c_N - a)\}
		\end{cases}.
		\]
		This completes the proof.\end{proof}
	
	\subsubsection{The Handle Attachment Lemma}
	
	We now derive the counterpart of the handle attachment lemma of smooth Morse functions (\ref{thm:Morsebasic_2} of \Cref{thm:Morsebasic}) for topological Morse functions. Having derived the isotopy lemma for topological Morse functions, we can adopt the proof in \cite{milnor_morse_1963} for the smooth case for our purposes here. 
	
	\begin{thm}[Handle Attachment Lemma, topological version]
		\label{thm:topological_handle_lemma}
		Let $f : \R^n \to \R$ be a continuous proper function.
		Suppose that there exists $c \in \R$ and $\eps > 0$ such that 
		all points in $f^{-1}[c-\eps,c+\eps]$ are topological regular points for $f$, except a single point $q$ that is a non-degenerate critical point with index $\lambda$ and value $d(q) = c$. 
		Then $f^{-1}(-\infty, c + \eps]$ has the homotopy type of $f^{-1}(-\infty, c - \eps]$ with a $\lambda$-cell attached: 
		\[f^{-1}(-\infty, c + \eps] \simeq f^{-1}(-\infty, c - \eps] \union e^\lambda.\]
	\end{thm}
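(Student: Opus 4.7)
The plan is to adapt the classical proof of the handle attachment lemma by \cite{milnor_morse_1963} to the topological Morse setting, replacing the smooth isotopy lemma by its topological analogue \Cref{thm:topological_isotopy_lemma} just proved. The delicate geometric surgery is carried out through the normal-form homeomorphism $\phi$ at $q$, so that within the chart one may perform a Milnor-style smooth modification of $f \circ \phi$ and export it back by conjugation.

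First I would fix a homeomorphism $\phi : N_1 \to N_2$ with $\phi(0) = q$ and $f \circ \phi(u,v) = c - |u|^2 + |v|^2$ for $(u,v) \in \R^\lambda \times \R^{n-\lambda}$ given by \Cref{def:topological_critical_point}. After possibly shrinking $\eps$ (which is justified by applying \Cref{thm:topological_isotopy_lemma} to the critical-point-free intervals $f^{-1}[c+\eps', c+\eps]$ and $f^{-1}[c-\eps, c-\eps']$ for any $0 < \eps' < \eps$), I assume the closed ellipsoid $E := \{(u,v) : |u|^2 + 2|v|^2 \leq 2\eps\}$ lies inside $N_1$. I then define the $\lambda$-cell $e^\lambda := \phi(\{(u,0) : |u|^2 \leq \eps\})$, whose boundary $\partial e^\lambda$ sits inside $f^{-1}(c-\eps)$, so that it attaches to $\sublevel{f}{c-\eps}$ along the inclusion.

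Following Milnor, I would define a continuous proper function $F : \R^n \to \R$ that agrees with $f$ outside $\phi(E)$ and satisfies
\[ F \circ \phi(u, v) = c - |u|^2 + |v|^2 - \mu\bigl(|u|^2 + 2|v|^2\bigr) \]
on $N_1$, where $\mu : [0,\infty) \to [0,\infty)$ is a smooth bump with $\mu(0) > \eps$, $\mu(r) = 0$ for $r \geq 2\eps$, and $-1 < \mu' \leq 0$. A direct chart-level gradient computation shows that the only critical point of $F \circ \phi$ in $N_1$ is the origin, with value $c - \mu(0) < c-\eps$ and Morse index $\lambda$, while the smoothness of $\mu$ (along with its vanishing for $r \geq 2\eps$) makes $F \circ \phi$ a smooth Morse function on $N_1$. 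Since $F = f$ outside $\phi(E)$, $F$ is a topological Morse function on $\R^n$ whose critical set coincides with that of $f$. The identities $\sublevel{F}{c+\eps} = \sublevel{f}{c+\eps}$ and ``$F^{-1}[c-\eps,c+\eps]$ contains no critical points of $F$'' follow as in Milnor, using $f \circ \phi \leq c + |v|^2 \leq c+\eps$ throughout $E$.

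Applying \Cref{thm:topological_isotopy_lemma} to $F$ therefore gives $\sublevel{f}{c+\eps} = \sublevel{F}{c+\eps} \simeq \sublevel{F}{c-\eps}$. It remains to exhibit a strong deformation retract of $\sublevel{F}{c-\eps}$ onto $\sublevel{f}{c-\eps} \cup e^\lambda$. Writing $\sublevel{F}{c-\eps} = \sublevel{f}{c-\eps} \cup H$ with handle $H := \{F \leq c-\eps < f\} \subset \phi(E)$, I would construct an explicit chart-level retraction that linearly shrinks $|v|$ toward $0$: points with $|u|^2 \leq \eps$ are taken all the way to $e^\lambda$, while points with $|u|^2 > \eps$ are shrunk only until they reach the level $\{f \circ \phi = c-\eps\}$. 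The main obstacle is the careful design of the interpolation across $|u|^2 = \eps$ so that the retraction is continuous, fixes $\sublevel{f}{c-\eps}$ pointwise, and maps into $\sublevel{f}{c-\eps} \cup e^\lambda$; this is precisely the final step of Milnor's proof, which is purely topological and carries over unchanged. Combining the two homotopy equivalences yields the theorem.
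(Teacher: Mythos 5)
Your proof is correct and follows essentially the same strategy as the paper's: build the auxiliary function $F$ through the normal-form chart $\phi$, observe that $F^{-1}[c-\eps,c+\eps]$ has only topological regular points, apply \Cref{thm:topological_isotopy_lemma} to $F$, and finish by transporting Milnor's handle-to-cell deformation retraction through $\phi$. One small remark in your favor: your conditions on the bump function, namely $-1 < \mu'(s) \leq 0$, are the correct Milnor conditions needed so that $F \circ \phi$ has no spurious critical points; the paper's stated condition $\mu'(s) \in (0,1)$ appears to be a typo.
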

	
	\begin{proof}
		Let $\phi : (N_1, 0) \to (N_2, q)$ be a homeomorphism such that	
		\[f \overset{\phi}{\sim} c - \sum_{i = 1}^{\lambda} x_i^2 + \sum_{i = \lambda + 1}^n x_i^2.\] 
		Furthermore, assume that $N_1 = B(0,r)$ is a small open ball around $0$. Denote the quadratic part by $h(x) := - \sum_{i = 1}^{\lambda} x_i^2 + \sum_{i = \lambda + 1}^n x_i^2$.
		
		Notice that by applying the Isotopy Lemma of \Cref{thm:topological_isotopy_lemma} as needed, we can assume that $\eps$ is small enough so that the ellipsoid $\mathcal{E} = \{x = (x_1,\ldots, x_n) \in \R^n ~|~ x_1^2 + \ldots + x_{\lambda}^2 + 2 \, (x_{\lambda + 1}^2 + \ldots + x_n^2) \leq 2\, \eps \}$ is strictly contained inside of $B(0, r)$, while maintaining the statement of the Handle Attachment Lemma in the same form as given above. 
		
		\textbf{Auxiliary Function.}
		We construct an auxiliary function $F$ with respect to the set $U = \phi(B(0,r))$, following the prescription of \cite{milnor_morse_1963} for the smooth case. Let $\mu : \R \to \R$ be a smooth bump function such that
		\begin{align*}
			\mu(0) &> \eps & \\
			\mu(s) &= 0 & \text{ for } s \geq 2\,\eps, \\
			\mu'(s) &\in (0,1) & \forall\ s \in \R.
		\end{align*}
		Let $F : \R^n \to \R$ be defined as follows:
		\begin{numcases}{F(y)=}
			f(y) &$\text{ if } y \notin U,$ \nonumber\\
			f(y) - \mu( \tilde{h}(\phi^{-1}(y)) ) &$\text{ if } y \in U,$ \label{eq:2nd_line}
		\end{numcases}
		where $\tilde{h}(x) = x_1^2 + \ldots + x_\lambda^2 + 2 \, (x_{\lambda + 1}^2 + \ldots + x_n^2)$.
		For $x \in B(0,r)$, \eqref{eq:2nd_line} also rewrites as 
		$$F(\phi(x)) = c + h(x) - \mu(\tilde{h}(x)) = c - \sum\limits_{i \leq \lambda} x_i^2 + \sum\limits_{i \geq \lambda + 1} x_i^2 - \mu\left( \sum\limits_{i \leq \lambda} x_i^2 + 2 \sum\limits_{i \geq \lambda + 1} x_i^2 \right).
		$$
		
		Because $\mu$ vanishes beyond the ellipsoid $\mathcal{E}$, which is equal to $\{x \in \R^n ~|~ \tilde{h}(x) \leq 2\,\eps\}$, the auxiliary and original functions coincide outside $\phi(\mathcal{E}^o)$: $\forall\ y \notin \phi(\mathcal{E}^o),\,  F(y)= f(y)$. 
		
		\textbf{Retractions Over Regular Points $F^{-1}[c-\eps,c+\eps]$. }
		We can verify that the constructed function $F$ is continuous and proper.  We show that $F$ only admits topological regular points in the interlevel set $F^{-1}[c-\eps,c+\eps]$.
		
		Let $y \in F^{-1}[c-\eps,c+\eps]$. 
		If $y \in F^{-1}[c-\eps,c+\eps] \setminus \phi(\mathcal{E})$, or equivalently, $y \in f^{-1}[c-\eps,c+\eps] \setminus \phi(\mathcal{E})$, we know that $F$ and $f$ coincide on a neighborhood of $y$. By assumption on $f$, there exists $\varphi : (M_1, 0) \to (M_2, y)$ such that $f \circ \varphi(x) = f(y) + x_n$, i.e., $F \circ \varphi(x) = F(y) + x_n$ by taking $M_2$ a small enough neighborhood around $y$ such that $F = f$ on it.
		
		On the other hand, if $y \in F^{-1}[c-\eps,c+\eps] \inter \phi(\mathcal{E})$, then $w = \phi^{-1}(y) \in (F \circ \phi)^{-1}[c-\eps,c+\eps] \inter \mathcal{E} \subset B(0,r)$. $F \circ \phi$ is smooth on $B(0,r)$, so based on a local integral flow defined by the non-vanishing gradient vector field around $w$,
		there exists $\theta : (M_0, 0) \to (M_1, w)$, where we can suppose $M_1 \subset B(0,r)$, such that $F \circ \phi \circ \theta(x) = F(w) + a_n$. Then by composition, $F$ rewrites in regular form up to the homeomorphism $\phi \circ \theta : (M_0, 0) \to (\phi(M_1),y)$.
		
		By \Cref{thm:topological_isotopy_lemma} applied to $F$, the sublevel sets $F^{-1}(-\infty, c-\eps]$ and $F^{-1}(-\infty, c+\eps]$ are homotopy equivalent.
		
		\textbf{Handle of Auxiliary Function Retracts into $\lambda$-cell.}
		As in \cite{milnor_morse_1963}, we have that $\{F \leq c + \eps\} = \{f \leq c + \eps\}$. We also have that $\{F \leq c - \eps\} = \{f \leq c - \eps\} \union L$, where $L = \{F \leq c - \eps\} \setminus \{f \leq c - \eps\} = \phi(L')$ is the image via $\phi$ of a handle $L'$ in the ``smooth" domain: $L' = \{h - \mu \circ \widetilde{h} \leq -\eps \} \setminus \{h \leq -\eps\})$ is homeomorphic to $(D^\lambda \times D^{n - \lambda},\, \bord D^\lambda \times D^{n - \lambda})$ ($D^s$ refers to the unit disk in dimension $s$), is contained in the ellipsoid $\mathcal{E}$, and is attached to $\{h \leq - \eps\}$.
		
		In the smooth case, \cite{milnor_morse_1963} showed that 
		the handle can be retracted into a $\lambda$-cell, $(e^{\lambda})'$ attached to $\{h \leq - \eps\}$; more precisely, that
		$\{F \circ \phi \leq c - \eps\} = \{f \circ \phi \leq c - \eps\} \union L' \cong \{f \circ \phi \leq c - \eps\} \union (e^{\lambda})'$. By composing the homeomorphism $\phi$, we obtain that there is a $\lambda$-cell $e^\lambda$ such that the following spaces are homotopy equivalent:
		$$
		\{f \leq c + \eps\} = \{F \leq c + \eps\} \simeq  \{F \leq c - \eps\} \simeq \{f \leq c - \eps\} \union e^\lambda.
		$$
		This completes the proof. \end{proof}
	
	\begin{rmk}
		\label{rmk:several_handles_topological}
		As in \cite{milnor_morse_1963} (see \Cref{thm:Morsebasic}, the proof of \Cref{thm:topological_handle_lemma} may be adapted to show that if all points in $f^{-1}[c-\eps, c+\eps]$ are regular, except for a finite number of critical points $q_1, \ldots, q_N$ sharing the same critical value $c$ and with indices $\lambda_1, \ldots, \lambda_N$, then
		\[\sublevel{f}{c+\epsilon} \simeq \sublevel{f}{c-\epsilon} \union e^{\lambda_1} \union \ldots \union e^{\lambda_N}.\]
	\end{rmk}
	
	\textbf{Topological Morse Functions and Persistence Modules.}
	Having established the Morse handle attachment and isotopy lemmas for topological Morse functions, we can make the usual inferences on the persistence module of the sublevel set filtration of $f$, and establish the same observations we had for smooth Morse functions in  \Cref{cor:PM_morse_function_tame} for topological Morse functions. 
	\begin{cor} 
		\label{cor:PM_topological_morse_function_tame} Let $f: \R^n \to \R$ be a proper topological Morse function with finitely many topological critical points. Then:
		\begin{enumerate}[label = (\roman*)]
			\item The persistence module of the sublevel set filtration $f$ is pointwise finite dimensional (p.f.d.);
			\item For all $k$, the barcode $\Bar_k(f)$ is a finite multiset of intervals $\{ [b,d) \subset \bar\R\}$;
			\item A critical point with index $\lambda$ either corresponds to the birth of an interval in $\Barc{\lambda}{f}$ with homology dimension $\lambda$, or the death of an interval in $\Barc{\lambda-1}{f}$ with homology dimension $\lambda - 1$.
		\end{enumerate} 
	\end{cor}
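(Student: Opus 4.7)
The plan is to mirror the proof of \Cref{cor:PM_morse_function_tame}, substituting the topological isotopy lemma (\Cref{thm:topological_isotopy_lemma}) and the topological handle attachment lemma (\Cref{thm:topological_handle_lemma}, together with \Cref{rmk:several_handles_topological}) in place of their smooth counterparts. Since $f$ has only finitely many topological critical points, these take only finitely many critical values $c_1 < c_2 < \ldots < c_N$, with each $c_k$ possibly realized by several critical points of possibly different indices.

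To establish (i) and (ii), I would fix $\eps > 0$ small enough that the intervals $[c_k - \eps, c_k + \eps]$ are pairwise disjoint, and proceed inductively in $k$. The topological isotopy lemma handles the slabs of regular values between consecutive critical values, giving homotopy equivalences $\sublevel{f}{s} \simeq \sublevel{f}{s'}$ whenever $[s, s']$ avoids all $c_k$. The topological handle attachment lemma and \Cref{rmk:several_handles_topological} handle each $c_k$ by attaching one $\lambda(q_i)$-cell for each critical point $q_i$ with $f(q_i) = c_k$. Concatenating these, $\sublevel{f}{t}$ is homotopy equivalent to a CW-complex obtained from the base sublevel set (which has finite-dimensional homology by properness of $f$ on $\R^n$, e.g.\ empty if $f$ is bounded below) by attaching at most $|\mathrm{Crit}(f)|$ cells in total. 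This directly gives pointwise finite dimensionality of $\PH_k(f)$, and shows that no homological change occurs on intervals of regular values, so all barcode endpoints lie in the finite set $\{c_1, \ldots, c_N\} \cup \{\pm\infty\}$, making $\Bar_k(f)$ a finite multiset. The half-open form $[b, d)$ reflects the convention that $\sublevel{f}{t}$ is closed: a class born by a cell attachment at $c_k$ is already present at the filtration value $c_k$ itself, while a class killed by a subsequent attachment persists strictly below but not at its killing value.

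For (iii), I would invoke the long exact sequence of the pair $(\sublevel{f}{c_k + \eps}, \sublevel{f}{c_k - \eps})$: the relative homology is concentrated in degree $\lambda$ whenever a $\lambda$-cell is attached, so the only two algebraic possibilities are that $\rank \H_\lambda$ increases by one (a birth in $\Barc{\lambda}{f}$) or $\rank \H_{\lambda-1}$ decreases by one (a death in $\Barc{\lambda-1}{f}$). I expect the only delicate bookkeeping to arise at critical values shared by several critical points of possibly differing indices, where the homological changes combine additively across the cells attached at the same value; this is addressed directly by \Cref{rmk:several_handles_topological}, after which the argument is a formal rewrite of the proof of \Cref{cor:PM_morse_function_tame} with each smooth Morse lemma replaced by its topological counterpart.
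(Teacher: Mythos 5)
Your proposal follows the paper's intended approach---transplanting the proof of \Cref{cor:PM_morse_function_tame} to the topological setting via \Cref{thm:topological_isotopy_lemma}, \Cref{thm:topological_handle_lemma}, and \Cref{rmk:several_handles_topological}---and your inductive scheme through disjoint critical bands together with the long-exact-sequence argument for (iii) are both sound. The step that needs repair is the parenthetical claim that the base sublevel set $\sublevel{f}{t_0}$, for $t_0$ below all critical values, ``has finite-dimensional homology by properness of $f$ on $\R^n$.'' Properness alone does not give this. For instance, $f(x,y)=-x^2-y^2$ is proper on $\R^2$ with a single critical point, but for $t_0<0$ the set $\sublevel{f}{t_0}$ is a noncompact region homotopy equivalent to a circle; the isotopy lemma tells you this homotopy type does not change as $t_0$ decreases, but not what it is.

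The proof of \Cref{cor:PM_morse_function_tame} anchors at the other end of the filtration: $\H_\bullet(M)$ is finite-dimensional by hypothesis, and for $t'$ above all critical values the inclusion $\sublevel{f}{t'}\hookrightarrow M$ induces an isomorphism on homology (a colimit of the isomorphisms supplied by the isotopy lemma). Since $\sublevel{f}{t}\hookrightarrow M$ factors through any such $\sublevel{f}{t'}$, and the finitely many cell attachments between $t$ and $t'$ bound the kernel of the induced map, $\H_\bullet(\sublevel{f}{t})\to\H_\bullet(M)$ has finite rank and kernel, whence $\dim\H_\bullet(\sublevel{f}{t})<\infty$. The topological corollary reads off the same argument with $M=\R^n$, which is contractible. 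If you prefer your bottom-up formulation, you should either add the hypothesis that $f$ is bounded below (satisfied by the signed distance $d$ in the paper's application, forcing $\sublevel{f}{t_0}=\emptyset$), or first establish finite-dimensionality for large $t$ as above and then propagate it downward through the finitely many cell attachments.
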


	\subsection{Generalization to Distance Functions}
	\label{sec:generalization_dist_functions}
	
	We can now combine the results of the previous sections on Min-type functions and topological Morse functions to eventually deduce the fundamental Morse lemmas for signed distance functions.
	
	In \Cref{sec:Euclidean_distance_functions}, we introduce the setting of general Euclidean distance functions (signed and unsigned); we discuss their properties and in particular their (non)-smoothness, and concentrate on the case of signed distance functions to smooth surfaces.
	Then in \Cref{sec:NDG_crit_pts_dist_functions}, we specify some geometric conditions for Euclidean distance functions to admit Min-type non-degenerate critical points, that in turn are also non-degenerate in the topological sense (\Cref{def:topological_critical_point}).
	In \Cref{sec:morse_istopy_handle_distance}, we extend Morse's isotopy and handle attachment lemmas to the signed distance, based on the topological Morse theorems of \Cref{sec:topological_morse_theory} and the Min-type results of \Cref{sec:min-type}.
	
	Distance functions, signed or unsigned, appear in many different fields, alongside the medial axis, which is the set of points whose distance-to-boundary is realized by at least two closest points. In topological data analysis, open sets were shown to be homotopy equivalent to their medial axis, by considering an extended gradient flow of the distance \citep{lieutier_any_2003, chazal_stability_2004}. In the finite setting, the medial axis is simply the Voronoi diagram of the point cloud \citep{lieutier_any_2003,attali_stability_2009}.
	In the field of PDEs, the distance is viewed as a viscosity solution of the Eikonal equation $\vert \grad d \vert = 1$, and the homotopy equivalence was shown in the more general Riemannian setting using a generalized gradient flow \citep{albano_singular_2013}. The medial axis and distance function were also studied in non-smooth analysis and singularity theory \citep{cheeger_critical_1991,birbrair_medial_2017}.
	The distance field (signed or not) and medial axis are involved in multiple fields of applications, ranging from computer graphics to shape analysis \citep{lee_medial_1982, lindquist_medial_1996, sigg_signed_2003, park_deepsdf_2019}. The signed version is also related to phase-field representations, that have numerous applications \citep{song_generation_2022}.
	
	\subsubsection{Distance Function to Subsets of Euclidean Space}
	\label{sec:Euclidean_distance_functions}
	We first review some fundamental concepts and facts about distance functions to subsets of Euclidean space. For $A \subset \R^n$, the distance function $\dist(\cdot, A): \R^n \to [0, \infty)$ is given by
	\begin{equation}
		\dist(q, A) = \inf_{y \in A} \| q - y \|.
	\end{equation}
	If $A$ is compact, then for any $q$ the infimum over $y \in A$ is attained by some point in $A$. If $p \in A$ realizes the distance from $q$ to $A$, i.e., if $\dist(q, A) = \| q - p \|$, then $p$ is a \textit{contact point} or closest point of $q$ in $A$. We let $\Gamma(q)$ denote the set
	of contact points:
	\[\Gamma(q) = \{ p \in A ~|~ \dist(q,p) = \dist(q,A) \}.\]
	The \emph{medial axis} of $A$ is the set of points in $\R^n$ that admit more than one contact point in $A$ \citep{federer1959curvature}
	\[\mathcal{M}_{A} = \{ q \in \R^n ~|~ |\Gamma(q)| \geq 2 \}.\]
	The closure of the medial axis in $\R^n$ is called the \textit{cut locus} $\overline{\mathcal{M}_{A}}$ of $A$. Off the medial axis, we let $\xi: \R^n \setminus \mathcal{M}_{A} \to A$ denote the projection map that sends a point in Euclidean space to its unique contact point in $A$.
	
	As the distance function is 1-Lipschitz, Rademacher's theorem implies it is differentiable almost everywhere. The following theorem by \cite{federer1959curvature} describes where the distance function is continuously differentiable and gives an expression of its derivatives.  
	
	\begin{thm} \emph{(Theorem 4.8(3-5), {\cite{federer1959curvature}})}.
		For $A$ a non-empty, closed subset of $\R^n$, the nearest neighbor map $\xi: \R^n \setminus \mathcal{M}_{A} \to A$ is continuous, and the distance function $\dist(\cdot, A)$ is continuously differentiable on $ \R^n \setminus (A \union \cutlocus{A})$. Where $\dist(\cdot, A)$ is differentiable and $q \notin A$, we have
		\begin{equation}
			\nabla \dist(q, A) = \frac{q - \xi(q)}{\| q - \xi(q) \|}.\label{eq:classic_grad_dist}
		\end{equation}
	\end{thm}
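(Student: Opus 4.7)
The plan is to establish the three claims in sequence: continuity of $\xi$ first, then directional differentiability on $\R^n \setminus (A \cup \cutlocus{A})$ with the explicit formula, and finally $C^1$ regularity via continuity of the gradient.

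\textbf{Continuity of $\xi$.} Fix $q \notin \medial{A}$ and a sequence $q_n \to q$. Since $\dist(\cdot, A)$ is $1$-Lipschitz, $\dist(q_n, A) \to \dist(q, A)$, so the sequence $\xi(q_n) \in A$ is bounded: $\|\xi(q_n) - q\| \leq \|\xi(q_n) - q_n\| + \|q_n - q\| = \dist(q_n, A) + \|q_n - q\|$. Any limit point $p$ of $\xi(q_n)$ lies in $A$ (since $A$ is closed) and satisfies $\|p - q\| = \dist(q, A)$. Because $q \notin \medial{A}$, the contact point of $q$ is unique, forcing $p = \xi(q)$. Hence the whole sequence $\xi(q_n)$ converges to $\xi(q)$.

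\textbf{Directional derivatives on $\R^n \setminus (A \cup \cutlocus{A})$.} Let $q$ lie in this open set, write $d := \dist(\cdot, A)$, and pick a unit vector $v \in \R^n$. The squared-distance trick gives two one-sided estimates. For the upper bound, since $\xi(q) \in A$ is a candidate contact point for $q + tv$,
\begin{equation*}
d(q+tv)^2 \leq \|q+tv - \xi(q)\|^2 = d(q)^2 + 2t \langle v, q - \xi(q)\rangle + t^2.
\end{equation*}
For the lower bound, since $\xi(q+tv) \in A$ is a candidate contact point for $q$,
\begin{equation*}
d(q)^2 \leq \|q - \xi(q+tv)\|^2 = d(q+tv)^2 - 2t\langle v,\, q+tv - \xi(q+tv)\rangle + t^2.
\end{equation*}
Dividing each by $t > 0$, using that $d$ is continuous and bounded away from $0$ near $q$, and invoking the continuity of $\xi$ established above (valid because some neighborhood of $q$ is disjoint from $\cutlocus{A}$), both the $\limsup$ and $\liminf$ as $t \to 0^+$ of $(d(q+tv) - d(q))/t$ pinch to $\langle v, q - \xi(q)\rangle / d(q)$. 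The same argument with $t < 0$ gives the two-sided directional derivative.

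\textbf{Differentiability and $C^1$ regularity.} Because $v \mapsto \langle v, q - \xi(q)\rangle / d(q)$ is a continuous linear functional in $v$, the directional derivatives assemble into a genuine (Fréchet) derivative, yielding the formula $\grad d(q) = (q - \xi(q))/\|q - \xi(q)\|$. Continuity of this gradient on the open set $\R^n \setminus (A \cup \cutlocus{A})$ is immediate from continuity of $\xi$ on $\R^n \setminus \medial{A}$, establishing the $C^1$ regularity.

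The main obstacle is keeping the two one-sided squared-distance estimates tightly synchronized so that the sandwich at the limit really identifies the directional derivative; this is where continuity of $\xi$, proved in the first step, is essential to pass $\xi(q+tv) \to \xi(q)$ in the liminf estimate. Once that hurdle is cleared, the remaining steps are formal.
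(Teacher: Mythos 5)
The paper does not prove this statement; it is cited to Federer (1959, Theorem 4.8). So the comparison is between your proof and the known classical argument rather than the paper itself.

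Your treatment of the continuity of $\xi$ and of the directional-derivative sandwich is sound, and it is essentially the classical argument. The one place that needs repair is the passage from directional derivatives to Fréchet differentiability. You write that since $v \mapsto \langle v, q - \xi(q)\rangle / d(q)$ is a continuous linear functional, the directional derivatives ``assemble into a genuine (Fréchet) derivative.'' That inference is false in general: Gateaux differentiability with a bounded linear Gateaux derivative does not imply Fréchet differentiability (the standard $\frac{x^2 y}{x^4 + y^2}$-type examples have all directional derivatives vanishing at the origin, hence trivially linear, without being differentiable there). What actually makes your argument work is that your two squared-distance estimates hold \emph{uniformly} over unit vectors $v$: the upper bound
\[
d(q+tv) \;\le\; \sqrt{d(q)^2 + 2t\langle v, q-\xi(q)\rangle + t^2} \;\le\; d(q) + \frac{t\langle v, q-\xi(q)\rangle}{d(q)} + \frac{t^2}{2d(q)}
\]
is uniform in $v$, and the lower bound becomes uniform once you note that $\xi$ is uniformly continuous on a compact ball around $q$, so $\xi(q+tv)\to\xi(q)$ uniformly in $v$ as $t\to 0$. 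Spelling out this uniformity closes the gap. Alternatively, and more economically, you can avoid the issue entirely: your sandwich already shows each partial derivative $\partial_i d(q) = (q_i - \xi(q)_i)/d(q)$ exists and is continuous (by continuity of $\xi$) on the open set $\R^n \setminus (A \cup \cutlocus{A})$, and the classical theorem ``continuous partials $\Rightarrow C^1$'' then delivers both differentiability and the gradient formula in one stroke.
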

	In particular, where $\dist(\cdot, A)$ is differentiable, the distance function satisfies the Eikonal equation $\| \nabla \dist(\cdot, A) \| = 1$. In \cite{lieutier_any_2003}, a discontinuous vector field called the \emph{extended gradient} $\nabla \dist(q,A)$ (with a slight abuse of notation) is defined on all of $\R^n$, such that it vanishes on $A$, coincides with the gradient of the distance function on $\R^n \setminus (\cutlocus{A} \cup A)$. The extended gradient is given explicitly as 
	\begin{align}\label{eq:extend_grad_dist}
		\nabla \dist(q,A) &= \frac{q - c(q)}{\dist(q,A)}
	\end{align}
	for points on the cut locus $\cutlocus{A}$; $c(q)$ denotes the center of the unique smallest closed ball enclosing $\Gamma(q)$. \cite{lieutier_any_2003} shows that if $A$ is the boundary of a bounded open set, a discrete \emph{Euler scheme} integrating the extended gradient produces a continuous semi-flow in the limit of step size going to zero; along such a flow, the distance function is non-decreasing. \cite{lieutier_any_2003} also shows that the points where the extended gradient vanishes obstruct the deformation retraction of sublevel sets onto another along the aforementioned semi-flow.

	\textbf{Signed Distance Functions to Surfaces.}
	\label{sec:properties_signed_dist}
	If a subset $A$ is the boundary of a bounded open subset, then we can modify the distance function to $A$ to encompass this extra information. Let $\W \subset \R^n$ be a \textit{non-empty bounded open subset} whose boundary  $\Surf = \bord \W$ is $C^k$ with $k \geq 2$. Given $\W$, we partition the ambient Euclidean space as
	\begin{equation} \label{eq:partition}
		\R^n = \W^- \sqcup \Surf \sqcup \W^+
	\end{equation}
	where $\W^- = \W$ and $\W^+ = (\W^\mathrm{c})^\mathrm{o}$. We let $\sgn: \R^n \to \{\pm 1\}$ be the function that labels $\sgn(q) = - 1$ if $q \in \W^-$ and $\sgn(q) = +1$ if $q \in \W^+$. 
	
	The \textit{signed distance function} $d : \R^n \to \R$ associated to an open subset $\W$ with boundary $\Surf$ is given by $d(q) = \sgn(q) \cdot \dist(q, \Surf)$. Equivalently, 
	\begin{equation}
		d = \dist(\cdot, \W) - \dist(\cdot, \W^c) = \dist(\cdot, \W^-) - \dist(\cdot, \W^+). \label{eq:signed_d_def}
	\end{equation}
	Note that the \textit{pure distance function} $\dist(\cdot, \Surf) : \R^n \to \R$  (i.e.,~unsigned distance function) is related to the one-sided distances by addition:
	\[ \dist(\cdot, \Surf) =  \dist(\cdot, \W^-) + \dist(\cdot, \W^+). \]
	
	The partition of the ambient Euclidean space given by \cref{eq:partition} induces a partition of the medial axis into two disjoint subsets, the \emph{inner medial axis} $\mathcal{M}_{\Surf}^{-} = \W_- \cap \mathcal{M}_{\Surf}$, and the \emph{outer medial axis} $\mathcal{M}_{\Surf}^{+} = \W_+ \cap \mathcal{M}_{\Surf}$. The inner and outer cut loci are similarly defined. The medial axis is illustrated in \Cref{fig:petal_SDF}.
	
	\begin{figure}[h!]
		\centering
		\includegraphics[clip, width=\linewidth]{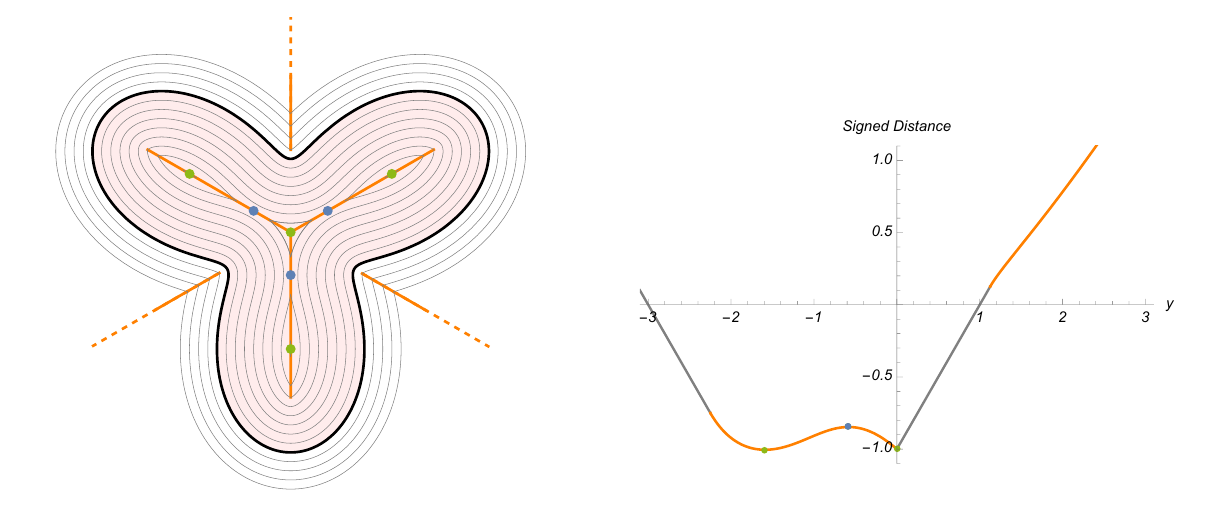}
		\caption{\textbf{Signed distance field for a petal-shaped curve.} Left: level sets of the field (gray) generated by the curve (black), its cut locus (orange), and its critical points (green for local minima, blue for saddle points). The curve is parameterized by $r(\theta) = 2 + \sin 3 \, \theta$ in polar coordinates. Right: profile of the signed distance, restricted to the vertical axis of the left subfigure. Dots and orange portions indicate the presence of the critical points and the cut locus, with blue dots for minima and green dots for saddle points. The signed distance field is not smooth in $\R^n$, in particular at points belonging to the cut locus.
		}
		\label{fig:petal_SDF}
	\end{figure}
	
	On $\R^n \setminus (\Surf \union \cutlocus{\Surf})$, where $\dist(\cdot, \Surf)$ is differentiable, the gradient of $d$ is 
	\begin{equation}\label{eq:classic_grad_signdist}
		\grad d(q) = \sgn(q) \grad \dist(q, \Surf) = \sgn(q) \, \frac{q - \xi(q)}{\|q-\xi(q)\|}.
	\end{equation}

	On a tubular neighborhood of $\Surf$, the pure and signed distance functions inherit the smoothness of the surface: if $\Surf$ is $C^k$ for $k \geq 2$, there exists a tubular neighborhood $\mathcal{N}_\mu = \{x \in \R^n ~|~ \dist(x, \Surf) \leq \mu \}$ of $\Surf$, with $\mu > 0$, on which the signed distance $d$ is $C^k$
	\citep[Theorem 8.2]{delfour_shapes_2011}. In particular, the gradient of \cref{eq:classic_grad_signdist} extends smoothly across the surface $\Surf$, and $d$ satisfies the Eikonal equation $\|\nabla d\| = 1$ on $\R^n \setminus \cutlocus{\Surf}$. In contrast, 
	the pure distance $\dist(\cdot, \Surf)$ is $C^k$ only in $\mathcal{N}_\mu \setminus \Surf$, with a discontinuous gradient across $\Surf$ \cite[Lemma 14.16]{gilbarg_elliptic_1977}, \citep{krantz_distance_1981}.
	
	We can similarly define an extended gradient for $d$, building on the extended gradient for the pure distance function. For $q \in \R^n \setminus \Surf$, the extended gradient $\grad d$ (via a slight abuse of notation) of the signed distance function is given by
	\begin{align}	
		\grad d(q) = \sgn(q) \cdot \grad \dist(q, \Surf)  = \frac{q - c(q)}{d(q)}.
		\label{eq:extend_grad_signdist}
	\end{align}
	These vector fields are illustrated in \Cref{fig:petal_gradient_fields}. Having associated an extended vector field with the signed distance function, we can now use it to define critical points of the signed distance function. 
	
	\begin{figure}[h!]
		\centering
		\includegraphics[clip, width=\linewidth]{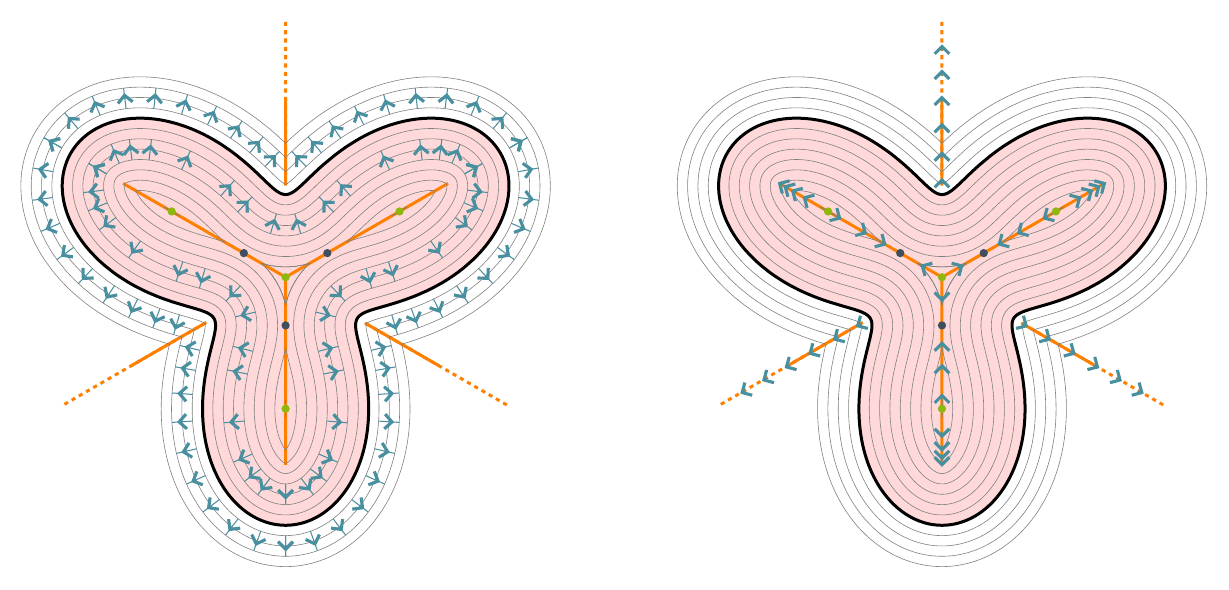}
		
		\caption{\textbf{Extended gradient field of the signed distance function.} Left: the gradient field in $\R^n \setminus \overline{\mathcal{M}_{\Surf}}$, defined in the classical sense, satisfies the Eikonal equation $|\grad d| = 1$. Right: the extended gradient field on the cut locus $\overline{\mathcal{M}_{\Surf}}$. Critical points are represented as dots, where by definition the extended gradient field vanishes, with blue for minima and green for saddle points. \label{fig:petal_gradient_fields}
		}
		
	\end{figure}

	\begin{defi}[Critical points of distance functions, signed and unsigned]
		\label{def:critical_point_dist_functions}
		A point $q \in \R^n$ is a \emph{critical point of the signed distance function} $d$ if its extended gradient vanishes at $q$:
		\[ \grad d(q) = 0.\]
		Notice that $q \in \mathcal{M}$ necessarily, since $\grad d$ is of unit norm outside the medial axis.
		
		Similarly, a point $q \in \R^n$ is a \emph{critical point of the pure distance function} $\dist(\cdot, \Surf)$ if its extended gradient $\nabla \dist(\cdot, \Surf)(q) = 0$. In particular, any point on $\Surf$ is a critical point of $\dist(\cdot, \Surf)$. On $\R^n \setminus \Surf$, $\dist(\cdot, \Surf)$ and $d$ share the same critical points.
	\end{defi}
	
	We can distinguish \textit{inner and outer critical points} that lie on $\mathcal{M}_{\Surf}^{-}$ and $\mathcal{M}_{\Surf}^{+}$, 
	so that $\mathrm{Crit}(\dist(\cdot, \Surf)) = \mathrm{Crit}^{-} \sqcup \Surf \sqcup \mathrm{Crit}^{+}$,
	$\mathrm{Crit}(\dist(\cdot, \W^-)) = \Surf \sqcup \mathrm{Crit}^{+}$, $\mathrm{Crit}(\dist(\cdot, \W^+)) = \Surf \sqcup \mathrm{Crit}^{-}$, and $\mathrm{Crit}(d) = \mathrm{Crit}^{-} \sqcup \mathrm{Crit}^{+}$.
	
	\Cref{def:critical_point_dist_functions} can in fact be reformulated into several equivalent forms for $\dist(\cdot, \W)$ in the Euclidean setting \citep{lieutier_any_2003,chazal_stability_2007}, and for $d_p$ in the Riemannian setting \citep{grove_generalized_1977, cheeger_critical_1991}.
	
	\begin{lem}
		\label{lem:equivalent_definitions_critical_point}
		For $q \notin \Surf$, the following are equivalent and characterize critical points for both $d$ and $\dist(\cdot, \Surf)$:
		\begin{enumerate}
			\item $\grad d(q) = 0$;
			\item $\grad \dist(\cdot, \Surf)(q) = 0$;
			\item $q = c(q)$ is the center of the unique smallest closed ball
			containing $\Gamma(q)$;
			\item $q$ belongs to the closed convex hull of its contact points $\Gamma(q)$;
			\item there is no open half-space containing $\Gamma(q)-q$;
			\item for any vector $v \in \R^n$, there exists $p \in \Gamma(q)$ such that 
			$v \cdot (p-q) \geq 0$.
		\end{enumerate}
	\end{lem}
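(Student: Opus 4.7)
The plan is to establish the equivalences in a roughly circular fashion, organizing them as (1)$\Leftrightarrow$(2), then (1)$\Leftrightarrow$(3), then (3)$\Leftrightarrow$(4), then (4)$\Leftrightarrow$(5)$\Leftrightarrow$(6), so that each step draws on either an explicit formula, a convex-geometry fact about smallest enclosing balls, or the separating hyperplane theorem. Since $q \notin \Surf$, we have $d(q) \neq 0$ and $\sgn(q) \in \{\pm 1\}$ is well defined and constant on a neighborhood of $q$; these observations will be used throughout.

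The equivalence (1)$\Leftrightarrow$(2) follows immediately from the identity $\grad d(q) = \sgn(q) \, \grad \dist(q, \Surf)$ valid off $\Surf$, together with $\sgn(q) \neq 0$. For (1)$\Leftrightarrow$(3), I would invoke the explicit formula \eqref{eq:extend_grad_signdist} for the extended gradient, $\grad d(q) = (q - c(q))/d(q)$; since $d(q) \neq 0$, this vanishes if and only if $q = c(q)$.

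The core step is (3)$\Leftrightarrow$(4), which I expect to be the main obstacle since it requires a clean use of the Euclidean characterization of the smallest enclosing ball. For the direction (3)$\Rightarrow$(4), I would quote (or briefly verify) the classical fact that the center of the smallest closed ball enclosing a compact set always lies in the convex hull of the set, hence $q = c(q) \in \conv(\Gamma(q))$. For the converse, I would write $q = \sum_i t_i p_i$ with $p_i \in \Gamma(q)$, $t_i \geq 0$, $\sum_i t_i = 1$, and use the crucial fact that all $p_i$ lie at the common distance $d(q)$ from $q$. A short computation, expanding $\|q' - p_i\|^2 = \|q' - q\|^2 + 2(q' - q)\cdot(q - p_i) + \|q - p_i\|^2$ and averaging against the weights $t_i$, yields
\[
\sum_i t_i \|q' - p_i\|^2 = \|q' - q\|^2 + d(q)^2,
\]
so that $\max_i \|q' - p_i\| > d(q)$ for any $q' \neq q$. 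This forces $q$ to be the unique center of the smallest enclosing ball of $\Gamma(q)$, giving (4)$\Rightarrow$(3).

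For (4)$\Leftrightarrow$(5), I would rephrase (4) as $0 \in \conv(\Gamma(q) - q)$ and apply the separating hyperplane theorem to the compact convex set $\conv(\Gamma(q) - q)$: this set misses the origin if and only if there is a vector $v$ with $v \cdot (p - q) > 0$ for all $p \in \Gamma(q)$, i.e., an open half-space containing $\Gamma(q) - q$. Finally, (5)$\Leftrightarrow$(6) is purely logical: the negation of (6) is exactly the existence of a vector $v$ with $(-v) \cdot (p - q) > 0$ for all $p \in \Gamma(q)$, which is the negation of (5). Putting these equivalences together yields the lemma.
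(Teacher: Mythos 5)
Your argument is correct and establishes all six equivalences. The place where you genuinely depart from the paper is the step (3)$\Leftrightarrow$(4): the paper disposes of (2)$\Leftrightarrow$(4) by citing an external result (Lemma 2.2 in the stability paper of Chazal, Cohen-Steiner and Lieutier), whereas you give a short self-contained argument exploiting the equidistance of contact points. The barycentric identity
\[
\sum_i t_i \|q'-p_i\|^2 = \|q'-q\|^2 + \dist(q,\Surf)^2,
\]
valid whenever $q = \sum_i t_i p_i$ with $t_i \geq 0$, $\sum_i t_i = 1$, and $p_i \in \Gamma(q)$, immediately gives $\max_i \|q'-p_i\| > \dist(q,\Surf)$ for all $q' \neq q$, so $q$ is the unique center of the minimal enclosing ball of $\Gamma(q)$. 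This trades a citation for a few lines of elementary Euclidean geometry, which is a genuine gain in self-containedness. One implicit point worth making explicit: since $\Gamma(q)$ is a compact subset of $\R^n$, its convex hull is already closed (so the ``closed convex hull'' in (4) coincides with the convex hull) and, by Carath\'eodory's theorem, any point of it is a finite convex combination of at most $n+1$ contact points, which is what licenses writing $q = \sum_i t_i p_i$ as a finite sum. The remaining equivalences---(1)$\Leftrightarrow$(2) via the sign factor, (1)$\Leftrightarrow$(3) via the explicit extended-gradient formula for $\grad d$, and (4)$\Leftrightarrow$(5)$\Leftrightarrow$(6) via hyperplane separation and logical negation---track the paper's argument exactly.
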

	\begin{proof}
		Conditions 1.~and 2.~are equivalent, due to \cref{eq:extend_grad_signdist}. Conditions 2.~and 3.~are equivalent by definition of $c(q)$.
		Conditions 2.~and 4.~are known to be equivalent \cite[Lemma 2.2]{chazal_stability_2007}. 
		Conditions 4.~and 5.~are equivalent due to the hyperplane separation theorem.
		Finally, conditions 5.~and 6.~ are equivalent: 
		there is an open half-space containing $\Gamma(q) - q = \{p - q\}_{p \in \Gamma(q)}$ if and only if we can find a vector $v$ (perpendicular to the hyperplane and pointing in the other direction) such that $\forall\ p \in \Gamma(q)$, $v \cdot (p-q) < 0$.
	\end{proof}
	Condition 4.~was proposed as a definition of critical point by \cite{ferry_when_1976} in $\R^n$.
	Conditions 5.~and 6.~are adapted from \cite{grove_generalized_1977,cheeger_critical_1991}, where $q$ is critical if for any tangent vector $v \in T_q M$, there exists a geodesic joining $q$ to $p$ whose speed vector $\dot\gamma(0)$ forms an angle less or equal to $\pi / 2$ with $v$.
	
	\begin{rmk}
		It can also be shown that the points where the extended gradient of the (signed) distance function vanish on the medial axis are precisely the critical points of the Clarke subgradient of $d$ \citep{clarke1990optimization}. As mentioned previously in \Cref{rmk:clarke1}, the set of critical points of the extended gradient is thus a superset of the points that obstruct the deformation of sublevel sets of (signed) distance functions onto one another. 
	\end{rmk}

	\subsubsection{Min-type Non-Degenerate Critical Points of Distance Functions}
	\label{sec:NDG_crit_pts_dist_functions}
	
	We now give sufficient conditions where a distance function can be locally modeled as a $C^k$-Min-type function (\Cref{lem:dist_as_min_type}). Using those conditions, we derive in conditions where a critical point of the distance function is a Min-type non-degenerate critical point (\Cref{def:non_degen_crit}) in \Cref{def:ndg_crit_dist} and \Cref{thm:normal_form_dist}, and consequently, a topological non-degenerate critical point endowed with a Morse index (\Cref{def:topological_critical_point}). We restrict to the setting where a point in $\R^n \setminus \Surf$ can only have finitely many contact points on the surface. Conditions on the surface $\Surf$, the bounded open set $\W$ that the surface bounds, and the signed distance function $d$ are defined in \Cref{sec:properties_signed_dist}.

	We first establish conditions where the distance function $\dist(\cdot, \Surf)$ can be described as locally Min-type at a point $q$, then consider additional constraints that allow us to find an efficient representation (\Cref{def:efficient_repr}) of $\dist(\cdot, \Surf)$ at $q$. Recall an efficient representation of a Min-type function $f$ is a local expression
	\[ f = \min \{\alpha_1, \ldots,  \alpha_m\}\]
	where $m$ is the minimal number of functions for any expression of $f$ as a minimum over a collection of functions. We proceed with the following extension of Lemma 3.4 in \cite{birbrair_medial_2017}, which relates $\dist(\cdot, \Surf)$ at $q$ to local neighborhoods of contact points $\Gamma(q)$ in $\Surf$.
	
	\begin{lem}[Existence of contact pieces] \label{lem:pieces}
		Let $q \in \R^n \setminus \Surf$ and suppose that $m = |\Gamma(q)| < \infty$. Denote the contact points of $q$ by $p_1,\ldots,p_m$. 
		Then there exists $r > \dist(q, \Surf)$ such that $\overline{B}(q,r) \inter \Surf$ contains pairwise-disjoint connected closed subsets $S_i \subset \Surf$,
		and a single $p_i$ belongs to the interior of $S_i$:
		\begin{equation} \label{eq:pieces}
			\overline{B}(q,r) \inter \Surf ~ \supset ~ \bigsqcup\limits_{i=1}^m S_i \quad \text{with} \quad p_i \in \overset{\circ}{S_i}.
		\end{equation}
		Moreover, by setting $\alpha_i = \dist(\cdot, S_i)$, there exists a neighborhood $N$ of $q$ on which 
		\begin{equation} \label{eq:alpha_i}
			\forall\ x \in N, \quad \dist(x, \Surf) = \min \limits_{i = 1,\ldots,m} \alpha_i(x).
		\end{equation}
		
		The sets $\{S_i\}$ are said to form a family of \emph{contact pieces} of $q$.
	\end{lem}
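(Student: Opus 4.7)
The proof naturally splits into two tasks: first constructing pairwise disjoint connected closed pieces $S_i\subset\Surf$ around the contact points, and then verifying the local identity $\dist(\cdot,\Surf)=\min_i\alpha_i$ on a neighborhood of $q$. The construction part is largely a chart-and-shrink argument, while the identity requires a compactness-based contradiction.

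\textbf{Constructing the pieces $S_i$.} I would fix $r=\dist(q,\Surf)+\delta$ for some small $\delta>0$, so that each $p_i$ lies in the open ball $B(q,r)$. Since $\Surf$ is $C^k$ with $k\geq 2$, each $p_i$ admits a local chart $\phi_i\colon U_i\to\Surf$ from an open neighborhood of the origin in $\R^{n-1}$ with $\phi_i(0)=p_i$. Setting $S_i=\phi_i(\overline{B}(0,\rho_i))$ for a sufficiently small $\rho_i>0$ produces a connected closed subset of $\Surf$ with $p_i$ in its relative interior. Because the map $p\mapsto\|q-p\|$ is continuous and $\|q-p_i\|<r$, I can shrink $\rho_i$ so that $S_i\subset\overline{B}(q,r)$. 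Finally, since $\Gamma(q)=\{p_1,\dots,p_m\}$ is finite, a further shrinking of the $\rho_i$ makes the $S_i$ pairwise disjoint. This delivers \eqref{eq:pieces}.

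\textbf{Local minimum identity.} Define $\alpha_i=\dist(\cdot,S_i)$. Since $S_i\subset\Surf$, the pointwise inequality $\alpha_i\geq\dist(\cdot,\Surf)$ is immediate, whence $\min_i\alpha_i\geq\dist(\cdot,\Surf)$ globally. For the reverse inequality on some neighborhood $N$ of $q$, I would establish the following key claim: there exists a neighborhood $N$ of $q$ such that for every $x\in N$ at least one nearest point of $x$ in $\Surf$ lies in some $S_i$. Suppose this fails; then there is a sequence $x_n\to q$ together with nearest points $y_n\in\Surf$ of $x_n$ such that $y_n\notin\bigcup_i S_i$. By compactness of $\Surf$, a subsequence converges to some $y^\star\in\Surf$, and $y^\star\notin\bigcup_i\mathring{S_i}$ since the $y_n$ avoid the open set $\bigcup_i\mathring{S_i}$. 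Passing to the limit in $\|x_n-y_n\|=\dist(x_n,\Surf)$ and using continuity of $\dist(\cdot,\Surf)$ gives $\|q-y^\star\|=\dist(q,\Surf)$, so $y^\star\in\Gamma(q)=\{p_1,\dots,p_m\}$. But each $p_i\in\mathring{S_i}$, yielding a contradiction. Thus for $x\in N$ some contact point $y_i$ of $x$ in $\Surf$ lies in $S_i$, so $\alpha_i(x)\leq\|x-y_i\|=\dist(x,\Surf)$ for that $i$, which combined with the reverse inequality gives \eqref{eq:alpha_i}.

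\textbf{Main obstacle.} The delicate step is the local identity rather than the construction of the $S_i$: pointwise $S_i\subset\Surf$ gives only an inequality in one direction, and one must control the nearest points of $x$ uniformly as $x$ ranges over a neighborhood of $q$. The compactness argument above closes this gap, crucially exploiting both the finiteness of $\Gamma(q)$ (so that the putative limit must be one of the $p_i$) and the fact that each $p_i$ lies in the relative interior of $S_i$ (so the limit cannot escape the pieces). The smoothness hypothesis $k\geq 2$ on $\Surf$ enters only through the existence of local charts giving connected closed neighborhoods of each $p_i$.
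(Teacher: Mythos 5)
Your proof is correct and follows essentially the same strategy as the paper's: the crux in both is the compactness-and-contradiction argument establishing that nearest points of $x$ near $q$ must land in some $S_i$, forcing $\min_i\alpha_i \le \dist(\cdot,\Surf)$ locally (the reverse inequality being trivial). The only difference is in how the pieces are manufactured: the paper first chooses $r$ by a separate compactness argument so that the connected components of $\overline{B}(q,r)\cap\Surf$ themselves serve as the $S_i$, whereas you fix $r$ cheaply and build the $S_i$ directly as chart-images of small closed disks around the $p_i$, shrunk to be disjoint and contained in $\overline{B}(q,r)$. Your route avoids one compactness lemma and is, if anything, slightly more direct; both yield the required inclusion \eqref{eq:pieces} and feed the same limiting argument for \eqref{eq:alpha_i}.
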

	\begin{proof}
		First, we show that $r$ can be chosen such that the connected components of $\overline{B}(q,r) \inter \Surf$ contain at most one point $p_i$ each.
		
		Consider disjoint closed balls $\overline{B_i}$ centered at each $p_i$.
		There exists $r > \dist(q, \Surf)$ such that $\overline{B}(q,r) \inter \Surf \subset \sqcup_i (\overline{B_i} \inter \Surf)$. Otherwise, there is a sequence $(y_\nu)_{\nu \in \Nbb^*}$ such that $y_\nu \in \overline{B}(q, d(q,\Surf) + 1/\nu ) \inter \Surf$ and  staying in $\Surf \setminus \sqcup _i(\overline{B_i} \inter \Surf)$. Up to extracting a converging subsequence, $(y_\nu)$ converges to some $p \in \Surf$ with $\dist(q,p) = \dist(q,\Surf)$, while $p$ does not belong to $\sqcup_i (B_i \inter \Surf)$. This means that $p$ is a closest point of $q$ distinct from $p_1,\ldots,p_m$, which is a contradiction.
		
		While fixing $r$, denote by $S_i$ the connected component of $\overline{B}(x,r) \inter \Surf$ containing $p_i$, so that the inclusion \eqref{eq:pieces} is satisfied.
		Note that necessarily, $p_i \in (S_i)^\circ $ (for the induced topology of $\Surf$).
		
		Now, we show that there exists a ball $B(q,\eps)$ around $q$ such that the equality \eqref{eq:alpha_i} holds. First, notice that for any point $x$, $\dist(x, \Surf) \leq \min_i \dist(x, S_i)$. Suppose for contradiction that there is no $\eps$ such that $\dist(x, \Surf) \geq \min_i \dist(x, S_i)$ on $B(q,\eps)$. Then there would exist a sequence $(x_\nu)_{\nu \in \Nbb^*}$, with $x_\nu \in B(q,1/\nu)$, such that $\forall\ \nu, d(x_\nu,\Surf) < \min_i d(x_\nu, S_i)$. Up to extraction, we find contact points $y_\nu \in \Surf \setminus \sqcup_i S_i$ of $x_\nu$ that converge to some $y \in \overline{\Surf \setminus \sqcup_i S_i}$. Then $y$ also realizes the distance $d(q,\Surf)$, so $y$ is a contact point of $q$, for instance $p_1$. But then $y \in ( \Surf \setminus (S_1)^\circ ) \inter (S_1)^\circ = \emptyset$, a contradiction.
	\end{proof}
	
	This lemma states that, in the neighborhood of $q$, the distance field coincides with the minimum of the distance fields generated by isolated contact pieces around $\Gamma(q)$: points near $q$ only ``see" the pieces, as illustrated in \Cref{fig:petal_alpha_i}.
	
	\begin{figure}[h!]
		\centering
		\includegraphics[clip, width=.8\linewidth]{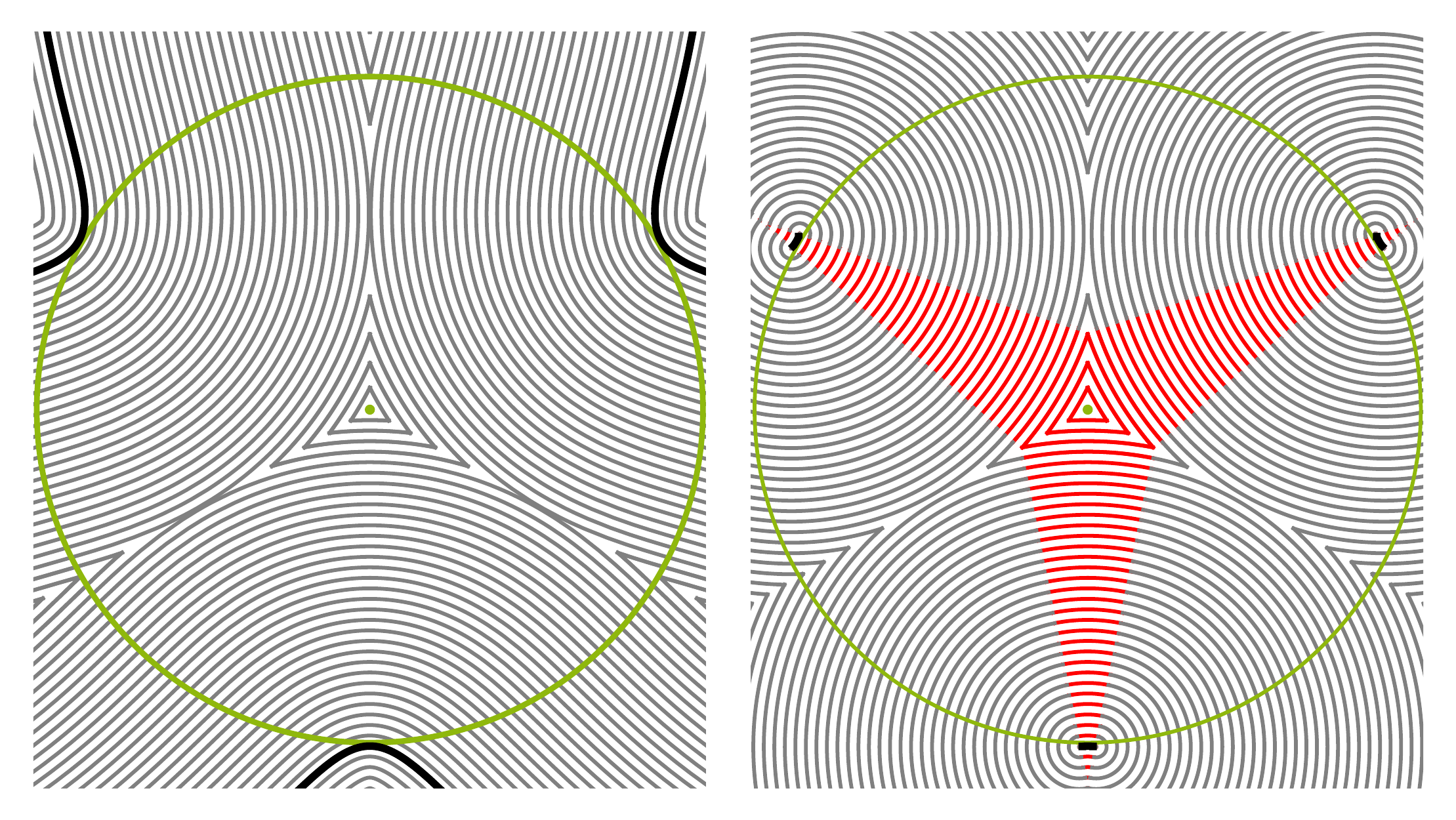}
		\caption{\textbf{Contact pieces and induced level sets.}.
			This plot compares the level sets of $\dist(\cdot, \Surf)$ (left) v.s. the level sets of $\min\{\alpha_1,\ldots,\alpha_m\}$ (right).
			Red-colored portions of the isolines of $\min\{\alpha_1,\ldots,\alpha_m\}$ (right) are those which coincide with the isolines generated by $\dist(\cdot, \Surf)$.
		}
		\label{fig:petal_alpha_i}
	\end{figure}

	While $|\Gamma(q)| < \infty$ implies $\dist(\cdot, \Surf)$ is a Min-type function at $q$, whether it is a $C^k$-smooth Min-type function depends on the regularity of $\alpha_i$. The smoothness of $\alpha_i$ will be determined by the curvature of $\Surf$ at the points $p_i$.

	The extrinsic curvature of $\Surf$ is described by the shape operator: Given the Gauss map $\nbf : \Surf \to \mathbb{S}^{n-1}$ (which points inwards into $\W^-$ by our convention),
	the \textit{shape operator} $d_p\nbf : T_p \Surf \to T_p \Surf$  
	is diagonalizable in the \textit{principal directions}:
	\[ d_p\nbf(e_i) = - \kap_i \, e_i,\] 
	with eigenvalues being the opposite of the \textit{principal curvatures} of $\Surf$ at $p$. We order them as $\kapmax = \kap_1 \geq \kap_2 \geq \ldots \geq \kap_{n-1} = \kapmin$.
	
	The factor determining whether $\alpha_i$ is smooth at $q$ depends on the curvature at $p$ and the distance between $p$ and $q$. We give the precise conditions below and an illustration in \Cref{fig:petal_contact_spheres}.
	
	\begin{defi}[Ball conditions] \label{def:Ball_conditions}
		Given a point $q \in \R^n \setminus \Surf$ and $p \in \Gamma(q)$, we say that $q$ satisfies the \textit{loose ball condition} at $p$ when
		\begin{equation} \label{eq:loose_ball_condition}
			\begin{cases}
				\kapmax(p) & \leq \frac{1}{\dist(q,\Surf)} \text{ if } q \in \W^-, \\
				\kapmin(p) & \geq - \frac{1}{\dist(q,\Surf)} \text{ if } q \in \W^+. 
			\end{cases}
		\end{equation}
		When the inequalities are strict, we say that $q$ satisfies the \textit{strict ball condition} at $p$:
		\begin{equation} \label{eq:strict_ball_condition}
			\begin{cases}
				\kapmax(p) & < \frac{1}{\dist(q,\Surf)} \text{ if } q \in \W^-, \\
				\kapmin(p) & > - \frac{1}{\dist(q,\Surf)} \text{ if } q \in \W^+. 
			\end{cases}
		\end{equation}
	\end{defi}
	
	\begin{figure}[h!]
		\centering
		\includegraphics[clip, width=\linewidth]{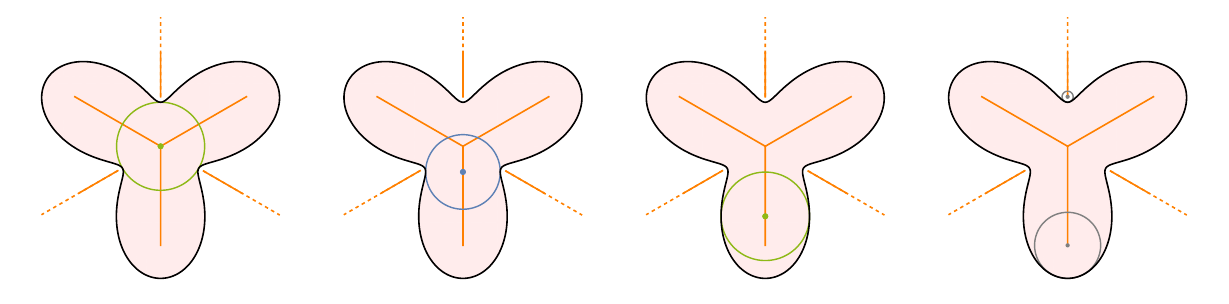}
		\caption{\textbf{Contact spheres and the strict ball condition}. From left to right: contact spheres with $3$, $2$, $3$ and $1$ contact points, respectively. The first three subfigures show critical points: a local minimum, a saddle point, a local minimum, respectively, whose contact spheres all satisfy the strict ball condition. The final subfigure show two regular points belonging to $\overline{\mathcal{M}_{\Surf}} \setminus \mathcal{M}_{\Surf}$, whose contact sphere violates the strict ball condition.
		}
		\label{fig:petal_contact_spheres}
	\end{figure}
	The lemma below says that the strict ball condition ensures that $\alpha_i$ is differentiable at $q$. 
	
	\begin{lem}[Distance as a $C^k$-Min-type function]
		\label{lem:dist_as_min_type}
		Consider $q \in \R^n \setminus \Surf$ and suppose that $m = |\Gamma(q)| < \infty$. Let $\{S_i\}$, and $\{\alpha_i\}$ be as in \Cref{lem:pieces}. 
		Then if $q$ satisfies the strict ball condition \eqref{eq:strict_ball_condition} at any contact point $p_i$, the functions $\alpha_i$ are $C^k$ in a neighborhood of $q$.
		Therefore, $\dist(\cdot, \Surf)$ is a $C^k$-Min-type function at $q$, and futhermore the representation $\dist(\cdot, \Surf) = \min \{\alpha_1,\ldots,\alpha_m\}$ is efficient at $q$.
		
	\end{lem}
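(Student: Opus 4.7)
The plan is to combine \Cref{lem:pieces} with the classical smoothness theorem for distance functions to $C^k$ hypersurfaces, applied separately to each contact piece $S_i$, and then verify that no term in the resulting minimum is redundant. From \Cref{lem:pieces}, we already have a neighborhood $N$ of $q$ on which $\dist(\cdot,\Surf) = \min_i \alpha_i$, with $\alpha_i = \dist(\cdot, S_i)$ and $p_i$ lying in the interior of the connected $C^k$ piece $S_i$. Two tasks remain: establish $C^k$ smoothness of each $\alpha_i$ near $q$, and show that the resulting representation is efficient.

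For smoothness, I would apply the tubular neighborhood theorem for distance functions to $C^k$ hypersurfaces, for instance \cite[Theorem 8.2]{delfour_shapes_2011} or \cite[Lemma 14.16]{gilbarg_elliptic_1977}, locally to each $S_i$. The geometric input required is that $q$ lie in the tubular neighborhood of $S_i$ on which the normal exponential map $\Phi_i(p,t) = p + t\,\nbf(p)$ is a diffeomorphism onto a neighborhood of $q$. Writing $q = p_i + t_q\,\nbf(p_i)$ with $t_q$ the signed scalar corresponding to the sign of $q$, a short computation in a principal basis of $T_{p_i}\Surf$ shows that the Jacobian determinant of $\Phi_i$ equals $\prod_{j=1}^{n-1}(1 - t\,\kap_j(p_i))$ at the point $(p_i,t)$. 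For $q \in \W^-$ we have $t_q = \dist(q,\Surf) > 0$, and non-vanishing of this product on the whole segment $[0,t_q]$ reduces to $\kapmax(p_i) < 1/\dist(q,\Surf)$; for $q \in \W^+$ we have $t_q = -\dist(q,\Surf) < 0$, and the analogous condition is $\kapmin(p_i) > -1/\dist(q,\Surf)$. These are precisely the strict ball conditions \eqref{eq:strict_ball_condition}. The cited theorems then deliver a unique smooth projection $\xi_i$ onto $S_i$ near $q$ together with the $C^k$ regularity of $\alpha_i$ on a neighborhood of $q$.

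For efficiency of $\min\{\alpha_1,\ldots,\alpha_m\}$ at $q$, I would note that each $\alpha_i(q) = \dist(q,\Surf)$ since $p_i \in \Gamma(q)$, so all $m$ terms are active at $q$. Once $\alpha_i$ is known to be smooth near $q$, one has $\grad \alpha_i(q) = (q-p_i)/\|q-p_i\|$, which is a unit vector; the contact points $p_1,\ldots,p_m$ being pairwise distinct forces these unit vectors to be pairwise distinct as well. Hence for $j \neq i$, $\grad \alpha_j(q) \cdot \grad \alpha_i(q) < 1$, and stepping from $q$ a small amount in the direction $-\grad \alpha_i(q)$ decreases $\alpha_i$ strictly faster than any other $\alpha_j$. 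In particular there are points arbitrarily close to $q$ at which $\alpha_i$ is the unique active term, so no $\alpha_i$ can be removed, and the representation is efficient.

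The step I expect to be most delicate is the $C^k$ regularity claim itself. A direct application of the inverse function theorem to $\Phi_i$ yields only $\xi_i \in C^{k-1}$ and hence, a priori, only $\alpha_i \in C^{k-1}$. The additional derivative is recovered by exploiting the Eikonal identity $|\grad \alpha_i| = 1$ together with the explicit representation of $\alpha_i$ via the projection $\xi_i$, which is precisely what the cited tubular neighborhood theorems package and which is responsible for the smoothness threshold matching that of the surface.
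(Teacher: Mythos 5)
Your smoothness argument is essentially the paper's: both compute the Jacobian of the normal map $\Psi(u,t)=y(u)+t\,\nbf(y(u))$ in a principal coordinate system, observe that the strict ball condition makes it invertible at $(u_0,r)$, and then use the inverse function theorem plus the explicit form of $\grad\alpha_i = \nbf\circ\xi_i$ to recover $C^k$ (rather than merely $C^{k-1}$) regularity of $\alpha_i$. That part is correct and in line with the paper.

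The efficiency argument has a genuine gap. You correctly observe that stepping from $q$ in the direction $-\grad\alpha_i(q)$ lands in the open active set $A_i^\circ$ (this is the same as the paper's ray argument toward $p_i$), and therefore no $\alpha_i$ can be dropped from $\{\alpha_1,\ldots,\alpha_m\}$. But that only establishes \emph{irredundancy} of this particular family. Efficiency in \Cref{def:efficient_repr} is a stronger, global statement: $m$ must be the minimum number of functions over \emph{all} possible Min-type representations of $\dist(\cdot,\Surf)$ at $q$, not just a representation from which nothing can be deleted. A priori some entirely different family $\{\beta_1,\ldots,\beta_l\}$ with $l<m$ could also represent the germ. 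To rule this out you need the additional comparison step the paper carries out: given another representation $\{\beta_j\}$, note $A_i^\circ \neq \emptyset$ as a germ, so by a Baire-category (finite-union) argument some $A_i^\circ\cap B_j^\circ$ is nonempty arbitrarily close to $q$; on that open set $\alpha_i = \beta_j = f$, hence $\grad\alpha_i = \grad\beta_j$ there, and passing to the limit at $q$ gives $\grad\alpha_i(q)=\grad\beta_j(q)$. Since the gradients $(q-p_i)/\|q-p_i\|$ are pairwise distinct, this injectively assigns indices $i\mapsto j$, forcing $l\geq m$. Without this step, the claim ``the representation is efficient'' does not follow from what you have shown.
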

	\begin{proof}
		We suppose without loss of generality that $q \in \W^-$. Note that if $m \geq 2$, then $q \in \mathcal{M}_{\Surf}$. In any case, $q \notin \mathcal{M}_{S_i}$ because $q$ admits a unique closest point in each piece, by construction of the contact pieces, but this does not say whether $q$ belongs to $\overline{\mathcal{M}_{S_i}}$ or not. In fact, the strict ball condition \eqref{eq:strict_ball_condition} implies that
		\begin{equation} \label{eq:not_in_pieces_cut_locus}
			\forall\ i = 1,\ldots,m, \quad q \notin \overline{\mathcal{M}_{S_i}}.
		\end{equation}
		Indeed, suppose for contradiction that for some $i$, $q \in \overline{\mathcal{M}_{S_i}}$. Then there is a sequence of points $(x_\nu)$ converging to $q$ that have at least two contact points on $S_i$. But it can be shown that there must be a point $y_\nu \in S_i$ whose curvature is greater or equal to the curvature of the contact ball of $x_\nu$: $\kapmax(y_\nu) \geq \frac{1}{\dist(x_\nu, S_i)} $, such that the sequence $(y_\nu)$ converges to $p_i$ (using arguments similar to \cite{niyogi_finding_2008}). 
		However, at the limit this contradicts the condition.
		
		Thus, there is a neighborhood $N$ of $q$ that does not meet the closed set $\overline{\mathcal{M}_{S_i}}$. Thus, $\alpha_i = \dist(\cdot, S_i)$ is differentiable on $N$. Any point $x \in N$ admits a unique projected point $\Pi_i(x)$ on $S_i$ and we have $\grad \alpha_i(x) = \frac{x - \Pi_i(x)}{d(x,\Surf)} = \nbf(\Pi_i(x))$ (see \Cref{sec:Euclidean_distance_functions}). 
		
		Next, to show that $\alpha_i$ is $C^k$ in a neighborhood of $q$, we adapt the proof of \cite[Lemma 14.16]{gilbarg_elliptic_1977}.
		
		Choose a principal coordinate system $(u,\varphi(u))$ of $\Surf$ around $p_i = (u_0, \varphi(u_0))$. Recall that $\Surf$ is locally the graph of the $C^k$ function $\varphi : T_{p_i}\Surf \, \inter \, U \to \R$ where $U$ is an open neighborhood of $p_i$ in $\R^n$, and $\grad \varphi(u_0) = 0$, and that the axes may be chosen such that the first $n-1$ coordinates are aligned with the principal directions. In this coordinate system the Hessian matrix $\Hess \, \varphi(u_0) = \text{diag}\left( \kap_1,\kap_2,\ldots,\kap_{n-1} \right)$ gives the principal curvatures.
		
		At a point $y(u) = (u, \varphi(u)) \in \Surf$ with $u \in T_{p_i}\Surf \, \inter \, U$, the inwards normal $\nbf(y(u))$ is a $C^{k-1}$ function w.r.t.~$u$, since
		\[ \nbf(y(u)) = \left(-\partial_1 \varphi(u),\ldots, -\partial_{n-1} \varphi(u), 1 \right) /  \sqrt{1 + |\grad \varphi (u)|^2}.\]
		With respect to the principal coordinate system, we have
		\[\partial_b( \nbf_a \circ y)(u_0) = - \kap_a \, \delta_{a,b} \quad \text{for } a,b = 1,\ldots, n-1.\]
		
		Consider the function $\Psi : (T_{p_i}\Surf \, \inter \, U) \times \R \to \R^n$ defined by
		\begin{equation}\label{eq:from_piece_to_distlevel}
			\Psi(u, t) = y(u) + t \, \nbf(y(u)), \quad y = (u, \varphi(u)).
		\end{equation}
		
		Setting $r = \dist(q,\Surf)$, we have $\Psi(u_0,r) = p_i + r \, \nbf(p_i) = q$.
		We know that $\Psi$ is $C^{k-1}$ w.r.t.~$(u,t)$ because $\nbf$ is $C^{k-1}$ w.r.t.~$u$. The Jacobian of $\Psi$ at $(u_0,r)$ is equal to 
		\[D\Psi(u_0,r) = \text{diag}\left(1-\kap_1 \, r, \ldots, 1-\kap_{n-1} \, r, 1 \right)\]
		and is invertible by the strict ball condition \eqref{eq:strict_ball_condition}:
		\[\det(D\Psi(u_0,r)) = (1-\kapmax \, r) \ldots (1-\kapmin \, r) > 0.\]
		By the inverse mapping theorem, $\Psi$ is invertible between neighborhoods of $(u_0,r)$ and $q$ and $\Psi^{-1}$ is also $C^{k-1}$. Thus, around $q$, the functions $x \mapsto u(x)$ and $x \mapsto y(u(x)) = (u(x), \varphi(u(x)))$ are $C^{k-1}$. So $\grad \alpha_i(x) = \nbf(\Pi_i(x)) = \nbf(y(u(x)))$ is also $C^{k-1}$. We conclude that there exists a neighborhood of $q$ on which $\alpha_i$ is $C^k$.
		
		We now show that $\{\alpha_1,\ldots,\alpha_m\}$ is efficient. 
		For any point $x$ on the open ray $(q,p_i)$ connecting $q$ and $p_i$,
		the only closest point on $S_1 \sqcup \ldots \sqcup S_m$ is $p_i$.
		This yields that the germ of $(q,p_i)$ 
		is a subset of $\{\forall\ j \neq i, \alpha_i < \alpha_j \} \subset A_i^\circ$. Hence, $A_i^\circ \neq \emptyset$ as a germ of set.  
		
		Let $\dist(\cdot, \Surf) = \min \{\beta_1,\ldots,\beta_l\}$ be another representation. We write $A_i = \union_j (A_i \inter B_j)$ and since $A_i^\circ \neq \emptyset$, by the Baire category theorem, there exists $j$ such that $(A_i \inter B_j)^\circ = A_i^\circ \inter B_j^\circ \neq \emptyset$, as germs of sets, up to extracting the index $j$. We can thus build a sequence $(x_\nu)$ converging to $q$ such that $\grad \alpha_i(x_\nu) = \grad \beta_j(x_\nu)$. At the limit, $\grad \alpha_i(q) = \grad \beta_j(q)$ by smoothness. Since the gradients $\{\grad \alpha_i(q) \}$ are pairwise distinct, the indices $i$ are injectively associated to the indices $j$, so $m \leq l$.
		This shows that $\{\alpha_1,\ldots,\alpha_m\}$ is an efficient representation.
	\end{proof}
	
	\begin{rmk}
		
		A point $q$ such that $|\Gamma(q)| < \infty$ already satisfies a loose ball condition \eqref{eq:loose_ball_condition}
		(see \cite[Proposition 6.1]{niyogi_finding_2008}).
	\end{rmk}
	
	\Cref{lem:dist_as_min_type} establishes the conditions where the distance function is locally a $C^k$-Min-type function, equipped with an explicit efficient representation. Thus,  combining \Cref{lem:dist_as_min_type}  and \Cref{def:non_degen_crit}, we can now give conditions where a point is a non-degenerate Min-type critical point of pure and unsigned distance functions. 
	
	\begin{prop}(Min-type non-degenerate critical point of the distance function)
		\label{def:ndg_crit_dist}
		Consider $q \in \R^n \setminus \Surf$, where
		\begin{enumerate}[label=(N\arabic*), ref=(N\arabic*), start=1]
			\item $\Gamma(q) = \{p_1,\ldots,p_m\}$ is finite; and \label{def:NDGC_Finite}
			\item The strict ball condition (\cref{eq:strict_ball_condition}) holds at any contact point $p_i$. \label{def:NDGC_StrictBall}
		\end{enumerate}
		Then $q$ is a \emph{Min-type non-degenerate critical point of the pure distance function $\dist(\cdot, \Surf)$} (and thus also of the signed distance function $d$), if and only if the following hold:
		\begin{enumerate}[label=(N3\alph*), ref=(N3\alph*)]
			\item $\Gamma(q)$ are in general position in $\R^n$ (efficient-LIG representation);\label{def:NDGC_LIG}
			\item $q$ is strictly in the interior of the convex hull of $\Gamma(q)$; and\label{def:NDGC_Convex} 
			\item The restriction $\dist(\cdot, \Surf)_{|G(q)}$ is a Morse function in a neighborhood of $q$, where $G(q)$ is the germ of set $\{\alpha_1 = \ldots = \alpha_m\}$ (\Cref{def:G(x)}), and  $\{\alpha_i\}$ is the family of distances to contact pieces of $q$. \label{def:NDGC_MorseGerm}
		\end{enumerate}
	\end{prop}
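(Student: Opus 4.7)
The plan is to apply \Cref{lem:dist_as_min_type} under hypotheses \ref{def:NDGC_Finite} and \ref{def:NDGC_StrictBall}, and then match each of the three clauses of \Cref{def:non_degen_crit} against the geometric conditions \ref{def:NDGC_LIG}, \ref{def:NDGC_Convex}, and \ref{def:NDGC_MorseGerm}. Concretely, by \Cref{lem:dist_as_min_type}, near $q$ we have an efficient $C^k$-Min-type representation $\dist(\cdot,\Surf) = \min\{\alpha_1,\ldots,\alpha_m\}$ with $\alpha_i = \dist(\cdot, S_i)$, and each $\alpha_i$ is $C^k$ near $q$. Since $q \notin \overline{\mathcal{M}_{S_i}}$, the classical formula for the gradient of a smooth distance field gives
\[
\grad \alpha_i(q) \;=\; \frac{q - p_i}{\|q - p_i\|} \;=\; \frac{q - p_i}{r},
\qquad r := \dist(q,\Surf).
\]
This explicit formula is the key computational bridge and will allow us to pass between conditions on the gradients $\grad \alpha_i(q)$ (as required by the Min-type framework) and conditions on the points $p_i$ themselves (as stated in the proposition).

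Next I would treat the three equivalences in turn. For \ref{def:NDGC_LIG} vs. the LIG condition (\Cref{def:LIG}): the vectors $\grad\alpha_i(q) - \grad\alpha_1(q) = (p_1 - p_i)/r$ are linearly independent for $i = 2,\ldots,m$ if and only if $\{p_1,\ldots,p_m\}$ are affinely independent, i.e., in general position in $\R^n$. For \ref{def:NDGC_Convex} vs. Condition 1 of \Cref{def:non_degen_crit}: since $c \mapsto (q-c)/r$ is an affine isomorphism, it sends $\mathrm{conv}(\Gamma(q))$ bijectively onto $\mathrm{conv}\{\grad\alpha_i(q)\}$ and preserves relative interiors; thus $0$ lies strictly in the interior of $\mathrm{conv}\{\grad\alpha_i(q)\}$ iff $q$ lies strictly in the interior of $\mathrm{conv}(\Gamma(q))$. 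For \ref{def:NDGC_MorseGerm} vs. Condition 2 of \Cref{def:non_degen_crit}: once the representation is efficient-LIG, the set $G_f(q)$ of \Cref{def:G(x)} is precisely $\{\alpha_1 = \cdots = \alpha_m\}$ as germs, so the two Morse-function conditions are literally the same statement.

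Putting these three equivalences together yields the biconditional, since \Cref{def:non_degen_crit} requires exactly the conjunction of (i) efficient-LIG, (ii) the strict-interior convex hull condition, and (iii) the Morse condition on $G_f(q)$. The parenthetical remark about the signed distance function $d$ follows from \Cref{def:critical_point_dist_functions}: off the surface $\Surf$, the signed and unsigned distance functions coincide up to a locally constant sign, so the active functions $\alpha_i$ just pick up a global sign and the convex-hull, LIG, and Morse-on-$G_f(q)$ conditions are preserved. The only mildly delicate point, which I would write out carefully, is the equivalence between ``$\{\grad\alpha_i(q)\}$ in general position'' and ``$\{p_i\}$ in general position'' — one must verify that the affine isomorphism $c \mapsto (q-c)/r$ respects general position in the sense of \Cref{def:LIG} and does not inadvertently shift the affine span through the origin in a way that alters the count; the computation $\grad\alpha_i(q) - \grad\alpha_1(q) = (p_1 - p_i)/r$ makes this transparent but it is the one place where care is needed.
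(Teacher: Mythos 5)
The paper states this proposition without a separate proof, treating it as a direct translation of the abstract Min-type conditions (\Cref{def:non_degen_crit}) to the concrete geometric setting via \Cref{lem:dist_as_min_type}; your argument is exactly that translation, correctly spelled out. Two small points you could tighten: in the ``only if'' direction you should note that the efficient-LIG representation postulated in \Cref{def:non_degen_crit} must have the same gradients at $q$ as the $\{\alpha_i\}$ supplied by \Cref{lem:dist_as_min_type} (this follows from the matching argument at the end of the proof of \Cref{lem:dist_as_min_type} together with \Cref{cor:uniqueness_minirep}, and is what lets you conclude \ref{def:NDGC_LIG} holds for $\Gamma(q)$); and for $q \in \W^-$ the signed distance is locally Max-type rather than Min-type, so ``picks up a global sign'' means passing through the Min/Max duality, under which general position, the strict-interior convex-hull condition, and the Morse property on $G(q)$ are all preserved while the index transforms as $\lambda \mapsto n - \lambda$, consistent with \Cref{thm:normal_form_dist}.
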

	
	Note that \ref{def:NDGC_Convex} is a stronger condition than the one presented in \Cref{def:critical_point_dist_functions} that defines the critical points of $\dist(\cdot, \Surf)$ and $d$ via extended gradient vector fields. From here on, a Min-type non-degenerate critical of either a pure or signed distance function will refer to a point satisfying the conditions listed in \Cref{def:ndg_crit_dist}. We give some examples of non-degenerate critical points in \Cref{fig:NDG_cases} and contrast them with degenerate critical points in \Cref{fig:DG_cases}.

	\begin{figure}[h!]
		\centering
		\includegraphics[clip, width=\linewidth]{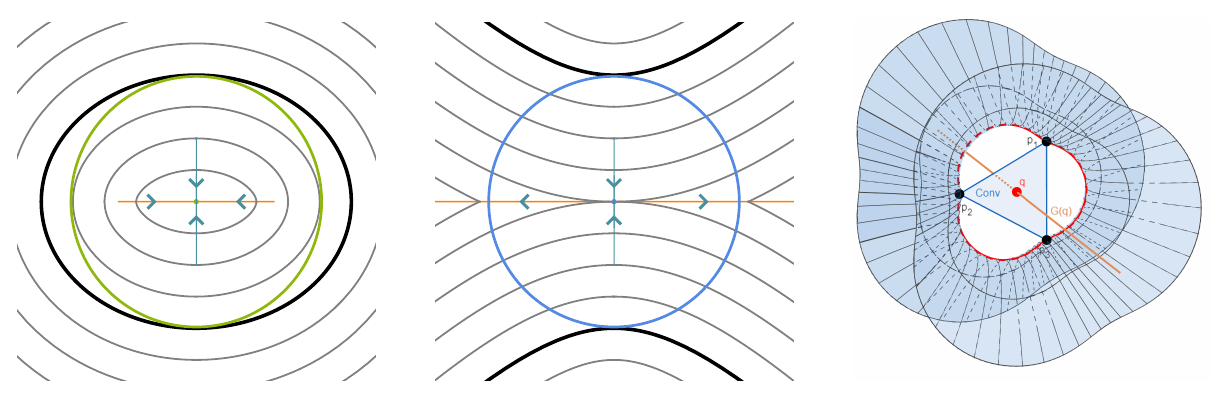}
		\caption{\textbf{Non-degenerate critical points and index.}
			The index can be interpreted as the sum of two terms $\lambda = (m-1) + \lambda'$ measuring how many directions where $q$ is a local maximum, one term for the dimension of the contact subspace spanned by $\Gamma(q)$, and one term for the restriction $\dist(\cdot,\Surf)_{\rvert G(q)}$. 
			Left: $q$ admits $2$ contact points in $\R^2$, is a local maximum of $\dist(\cdot, \Surf)$, with index $\lambda = 1 + 1 = 2$.
			Middle: $q$ admits $2$ contact points in $\R^2$, is a saddle point of $\dist(\cdot, \Surf)$, with index $\lambda = 1 + 0 = 1$.
			Right: $q$ admits $3$ contacts points in $\R^3$, is a critical point of $\dist(\cdot, \Surf)$, with index $\lambda = 2 + 0 = 2$.
			Blue arrows are aligned with the gradient field $\grad \dist(\cdot, \Surf)$.
		}
		\label{fig:NDG_cases}
	\end{figure}
	
	\begin{figure}[h!]
		\centering
		\includegraphics[clip, width=\linewidth]{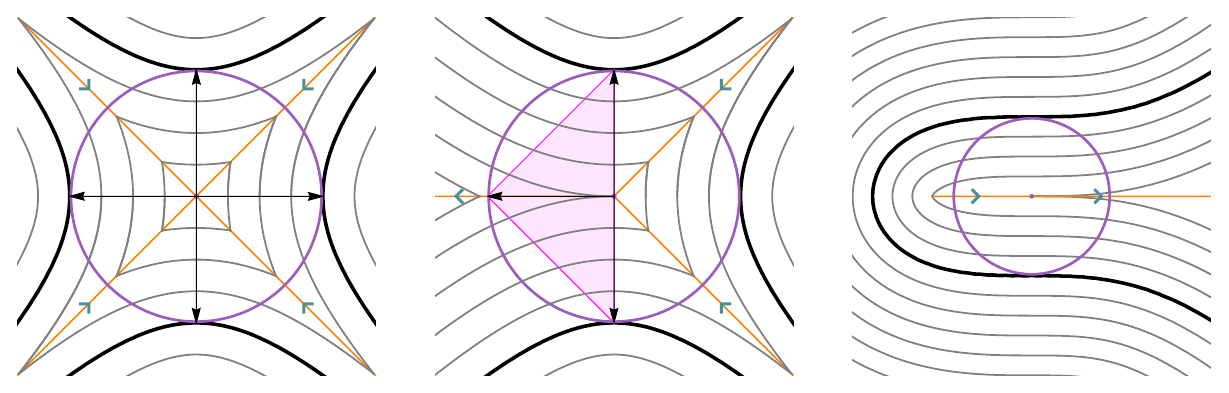}
		\caption{\textbf{Degenerate critical points in $\R^2$.} Three examples where conditions \ref{def:NDGC_LIG}, \ref{def:NDGC_Convex}, and \ref{def:NDGC_MorseGerm} of \Cref{def:ndg_crit_dist} are violated, respectively.
			Left: $q$ admits $4$ contact points that cannot form a $3$-dimensional subspace in $\R^2$.
			Middle: $q$ is located on the boundary of $\mathrm{Conv}(\Gamma(q))$, but not strictly inside.
			Right: the distance function restricted on $G(q)$ (which coincides with the medial axis here) is not Morse at $q$, as its Hessian is degenerate.
			In the three plots, $q$ is at the center of the contact sphere; the surface is represented in black, the medial axis in orange, and the level sets in gray. Black arrows are aligned with $\grad \alpha_i(q)$, blue arrows are aligned with $\grad \dist(\cdot, \Surf)$.
		}
		\label{fig:DG_cases}
	\end{figure}
	
	Applying \Cref{thm:normal_form}, we can thus obtain the Morse-like normal form for distance functions at a Min-type non-degenerate critical point $q$. In other words, the pure and signed distance functions at $q$ are locally homeomorphic to the normal form of a non-degenerate critical point of a smooth function. This idea is illustrated in \Cref{fig:NDG_levels}.
	
	\begin{thm}[Normal form for distance functions]
		\label{thm:normal_form_dist}
		Let $f = \dist(\cdot, \Surf)$ or $f = d$ denote either the pure or signed distance function. If $q \in \R^n \setminus \Surf$ satisfies the conditions listed in \Cref{def:ndg_crit_dist}, then $q$ is a topological non-degenerate critical point (\Cref{def:topological_critical_point}): i.e., there exists an almost smooth homeomorphism $\phi : U \to V$ with $\phi(0) = q$, and $U$ and $V = \phi(U)$ open neighborhoods of $0$ and $q$ in $\R^n$, such that for $x \in U$,
		\[ f \circ \phi \,(x) = f(q) - \sum_{i = 1}^{\lambda} x_i^2 + \sum_{i = \lambda + 1}^{n} x_i^2,\]
		where $\lambda = \idx(q; f)$. 
		Furthermore, the index of $q$ for the pure distance function is given by
		\begin{equation}
			\label{eq:idx_crit_pure}
			\idx{(q; \dist(\cdot, \Surf))} = (m-1) + \idx{(q; \dist(\cdot, \Surf)_{|G(q)})};
		\end{equation}
		and the index of $q$ for the signed distance function $d$ is given by
		\begin{equation}
			\idx(q;d) =	\begin{cases}
				n - \idx{(q; \dist(\cdot, \Surf))} &\text{if } d(q) < 0, \\
				\idx{(q; \dist(\cdot, \Surf))} &\text{if } d(q) > 0.
			\end{cases}
		\end{equation}
	\end{thm}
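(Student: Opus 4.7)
The plan is to reduce to the Min-type normal form (Theorem~\ref{thm:normal_form}) in three steps: obtain an efficient-LIG Min-type representation of the pure distance function at $q$, verify the Min-type non-degeneracy conditions, and then transfer the result to the signed distance.

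Conditions \ref{def:NDGC_Finite} and \ref{def:NDGC_StrictBall} put us in the hypotheses of Lemma~\ref{lem:dist_as_min_type}, which yields a neighborhood of $q$ on which $\dist(\cdot,\Surf) = \min\{\alpha_1,\ldots,\alpha_m\}$ is an efficient $C^k$ representation with $\alpha_i = \dist(\cdot, S_i)$. The proof of that lemma also shows that $\grad \alpha_i(q)$ is the unit vector $(q - p_i)/\dist(q,\Surf)$ pointing from $p_i$ to $q$. The affine identities
\[\grad\alpha_i(q) - \grad\alpha_j(q) = (p_j - p_i)/\dist(q,\Surf), \qquad \textstyle\sum_i t_i \grad \alpha_i(q) = (q - \sum_i t_i p_i)/\dist(q,\Surf),\]
valid for $t_i \geq 0$ summing to $1$, translate conditions \ref{def:NDGC_LIG}--\ref{def:NDGC_Convex} on $\Gamma(q)$ into the corresponding conditions on the gradients from Definitions~\ref{def:LIG} and~\ref{def:non_degen_crit}(1): general position of $\Gamma(q)$ gives the LIG property, and strict interiority of $q$ in $\mathrm{Conv}(\Gamma(q))$ gives strict interiority of $0$ in $\mathrm{Conv}(\{\grad \alpha_i(q)\})$.

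Moreover, by Definition~\ref{def:G(x)} the germ $G_f(q) = \{\alpha_1 = \cdots = \alpha_m\}$ is exactly the set appearing in \ref{def:NDGC_MorseGerm}, and the restriction $\dist(\cdot,\Surf)|_{G_f(q)}$ coincides with each $\alpha_i|_{G_f(q)}$ near $q$; hence \ref{def:NDGC_MorseGerm} is precisely Definition~\ref{def:non_degen_crit}(2). All the hypotheses of Definition~\ref{def:non_degen_crit} being met, Theorem~\ref{thm:normal_form} provides an almost smooth homeomorphism $\phi$ with $\phi(0) = q$ such that
\[\dist(\cdot,\Surf)\circ\phi(x) = \dist(q,\Surf) - \sum_{i=1}^{\lambda}x_i^2 + \sum_{i=\lambda+1}^{n}x_i^2,\]
with $\lambda = (m-1) + \idx(q;\dist(\cdot,\Surf)|_{G_f(q)})$ by Definition~\ref{def:index_min_type}; this is \eqref{eq:idx_crit_pure}, and comparing $\phi$ with Definition~\ref{def:topological_critical_point} realizes $q$ as a topological non-degenerate critical point of $\dist(\cdot,\Surf)$.

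Finally, since $q \notin \Surf$ and $\sgn$ is locally constant on $\R^n \setminus \Surf$, we have $d = \sgn(q)\cdot \dist(\cdot,\Surf)$ on a neighborhood of $q$. If $d(q) > 0$, the normal form transfers unchanged with the same index. If $d(q) < 0$, we post-compose $\phi$ with the coordinate permutation swapping the first $\lambda$ and the last $n-\lambda$ coordinates; this turns a normal form of index $\lambda$ for $\dist(\cdot,\Surf)$ into a normal form of index $n - \lambda$ for $d = -\dist(\cdot,\Surf)$, giving the claimed formula. The only nontrivial step I anticipate is the gradient bookkeeping above, in particular verifying that the conversion between geometric conditions on $\Gamma(q)$ and Min-type conditions on $\{\grad \alpha_i(q)\}$ is uniform across the two sign conventions $q \in \W^\pm$; all the genuine analytic content has already been absorbed into Lemma~\ref{lem:dist_as_min_type} and Theorem~\ref{thm:normal_form}.
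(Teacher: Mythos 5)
Your proof is correct and follows the same route the paper intends: Lemma~\ref{lem:dist_as_min_type} together with the conditions of Proposition~\ref{def:ndg_crit_dist} verifies the hypotheses of Definition~\ref{def:non_degen_crit}, Theorem~\ref{thm:normal_form} then yields the almost smooth normal form and index~\eqref{eq:idx_crit_pure} for $\dist(\cdot,\Surf)$, and negation on $\W^-$ gives the signed-distance index. One small wording fix: to rearrange the coordinates in the domain so the negative squares come first, you should \emph{pre}-compose $\phi$ with the block permutation (i.e., replace $\phi$ by $\phi\circ\sigma$), not post-compose it.
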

	
	As a consequence of the normal form, there can only be finitely many non-degenerate critical points for a surface $\Surf$. 
	
	\begin{cor} \label{cor:mintype_ndg_finite} The set of non-degenerate Min-type critical points of the pure or signed distance to the compact surface $\Surf$ (i.e., those that satisfy the conditions given in \Cref{def:ndg_crit_dist}) is finite. 
	\end{cor}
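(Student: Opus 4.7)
The plan is to combine the topological isolation of non-degenerate critical points provided by \Cref{thm:normal_form_dist} with a compactness argument on $\mathrm{Crit}(d)$.

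First, I would show that the set $\mathrm{Crit}(d)$ of all critical points is compact. Boundedness follows from \Cref{lem:equivalent_definitions_critical_point}: every critical point $q$ lies in $\mathrm{Conv}(\Gamma(q)) \subset \mathrm{Conv}(\Surf)$, which is compact since $\Surf$ is. For closedness, given a sequence of critical points $q_\nu \to q^*$, I would use Carath\'eodory to write each $q_\nu = \sum_{i \le n+1} t_{\nu,i}\, p_{\nu,i}$ with $p_{\nu,i} \in \Gamma(q_\nu)$, pass to subsequences using compactness of the probability simplex and of $\Surf$ so that $(t_{\nu,i}, p_{\nu,i}) \to (t^*_i, p^*_i)$, and invoke continuity of $\dist(\cdot, \Surf)$ to conclude $p^*_i \in \Gamma(q^*)$, hence $q^* \in \mathrm{Conv}(\Gamma(q^*))$ is critical.

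Second, \Cref{thm:normal_form_dist} furnishes, around each $q \in S$, an open neighborhood $V_q$ on which $d$ is conjugate by a homeomorphism to its Morse normal form, whose unique critical point is the origin. Thus $q$ is the only critical point of $d$ in $V_q$, and $S$ consists of isolated points of $\mathrm{Crit}(d)$. Arguing by contradiction, if $S$ were infinite, compactness of $\mathrm{Crit}(d)$ would yield a convergent subsequence $q_\nu \to q^* \in \mathrm{Crit}(d)$ with all $q_\nu \in S$ distinct. The case $q^* \in S$ is immediately ruled out by isolation: $V_{q^*}$ would contain $q_\nu \ne q^*$ for large $\nu$, each of which is critical, contradicting that $V_{q^*}$ admits no other critical point than $q^*$.

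The main obstacle is the remaining case $q^* \notin S$, where the limit is a degenerate critical point approached by non-degenerate ones. To eliminate it, I would extract further subsequences so that $m = |\Gamma(q_\nu)|$ is constant (with $m \le n+1$ by (N3a) and Carath\'eodory) and the contact points converge, $p_{\nu,i} \to p^*_i \in \Gamma(q^*)$, and then show that each of the conditions \ref{def:NDGC_Finite}--\ref{def:NDGC_MorseGerm} of \Cref{def:ndg_crit_dist} is inherited by $q^*$, forcing $q^* \in S$ and a contradiction. The delicate step is that \ref{def:NDGC_StrictBall}, \ref{def:NDGC_LIG}, \ref{def:NDGC_Convex} are strict inequalities and \ref{def:NDGC_MorseGerm} is an openness-of-Morseness condition, all of which can a priori degenerate in the limit. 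I would argue that each is in fact preserved: the strict ball condition at $q_\nu$ holds uniformly on a local chart, by continuity of the principal curvatures (which are $C^{k-2}$ on $\Surf$ with $k \ge 2$); and by the Min-type representation of \Cref{lem:dist_as_min_type} together with smoothness of the pieces $\alpha_i$, the remaining strict conditions survive the passage to the limit, so $q^*$ is non-degenerate.
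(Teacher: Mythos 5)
Your argument follows the same skeleton as the paper's: \Cref{thm:normal_form_dist} gives isolation, and containment in $\mathrm{Conv}(\Surf)$ gives boundedness. The paper's proof stops at exactly this point and asserts finiteness; you correctly observe that ``isolated and contained in a compact set'' does not by itself give ``finite'' (consider $\{1/n\}\subset[0,1]$), and you isolate the remaining obstruction: a sequence $q_\nu\in S$ might accumulate to some $q^*\in\mathrm{Crit}(d)\setminus S$. Recognizing this case is a genuine improvement in rigor over what is written in the paper.

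However, your proposed resolution of that case has a real gap. You assert that conditions \ref{def:NDGC_Finite}--\ref{def:NDGC_MorseGerm} are preserved in the limit, so that $q^*\in S$ would follow, but these are precisely not closed conditions. Even after extracting so that $|\Gamma(q_\nu)|\equiv m$ and $p_{\nu,i}\to p^*_i\in\Gamma(q^*)$, you have no control on whether $\Gamma(q^*)=\{p^*_1,\dots,p^*_m\}$: the contact set can jump discontinuously at the limit, and $\Gamma(q^*)$ may be strictly larger (even infinite), so \ref{def:NDGC_Finite} can fail. The strict ball condition \ref{def:NDGC_StrictBall} and the strict-interior condition \ref{def:NDGC_Convex} are strict inequalities; the continuity of the curvature and the contact map that you invoke only yields the non-strict closure, e.g.\ $\kapmax(p^*_i)\le 1/\dist(q^*,\Surf)$, not the required strict inequality. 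Morseness \ref{def:NDGC_MorseGerm} is likewise an open condition and can degenerate at the limit. So the desired contradiction in the $q^*\notin S$ case is not forced, and the proof is incomplete---in fact, it is incomplete at precisely the step on which the paper's own two-line proof is silent. (In the only place the paper actually deploys this corollary, namely \Cref{thm:generic}, all critical points are shown to be non-degenerate, so $S=\mathrm{Crit}(d)$ and no accumulation point outside $S$ can occur; but that additional information is not available in the corollary as stated, and neither your argument nor the paper's supplies it.)
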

	
	\begin{proof}
		From \Cref{thm:normal_form}, the non-degenerate critical points described in \Cref{def:ndg_crit_dist} are isolated. Because any such critical point is contained in the convex hull of a set of points in $\Surf$, all critical points are contained in the convex hull of $\Surf$ (note that some critical points may belong to the outer set $\W^+$). As the critical points are isolated and contained in a compact subset, there can only be finitely many of them.
	\end{proof}

	\begin{rmk}
		We can compare the non-degeneracy conditions and the index from \Cref{def:ndg_crit_dist} to those of the distance function $d_\mathcal{P}$ to a point a cloud $\mathcal{P}$ \citep{bobrowski_distance_2014}, which is always $C^k$-Min-type in $\R^n \setminus \mathcal{P}$. A point $q \in \R^n \setminus \mathcal{P}$ is (non-degenerate) critical with index $m-1$ if it admits $m$ contact points $p_1,\ldots,p_m \in \mathcal{P}$ that are in general position around $q$, and $q$ is strictly in the interior of the convex hull spanned by the contact points.
		These conditions match with conditions \ref{def:NDGC_LIG} and \ref{def:NDGC_Convex} of \Cref{def:ndg_crit_dist}. Condition \ref{def:NDGC_MorseGerm} is always satisfied: $G(q)$ in that case is the cell of Voronoi diagram (medial axis) to which $q$ belongs.  Moreover, the restriction admits a local minimum at $q$ and the index reduces to $m-1$, the first term of \cref{eq:idx_crit_pure}. This observation is also true of single-point distance functions $d_p$ defined on negatively curved Riemannian manifolds $M$ \citep{gershkovich_morse_1997,itoh_cut_2007}.
	\end{rmk}

	\begin{figure}[h!]
		\centering
		\includegraphics[clip, width=.7\linewidth]{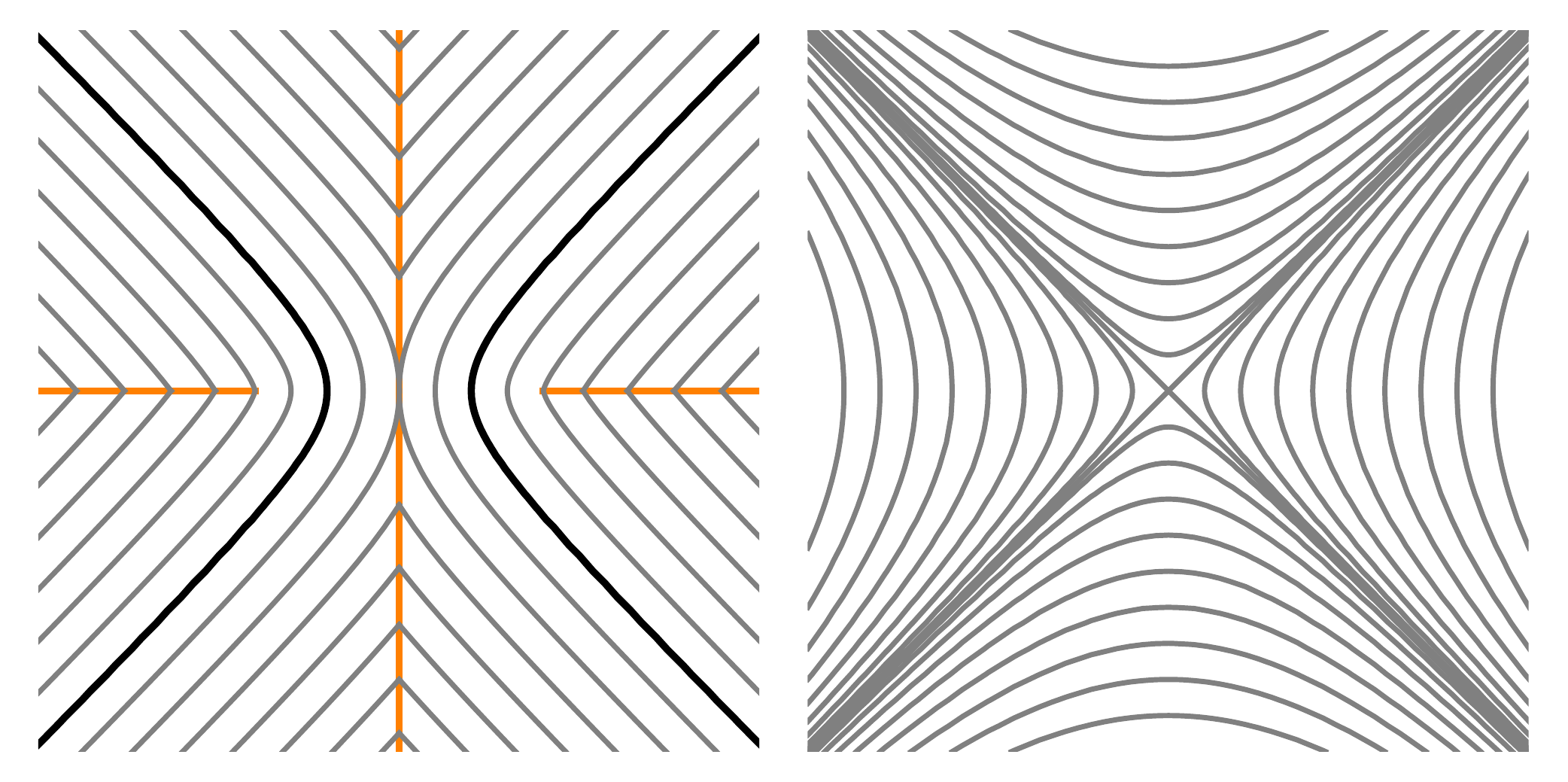}
		\caption{\textbf{Level sets around a non-degenerate critical point}, for a distance field (left) or a smooth function (right). In these examples defined in $\R^2$, the index is $\lambda = 1$ (corresponding to $m = 2$ contact points and $\lambda' = 0$ on the medial axis $G(q)$ on the left). In both cases, as soon as the level crosses the critical value at $q$, the sublevel set encounters a topological change determined by a $\lambda$-handle attachment.
		}
		\label{fig:NDG_levels}
	\end{figure}
	
	\subsubsection{Isotopy and Handle Attachment Lemmas}
	\label{sec:morse_istopy_handle_distance}
	
	We now show that Morse's isotopy and handle attachment lemmas hold for signed distance functions. 
	We combine the results of \Cref{sec:topological_morse_theory} on topological Morse functions to the existence of a normal form for $d$ around non-degenerate points given by the Min-type theory of \Cref{sec:min-type}.
	Recall that $\W$, $\Surf$, and $d$ are defined as in \Cref{sec:properties_signed_dist}.
	
	For a value $a \in \R$, we define the $a$-sublevel set
	\[V^a = d^{-1}(-\infty,a] = \{ x \in \R^n ~|~ d(x) \leq a \}. \]
	For two real values $a < b$, we define the $[a,b]$-interlevel set
	\[ V_a^b = d^{-1}[a,b] = \{ x \in \R^n ~|~ a \leq d(x) \leq b \}.  \]
	As $d$ is the signed distance function to a compact subset of Euclidean space, $d$ is proper and
	the sublevel sets are compact.
	
	\begin{thm}[Isotopy lemma for signed distance functions]
		\label{thm:dist_isotopy_lemma}
		Let $a < b$ be two real values. Suppose that $V_a^b$ contains no critical point of $d$ and that this set is compact. 
		Then $V^a$ is a deformation retract of $V^b$, giving that $V^a$ and $V^b$ are homotopy-equivalent.
	\end{thm}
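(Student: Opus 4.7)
My strategy is to derive the statement as a direct corollary of the topological Isotopy Lemma (\Cref{thm:topological_isotopy_lemma}) applied to $d$. This requires checking that $d$ is continuous and proper on $\R^n$, and that every point $q \in V_a^b$ is a topological regular point of $d$ in the sense of \Cref{def:topological_regular_point}. Continuity is automatic since $d$ is $1$-Lipschitz, and properness follows from the compactness of $\Surf$: the lower bound $d(x) \geq |x| - \max_{p \in \Surf}|p|$ forces $d(x) \to +\infty$ as $|x| \to \infty$, so every sublevel set is bounded and closed.

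Verifying topological regularity splits into two cases. If $q \notin \cutlocus{\Surf}$, then $d$ is $C^k$-smooth in a neighborhood of $q$ and satisfies the Eikonal equation $|\grad d(q)| = 1$, so the classical smooth straightening (integrating the gradient flow and completing to a local orthonormal frame) produces a diffeomorphism $\varphi$ with $d \circ \varphi(x) = d(q) + x_n$. If instead $q \in \cutlocus{\Surf}$, the non-criticality of $q$ combined with \Cref{lem:equivalent_definitions_critical_point} yields a unit vector $v$ with $v \cdot (p - q) < 0$ for every $p \in \Gamma(q)$, which points in a direction of strict ascent of $d$. I would construct the straightening explicitly on a small product neighborhood $D \times (-\tau, \tau) \subset v^\perp \times \R$ of the origin by setting
\[
	\varphi(\xi, x_n) \;=\; q + \xi + s(\xi, x_n)\, v,
\]
where $s(\xi, x_n)$ is defined implicitly by the equation $d(q + \xi + s\, v) = d(q) + x_n$. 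Provided the fibrewise map $s \mapsto d(q + \xi + s\, v)$ is strictly monotonic and continuous for each $\xi$ near $0$, a topological implicit function argument gives continuity of $s$, so $\varphi$ is a continuous injection onto an open neighborhood of $q$, hence a homeomorphism by invariance of domain, with $d \circ \varphi(\xi, x_n) = d(q) + x_n$ by construction.

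The main technical obstacle is justifying the uniform strict monotonicity of $s \mapsto d(q + \xi + s\, v)$ for all $\xi$ in a small disk of $v^\perp$. A priori the contact set $\Gamma(q')$ may jump discontinuously as $q'$ varies near $q$, so the descent inequality must be shown to survive perturbation. The remedy is the upper semi-continuity of $\Gamma$ (inherited from compactness of $\Surf$): for $q'$ sufficiently close to $q$, every $p' \in \Gamma(q')$ lies near some $p \in \Gamma(q)$, so the strict inequality $v \cdot (p' - q') < 0$ persists uniformly on a neighborhood. Differentiating $s \mapsto |q + \xi + s\, v - p'|^2$ for any $p' \in \Gamma(q + \xi + s\, v)$ then shows that the distance to each current contact point strictly increases along $+v$, and combining this with the $1$-Lipschitz infimum structure of $d$ yields the required strict monotonicity. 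Once topological regularity has been established at every point of $V_a^b$, \Cref{thm:topological_isotopy_lemma} produces the desired strong deformation retraction of $V^b$ onto $V^a$, completing the proof.
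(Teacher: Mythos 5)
Your proof is correct, and its overall architecture is the same as the paper's: check that $d$ is continuous and proper, verify that every point of $V_a^b$ is a topological regular point in the sense of \Cref{def:topological_regular_point}, and then invoke the topological Isotopy Lemma (\Cref{thm:topological_isotopy_lemma}). The difference lies entirely in how topological regularity is established. The paper cites \cite{cheeger_critical_1991} and \cite{grove_critical_1993} for the existence of a \emph{continuous} gradient-like vector field $W$ satisfying $W(y)\cdot(p-y)<0$ for all $p\in\Gamma(y)$ (which those authors build by a partition-of-unity patching of local choices) and then uses the associated flow to produce the local product structure. You instead fix a single constant direction $v$ at each non-critical point $q$ (supplied by condition~6 of \Cref{lem:equivalent_definitions_critical_point}) and build the straightening $\varphi(\xi,x_n)=q+\xi+s(\xi,x_n)\,v$ explicitly via strict monotonicity of $s\mapsto d(q+\xi+sv)$, continuity, and invariance of domain. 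Your monotonicity estimate---upper semi-continuity of $\Gamma$ gives a uniform angular bound $v\cdot(p'-q')<-c'\,|p'-q'|$ for $q'$ near $q$ and $p'\in\Gamma(q')$, and the elementary inequality $|x_1-\delta v-p_1|<|x_1-p_1|$ for small $\delta>0$ then forces $\dist(x_0,\Surf)<\dist(x_1,\Surf)$ along the segment---is sound, and it is essentially the Grove--Shiohama argument stripped to a single fixed direction. Because topological regularity is a purely \emph{local} property and the global deformation is handled by \Cref{thm:topological_isotopy_lemma}, a constant $v$ really does suffice; you avoid constructing a globally-defined continuous vector field and flowing along it, which makes the argument more self-contained and slightly more elementary than the paper's reliance on prior literature. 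One minor point worth stating explicitly: the sign of $d$ is locally constant off $\Surf$, so whether $d=\dist(\cdot,\Surf)$ or $d=-\dist(\cdot,\Surf)$ near $q$, the map $s\mapsto d(q+\xi+sv)$ is strictly monotone (increasing or decreasing, respectively), which is all that the implicit-function step needs.
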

	\begin{proof}
		
		$d$ is continuous and proper. Moreover, any point $y \in V_a^b$ is in fact a topological regular point (\Cref{def:topological_regular_point}), due to results given by \cite{cheeger_critical_1991} and \cite{grove_critical_1993}. There is a gradient-like vector field $W : \R^n \setminus \mathrm{Crit}(d) \to T\R^n$ such that for any point $y \in \R^n \setminus \mathrm{Crit}(d) \to T\R^n$, $W(y)$ defines an open half-space strictly containing all the contacts points : $\forall\ p \in \Gamma(y)$,
		$W(y) \cdot (p-y) < 0$. In a neighborhood $U_y$ of $y$ and for $\tilde{\eps} > 0$ small enough, this vector field induces a homeomorphism $V_{a}^b \inter U_y \simeq (U_y \inter \{d = a\}) \times [a,b]$ where $a = d(y) - \tilde{\eps}$ and $b = d(y) + \tilde{\eps}$, so that the values of $d$ are locally represented as the last coordinate (belonging to $[a,b]$) of some homeomorphic system of coordinates. We conclude by applying \Cref{thm:topological_isotopy_lemma}.
	\end{proof}
	
	\begin{rmk}
		The same result can be obtained by applying \cite[Proposition 1.8]{grove_critical_1993} multiple times; they also show that all levels $d^{-1}(t)$ are homeomorphic for $t \in [a,b]$ and $V_a^b$ is homeomorphic to $d^{-1}(a) \times [a,b]$.
	\end{rmk}
	
	\begin{thm}[Handle attachment lemma for signed distance functions]
		\label{thm:dist_handle_lemma}
		Let $q \in \R^n \setminus \Surf$ be a non-degenerate critical point of $d$ with index $\lambda$ and value $d(q) = c$ (i.e., one that satisfies the conditions of \Cref{def:ndg_crit_dist}). Suppose furthermore that, for some $\eps > 0$, the interlevel set $V_{c - \eps}^{c + \eps}$ contains no other critical point than $q$ and that it is a compact set. 
		
		Then $V^{c + \eps}$ has the homotopy type of $V^{c-\eps}$ with a $\lambda$-cell attached: 
		\[V^{c + \eps} \simeq V^{c-\eps} \union e^\lambda.\]
	\end{thm}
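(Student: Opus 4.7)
The plan is to reduce this statement to the topological version of the Handle Attachment Lemma (\Cref{thm:topological_handle_lemma}) already established for topological Morse functions. To invoke it we must check three hypotheses: that $d$ is continuous and proper, that every point of $V_{c-\eps}^{c+\eps}\setminus\{q\}$ is a topological regular point, and that $q$ itself is a non-degenerate topological critical point with the correct index $\lambda$.

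Continuity of $d$ is immediate since $d$ is $1$-Lipschitz; properness follows from the assumption that $\Surf$ is compact, so sublevel sets are compact. For the second ingredient, any point $y\in V_{c-\eps}^{c+\eps}$ with $y\neq q$ is non-critical in the sense of \Cref{def:critical_point_dist_functions}, and the argument already used in the proof of \Cref{thm:dist_isotopy_lemma} shows that such a $y$ is a topological regular point: by the characterization in \Cref{lem:equivalent_definitions_critical_point}, the contact set $\Gamma(y)$ lies strictly in an open half-space, which produces a gradient-like direction providing a local homeomorphism in which $d$ becomes the projection onto the last coordinate, i.e., the normal form of \Cref{def:topological_regular_point}. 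For the third ingredient, $q$ satisfies conditions \ref{def:NDGC_Finite}--\ref{def:NDGC_MorseGerm}, so \Cref{thm:normal_form_dist} supplies an almost smooth homeomorphism $\phi$ putting $d$ into the quadratic normal form
\[ d\circ\phi(x) = c - \sum_{i=1}^{\lambda} x_i^2 + \sum_{i=\lambda+1}^n x_i^2,\]
which is exactly \Cref{def:topological_critical_point} for a non-degenerate topological critical point of index $\lambda$.

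With these three ingredients in hand, $d$ restricted to a neighborhood of $V_{c-\eps}^{c+\eps}$ satisfies the hypotheses of \Cref{thm:topological_handle_lemma} (shrinking $\eps$ if necessary, with the isotopy lemma \Cref{thm:dist_isotopy_lemma} ensuring that this does not change the homotopy type of the sublevel sets involved). The conclusion of that theorem is exactly the desired statement
\[ V^{c+\eps} \simeq V^{c-\eps}\union e^{\lambda}.\]

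The only subtlety worth flagging is that \Cref{thm:topological_handle_lemma} was stated for functions on all of $\R^n$, while here we only control $d$ on the compact interlevel set $V_{c-\eps}^{c+\eps}$. However, the proof of \Cref{thm:topological_handle_lemma} is local around the critical point and uses the Isotopy Lemma to handle the remainder of the interlevel set; our situation provides both ingredients (the normal form around $q$ and \Cref{thm:dist_isotopy_lemma} elsewhere), so the same argument goes through verbatim. This compatibility is the only point requiring care; everything else is a direct assembly of prior results.
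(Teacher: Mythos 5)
Your argument is correct and follows exactly the same route as the paper's proof: establish that $d$ is continuous and proper, invoke \Cref{thm:normal_form_dist} to identify $q$ as a non-degenerate topological critical point of index $\lambda$, cite the proof of \Cref{thm:dist_isotopy_lemma} to see that the other points of $V_{c-\eps}^{c+\eps}$ are topological regular points, and conclude by \Cref{thm:topological_handle_lemma}. The subtlety you flag at the end is in fact a non-issue, since $d$ is globally defined on $\R^n$ and \Cref{thm:topological_handle_lemma} only requires hypotheses on the interlevel set $d^{-1}[c-\eps,c+\eps]$, which are exactly what you have verified.
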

	\begin{proof}
		$d$ is a continuous and proper function. By \Cref{thm:normal_form_dist}, $d$ admits a normal form $d \sim \mathrm{cst} - \sum_{i = 1}^{\lambda} x_i^2 + \sum_{i = \lambda + 1}^{n} x_i^2 $ at $q$, so $q$ is also non-degenerate in the topological sense of \Cref{def:topological_critical_point}.
		On the other hand, any other point $y \in V_{c - \eps}^{c + \eps} \setminus \{q\}$ is a topological regular point in the sense of \Cref{def:topological_regular_point} (see the proof of \Cref{thm:dist_isotopy_lemma}).
		
		By applying \Cref{thm:topological_handle_lemma} we get
		\[V^{c+\eps} \simeq V^{c-\eps} \union e^\lambda.\]
	\end{proof}
	
	\begin{rmk}
		If the critical points of $d$ are non-degenerate, then there are only finitely many of them in any interlevel set $V_{c - \eps}^{c + \eps}$. This is due to non-degenerate critical points being isolated and the compactness of the interlevel set $V_{c - \eps}^{c + \eps}$, which follows from $\Surf$ being compact. As such, for the set of critical points $q_1, \ldots, q_N$ that lie in $\level{d}{c}$, there is a sufficiently small $\epsilon$ such that they are the only critical points in  $V_{c - \eps}^{c + \eps}$. We can use Remark \ref{rmk:several_handles_topological} to deduce
		\[V^{c+\eps} \simeq V^{c-\eps} \union e^{\lambda_1} \union \ldots \union e^{\lambda_N}\]
		where $\lambda_i$ denote the indices of $q_i$.
	\end{rmk}

	\subsection{Signed Distance Functions are Morse for Generic Surfaces}
	\label{sec:genericity}
	
	We have shown in \Cref{sec:morse_istopy_handle_distance} that Morse theory can be extended to the signed distance function $d$ if critical points of $d$ are $C^k$-Min-type non-degenerate; that is, if they satisfy the  geometric conditions \ref{def:NDGC_Finite} - \ref{def:NDGC_MorseGerm} outlined in \Cref{def:ndg_crit_dist}. 
	For surfaces in $\R^3$, these conditions restrict the surface from forming spherical caps and cylindrical necks around $q$. 
	We show that not only are these conditions satisfied for some surfaces in $\R^3$, but they are in fact satisfied for \textit{generic} embeddings of a surface into $\R^3$. 
	Heuristically, this means that the conditions are true for nearly every way of embedding the manifold as a surface in $\R^3$. 
	
	\begin{defi}[Generic property and residual sets]
		A property $(P)$ is said to hold for \emph{generic} elements of a topological space $E$, or \emph{generically}, if 
		$\{e \in E ~|~ P(e) \text{ is true} \}$
		is a \emph{residual set} in $E$, i.e.,
		is the countable intersection of dense open subsets of $E$.
	\end{defi}
	
	If $E$ is a complete metric space, a residual set is nonempty; in fact, by the Baire category theorem, a residual set is dense in $E$. Moreover, the intersection of countably many residual sets is also residual, so that if a countable family of properties are individually generic, then they are also simultaneously generic.
	
	In this section, we prove that the signed distance function is a topological Morse function with finitely many critical points for generic embeddings of surfaces in \Cref{thm:generic}. Our proof relies on \emph{transversality theory} \citep{guillemin_differential_2010}. Building up to the proof, we describe key concepts in transversality theory, and summarize key results in the literature which describe the structure of the cut locus for generic embeddings. This allows us to derive \Cref{thm:generic}.

	Before we begin, we make the class of surfaces that we are considering and the parameter space of manifold embeddings explicit. In this section, $M$ refers to a two-dimensional $C^k$-smooth ($k \geq 3$) manifold, that is orientable and closed (i.e., compact without boundary).
	We denote $\Emb$ to be the space of embeddings of $M$ into $\R^3$ endowed with the $C^k(M,\R^3)$ topology, defined in \cite{hirsch_differential_1976}. Note that for any embedding $\iota \in \Emb$, the embedded surface $\Surf_\iota = \iota(M)$ is closed and orientable.

	\subsubsection{Genericity of Transversality}
	Transversality describes the fact that intersections of spaces are not tangential. 
	
	\begin{defi}[Transversality]
		Two submanifolds $M$ and $N$ in a manifold $Y$ \textit{intersect transversely}, denoted by $M \pitchfork N$, if at each point of intersection $a \in M \inter N$, $T_a M + T_a N = T_a Y$. Similarly, let $f : X \to Y$ be a smooth map between smooth manifolds and denote its differential map at $x \in X$ by $D_x f : T_x X \to T_{f(x)} Y$. $f$ is \textit{transversal} to a submanifold $W$ of $Y$ and written as $f \pitchfork W$ if for any $x \in X$ such that $f(x) \in W$, 
		\[D_x f(T_x X) + T_{f(x)} W = T_{f(x)} Y .\]
	\end{defi}
	In particular, if we have an empty intersection $\Im f \inter W = \emptyset$, then we also have $f \pitchfork W$.
	Note that if the intersection is not void and contains $f(x)$, transversality implies:
	\[\rank(D_x f) + \dim W \geq \dim Y. \]

	It is important to note that \textit{transversality is a generic property}. For example, two lines in the plane generically intersect  at one point and cross each transversely. In three dimensions, two lines in $\R^3$ generically never intersect, yet they are nevertheless transversal to each other. If two lines do intersect in $\R^3$, they can always be perturbed around their intersection to avoid each other by arbitrarily small perturbations.
	
	We begin by stating the \textit{parametric transversality theorem}, which exists in various forms for finite-dimensional \citep{hirsch_differential_1976} and infinite-dimensional manifolds \citep{abraham_transversal_1967}.
	
	\begin{thm}[Parametric transversality theorem \citep{abraham_transversal_1967}]
		\label{thm:param_trans}
		Let $E$, $X$, $Y$ be $C^s$ manifolds, $W \subset Y$ a $C^s$ submanifold (not necessarily closed), and $F : E \times X \to Y$ such that the following conditions are satisfied:
		\begin{itemize}
			\item $X$ has finite dimension $\dim_X$ and $W$ has finite codimension $\mathrm{codim}_W$ in $Y$;
			\item $E$ and $X$ are second countable (i.e., their topology has a countable base);
			\item $F$ is $C^s$;
			\item $s > \max \{ 0, \dim_X - \mathrm{codim}_W\}$;
			\item $F \pitchfork W$.
		\end{itemize}
		Then the set
		\[ \{e \in E ~|~ F(e, \cdot) \pitchfork W \} \]
		is residual (hence dense) in $E$.
	\end{thm}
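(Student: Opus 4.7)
The plan is to follow the standard Abraham--Robbin strategy: reduce the parametric transversality statement to an application of the Sard--Smale theorem on a suitable preimage submanifold. First I would form the preimage $Z = F^{-1}(W) \subset E \times X$. Since $F \pitchfork W$ and $W$ has finite codimension $\mathrm{codim}_W$ in $Y$, the preimage theorem applies and guarantees that $Z$ is a $C^s$ submanifold of $E \times X$ of codimension $\mathrm{codim}_W$, with tangent space at $(e,x) \in Z$ given by $(DF_{(e,x)})^{-1}(T_{F(e,x)}W)$. The second-countability hypothesis on $E$ and $X$ ensures $Z$ is also second countable, which is needed to apply the Sard-type results.

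Next I would consider the natural projection $\pi : Z \to E$ induced by the projection $E \times X \to E$. The central observation to establish is a pointwise equivalence: for a given $e \in E$, the slice map $F(e, \cdot) : X \to Y$ is transverse to $W$ if and only if $e$ is a regular value of $\pi$. This is a short linear-algebra check at each $(e,x) \in Z$: unpacking $T_{(e,x)}Z \subset T_e E \oplus T_x X$ via the defining condition and comparing with the image of $D_x(F(e,\cdot))$ modulo $T_{F(e,x)}W$ shows that surjectivity of $D_{(e,x)}\pi$ at every $x$ with $(e,x) \in Z$ is precisely the transversality of $F(e,\cdot)$ to $W$ at $F(e,x)$ (with the empty-fibre case being trivial, as then transversality holds vacuously).

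Then I would invoke the Sard--Smale theorem applied to $\pi : Z \to E$. The Fredholm index of $\pi$ (or, in the finite-dimensional case of $X$, simply the dimensional count) is $\dim_X - \mathrm{codim}_W$, which is exactly why the hypothesis $s > \max\{0, \dim_X - \mathrm{codim}_W\}$ appears: it guarantees sufficient regularity for Sard--Smale to conclude that the set of regular values of $\pi$ is residual in $E$. By the pointwise equivalence from the previous step, this residual set coincides with $\{e \in E \mid F(e,\cdot) \pitchfork W\}$, and density follows from the Baire category theorem if $E$ is a Baire space.

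The main obstacle, and the reason for the technical smoothness hypothesis, is the infinite-dimensional ambient setting for $E$. Classical Sard's theorem is not strong enough, and one must invoke Sard--Smale, which requires the projection $\pi$ to be a Fredholm map of suitable regularity. Verifying that $\pi$ is indeed Fredholm and that its index computes as $\dim_X - \mathrm{codim}_W$ is the delicate bookkeeping step; once this is in hand, the smoothness threshold $s > \max\{0,\dim_X - \mathrm{codim}_W\}$ matches the hypothesis of Sard--Smale exactly, and the theorem follows. The remaining steps (preimage is a submanifold, equivalence of regular values with slicewise transversality) are structural and follow without further difficulty from the implicit function theorem in Banach manifolds.
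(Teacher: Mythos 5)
The paper does not prove this theorem; it imports it directly from \citet{abraham_transversal_1967}, so there is no in-paper proof to compare against. Your sketch is a faithful reconstruction of the standard Abraham--Robbin argument: form $Z = F^{-1}(W)$ as a $C^s$ submanifold of codimension $\mathrm{codim}_W$ via the transversality-preimage theorem, project $\pi : Z \to E$, verify the linear-algebraic equivalence between $e$ being a regular value of $\pi$ and slicewise transversality of $F(e,\cdot)$ to $W$ (the computation $\operatorname{Im} D_E F \subseteq T_W + \operatorname{Im} D_X F$ iff $D\pi|_{T_Z}$ surjective, combined with $F \pitchfork W$, delivers the equivalence), and then apply Sard--Smale to the Fredholm map $\pi$ of index $\dim_X - \mathrm{codim}_W$. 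The hypothesis $s > \max\{0, \dim_X - \mathrm{codim}_W\}$ is precisely the differentiability threshold in Sard--Smale, and second countability of $E \times X$ is what lets the residual-set conclusion go through. This is correct and is indeed the original proof route for this result; it is the intended provenance of the citation, so there is nothing to flag beyond noting that the one nontrivial bookkeeping item you acknowledge --- verifying $\pi$ is Fredholm of the stated index --- does need to be written out carefully in a complete account, since it is where the finite-dimensionality of $X$ and finite codimension of $W$ enter essentially.
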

	In particular, $E$, $W$ and $Y$ may be infinite-dimensional.
	If $W$ and $Y$ are also finite-dimensional, then $s$ must be strictly greater than $\max \{ 0, \dim X + \dim W - \dim Y \}$.
	
	Using the parametric transversality theorem, we adopt the following proof strategy to show that the geometric conditions for the signed distance function to be topological Morse with finitely many critical points are generically satisfied (\Cref{thm:generic}): We will express each condition as a transversal intersection $F(e, \cdot) \pitchfork W$ between a map $F: E \times X \to Y$ and a submanifold $W \subset Y$ where $E$ is set to be the space of embeddings $\Emb$ of a manifold $M$ into $\R^3$. By carefully constructing the map so that the conditions of \Cref{thm:param_trans} are satisfied, we can show that the set of embeddings on which the geometric conditions are satisfied is residual. To set up the proof of \Cref{thm:generic}, we first describe the structure of the cut locus of generic surfaces. 
	
	\subsubsection{Generic Structure of the Cut Locus of Surfaces}
	
	We aim to show that, for generic embeddings, the critical points of the signed distance function $d$ satisfy the conditions listed in \Cref{def:ndg_crit_dist} to be non-degenerate Min-type critical points. These conditions describe geometric constraints between a critical point on the medial axis and its contact points on the surface. Our first step to show that these conditions are satisfied for a generic set of embeddings is to consider the possible configurations of the contact set for general points on the cut locus that need not be critical points of $d$. This subject is well studied in literature and we recall important results on the finiteness of the contact set and the generic local shape of the medial axis \citep{yomdin_local_1981, mather_distance_1983, giblin_symmetry_2000, cazals_differential_2005, damon_global_2006}. These results are summarized in \Cref{thm:generic_cut_locus}, which describes a classification of points on generic cut loci into five possible contact set configurations. 
	
	Using this description of the geometric configurations of contact sets, we observe that some of the conditions listed in \Cref{def:ndg_crit_dist} are already satisfied by some if not all types of points on the cut loci, and restrict the verification of the remaining conditions to a case-by-case analysis over a small number of contact set configurations. The following theorem by \cite{yomdin_local_1981} show that \ref{def:NDGC_Finite} and \ref{def:NDGC_LIG} are in fact generically satisfied by all points on the cut locus. 
	
	\begin{thm}[Finiteness of the contact set \citep{yomdin_local_1981}]
		\label{thm:finite_contact}
		For generic embeddings $\iota \in \Emb$ (where $k \leq 2$) , a point $q \in \R \setminus \Surf$ where $\Surf = \iota(M)$ only has a finitely many contact points with $|\Gamma(q)| = 1, 2, 3 \text{ or } 4$, and $\Gamma(q)$ are in general position.
	\end{thm}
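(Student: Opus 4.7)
The plan is a standard multi-jet transversality argument in the spirit of \cite{abraham_transversal_1967, mather_distance_1983}, applying the parametric transversality theorem (\Cref{thm:param_trans}) with $E = \Emb$ to a series of evaluation maps that encode the failure of the desired properties. The key dimension count is: the configuration space $(M^m \setminus \Delta) \times \R^3$ has dimension $2m+3$, while the condition ``$q$ has $m$ common contact points $\iota(p_1),\ldots,\iota(p_m)$'' imposes $3m-1$ scalar equations (two per contact point for normality $q - \iota(p_i) \in N_{\iota(p_i)}\Surf$, plus $m-1$ for equal-distance $|q-\iota(p_i)|^2 = |q-\iota(p_1)|^2$). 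The expected dimension of the solution set is therefore $4-m$, negative for $m \geq 5$.

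For each $m \geq 2$, I would introduce the evaluation map
\[
\Phi_m : \Emb \times (M^m \setminus \Delta) \times \R^3 \longrightarrow \R^{3m-1},
\]
sending $(\iota, p_1,\ldots,p_m, q)$ to the $2m$ tangential components of the vectors $q - \iota(p_i)$ (read off in a local trivialization of $TM$) together with the $m-1$ radial differences $|q-\iota(p_i)|^2 - |q-\iota(p_1)|^2$. The first step is to verify that $\Phi_m$ is transverse to $\{0\}$: at any zero, bump-function perturbations of $\iota$ supported in pairwise disjoint neighborhoods of the $p_i$ independently move both the value $\iota(p_i)$ (3 degrees of freedom) and the tangent plane $T_{\iota(p_i)}\Surf$, which is more than enough to surject onto the $(3m-1)$-dimensional target. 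Applying \Cref{thm:param_trans} (noting that $s = k \geq 2$ suffices once $k > \max\{0,\ 2m+3 - (3m-1)\} = \max\{0, 4-m\}$, so $k \geq 3$ is safely enough for all relevant $m$), we obtain a residual set $\mathcal{R}_m \subset \Emb$ for which $\Phi_{m,\iota}^{-1}(0)$ is a submanifold of dimension $4-m$. For $m \geq 5$ this forces $\Phi_{m,\iota}^{-1}(0) = \emptyset$, so no $q$ has five or more contact points; taking $\iota \in \bigcap_{m \geq 5} \mathcal{R}_m$ (still residual) yields $|\Gamma(q)| \leq 4$ and in particular finite.

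For the general-position statement, I would augment $\Phi_m$ with the non-general-position equations and repeat the transversality argument. For $m=4$, add one coplanarity equation (the $4\times4$ determinant built from $\iota(p_2)-\iota(p_1),\, \iota(p_3)-\iota(p_1),\, \iota(p_4)-\iota(p_1)$): the expected dimension becomes $-1$, hence generically empty. For $m=3$, add the two independent components of the collinearity constraint $(\iota(p_2)-\iota(p_1)) \times (\iota(p_3)-\iota(p_1)) = 0$; the dimension count again gives $-1$. For $m=2$ the condition is vacuous once $p_1 \neq p_2$. Submersivity of each augmented map follows from the same independent jet-perturbation argument, since the additional equations only involve the values $\iota(p_i)$, which are still freely movable. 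Intersecting the resulting residual sets (for $m=3,4,5$ and the non-general-position strata) produces the final residual set of embeddings satisfying the theorem.

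The main obstacle is the rigorous submersion step: one must confirm that the space $\Emb$ is rich enough to realize any prescribed first-order change of $(\iota(p_i), T_{\iota(p_i)}\Surf)$ simultaneously at $m$ distinct points, and to do so in a way that is independent across points. This reduces to the standard fact that the multi-jet evaluation map $\Emb \to \bigoplus_{i=1}^m J^1_{p_i}(M,\R^3)$ is a surjective submersion on the open stratum where the $p_i$ are distinct, which is where the infinite-dimensional parameter space $\Emb$ does the essential work. Once this is in hand, the rest is bookkeeping with the parametric transversality theorem and elementary dimension counts.
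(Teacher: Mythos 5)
The paper does not prove this theorem; it is cited as an external result from \cite{yomdin_local_1981}, with the transversality machinery you invoke only appearing later in the paper's own proofs of \Cref{prop:A1Morse}, \Cref{prop:A1_convex hull}, and \Cref{prop:no_A_1_A_3}. Your proposal therefore supplies a proof where the paper defers to a reference, and the approach you take --- parametric (multi-jet) transversality applied to evaluation maps encoding simultaneous normal contact with equal radii --- is precisely the standard route to Yomdin's result and is consistent in style with the paper's genericity arguments elsewhere.

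The dimension bookkeeping is correct: $\dim\bigl((M^m\setminus\Delta)\times\R^3\bigr) = 2m+3$, codimension $3m-1$ from the $2m$ tangential normality equations plus $m-1$ equal-distance equations, giving expected fiber dimension $4-m$; emptiness for $m\ge 5$ gives $|\Gamma(q)|\le 4$, and the augmented maps with one coplanarity equation ($m=4$) or two collinearity equations ($m=3$) drop the expected dimension to $-1$, giving general position. The submersivity step you flag is indeed the only nontrivial point, and the disjoint-bump-function argument you sketch is exactly what is needed (it is the same mechanism used in \Cref{lem:submersion} of the paper's Appendix). One small correction: because $\Phi_m$ reads off the tangent plane $T_{\iota(p_i)}\Surf = \Im D_{p_i}\iota$, it is only of class $C^{k-1}$, not $C^k$; so the smoothness hypothesis of \Cref{thm:param_trans} should be checked with $s = k-1$. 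Since all the relevant cases require only $s > \max\{0, 4-m - (\text{extra codim})\} \le 0$, the bound $k\ge 2$ (so $s = k-1 \ge 1$) suffices, and the paper's standing assumption $k\ge 3$ covers it with room to spare; your conclusion is unaffected, but the statement ``$s = k \ge 2$'' should be amended.
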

	
	To study other constraints such as the strict ball condition \ref{def:NDGC_StrictBall}, we need a finer description of the contact set $\Gamma(q)$ involving the \emph{singularity types} of points in $\Gamma(q)$. In the following brief summary, we follow the classification of \cite{mather_distance_1983} and \cite{bruce1992curves} and refer to \cite{cazals_differential_2005} for a detailed discussion. For a point $q \in \R^3$ with finitely many contact points, and $p \in \Gamma(q)$, the squared distance function to $q$ restricted to $\Surf$ can be expressed on the local chart at $p$ as 
	\begin{equation} \label{eq:contact_pt_dist}
		g(x_1,x_2) = r^2 + x_1^2(1-r\kappa_1) + x_2^2(1-r\kappa_2) + \text{higher order terms},
	\end{equation}
	where $(x_1,x_2)$ are local coordinates along the principal directions of curvatures at $p = (0,0)$, and $r = \| p-q\|$. A contact point $p$ is an $A_1$ \emph{singularity} of $q$ if there is a diffeomorphism on the chart at $p$, such that \cref{eq:contact_pt_dist} can be written as 
	\[g = r^2 \pm x^2 \pm y^2.\]
	In other words, $p$ is a non-degenerate critical point of $g$. Note that $p$ is an $A_1$ singularity of $q$ only if the strict ball condition (\cref{eq:strict_ball_condition}) is true. $p$ is an $A_3$ \emph{singularity} of $q$ if there is a diffeomorphism on the chart at $p$, such that \cref{eq:contact_pt_dist} can be written as 
	\[g = r^2 \pm x^2 \pm y^4.\]
	Note that $p$ is then a degenerate critical point of $g$ and $r = 1/\kappa$ for one of the principal curvatures $\kappa$, as one of the leading quadratic terms of \cref{eq:contact_pt_dist} vanishes. Note that $p$ is an $A_3$ singularity of $q$ only if the strict ball condition (\cref{eq:strict_ball_condition}) is violated. 
	
	A complete list of singularities beyond types $A_1$ and $A_3$ can be found in \cite{arnol1974normal}, but as we shall see in \Cref{thm:generic_cut_locus}, points on the cut locus only possess contact points of types $A_1$ and $A_3$ for generic surfaces. We can thus classify points on the cut locus by enumerating the number of $A_1$ and $A_3$ singularities in its contact sets; $q \in \overline{\med}$ is said to be of $A_1^j A_3^k$ \emph{type} if $\Gamma(q)$ contains $j$ and $k$ many $A_1$ and $A_3$ singularities, respectively. In fact, the following lemma shows that \Cref{thm:finite_contact} implies the set of $A_1^j$ are $(4-j)$-dimensional submanifolds. The lemma also provides some intuition on the meaning of $G(q)$ (\Cref{def:G(x)}).
	
	\begin{lem} \label{lem:A1_manifolds} Consider a generic surface $\Surf$ satisfying the properties specified in \Cref{thm:finite_contact}. Let $q$ be an $A_1^j$ point for $j = 1,\ldots,4$. Then: 
		\begin{enumerate}
			\item There is an efficient-LIG representation of the distance function on a neighborhood $U$ of $q$
			$$ 
			\dist(\cdot, \Surf) = \min \{ \alpha_1, \ldots, \alpha_j\},
			$$
			where $\{\alpha_i = \dist(\cdot, S_i) \}$ are the distance functions to the contact pieces $S_i$ of $q$ as defined in \Cref{lem:pieces};
			\item There is a neighbourhood $V_1 \subseteq U$ of $q$ in $\R^3$, such that any point $q' \in V_1$ has at most $j$ many contact points, all of which are $A_1$ singularities; and 
			\item Let $\sigma = \{x \in U \ | \ \alpha_1(x) = \cdots = \alpha_j(x) = \dist(x, \Surf)\} \ni q$. Then there is a neighbourhood $V_2$ of $q$ such that $V_2 \cap \sigma $ is a $(4-j)$-dimensional manifold.
		\end{enumerate}
		Hence, there is a neighbourhood $V = V_1 \cap V_2$ of $q$, such that the set of $A_1^j$ type points in $V$ is the $(4-j)$-dimensional manifold $V \cap \sigma$. In other words, the set of $A_1^j$ points is equivalent to the $(4-j)$ dimensional manifold $G(q)$ (the germ of $\sigma$  at $q$ as defined in \Cref{def:G(x)}) as germs of sets at $q$.
	\end{lem}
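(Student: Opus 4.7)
The plan is to establish the three numbered claims in order and then combine them to get the final equivalence.

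For (1), I would apply \Cref{lem:pieces} to the point $q$ (which is legal since $|\Gamma(q)| = j$ is finite by hypothesis) to obtain contact pieces $S_1, \ldots, S_j$ of $q$ and a neighborhood $U$ on which $\dist(\cdot, \Surf) = \min\{\alpha_1, \ldots, \alpha_j\}$ as germs of functions, where $\alpha_i = \dist(\cdot, S_i)$. Since $q$ is of type $A_1^j$, the squared distance to $q$ on the local chart at each $p_i$ is non-degenerate, which is equivalent to the strict ball condition \eqref{eq:strict_ball_condition} at $(q, p_i)$; hence \Cref{lem:dist_as_min_type} applies, making each $\alpha_i$ a $C^k$ function on a (possibly smaller) neighborhood of $q$ and producing an efficient representation. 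For the LIG property, recall that $\nabla \alpha_i(q) = \pm (q - p_i)/\|q - p_i\|$ with common sign (determined by $\sgn(q)$) and common magnitude $1$, and $\|q-p_i\|=r$ is the same constant $\dist(q,\Surf)$ for all $i$; thus $\nabla \alpha_i(q) - \nabla \alpha_1(q)$ are scalar multiples of $p_1 - p_i$, which are linearly independent precisely because $\Gamma(q)$ is in general position by \Cref{thm:finite_contact}.

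For (2), I would use openness of the strict ball condition. Since each $\alpha_i$ is smooth near $q$ (with unique projection $\pi_i(x)$ onto $S_i$), the condition $\kappa_{\max}(\pi_i(x)) < 1/\dist(x, S_i)$ (or the analogous lower bound in the outer case) is strict at $x = q$, and is preserved under small perturbations because $\Surf$ is $C^3$ so $\kappa_{\max}\circ\pi_i$ and $\dist(\cdot, S_i)$ vary continuously. Shrinking $U$, for every $x$ nearby, every contact point of $x$ in $S_i$ is an $A_1$ singularity. Moreover, by \Cref{lem:pieces}, any contact point of $x$ (close enough to $q$) lies in $\bigsqcup_i S_i$, and inside each $S_i$ there is at most one contact point $\pi_i(x)$ (by the strict ball condition and smoothness of $\pi_i$). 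This gives $|\Gamma(x)| \le j$ with all contact points of type $A_1$, yielding the required neighborhood $V_1$.

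For (3), I would proceed exactly as in \Cref{lem:G(q)_mfd}: near $q$, $\sigma$ coincides with $\bigcap_{i=2}^{j}\{\alpha_i - \alpha_1 = 0\}$, because $\dist(\cdot,\Surf) = \min_i \alpha_i$ on $U$ implies $\alpha_1 = \cdots = \alpha_j$ is equivalent to $\alpha_1 = \cdots = \alpha_j = \dist(\cdot, \Surf)$ there. By the LIG conclusion of (1), the gradients $\{\nabla(\alpha_i-\alpha_1)(q)\}_{i=2,\ldots,j}$ are linearly independent, so the implicit function theorem produces a neighborhood $V_2$ on which $V_2 \cap \sigma$ is a $C^k$-submanifold of codimension $j-1$ in $\R^3$, hence of dimension $4-j$.

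Finally, setting $V = V_1 \cap V_2$, I would show the equality of sets: if $q' \in V \cap \sigma$, then all $j$ projections $\pi_i(q')$ realize $\dist(q',\Surf)$ and are distinct $A_1$ singularities by (2), and by (2) there are at most $j$ contact points, so $q'$ is exactly $A_1^j$; conversely, if $q'\in V$ is $A_1^j$, then its $j$ contact points must (by (2)) be the $j$ distinct projections $\pi_i(q')$, forcing $\alpha_i(q') = \dist(q',\Surf)$ for every $i$, i.e.\ $q'\in\sigma$. The main subtlety I anticipate is the interplay in (2) between the two openness statements — openness of the strict ball condition (to preserve the $A_1$ type of existing contact points) and the argument that no new contact points outside $\bigsqcup S_i$ appear for nearby $x$. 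The latter is what \Cref{lem:pieces} guarantees when $U$ is chosen small enough, and I would make sure to shrink $U$ simultaneously to accommodate both requirements before defining $V_1$.
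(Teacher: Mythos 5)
Your proposal is correct and follows essentially the same route as the paper's proof: (1) invoke \Cref{lem:pieces} and \Cref{lem:dist_as_min_type} to get the efficient Min-type representation, with general position of $\Gamma(q)$ supplying LIG; (2) use openness of the strict ball condition together with the local pieces structure to show nearby points have $\le j$ contact points, all of type $A_1$; (3) apply \Cref{lem:G(q)_mfd} (implicit function theorem) to identify $\sigma$ as a $(4-j)$-manifold; and then combine. The one place you are somewhat more explicit than the paper is the LIG verification in (1): you compute $\nabla\alpha_i(q) - \nabla\alpha_1(q) = (p_1 - p_i)/r$ and reduce linear independence to general position of $\Gamma(q)$, whereas the paper leaves this step implicit by citing \Cref{thm:finite_contact}. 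For (2) the paper works with the explicit continuous function $\eta_i(x) = 1 - \alpha_i(x)\,\kappa_{\max}(\xi_i(x))$, which is the same openness argument you describe in words. Both are sound; no gap.
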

	\begin{proof} \phantom{0}
		
		\begin{enumerate}
			\item Since an $A_1^j$ type point only has finitely many critical points and satisfy the strict ball condition (\cref{eq:strict_ball_condition}) by definition, \Cref{lem:dist_as_min_type} implies $\dist(\cdot, \Surf)$ is $C^k$-Min-type with an efficient representation $\dist(\cdot, \Surf) = \min \{ \alpha_1, \ldots, \alpha_j \}$ on a neighborhood $U$ of $q$, where $\alpha_i$ are distance functions to disjoint subsets $S_i$ of $\Surf$, each a neighborhood of $p_i \in \Gamma(q)$ in $\Surf$ as given by \Cref{lem:pieces}. The representation is efficient due to $\Gamma(q)$ being in general position as given in \Cref{thm:finite_contact}.
			\item  Consider $q' \in U$. Since we have on $U$ an efficient-LIG representation of the distance function, $q'$ has $j' \leq j$ contact pieces $S_1,\ldots, S_{j'}$, which are also the contact pieces of $q$ specified in the efficient-LIG representation; furthermore, since we have assumed the conditions specified in \Cref{thm:finite_contact}, the contact set is finite $\Gamma(q') = \{p'_1, \ldots, p'_j\}$ and $p_i' = \Gamma(q') \cap S_i$. We now investigate the singularity types of $p_i'$.  Recall an isolated point $p_i' \in \Gamma(q')$ is an $A_1$ singularity of $q'$ iff the principal curvatures at $p_i' \in \Surf$ are less than $r' = \| q' -p_i' \|$. We note that on $U$, the distance to the pieces $\alpha_i = \dist(\cdot, S_i)$ and the nearest neighbor projection map $\xi_i: U \to S_i$ onto the contact piece $S_i$ are continuous. We also observe that the principal curvatures $\kappa_\ell: \Surf \to \R$ for $\ell = 1,2$ are also continuous. We can thus construct a continuous map  $\eta_i: U \to \R$ given by $\eta_i(x)= 1-\alpha_i(x) \kappa_{\text{max}}(\xi_i(x))$, where $\kappa_{\text{max}} := \max\{ \kappa_1, \kappa_2 \}$. Note that $p_i' \in \Gamma(q')$ is an $A_1$ singularity of $q_i'$ iff $\eta_i(q_i') > 0$. Since $q$ is $A_1^j$ type, we have $\eta_i(q) > 0$ for all $i$. By continuity, there is a sufficiently small neighbourhood $W_i$ of $q$ for each map $\eta_i$, such that $\eta_i > 0$ on $W_i$ and the contact point $p_i'$  of $q' \in W_i$ in $S_i$ is an $A_1$ singularity. We can then take $V_1 = \cap_{i=1}^j W_i$ to be the neighbourhood of $q$ on which any $q' \in V_1$ has at most $j$ many contact points, all of which are $A_1$ singularities.
			\item  Since the representation is efficient-LIG, \Cref{lem:G(q)_mfd} implies there is a neighborhood $V_2 \subset U$ such that $V_2 \cap \sigma$ is a $(4-j)$-dimensional manifold.
		\end{enumerate}
		It follows that for $V = V_1 \cap V_2$, the submanifold $V \cap \sigma$ are $A_1^j$ type points; furthermore, since $A_1^j$ points on $U$ must belong to $\sigma$ by definition, $V \cap \sigma$ is precisely the set of $A_1^j$ points in the neighbourhood $V$ of $q$.
	\end{proof}
	\Cref{{lem:A1_manifolds}} shows that there are submanifolds in generic cut loci consisting of $A_1^j$ points. This is further developed by \cite{mather_distance_1983}, who showed that generic cut loci can be decomposed into manifolds of the same $A_1^jA_3^k$ type. Specifically, the decomposition is a \emph{Whitney stratification}: the manifolds in the decomposition, called \emph{strata}, fit together following local topological constraints that ensure the stratified space is well-behaved. The full definition of Whitney stratification can be found in  \cite{goresky_stratified_1988}; for our purposes, the theorem below gives an explicit geometric picture of how the strata fit together for generic cut loci of surfaces.
	
	\begin{thm}[Generic shape of the cut locus \citep{mather_distance_1983, giblin_symmetry_2000, cazals_differential_2005, damon_global_2006}]
		\label{thm:generic_cut_locus}
		The cut locus $\overline{\med}$ of a generic embedding $\iota$ of a smooth surface into $\R^3$ is a two-dimensional Whitney-stratified set whose strata belong to one of five classes, distinguished by the number of contact points $q \in \overline{\med}$ has on $\Surf$ and the types of singularities of $\dist(\cdot, q)^2\rvert_\Surf$ at the contact points. Specifically, a stratum of $\overline{\med}$ can either be:
		\begin{itemize}
			\item A two-dimensional stratum (or sheet) of $A_1^2$ points, each of which having two contact points on $\Surf$ of type $A_1$;
			\item A one-dimensional stratum (or curve) of $A_1^3$ points, each of which has three contact points on $\Surf$ of type $A_1$. They are curves which lie at the intersection of three $A_1^2$ sheets; 
			\item A zero-dimensional stratum consisting of an $A_1^4$ point which has four contact points of type $A_1$. The $A_1^4$ strata are isolated and lie at the intersection of six $A_1^2$ sheets and four $A_1^3$ curves along the boundaries of the $A_1^2$ sheets;
			\item A one-dimensional stratum of $A_3$ edge points, each of which has one $A_3$ contact point on $\Surf$. Thus, they do not belong to the medial axis $\mathcal{M}$ itself. They bound $A_1^2$ sheets in $\R^3$;
			\item A zero-dimensional stratum consisting of an $A_1 A_3$ `\emph{fin creation}' point, which has two contact points on $\Surf$ of types $A_1$ and $A_3$ respectively. A fin creation point is a common end point of two one-dimensional strata of types $A_1^3$ and $A_3$ respectively, and the two $A_1^3$ and $A_3$ curves bound a common $A_1^2$ sheet. Fin creation points are isolated.
		\end{itemize}
	\end{thm}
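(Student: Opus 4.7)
The plan is to apply the multi-jet transversality theorem to the family of squared distance functions parametrized by the embedding space $\Emb$. Define
\[F : \Emb \times M \times \R^3 \to \R, \qquad F(\iota, p, q) = \tfrac{1}{2}\|\iota(p) - q\|^2,\]
so that for each fixed $(\iota, q)$ the critical points of $F_{\iota, q} : M \to \R$ are exactly the contact points of $q$ on $\Surf_\iota = \iota(M)$, and the structure of the cut locus at $q$ is read off the singularity type of $F_{\iota, q}$ at each contact point, jointly with the constraint that all contact points share the same critical value.

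First I would invoke the Thom--Mather classification for real-valued functions on a $2$-manifold: the $\mathcal{R}$-equivalence classes of finite-codimension function-germs are indexed by types $A_1, A_2, A_3, A_4, D_4^\pm, \ldots$ of codimensions $0, 1, 2, 3, 3, \ldots$ within the stratum of critical jets in $J^k(M, \R)$. Next, for each $j \geq 1$ and each tuple of types $(\tau_1, \ldots, \tau_j)$, I would consider the map sending $(p_1, \ldots, p_j, q)$ (with the $p_i$ distinct) to the multi-jet $(j^k F_{\iota, q}(p_1), \ldots, j^k F_{\iota, q}(p_j))$. Applying the multi-jet version of \Cref{thm:param_trans}, a residual set of $\iota$ makes this map transverse to the product stratum $\Sigma_{\tau_1} \times \cdots \times \Sigma_{\tau_j}$ intersected with the equal-critical-value diagonal. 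The universal transversality hypothesis holds because bump-supported perturbations of $\iota$ near $p_i$ can realize any prescribed variation of $j^k F_{\iota, q}(p_i)$ independently at each $p_i$, a local computation using that $\Surf_\iota$ is locally a graph.

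A codimension count then shows that the stratum of test points $q \in \R^3$ with contact configuration $A_{k_1} \cdots A_{k_j}$ has dimension $3 - \sum_i \mathrm{codim}(\Sigma_{A_{k_i}}) - (j-1)$, where $-(j-1)$ accounts for the equal-distance constraints across the $j$ contact points. Nonnegativity of this dimension selects exactly the configurations $A_1^2, A_1^3, A_1^4, A_3, A_1 A_3$ of dimensions $2, 1, 0, 1, 0$ respectively (with isolated $A_4$ and $D_4^\pm$ points absorbed as boundary points of the $A_3$ edges). Since countable intersections of residual sets remain residual, the conclusion holds on a residual subset of $\Emb$, namely the intersection over the finitely many configurations of dimension $\geq 0$ together with the complement of configurations of negative expected dimension.

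The main obstacle is verifying Whitney regularity of the resulting decomposition, i.e., conditions (a) and (b) at each incidence such as three $A_1^2$ sheets meeting along an $A_1^3$ curve, or an $A_3$ edge terminating at an $A_1 A_3$ fin-creation point. This is handled by writing down the local normal form of the squared distance function at each singularity type and invoking the general principle that transverse preimages of Whitney-stratified sets remain Whitney-stratified. The detailed incidence geometry---in particular that an $A_1^4$ point lies at the junction of six $A_1^2$ sheets and four $A_1^3$ curves, and that the $A_1^3$ and $A_3$ curves meeting at a fin-creation point bound a common $A_1^2$ sheet---is the step most in need of explicit case analysis; it is carried out in full in \cite{mather_distance_1983}, \cite{giblin_symmetry_2000}, \cite{cazals_differential_2005}, and \cite{damon_global_2006}, which we cite as the source of the statement.
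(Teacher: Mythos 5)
The paper gives no proof for this theorem; it is cited directly from \cite{mather_distance_1983}, \cite{giblin_symmetry_2000}, \cite{cazals_differential_2005}, and \cite{damon_global_2006}, and the paper's use of the result treats those references as the source. Your sketch via multi-jet transversality is the right framework, and the dimension formula $3 - \sum_i \mathrm{codim}(\Sigma_{A_{k_i}}) - (j-1)$ is the correct count, but there is a genuine gap in how you arrive at the final list of strata.

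The problem begins with the claim that the critical points of $F_{\iota,q}$ are exactly the contact points of $q$ on $\Surf_\iota$. This is not true: critical points of the squared-distance function to $q$ are all points $p$ where $q - \iota(p)$ is normal to the surface, whereas contact points are only those that realize the \emph{minimum} distance. Because your multi-jet count is carried out over the full critical-jet strata, with no minimality constraint, the codimension count alone would admit contact configurations of type $A_2$ (expected dimension $2$), $A_1 A_2$ (dimension $1$), $A_1^2 A_2$, $A_2^2$, $A_4$, $D_4^\pm$ (all dimension $0$) --- none of which actually occur in the cut locus. The missing filter is the requirement that the germ of $\dist(\cdot,q)^2\rvert_\Surf$ at each contact point must have a local minimum there; this excludes $A_2$, $A_4$, and $D_4^\pm$ (whose normal forms have odd leading terms in the null direction and hence no local minimum) and forces any $A_3$ contact to be of the $A_3^+$ form $x^2 + y^4$ rather than $x^2 - y^4$. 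Only with this constraint does the codimension count reduce to $A_1^2$, $A_1^3$, $A_1^4$, $A_3$, and $A_1 A_3$ and nothing else. Your parenthetical remark that $A_4$ and $D_4^\pm$ points are ``absorbed as boundary points of the $A_3$ edges'' conflates the cut locus with the larger symmetry set (the bifurcation set of the squared-distance family), where those types genuinely appear; they are entirely absent from the cut locus, so no such absorption is needed or possible.
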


	\subsubsection{Distance Critical Points are Generically Non-degenerate}
	
	We are now in a position to prove our main result: generic signed distance functions are topological Morse functions with finitely many critical points that are all non-degenerate (\Cref{thm:generic}). Our proof proceeds by showing that each of the conditions listed in \Cref{def:ndg_crit_dist} holds on a residual set of embeddings. If this is the case, then the set of embeddings on which all conditions hold is an intersection of residual sets, which is itself residual. As we have shown in the previous section, some of the conditions are already true as generic properties of points on the cut locus. Having validated that \ref{def:NDGC_Finite} and \ref{def:NDGC_LIG} hold for all points on generic cut loci in \Cref{thm:finite_contact}, we verify in \Cref{prop:no_A_1_A_3} that the strict ball condition \ref{def:NDGC_StrictBall} is generically satisfied by showing that only $A_1^j$ points may be critical, but not $A_1A_3$ points. 
	
	Further, in \Cref{prop:A1Morse} and \Cref{prop:A1_convex hull}, we also show that \ref{def:NDGC_Convex} and \ref{def:NDGC_MorseGerm} are generic conditions on $A_1^2$, $A_1^3$, and $A_1^4$ type critical points.  
	
	Our proofs rely on the parametric transversality theorem (\Cref{thm:param_trans}), which was also used to derive the generic description of the cut locus in \Cref{thm:generic_cut_locus}.
	We make repeated use of the following transversality argument:
	We first show that, for a given embedding $\io \in \Emb$, a property listed in \Cref{def:ndg_crit_dist} is true if some explicitly constructed map $f^\io : X \to Y$ is transversal to a submanifold $W \subset Y$. 
	To prove that $f^\io \pitchfork W$ for generic $\io$, we consider the function $f^\io$ as a member of a family of maps $F : \Emb \times X \to Y$ parameterized over the space of embeddings $\Emb$, such that $f^\io = F(\io, \cdot)$. By concluding that $F$ is a submersion (and thus transversal to any arbitrary submanifold of $Y$, in particular $W$), \Cref{thm:param_trans} then implies $f^\io \pitchfork W$ for generic $\io$.
	We note that the sufficient conditions for some properties listed in \Cref{def:ndg_crit_dist} correspond to having a void intersection $\Im f^\io \inter W = \emptyset$. Showing this property for generic $\io$ can be simply achieved by finding a dimensional insufficiency of the form $\rank Df^\io + \dim W \leq \dim X + \dim W < \dim Y$, as transversality of $F$ will then imply a void intersection between $\Im f^\io$ and $W$.
	
	We can then conclude that the set of sufficient conditions listed in \Cref{def:ndg_crit_dist} for a critical point to be non-degenerate is satisfied on a residual subset of the space of embeddings $\Emb$.
	
	In the construction of the maps $F$, we choose manifolds $X$ and $Y$ such that the other assumptions of the parametric transversality theorem (\Cref{thm:param_trans}) are satisfied. We choose finite-dimensional spaces for $X$, $Y$, and $W$ so that either $\dim X + \dim W - \dim Y = 0$ or $-1$. $E = \Emb$ and $X$ will be second countable, $F$ will be $C^{s}$ with $s \geq k - 2 > \max \{ 0, \dim X + \dim W - \dim Y \} = 0$. Our constructions of $F$ will also  be sufficiently smooth submersions for the situation at hand; this is shown in \Cref{lem:submersion} (Appendix).
	
	We set out the proof in several steps. The proof mainly deals with the pure distance $\dist(\cdot, \Surf)$, which is sufficient to derive the results for the signed distance. Note that critical points of $d$ directly correspond to those of $\dist(\cdot, \Surf)$ that are not on $\Surf$ (\Cref{def:critical_point_dist_functions}), so in what follows we consider critical points $q \notin \Surf$. 
	We make a few notational remarks before we embark on the proofs.
	
	\begin{rmk}
		To keep concise notations, we may drop the index from the notation of the objects associated to an embedding $\iota$, such as from $\Surf_\iota$ and $\W_\iota$.
	\end{rmk}
	
	\begin{rmk}[Notations for the Gauss map]
		\label{rmk:gauss_map_notation}
		We set up some shorthand notation for denoting the Gauss map. By the Jordan--Brouwer separation theorem for hypersurfaces \citep{lima_jordan-brouwer_1988,mcgrath_smooth_2016}, $\Surf_\iota$ divides $\R^3 \setminus \Surf_\iota$ into an ``inside" region, denoted by $\W_\iota = \W_\iota^-$, which is a bounded open set with possibly multiple connected components, and an ``outside" region $\R^3 \setminus \overline{\W_\iota} = \W_\iota^+$. For $\io \in \Emb$, let $\nbf = \nbf_\io : \Surf \to \mathbb{S}^2$ denote the Gauss map, i.e., the globally-defined unit vector field normal to the surface that points inwards (in the direction of $\W^-$ if $\Surf = \bord \W^-$). Note that $\nbf_\io$ is orthogonal to the image of $D_{m_i} \io : T_{m_i} M \to T_{\iota(m_i)}\R^3$, which is the tangent space of the surface $T_{\iota(m_i)} \Surf = \Im D_m\io$. Since the embedding $\iota$ is implicit in the notation $\nbf$, we drop the index in $\nbf_\io$.
		We abuse notation and also use $\nbf$ to denote the Gauss map when we compose $\nbf:\Surf \to \mathbb{S}^2$ with the projection map of the tubular neighborhood $\mathrm{Tub}(\Surf)$ onto $\Surf$, or the inclusion of $\Sbb^2$ into $\R^3$. Hence $\nbf$ is a shorthand for four different maps, whose domain can either be $\Surf$ or $\mathrm{Tub}(\Surf)$, and whose codomain is either $\mathbb{S}^2$ or $\R^3$. We will specify the source and target spaces explicitly to avoid ambiguity when we reference $\nbf$ in the subsequent proofs.
	\end{rmk}

	\begin{prop} \label{prop:A1Morse} Consider a critical point $q$ of the distance function of type $A_1^j$. Then it is a Min-type critical point  (\Cref{def:q_mintype_crit}) of $\dist(\cdot, \Surf)$.  Furthermore, there is a residual subset of embeddings $\io \in \Emb$ such that $A_1^j$ critical points $q$ of the distance function are non-degenerate critical points of $\dist(\cdot, \Surf)_{\rvert G(q)}$, the smooth function that is the restriction of $\dist(\cdot, \Surf)$ to the germ $G(q)$ of the submanifold of $A_1^j$ points (as given in \Cref{lem:A1_manifolds}). Thus, $A_1^j$ type distance critical points satisfy \ref{def:NDGC_MorseGerm}.
	\end{prop}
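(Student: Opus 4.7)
The plan is to handle the two claims in sequence. For the first claim, I would unpack the $A_1^j$ hypothesis: $|\Gamma(q)| = j < \infty$, and each contact $p_i$ being an $A_1$ singularity of $\dist(\cdot,q)^2|_\Surf$ is equivalent via the expansion \eqref{eq:contact_pt_dist} to the strict ball condition \eqref{eq:strict_ball_condition} holding at $p_i$. Combined with the general position of $\Gamma(q)$ furnished generically by \Cref{thm:finite_contact} (and assumed throughout this section), \Cref{lem:dist_as_min_type} supplies an efficient-LIG representation $\dist(\cdot,\Surf) = \min\{\alpha_1,\ldots,\alpha_j\}$ on a neighborhood of $q$ with $\grad \alpha_i(q) = (q-p_i)/\|q-p_i\|$. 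By \Cref{def:critical_point_dist_functions} and \Cref{lem:equivalent_definitions_critical_point}, $q$ being a distance critical point means $q$ lies in the convex hull of $\Gamma(q)$; translating by $q$ and dividing by the common distance $r = \|q - p_i\|$ gives exactly ``$0$ lies in the convex hull of $\{\grad \alpha_i(q)\}$'', which is the defining property of \Cref{def:q_mintype_crit}.

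For the second claim, I would split by $j \in \{2,3,4\}$ --- the case $j = 1$ is vacuous since a single-contact critical point must equal its contact and so lies on $\Surf$. The case $j = 4$ is trivial since \Cref{lem:A1_manifolds} gives $\dim G(q) = 0$, making Morseness automatic. For $j \in \{2,3\}$, the function $\dist(\cdot,\Surf)|_{G(q)}$ coincides with the smooth $\alpha_i|_{G(q)}$ for any $i$, and I must show its Hessian at $q$ is non-degenerate for a residual set of embeddings. Following the transversality template set out in the preamble of this subsection, I would construct, for each such $j$, a $C^{k-2}$ parametric map
\[ F_j : \Emb \times X_j \longrightarrow Y_j, \]
where $X_j \subset M^j \times \R^3 \times \R_{>0}$ consists of tuples $(m_1,\ldots,m_j,q,r)$ with pairwise distinct $m_i$, and $F_j(\io, m_1,\ldots,m_j,q,r)$ packages (i) the equidistance-and-normality equations $q - \io(m_i) = r\,\nbf(m_i)$ for each $i$, (ii) the criticality condition $0 \in \mathrm{Conv}\{q - \io(m_i)\}$, and (iii) the restricted Hessian $H_q = \Hess\bigl(\dist(\cdot,\Surf_\io)|_{G(q)}\bigr)_q$, viewed as a symmetric bilinear form on $T_q G(q) \cong \R^{4-j}$.

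The codimension-one subset $W \subset Y_j$ of degenerate symmetric $(4-j)\times(4-j)$ forms, combined with the critical-configuration equations, gives a total codimension in $Y_j$ that exceeds $\dim X_j$; a dimension count then shows $\mathrm{Im}\, f^\io_j \cap W = \emptyset$ once $f^\io_j := F_j(\io,\cdot)$ is transversal to $W$, which via \Cref{thm:param_trans} follows from $F_j$ being a submersion. This produces, for each $j \in \{2,3\}$, a residual subset $\mathcal{R}_j \subset \Emb$ on which no $A_1^j$ critical point has degenerate restricted Hessian. The intersection $\mathcal{R}_2 \cap \mathcal{R}_3$ is the claimed residual set.

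The main obstacle is verifying that $F_j$ is a submersion --- specifically, controlling the dependence of the restricted Hessian $H_q$ on the embedding. Using the Weingarten-type formula for $\Hess \alpha_i$ at a point satisfying the strict ball condition, $H_q$ depends linearly on the second fundamental forms of $\Surf_\io$ at each $p_i$. I would therefore exhibit, for each $i$, compactly supported variations of $\io$ near $p_i$ that hold $p_i$ and $\nbf(p_i)$ fixed while independently tuning every entry of the second fundamental form there; paired with further variations that independently perturb the positions and normals to cover the remaining output components, these produce a full-rank differential of $F_j$ in the embedding variable. Such a bump-function construction is standard but bookkeeping-heavy, and is naturally packaged into the auxiliary \Cref{lem:submersion} advertised in the appendix.
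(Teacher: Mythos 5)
Your proof of the first claim matches the paper's: unpack the $A_1^j$ hypothesis to get the efficient-LIG representation via \Cref{lem:dist_as_min_type}, then translate ``$q$ lies in the convex hull of $\Gamma(q)$'' (from \Cref{lem:equivalent_definitions_critical_point}) into ``$0$ lies in the convex hull of $\{\grad\alpha_i(q)\}$'' using $\grad\alpha_i(q) = (q-p_i)/r$.

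For the genericity of non-degeneracy, you take a genuinely different route. The paper keeps the target space $Y$ small (normals and offset points only), chooses $W$ with $\mathrm{codim}\,W = \dim X$ exactly, and shows by an explicit block-matrix computation --- combined with the Riemannian-Hessian formula of \Cref{lem:riem_hess} --- that the transversality condition $f^\io \pitchfork W$ \emph{at an intersection point} is equivalent to the non-degeneracy of $\Hess\bigl(\dist(\cdot,\Surf)|_{G(q)}\bigr)$. In other words, the derivative rank condition underlying transversality carries the Hessian information, and no Hessian ever appears in the codomain of $F$. You instead enlarge $Y_j$ to include the restricted Hessian (via its shape-operator expression, as you note), place the degenerate-determinant locus inside $W$, and argue that $\Im f^\io$ generically misses $W$ by a dimension count. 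Your approach avoids the block-matrix manipulations but pushes the entire burden onto the submersion lemma: you must be able to tune the second fundamental form at each $p_i$ independently while holding the other outputs fixed, which the paper's \Cref{lem:submersion} does in fact supply, so this is feasible. Two technicalities would still need tidying before this is a complete proof: (a) the criticality condition $0 \in \mathrm{Conv}\{q - \io(m_i)\}$ is a set-membership inequality, not a system of smooth equations, so it cannot literally be a component of $F_j$ nor a defining equation of $W$ --- the paper implicitly sidesteps this by encoding only the smooth equality constraints (e.g.\ coplanarity for $A_1^3$) and thereby proving non-degeneracy at \emph{all} $A_1^j$ medial-axis points, not merely the critical ones; (b) the locus of degenerate symmetric $(4-j)\times(4-j)$ forms is a stratified algebraic variety singular along the lower-rank strata, not a submanifold, so you would need a stratum-by-stratum transversality argument (or observe that the deeper strata have even higher codimension and are therefore also generically avoided). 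Neither issue is fatal, but neither is addressed in the proposal.
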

	\begin{proof}
		Recall for $q$ an $A_1^j$ type, we have an efficient-LIG representation $\dist(\cdot, \Surf) = \min\{\alpha_1, \ldots, \alpha_j\}$, and the germ of the stratum $G(q)$ is a $(4-j)$ dimensional submanifold (\Cref{lem:A1_manifolds}). Recall \Cref{lem:equivalent_definitions_critical_point} that $q$ is a distance critical point iff $q$ is in the convex hull of its contact points: i.e., there is a set of non-negative coefficients $t_i$ that sum to one, such that $q = \sum_{i=1}^j t_i p_i$. Since $\nabla \alpha_i \rvert_{q} = (q-p_i)/r$ where $r = \|p_1 - q \| = \cdots = \|p_j - q \|$, we can rewrite this condition as $\sum_{i=1}^j t_i \nabla \alpha_i \rvert_{q} = 0$. In other words, the origin is contained in the convex hull of $\{ \nabla \alpha_i\rvert_q\}_{i = 1,\ldots,j}$. Thus, $q$ is a Min-type critical point of $\dist(\cdot, \Surf)$ (\Cref{def:q_mintype_crit}); furthermore, $q$ is a critical point of  $\dist(\cdot, \Surf)_{\lvert G_f(q)}$ (\Cref{lem:q_mintype_crit}). 
		
		We now show that the Riemannian Hessian of the pure distance restricted to the submanifold $G(q)$ is non-degenerate at $q$. 
		Without loss of generality, we assume that $q \in \W_\io^-$ is ``inside".
		Note that the explicit expression of the Riemannian Hessian of distance functions restricted on $G(q)$ is given in  \Cref{lem:shape_operators_levels} and \Cref{lem:riem_hess} of the Appendix, derived using the shape operator of offset surfaces. We proceed by a case by case analysis for $j = 2,3,4$.

		\textbf{$A_1^2$ Critical Points}. We start by establishing the proof for a point $q$ with two $A_1$ contact points. In what follows, $M^{(i)}$ refers to the set of $i$-tuples of distinct points in $M$.
		
		Let $X = M^{(2)} \times \R_{>0}$ and $Y = (\mathbb{S}^2)^2 \times (\R^3)^2$. Consider the submanifold $W = \{y = (y_1,\ldots, y_4)\in Y ~|~ y_1 + y_2 = 0 \text{ and } y_3 - y_4 = 0 \}$ of dimension $5$ in $Y$.
		Define 
		\begin{equation}
			F:
			\begin{array}{ccc}
				\Emb \times X & \to & Y \\
				(\io, m_1,m_2,r) & \mapsto & (n_1, n_2, p_1 + r \, n_1, p_2 + r \, n_2 )
			\end{array}
		\end{equation}
		where $p_i = \io(m_i)$ and $n_i = \nbf(p_i)$. In the expression $p_i + r \, n_i$, $n_i$ is viewed in $\R^3$.
		
		Let us explain the geometric meaning of $f^\io(x) \in W$, where $f^\io = F(\io,\cdot)$:
		Given some embedding $\io$, if $q$ is of type $A_1^2$ with contact points $\{p_1,p_2\}$, then setting $m_i = \io^{-1}(p_i)$, $r = \dist(q, \Surf)$ and $x = (m_1,m_2,r) = x(q)$, we have $n_1 + n_2 = 0$ and $p_1 + r \, n_1 = p_2 + r \, n_2 = q$. This implies that $f^\io(x(q)) \in W$. However, note that the relationship $f^\io(x) \in W$ also describes points $x \in X$ that do not necessarily correspond to $A_1^2$ critical points, as it does not specify, for instance, whether the $p_i$ should be the closest points on the surface. 
		
		We investigate what it means for $f^\io$ to be transversal to $W$.
		For a fixed embedding $\io$, we compute the differential of $F(\iota, \cdot)$ with respect to $x = (m_1,m_2,r)$, obtaining
		
		\begin{equation*}
			D_x F(\io, \cdot) =
			\begin{blockarray}{*{3}{c} l}
				\begin{block}{cccl}
					T_{m_1} M & T_{m_2} M & ~~\R~~ & \\
				\end{block}
				\begin{block}{[ccc] l}
					d_{p_1} \nbf \circ d_{m_1} \io & 0 & 0  \topstrut  & T_{n_1} \mathbb{S}^2 \\
					0 & d_{p_2} \nbf \circ d_{m_2} \io & 0 & T_{n_2} \mathbb{S}^2   \\
					(\Id + r \, d_{p_1} \nbf) \circ d_{m_1} \io & 0 & n_1 & \R^3 \\
					0 & (\Id + r \, d_{p_2} \nbf) \circ d_{m_2} \io & n_2 \botstrut & \R^3  \\
				\end{block}
			\end{blockarray}.
		\end{equation*}

		Then $f^\io \pitchfork W$ if and only if for $x \in X$ such that $f^\io(x) \in W$, the following map is of rank $\mathrm{codim} \, W = 5$:
		
		\begin{equation*}
			\begin{blockarray}{*{3}{c} l}
				\begin{block}{cccl}
					T_{m_1} M & T_{m_2} M & ~~\R~~ & \\
				\end{block}
				\begin{block}{[ccc] l}
					d_{p_1} \nbf \circ d_{m_1} \io & d_{p_2} \nbf \circ d_{m_2} \io & 0 \topstrut  & \R^3 \\
					(\Id + r \, d_{p_1} \nbf) \circ d_{m_1} \io & - (\Id + r \, d_{p_2} \nbf) \circ d_{m_2} \io & n_1 - n_2 \botstrut & \R^3 \\
				\end{block}
			\end{blockarray}.
		\end{equation*}
		Up to the isomorphism 
		\begin{equation*}
			\begin{blockarray}{*{3}{c} l}
				\begin{block}{cccl}
					T_{m_1} M & T_{m_2} M & ~~\R~~ & \\
				\end{block}
				\begin{block}{[ccc] l}
					d_{m_1} \io & 0 & 0 \topstrut  & T_{p_1} \Surf \\
					0 & d_{m_2} \io & 0 & T_{p_2} \Surf \\
					0 & 0 & 1 \botstrut & \R \\
				\end{block}
			\end{blockarray},
		\end{equation*} 
		we can consider this map instead
		\begin{equation*}
			\begin{blockarray}{*{3}{c} l}
				\begin{block}{cccl}
					T_{p_1} \Surf & T_{p_2} \Surf & ~~\R~~ & \\
				\end{block}
				\begin{block}{[ccc] l}
					d_{p_1} \nbf  & d_{p_2} \nbf & 0 \topstrut  & \R^3 \\
					(\Id + r \, d_{p_1} \nbf)  & - (\Id + r \, d_{p_2} \nbf)  & n_1 - n_2 \botstrut & \R^3 \\
				\end{block}
			\end{blockarray}.
		\end{equation*}
		For $x$ such that $f^\io(x) \in W$, we have in particular that $n_2 = - n_1$ so that $n_1 - n_2 = 2 n_1$, and the tangent spaces $T_{p_1} \Surf \simeq T_{p_2} \Surf$ are parallel so they may be identified as a single space denoted $T_{p_i} \Surf$. Still abusing notation, we may consider $d_{p_1} \nbf$ and $d_{p_2} \nbf$ as maps $T_{p_i} \Surf \to T_{p_i} \Surf$.
		
		The previous rank condition is then equivalent to the following ones: 
		\begin{align*}
			f^\io \pitchfork W & \Leftrightarrow &\forall\ x \in X \text{ s.t. } f^\io(x) \in W, \quad & \begin{blockarray}{*{3}{c} l}
				\begin{block}{cccl}
					T_{p_i} \Surf & T_{p_i} \Surf & ~~\R~~ & \\
				\end{block}
				\begin{block}{[ccc] l}
					d_{p_1} \nbf & d_{p_2} \nbf & 0 \topstrut  & T_{p_i} \Surf \\
					\Id + r \, d_{p_1} \nbf & - (\Id + r \, d_{p_2} \nbf) & n_1 - n_2 \botstrut & \R^3 \\
				\end{block}
			\end{blockarray}
			\text{ is of rank } 5 \\
			& \Leftrightarrow & \forall\ x \in X \text{ s.t. } f^\io(x) \in W, \quad & \begin{blockarray}{*{3}{c} l}
				\begin{block}{cccl}
					T_{p_i} \Surf & T_{p_i} \Surf & ~~\R~~ & \\
				\end{block}
				\begin{block}{[ccc] l}
					d_{p_1} \nbf & d_{p_2} \nbf & 0 \topstrut  & T_{p_i} \Surf \\
					\Id + r \, d_{p_1} \nbf & - (\Id + r \, d_{p_2} \nbf) & 0 & T_{p_i} \Surf  \\
					0 & 0 & 2 \botstrut & \R \, n_1 \\
				\end{block}
			\end{blockarray}
			\text{ is of rank } 5 \\
			& \Leftrightarrow & \forall\ x \in X \text{ s.t. } f^\io(x) \in W, \quad & \begin{blockarray}{cc l}
				\begin{block}{ccl}
					T_{p_i} \Surf & T_{p_i} \Surf & \\
				\end{block}
				\begin{block}{[cc] l}
					d_{p_1} \nbf & d_{p_2} \nbf \topstrut  & T_{p_i} \Surf \\
					\Id + r \, d_{p_1} \nbf & - (\Id + r \, d_{p_2} \nbf) & T_{p_i} \Surf \\
				\end{block}
			\end{blockarray}
			\text{ is of rank } 4.
		\end{align*}
		
		The last condition involves a block matrix whose determinant can be expressed simply, thanks to a formula due to \cite{silvester_determinants_2000}:
		\[AC = CA \quad \Rightarrow \quad \det 
		\begin{pmatrix}
			A & B \\
			C & D
		\end{pmatrix} = \det(AD - CB).\]
		Since $d_{p_1} \nbf$ commutes with $(\Id + r \, d_{p_1} \nbf)$, we have
		\[f^\io \pitchfork W \Leftrightarrow \forall\ x \in X \text{ s.t. } f^\io(x) \in W, \quad \det \big(d_{p_1} \nbf \circ (\Id + r \, d_{p_2} \nbf) + (\Id + r \, d_{p_1} \nbf) \circ d_{p_2} \nbf \big) \neq 0.\]
		By composing with the isomorphisms $(\Id + r \, d_{p_1} \nbf)^{-1}$ and $(\Id + r \, d_{p_2} \nbf)^{-1}$ on the left and the right sides, this means
		\[f^\io \pitchfork W 
		\Leftrightarrow \forall\ x \in X \text{ s.t. } f^\io(x) \in W, \quad \det \big((\Id + r \, d_{p_1} \nbf)^{-1} \circ d_{p_1} \nbf +  d_{p_2} \nbf \circ (\Id + r \, d_{p_2} \nbf)^{-1} \big) \neq 0.\]
		Finally, $d_{p_1} \nbf$ also commutes with $(\Id + r \, d_{p_1} \nbf)^{-1}$, and so
		\[f^\io \pitchfork W \Leftrightarrow \forall\ x \in X \text{ s.t. } f^\io(x) \in W, \quad \det \big( d_{p_1} \nbf \circ (\Id + r \, d_{p_1} \nbf)^{-1} + d_{p_2} \nbf \circ (\Id + r \, d_{p_2} \nbf)^{-1} \big) \neq 0.\]

		Notice that the last inequality gives non-degeneracy of the Riemannian Hessian from \Cref{lem:riem_hess}. So, for generic embeddings $\io$, we obtain
		\[f^\io \pitchfork W \Rightarrow \Hess^\mathrm{Riem} g(q) \text{ is non-degenerate for any critical point } q \text{ of type } A_1^2. \]
		
		It remains to verify that $f^\io \pitchfork W$ for generic embeddings $\io \in \Emb$. But this is true by the transversality argument stated above, because $F$ is a submersion (\Cref{lem:submersion}).

		\textbf{$A_1^3$ Critical Points}. Likewise, for a point $q$ of type $A_1^3$, we define some well-chosen spaces and maps.
		
		Let $X = M^{(3)} \times \R_{>0}$ and $Y = (\mathbb{S}^2)^3 \times (\R^3)^3$. For two vectors $u,v \in \R^3$, we denote the usual scalar product by $u \cdot v$ and the vector product by $u \times v$. Consider the submanifold 
		\[ W = \{y = (y_1,\ldots, y_6)\in Y ~|~ y_3 \cdot (y_1 \times y_2) = 0 \text{ and } y_4 = y_5 = y_6 \} \]
		which is of dimension in $8$ and codimension $7$ in $Y$.
		Note that the triple product is equal to $y_3 \cdot (y_1 \times y_2) = y_1 \cdot (y_2 \times y_3) = y_2 \cdot (y_3 \times y_1)$, and that it vanishes if and only if one of the three vectors belongs the subspace spanned by the two others (which is of dimension $1$ or $2$). There is a map $\ell$ such that $W = \{\ell = 0\} \inter Y \subset E$, with $E = (\R^3)^6$ and
		$\ell(e_1,\ldots, e_6) = (e_3 \cdot (e_1 \times e_2), e_4 - e_5, e_4 - e_6)$. The tangent space to $\{\ell = 0\}$ at a point in $E$ is given by 
		\begin{align*}
			\Ker \grad \ell & = \{ (\eps_1,\ldots, \eps_6) \in E ~|~ (e_2 \times e_3) \cdot \eps_1 + (e_3 \times e_1) \cdot \eps_2 + (e_1 \times e_2) \cdot \eps_3 = 0 \} \\ 
			& \inter \{  (\eps_1,\ldots, \eps_6) \in E ~|~ \eps_4 - \eps_5 = 0 \text{ and } \eps_4 - \eps_6 = 0\}.
		\end{align*}
		
		Now, we introduce the following map:
		\begin{equation}
			F:
			\begin{array}{ccc}
				Emb \times X & \to & Y \\
				(\io, m_1,m_2,m_3,r) & \mapsto & (n_1, n_2, n_3, p_1 + r n_1, p_2 + r n_2 , p_3 + r n_3)
			\end{array}.
		\end{equation}
		
		Again, we study what it means for $f^\io = F(\io,\cdot)$ to be transversal to $W$.
		
		Given an embedding $\io$, the differential at $x = (m_1,m_2,m_3,r)$ is equal to
		
		\begin{equation*}
			D_x F(\io, x) =
			\begin{blockarray}{cccc l}
				\begin{block}{cccc l}
					T_{m_1} M & T_{m_2} M & T_{m_3} M & ~~\R~~ & \\
				\end{block}
				\begin{block}{[cccc] l}
					d_{p_1} \nbf \circ d_{m_1} \io & 0 & 0 & 0 \topstrut  & T_{n_1} \mathbb{S}^2 \\
					0 & d_{p_2} \nbf \circ d_{m_2} \io & 0 & 0 & T_{n_2} \mathbb{S}^2   \\
					0 & 0 & d_{p_3} \nbf \circ d_{m_3} \io & 0 & T_{n_3} \mathbb{S}^2   \\
					(\Id + r \, d_{p_1} \nbf) \circ d_{m_1} \io & 0 & 0 & n_1 & \R^3 \\
					0 & (\Id + r \, d_{p_2} \nbf) \circ d_{m_2} \io & 0 & n_2 & \R^3  \\
					0 & 0 & (\Id + r \, d_{p_3} \nbf) \circ d_{m_3} \io  & n_3 \botstrut & \R^3  \\
				\end{block}
			\end{blockarray}.
		\end{equation*}
		
		Then,
		as before, we study its projection on $\Ker(\grad \ell)^\perp = \Im((\grad \ell)^T)$ in $E$. We obtain that $f^\io \pitchfork W$ if and only $\forall\ x \in X \text{ s.t. } f^\io(x) \in W$,
		\begin{equation*}
			\begin{blockarray}{ccccl}
				\begin{block}{ccccl}
					T_{p_1} \Surf & T_{p_2} \Surf & T_{p_3} \Surf & ~~\R~~ & \\
				\end{block}
				\begin{block}{[cccc] l}
					(n_2 \times n_3) \cdot d_{p_1} \nbf & (n_3 \times n_1) \cdot d_{p_2} \nbf & (n_1 \times n_2) \cdot d_{p_3} \nbf & 0 \topstrut  & \R \\
					\Id + r \, d_{p_1} \nbf & - (\Id + r \, d_{p_2} \nbf) & 0 & n_1 - n_2 & \R^3 \\
					\Id + r \, d_{p_1} \nbf & 0 & - (\Id + r \, d_{p_3} \nbf) & n_1 - n_3 \botstrut & \R^3 \\
				\end{block}
			\end{blockarray}
			\text{ is of rank } 7.
		\end{equation*}
		In particular, if $q$ is of type $A_1^3$ with contact points $\{p_1,p_2,p_3\}$, we set $m_i = \io^{-1}(p_i)$, $r = \dist(q, \Surf)$ and $x = (m_1,m_2,m_3,r) = x(q)$. By the geometric relations, we have $n_3 \cdot (n_1 \times n_2) = 0$ (because they span the same two-dimensional subspace) and $p_1 + r \, n_1 = p_2 + r \, n_2 = p_3 + r \, n_3 = q$.
		This implies that $f^\io(x(q)) \in W$. Then, because the contact points are of $A_1$ type and satisfy the strict ball condition \eqref{eq:strict_ball_condition}, the following map is an isomorphism:
		\begin{equation*}
			\begin{blockarray}{ccccl}
				\begin{block}{ccccl}
					T_{p_1} \Surf & T_{p_2} \Surf & T_{p_3} \Surf & ~~\R~~ & \\
				\end{block}
				\begin{block}{[cccc] l}
					\Id + r \, d_{p_1} \nbf & 0 & 0 & 0 \topstrut & T_{p_1} \Surf \\
					0 & \Id + r \, d_{p_2} \nbf & 0 & 0 & T_{p_2} \Surf \\
					0 & 0 & \Id + r \, d_{p_3} \nbf & 0 & T_{p_3} \Surf \\
					0 & 0 & 0 & 1 \botstrut & \R \\
				\end{block}
			\end{blockarray}.
		\end{equation*}
		By applying the inverse of this isomorphism to the previous map on the right side, we obtain that, if $f^\io \pitchfork W$, then at the point $x(q)$,
		\begin{equation*}
			\begin{blockarray}{ccccl}
				\begin{block}{ccccl}
					T_{p_1} \Surf & T_{p_2} \Surf & T_{p_3} \Surf & ~~\R~~ & \\
				\end{block}
				\begin{block}{[cccc] l}
					t_{2,3} \cdot u_1 & t_{3,1} \cdot u_2 & t_{1,2} \cdot u_3 & 0 \topstrut & \R \\
					\Id & - \Id & 0 & n_1 - n_2 & \R^3 \\
					\Id & 0 & - \Id & n_1 - n_3 \botstrut & \R^3 \\
				\end{block}
			\end{blockarray}
			\text{ is of rank } 7, \text{ i.e., is full-rank},
		\end{equation*}
		where $t_{i,j} = n_i \times n_j$ and $u_i =  d_{p_i} \nbf \circ (\Id + r \, d_{p_i} \nbf)^{-1}$.
		
		Now, we use the block matrix formula
		\[D \text{ invertible } \quad \Rightarrow \quad \det 
		\begin{pmatrix}
			A & B \\
			C & D
		\end{pmatrix} = \det(A - B D^{-1} C) \det(D)\]
		applied to 
		\[A = 
		\begin{blockarray}{ccl}
			\begin{block}{ccl}
				T_{p_1} \Surf & T_{p_2} \Surf & \\
			\end{block}
			\begin{block}{[cc] l}
				t_{2,3} \cdot u_1 & t_{3,1} \cdot u_2 \topstrut & \R \\
				\Id & - \Id \botstrut & \R^3 \\
			\end{block}
		\end{blockarray}
		\qquad
		B = 
		\begin{blockarray}{ccl}
			\begin{block}{ccl}
				T_{p_3} \Surf & \R & \\
			\end{block}
			\begin{block}{[cc] l}
				t_{1,2} \cdot u_3 & 0 \topstrut & \R \\
				0 & n_1 - n_2 \botstrut & \R^3 \\
			\end{block}
		\end{blockarray}
		\]
		
		\[
		C = 
		\begin{blockarray}{ccl}
			\begin{block}{ccl}
				T_{p_1} \Surf & T_{p_2} \Surf & \\
			\end{block}
			\begin{block}{[cc] l}
				\Id & 0 \topstrut \botstrut & \R^3 \\
			\end{block}
		\end{blockarray}
		\qquad
		D = 
		\begin{blockarray}{ccl}
			\begin{block}{ccl}
				T_{p_3} \Surf & \R & \\
			\end{block}
			\begin{block}{[cc] l}
				-\Id & n_1 - n_3 \topstrut \botstrut & \R^3 \\
			\end{block}
		\end{blockarray}.
		\]
		$D$ is indeed invertible because $n_1 - n_3$ has a non-zero component along $n_3$ because $n_1 \neq n_3$ (otherwise $p_1 = p_3$). 
		
		We consider generic embeddings in the sense of \Cref{thm:generic_cut_locus}, which allows us to work with the generic cut locus.
		Let us introduce $t$, a unit vector directing $T_q G(q)$ and known to be perpendicular to the subspace spanned by $\{n_1,n_2,n_3\}$.
		In the basis $(t,t \times n_3, n_3)$, we have then
		\[
		D = 
		\begin{blockarray}{cccl}
			\begin{block}{cccl}
				t & t \times n_3 & \R & \\
			\end{block}
			\begin{block}{[ccc] l}
				-1 & 0 & 0 \topstrut & t \\
				0 & -1 & t \cdot t_{3,1} & t \times n_3 \\
				0 & 0 & n_1 \cdot n_3 - 1 \botstrut & n_3 \\
			\end{block}
		\end{blockarray}
		\]
		so that
		\[
		D^{-1} = 
		\begin{blockarray}{cccl}
			\begin{block}{cccl}
				t & t \times n_3 & n_3 & \\
			\end{block}
			\begin{block}{[ccc] l}
				-1 & 0 & 0 \topstrut & t \\
				0 & -1 & \frac{t \cdot t_{3,1}}{n_1 \cdot n_3 - 1} & t \times n_3 \\
				0 & 0 & \frac{1}{n_1 \cdot n_3 - 1} \botstrut & \R \\
			\end{block}
		\end{blockarray}.
		\]
		After some computations with compatible choices of bases, we can explicit the expressions of $B$, $C$, and $A$ as well. For instance, the product space $T_{p_1} \Surf \times T_{p_2} \Surf$ is spanned by the basis loosely denoted by $((t, t\times n_1), (t, t \times n_2))$. We obtain that
		\[A - B D^{-1} C =
		\begin{blockarray}{ccccl}
			\begin{block}{ccccl}
				t & t \times n_1 & t & t \times n_2 & \\
			\end{block}
			\begin{block}{[cccc] l}
				(a) & (b) & (c) & (d) \topstrut & \R \\
				1 & 0 & -1 & 0 & t \\
				0 & (e) & 0 & -1 & t \times n_2 \\
				0 & (f) & 0 & 0 \botstrut & n_2 \\
			\end{block}
		\end{blockarray}
		\]
		where
		\begin{align*}
			(a) &= t_{2,3} \cdot u_1(t) - t_{2,1} \cdot u_3(t) \\
			(b) &= t_{2,3} \cdot u_1(t \times n_1) - t_{2,1} \cdot u_3(t \times n_3) \left( n_1 \cdot n_3 + \frac{(t \cdot t_{3,1})^2}{n_1 \cdot n_3 - 1} \right) \\
			(c) &= t_{3,1} \cdot u_2(t) \\
			(d) &= t_{3,1} \cdot u_2(t \times n_2) \\
			(e) &= n_1 \cdot n_2 - \frac{(t \cdot t_{2,1})(t \cdot t_{1,3})}{n_1 \cdot n_3 - 1} \\
			(f) &= t \cdot t_{1,2} - \frac{(t \cdot t_{1,3})(n_1 \cdot n_2 - 1)}{n_1 \cdot n_3 - 1}.
		\end{align*}
		Therefore, for generic embeddings, being transversal to $W$ means 
		\begin{align*}
			f^\io \pitchfork W \text{ at } x(q) \quad & \Rightarrow \quad \det(A - B D^{-1} C) \neq 0 \\
			&\Rightarrow \quad (f) ~
			\begin{vmatrix}
				(a) & (c) & (d) \\
				1 & -1 & 0 \\
				0 & 0 & -1
			\end{vmatrix} \neq 0 \\
			&\Rightarrow \quad (f) \, \left( \, (a) + (c) \, \right) \neq 0 \\
			&\Rightarrow \quad \frac{t \cdot t_{1,2}}{n_1 \cdot n_2 - 1} \neq \frac{t \cdot t_{1,3}}{n_1 \cdot n_3 - 1} \text{ and } 
			t_{2,3} \cdot u_1(t) + t_{3,1} \cdot u_2(t) + t_{1,2} \cdot u_3(t) \neq 0
		\end{align*}
		Note that, if $\widehat{(n_1, n_2)}$ denotes the algebraic angle formed by $n_1,n_2$ in the plane orthogonal to $G(q)$ and oriented by $t$, we have $t \cdot t_{1,2} = \sin \widehat{(n_1, n_2)}$ and $n_1 \cdot n_2 = \cos \widehat{(n_1, n_2)}$. Hence the first condition involves some function $\frac{\sin \theta}{\cos \theta - 1}$ and just means that the angles $\widehat{(n_1, n_2)}$ and $\widehat{(n_1,n_3)}$ should be different, i.e., that $n_2 \neq n_3$, which is true. In the second condition, we recognize the result of \Cref{lem:riem_hess}, so that, for generic embeddings,
		\[f^\io \pitchfork W \Rightarrow \Hess^\mathrm{Riem} g(q) \text{ is non-degenerate for any critical point } q \text{ of type } A_1^3. \]
		Here again, $F$ is a submersion by \Cref{lem:submersion}, thus $f^\io \pitchfork W$ for generic embeddings $\io \in \Emb$, by the transversality argument stated above.
		
		\textbf{$A_1^4$ Critical Points}. There is nothing to check here, because the stratum of medial axis $G(q)$ reduces to a single point $\{q\}$ itself.
		
	\end{proof}
	
	\begin{prop}	\label{prop:A1_convex hull}
		There is a residual subset of embeddings $\iota \in \Emb$ for which a distance critical point $q$ of type $A_1^2$, $A_1^3$, or $A_1^4$ lies strictly in the interior of the convex hull of $\Gamma(q)$ (i.e., it satisfies \ref{def:NDGC_Convex}).
	\end{prop}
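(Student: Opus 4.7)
The plan is to mimic the transversality approach of \Cref{prop:A1Morse}: for each type $A_1^j$, I will express the failure of the strict interior property as an extra cutting condition in the codomain $Y$ added to the criticality condition, verify by dimension counting that this augmented locus has negative expected dimension in $X$, and then invoke \Cref{thm:param_trans} together with the submersion property of the parametric family $F$ (\Cref{lem:submersion}) to conclude that the bad set is empty for a residual set of embeddings.

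The $A_1^2$ case requires no transversality. A convex combination $t_1 \nbf(p_1) + t_2 \nbf(p_2) = 0$ with $t_1 + t_2 = 1$ and $|\nbf(p_i)|=1$ forces $t_1 = t_2 = 1/2$ and $\nbf(p_2) = -\nbf(p_1)$; combined with $p_i + r\,\nbf(p_i) = q$, this yields $q = (p_1 + p_2)/2$, the midpoint of the nondegenerate segment $[p_1, p_2]$, which is strictly interior.

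For the $A_1^3$ case, I would work with the map $F \colon \Emb \times (M^{(3)} \times \R_{>0}) \to (\Sbb^2)^3 \times (\R^3)^3$ from the $A_1^3$ portion of the proof of \Cref{prop:A1Morse}. A degenerate critical point has a vanishing coefficient in $\sum t_i \nbf(p_i) = 0$, which by the $A_1^2$ argument above forces an antipodal pair $y_i + y_j = 0$. For each choice of pair $(i,j)$ I would consider the smooth submanifold
\[
W'_{ij} \;=\; \{y_4 = y_5 = y_6\} \cap \{y_i + y_j = 0\} \;\subset\; Y,
\]
which has codimension $6 + 2 = 8$ (the two groups of equations involve disjoint coordinates of $Y$). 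Any degenerate $A_1^3$ critical point whose vanishing coefficient is the third index gives a point of $\mathrm{Im}(f^\iota) \cap W'_{ij}$. Because $F$ is a submersion (\Cref{lem:submersion}), \Cref{thm:param_trans} provides a residual set of embeddings on which $f^\iota \pitchfork W'_{ij}$, and since $\dim X = 7 < 8 = \mathrm{codim}\, W'_{ij}$, transversality forces the intersection to be empty. Intersecting over the three pairs preserves residuality.

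For the $A_1^4$ case, I would take $X = M^{(4)} \times \R_{>0}$ (dimension $9$), $Y = (\Sbb^2)^4 \times (\R^3)^4$, $F(\iota, m_1,\ldots,m_4,r) = (n_1,\ldots,n_4,\, p_1 + r n_1,\ldots, p_4 + r n_4)$, and $W_4 = \{y_5 = y_6 = y_7 = y_8\}$ of codimension $9$. If $q$ is critical with some $t_\ell = 0$, the complementary triple of normals $\{y_i, y_j, y_k\}$ must be coplanar with the origin, so $y_k \cdot (y_i \times y_j) = 0$; augmenting $W_4$ with this codimension-$1$ equation gives a smooth codimension-$10$ submanifold $W'_{4,\ell}$ of $Y$. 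Since $\dim X = 9 < 10$, the same submersion-plus-dimension-insufficiency argument excludes this degeneracy for a residual set of embeddings, and intersecting over the four choices of $\ell$ preserves residuality. The residual subset of $\Emb$ asserted by the proposition is then the intersection of these seven residual sets. The main subtlety to check carefully is that each augmented locus really is a smooth submanifold of the claimed codimension, i.e., that the added equations are independent from those already defining the criticality condition; this is immediate here by disjointness of coordinates between the normal and position components of $Y$.
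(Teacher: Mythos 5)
Your proposal is correct and takes essentially the same approach as the paper: a direct midpoint observation disposes of $A_1^2$, and for $A_1^3$ and $A_1^4$ you run the same dimension-insufficiency argument, showing $\mathrm{codim}\,W' > \dim X$ so that the submersion property of $F$ together with the parametric transversality theorem forces a generically empty intersection between $\mathrm{Im}\,f^\iota$ and the boundary-of-convex-hull locus. You helpfully spell out the $A_1^4$ case, which the paper only indicates by analogy (noting only that the cited submersion lemma is stated for the two- and three-contact-point maps, so the four-point version relies on the routine extension mentioned at the end of that lemma's proof).
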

	
	\begin{proof}
		A critical point $q$ of type $A_1^2$ always lies midway between the two points $p_1$ and $p_2$ in $\Gamma(q)$, so that the origin is always strictly inside of the convex hull spanned by $\{\grad \alpha_1(q), \grad \alpha_2(q)\} = \{n_1, n_2\}$.
		
		For a critical point $q$ of type $A_1^3$, we re-introduce the same spaces and maps as above, but with a slight modification to $W$.
		Let $X = M^{(3)} \times \R_{>0}$, $Y = (\mathbb{S}^2)^3 \times (\R^3)^3$, $W = \{y = (y_1,\ldots, y_6)\in Y ~|~ y_3 \cdot (y_1 \times y_2) = 0 \text{ and } y_4 = y_5 = y_6 \}$ and
		\begin{equation*}
			F:
			\begin{array}{ccc}
				Emb \times X & \to & Y \\
				(\io, m_1,m_2,m_3,r) & \mapsto & (n_1, n_2, n_3, p_1 + r n_1, p_2 + r n_2 , p_3 + r n_3)
			\end{array},
		\end{equation*} 
		as before. Consider the subspace of $W$ defined by
		\[W_{1,2} = \{ w \in W ~|~ w_1 + w_2 = 0 \},\]
		which is of dimension in $7$ and codimension $8$ in $Y$. If $f^\io(x(q))$ belongs to $W_{1,2}$, this corresponds geometrically to a right-angled triangle formed by $n_1,n_2,n_3$, where the right angle sits on $n_3$ while $n_1 + n_2 = 0$. 
		
		Actually, we already know that $F$ is a submersion (\Cref{lem:submersion}), hence for generic $\io$, $f^\io \pitchfork W_{1,2}$. But $\rank D_x f^\io \leq \dim X = 7$, so that $\rank D_x f^\io + \dim W \leq 7 + 7 < 15 = \dim Y$,
		which means that generically $\Im f^\io$ does not intersect $W_{1,2}$. Likewise, generically, $\Im f^\io$ does not intersect any of $W_{1,2}$, $W_{1,3}$ or $W_{2,3}$. In geometric terms, we get that for generic embeddings, there is no $A_1^3$ critical point lying on the boundary of the convex hull.
		
		Similarly, for the $A_1^4$ case, by introducing the relevant map $F$ and spaces $W_{i,j,k}$, we obtain a generic void intersection between images and spaces and conclude that for generic embeddings, there is no $A_1^4$ critical point lying on the boundary of the convex hull, i.e., belonging to a plane spanned by only $3$ of the $4$ contact points.
	\end{proof}

	Now, we move on to prove that for generic embeddings, critical points satisfy the strict ball condition \eqref{eq:strict_ball_condition} at any of their contact points. In particular, this excludes $A_3$ contact singularities (see \cite{cazals_differential_2005}). We must therefore show that critical points only admit $A_1$ contact points and no $A_3$ contact points, or equivalently, that there are no critical points of type $A_1 A_3$, but only of types $A_1^j$, $j = 2,3,4$.
	
	\begin{prop}
		\label{prop:no_A_1_A_3}
		There is a residual subset of embeddings $\iota \in \Emb$ that not only satisfies the properties set out in \Cref{thm:generic_cut_locus}, but also the following: the critical points of $d = d_\iota$ are of types $A_1^2$, $A_1^3$, or $A_1^4$ only, \emph{but not} $A_1 A_3$ (hence distance critical points of $\Surf = \iota(M)$ satisfy the strict ball condition \ref{def:NDGC_StrictBall}). 
	\end{prop}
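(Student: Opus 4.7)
The plan is to rule out $A_1 A_3$ critical points by a dimensional transversality argument of the same flavor as those used in \Cref{prop:A1Morse} and \Cref{prop:A1_convex hull}. Thanks to \Cref{thm:generic_cut_locus}, we may restrict to the already-residual set of embeddings whose cut locus has the generic stratification into $A_1^2$, $A_1^3$, $A_1^4$, $A_3$ and $A_1A_3$ strata. Points on the $A_3$ edge strata have a single contact point and thus cannot lie in the convex hull of their contact set without lying on $\Surf$ itself, so they can never be critical points of $d$. The only remaining obstruction are the isolated $A_1 A_3$ fin creation points.

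First I would observe that if $q \in \R^3 \setminus \Surf$ is an $A_1 A_3$ critical point with contact set $\Gamma(q) = \{p_1, p_2\}$, then the criticality condition from \Cref{lem:equivalent_definitions_critical_point} combined with $|n_1| = |n_2| = 1$ forces $n_1 + n_2 = 0$ and $q = (p_1 + p_2)/2$. The $A_3$ condition at (say) $p_2$ means that the strict ball condition fails there, which in terms of the shape operator is the single equation $\det(\mathrm{Id} + r\, d_{p_2}\nbf) = 0$ with $r = \|q - p_2\|$ (with the natural sign convention if $q \in \W^+$; we first treat the case $q \in \W^-$ and handle the symmetric case in the same way).

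Next I would set up the parametric transversality. Take $X = M^{(2)} \times \R_{>0}$ of dimension $5$, $Y = (\Sbb^2)^2 \times (\R^3)^2 \times \R$ of dimension $11$, and define
\[F(\io, m_1, m_2, r) = \bigl(n_1,\, n_2,\, p_1 + r n_1,\, p_2 + r n_2,\, \det(\mathrm{Id} + r\, d_{p_2}\nbf_\io)\bigr),\]
with $p_i = \io(m_i)$ and $n_i = \nbf_\io(p_i)$. Let $W = \{y \in Y : y_1 + y_2 = 0,\; y_3 = y_4,\; y_5 = 0\}$. The condition $y_1 + y_2 = 0$ cuts out a $2$-dimensional (antipodal) submanifold of $(\Sbb^2)^2$, so $W$ has codimension $2 + 3 + 1 = 6$ in $Y$ and dimension $5$. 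By the previous paragraph, every $A_1 A_3$ critical point $q$ gives rise to $x(q) \in X$ with $f^\io(x(q)) \in W$, where $f^\io = F(\io, \cdot)$.

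Finally, since $\dim X + \dim W = 5 + 5 = 10 < 11 = \dim Y$, transversality $f^\io \pitchfork W$ forces $\Im f^\io \cap W = \emptyset$. It therefore suffices to verify that $F$ is a $C^s$ submersion for $s \geq k-2 > 0$ and apply \Cref{thm:param_trans}; this is the content of \Cref{lem:submersion}, and it is the only genuinely nontrivial step, since the last coordinate of $F$ depends on the second jet of $\io$ at $p_2$, so one must build the infinite-dimensional variation of $\io$ so as to independently tune positions, normals, and shape operators at $p_1$ and $p_2$. Once submersivity is granted, transversality holds on a residual set of embeddings; intersecting with the residual set from \Cref{thm:generic_cut_locus} and repeating the construction with the sign adjustment for the outside case $q \in \W^+$ yields the residual set on which no critical point is of type $A_1 A_3$. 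Combined with the generic classification of cut locus strata, this forces every critical point to be of type $A_1^j$ for $j \in \{2,3,4\}$, and hence to satisfy the strict ball condition \ref{def:NDGC_StrictBall}.
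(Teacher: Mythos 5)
Your overall strategy --- encode the $A_1A_3$ criticality condition as membership in a high-codimension submanifold $W$, show $F \pitchfork W$ via submersivity, and conclude empty intersection from the dimension count $\dim X + \dim W < \dim Y$ --- matches the paper's. However, your specific encoding has a genuine gap, and the paper's construction is designed precisely to avoid it.

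You replace the paper's fiber-bundle coordinate $\Id + r\,d_{p_2}\nbf \in \mathrm{Sym}((\R\,n_2)^\perp)$ with the scalar $\det(\Id + r\,d_{p_2}\nbf)$, and set $W$ by the equation $y_5 = 0$. The problem is that $\{\det = 0\}$ in $\mathrm{Sym}(2\times 2)$ is a singular quadratic cone containing the origin, and the differential of $\det$ is $\varphi \mapsto \mathrm{adj}(\varphi)$, which vanishes exactly at $\varphi = 0$. Thus at any preimage $(\io, x) \in F^{-1}(W)$ where $\Id + r\,d_{p_2}\nbf = 0$ (both principal curvatures at $p_2$ equal to $1/r$, e.g.\ an umbilical tangency at the reach), neither perturbing the shape operator of $\io$ nor perturbing $r$ moves the last coordinate of $F$ to first order. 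At such points $\Im D_{(\io,x)}F$ fails to span the final $\R$ factor of $Y$, while $T_{F(\io,x)}W$ is transverse to that factor, so $F \not\pitchfork W$ there. \Cref{thm:param_trans} therefore cannot be applied to your $F$. Your appeal to \Cref{lem:submersion} does not fill this gap: that lemma proves submersivity for the paper's map whose last coordinate is the \emph{full} shape operator $\Id + r\,d_{p_1}\nbf$, not the determinant of it; composing a submersion with a map that has critical points does not stay a submersion.

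The paper's construction sidesteps this precisely by keeping the shape operator as the coordinate and defining $W$ via the condition $\rank\varphi = 1$ (a smooth, codimension-one stratum of $\{\det\varphi = 0\}$ that \emph{excludes} $\varphi = 0$). Then $F$ being a submersion (Lemma \ref{lem:submersion}) immediately gives $F \pitchfork W$ everywhere, and the same dimension count ($5 + 7 < 13$) finishes the argument. So the fix is either to adopt the paper's $Y$ and $W$, or to supplement your scalar approach with an additional transversality argument showing the rank-0 locus is generically avoided --- which in practice reduces to the paper's approach. Your observations that $A_3$ edge points (one contact point) cannot be critical, and that only $A_1 A_3$ fin-creation points remain to be excluded, are correct and consistent with the paper.
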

	
	\begin{proof}
		We make a slight modification to the spaces and maps introduced previously in the proof of \Cref{prop:A1Morse} for $A_1^2$ critical points. Let $X = M^{(2)} \times \R_{>0}$ and $Y = E \times \mathbb{S}^2 \times \R^3 \times \R^3$, where we define a fiber bundle $(E, \mathbb{S}^2, \pi, \mathrm{Sym}((\R y_1)^\perp))$ whose total space is $E$, base space is $\mathbb{S}^2$, fiber is $\mathrm{Sym}((\R y_1)^\perp)$, and $\pi : E \to \mathbb{S}^2$. Here, for a base point $y_1 \in \mathbb{S}^2$, the notation $\mathrm{Sym}((\R y_1)^\perp)$ refers to the space of symmetrical endomorphisms of the two-dimensional subspace orthogonal to $y_1$. 
		Define the map
		\begin{equation}
			F:
			\begin{array}{ccc}
				\Emb \times X & \to & Y \\
				(\io, m_1,m_2,r) & \mapsto & (n_1, \Id + r \, d_{p_1}\nbf, n_2, p_1 + r \, n_1, p_2 + r \, n_2 )
			\end{array}
		\end{equation}
		where $p_i = \io(m_i)$ and $n_i = \nbf(p_i)$. Consider the submanifold $W = \{y = ( (y_1, \varphi), y_2, \ldots, y_4)\in Y ~|~ y_1 + y_2 = 0 \text{ and } y_3 - y_4 = 0 \text{ and } \rank \varphi = 1\}$ which is of dimension $7$ in $Y$.
		
		Geometrically, if $q$ is a critical point with one $A_3$ contact point $p_1$ and one $A_1$ contact point $p_2$, then the map $\Id + r \, d_{p_1}\nbf \in \mathrm{Sym}((\R n_1)^\perp)$ is of rank $1$ because the largest principal curvature does not satisfy the strict ball condition, i.e., we have $\kap_{\text{max}}(p_1) = \frac{1}{\dist(q,\Surf)}$, contrarily to the smallest principal curvature (we may suppose with loss of generality that $q \in \W^-$). Thus, for such $q$, we get $f^\io(x(q)) \in W$.
		
		But, with the same arguments as before, generically $f^\io \pitchfork W$, but since $\rank D_x f^\io + \dim W \leq \dim X + \dim W = 5 + 7 < 13 = \dim Y$, this means that the intersection between $\Im f^\io$ and $W$ is generically void, which shows what we need.
	\end{proof}
	
	Assembling \Cref{thm:finite_contact}, \Cref{thm:generic}, \Cref{prop:A1Morse}, \Cref{prop:A1_convex hull}, \Cref{prop:no_A_1_A_3}, and \Cref{cor:mintype_ndg_finite}, we arrive at the main result of this section. 
	
	\begin{thm}[Signed distances are generically Morse with finitely many critical points]
		\label{thm:generic}
		There is a residual subset of embeddings $\io \in \Emb$, such that the signed distance function $d$ to $\Surf = \io(M)$ is a topological Morse function with finitely many critical points. In particular, for such embeddings, the critical points of $d$ are non-degenerate Min-type critical points, satisfying the conditions \ref{def:NDGC_Finite}-\ref{def:NDGC_MorseGerm} in \Cref{def:ndg_crit_dist}.
	\end{thm}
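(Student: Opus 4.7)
The plan is to assemble \Cref{thm:generic} from the residual-set results established in the preceding propositions, combined with the structural lemmas for Min-type and topological Morse functions. The key observation is that the conclusions of \Cref{thm:finite_contact}, \Cref{thm:generic_cut_locus}, \Cref{prop:A1Morse}, \Cref{prop:A1_convex hull}, and \Cref{prop:no_A_1_A_3} each carve out a residual subset of $\Emb$, and since a countable intersection of residual sets in the Baire space $\Emb$ (equipped with the $C^k$ topology) is again residual, we obtain a single residual set $\mathcal{R} \subset \Emb$ on which all these generic properties hold simultaneously.

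First, I would fix an embedding $\iota \in \mathcal{R}$, and let $q \in \R^3 \setminus \Surf_\iota$ be a critical point of the signed distance function $d = d_\iota$. By \Cref{thm:finite_contact}, $\Gamma(q)$ is finite and in general position, so \ref{def:NDGC_Finite} and \ref{def:NDGC_LIG} hold. By \Cref{prop:no_A_1_A_3}, the critical point is of type $A_1^j$ for $j \in \{2,3,4\}$ and no contact point is of $A_3$ type, which means every contact point $p_i$ satisfies the strict ball condition \eqref{eq:strict_ball_condition}, giving \ref{def:NDGC_StrictBall}. By \Cref{prop:A1_convex hull}, $q$ lies strictly in the interior of the convex hull of $\Gamma(q)$, yielding \ref{def:NDGC_Convex}. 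Finally, \Cref{prop:A1Morse} provides non-degeneracy of the restricted Hessian of $\dist(\cdot, \Surf)$ on the submanifold $G(q)$ (cf.~\Cref{lem:A1_manifolds}), so \ref{def:NDGC_MorseGerm} holds. All conditions of \Cref{def:ndg_crit_dist} are thus met, and $q$ is a non-degenerate Min-type critical point of $d$.

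Next, I would invoke \Cref{thm:normal_form_dist} to conclude that every such $q$ admits a Morse-like normal form up to an almost smooth homeomorphism; this is precisely the defining property of a non-degenerate topological critical point in the sense of \Cref{def:topological_critical_point}, with a well-defined index. For any non-critical point $y \in \R^3 \setminus \Surf_\iota$, the gradient-like construction of \cite{cheeger_critical_1991,grove_critical_1993} used in the proof of \Cref{thm:dist_isotopy_lemma} supplies a local homeomorphism casting $d$ into the regular form of \Cref{def:topological_regular_point}; for points on $\Surf_\iota$, the smoothness of $d$ in the tubular neighborhood gives the regular form directly via the standard smooth Morse construction. Hence every point of $\R^3$ is either a topological regular point or a non-degenerate topological critical point, so $d$ is a topological Morse function.

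Finally, finiteness of the critical set follows from \Cref{cor:mintype_ndg_finite}: the non-degenerate Min-type critical points are isolated by the normal form and contained in the convex hull of the compact surface $\Surf_\iota$, hence form a finite set. The main obstacle in this assembly is not a new argument but rather a bookkeeping one, namely verifying that the generic properties invoked from \Cref{thm:generic_cut_locus} and the propositions are each cut out by \emph{countably many} transversality conditions (e.g.~stratifying $M^{(j)} \times \R_{>0}$ along strata of the various jet bundles), so that the intersection remains residual; I would dispatch this by noting that each parametric transversality argument in \Cref{prop:A1Morse}, \Cref{prop:A1_convex hull}, and \Cref{prop:no_A_1_A_3} already operates on second-countable finite-dimensional source manifolds, so each individual residual set is in fact $G_\delta$, and a finite intersection of $G_\delta$ dense sets is $G_\delta$ dense.
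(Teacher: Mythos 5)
Your proposal follows essentially the same route as the paper's own proof: you assemble the residual subsets cut out by \Cref{thm:finite_contact} (for \ref{def:NDGC_Finite} and \ref{def:NDGC_LIG}), \Cref{prop:no_A_1_A_3} (for \ref{def:NDGC_StrictBall}), \Cref{prop:A1_convex hull} (for \ref{def:NDGC_Convex}), and \Cref{prop:A1Morse} (for \ref{def:NDGC_MorseGerm}), intersect them to get a single residual set, and conclude finiteness via \Cref{cor:mintype_ndg_finite}. You are in fact slightly more careful than the published proof in two minor respects: you explicitly invoke \Cref{prop:A1Morse} for \ref{def:NDGC_MorseGerm} (the paper's proof paragraph silently skips this citation, though it appears in the preceding assembly remark), and you verify explicitly that non-critical points and points on $\Surf$ are topological regular points, which the paper leaves implicit.
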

	
	\begin{proof}
		To show that $d$ is a topological Morse function, we prove that the critical points of $d$ are topologically non-degenerate critical points; in particular, they are non-degenerate Min-type critical points. We do so by verifying conditions \ref{def:NDGC_Finite}-\ref{def:NDGC_MorseGerm} in \Cref{def:ndg_crit_dist} are individually true on a residual subset of embeddings; thus there is a residual subset of embeddings where all the conditions \ref{def:NDGC_Finite}-\ref{def:NDGC_MorseGerm} are true. The finiteness of contact points \ref{def:NDGC_Finite} and LIG \Cref{def:NDGC_LIG} conditions shown to be generic in \Cref{thm:finite_contact}. \Cref{prop:no_A_1_A_3} shows that any critical point on a generic set of embeddings are $A_1^j$ types and satisfy the strict ball condition \ref{def:NDGC_StrictBall}; and \Cref{prop:A1_convex hull} shows that the convex hull condition \ref{def:NDGC_Convex} is also generic. Finally, as the critical points are non-degenerate, there are only finitely many such critical points \Cref{cor:mintype_ndg_finite}. 
	\end{proof}
	
	\section{Signed Distance Persistent Homology (SDPH)}
	\label{sec:SDPH}
	
	Given the generalized Morse theorems for generic shapes established in the previous Section \ref{sec:morse_theory_signdist}, we may now rigorously define SDPH. Our goal is to study the shape of an object, which requires for it to be rigorously and interpretably quantified. The distance and signed distance fields associated with the object, as well as persistent homology, are tools well-suited to this task. SDPH computes persistent homology with respect to $d$ (previously introduced in \Cref{sec:PH_and_Morse_theory}).
	
	Topology, in general, can be thought of as a pure mathematical study of shape and topological approaches have been used to study porous materials and vascular networks. For instance, porous materials were characterized by Vietoris--Rips persistent homology \citep{lee_quantifying_2017, obayashi_persistent_2022}, by cubical filtrations derived from the image itself \citep{robins_theory_2011}, and by a max-flow study in network models \citep{armstrong_correspondence_2021}. Discrete Morse theory has been used in conjunction with deep learning to accurately segment thin vascular structures \citep{hu_topology-preserving_2019,hu_learning_2022}.
	Recently, \cite{stolz_multiscale_2022} showcased how persistent homology, computed on a radial filtration or an alpha-complex filtration, is able to characterize tumor vascular networks.
	
	The signed distance field is particularly useful since it naturally delineates the boundary of a shape, which happens when the value of the signed distance function is zero.  In the context of persistent homology, filtering the space by the sublevel sets of the signed distance rather than the unsigned one allows features located inside the shape to be distinguished from those outside.  The SDPH methodology appeared (under various names) in the discrete cubical setting first and was used to study porous materials with complex morphologies \citep{delgado-friedrichs_morse_2014, delgado-friedrichs_skeletonization_2015} and the interactions between fluid flow and microstructure \citep{herring_topological_2019, moon_statistical_2019}.
	
	In the works by \cite{delgado-friedrichs_morse_2014, delgado-friedrichs_skeletonization_2015,herring_topological_2019, moon_statistical_2019}, the SDPH method was described in a purely discrete setting, which is suitable for carrying numerical computations. \cite{herring_topological_2019, moon_statistical_2019} proposed an empirical interpretation of the SDPH diagrams, but without mathematical justification. 
	To mathematically study the SDPH diagrams arising in the discrete settings, several types of discrete Morse theories may be useful \citep{banchoff_critical_1967, forman_morse_1998} (see \citep{lewiner_critical_2013, bloch_polyhedral_2013, bauer_persistence_2011, saucan_discrete_2020}).
	Here, we are interested in the behavior of SDPH in the smooth case, i.e., in the setting of smooth surfaces as opposed to cubical complexes in existing literature. 
	
	We begin by recalling our setting (see \Cref{sec:properties_signed_dist}): Let $\W = \W^-$ be a bounded open set of $\R^3$ with $C^k$-smooth boundary $\Surf$. We write $\R^3 = \W^- \sqcup \Surf \sqcup \W^+$. We furthermore assume that $\Surf$ is a generic embedded surface in the sense of \Cref{thm:generic} (\Cref{sec:genericity}): in particular, we assume $k \geq 3$. Recall that the associated signed distance field $d$ is defined by $d = \dist(\cdot, \W^-) - \dist(\cdot, \W^+)$.

	Consider the \textit{sublevel set filtration} of $d$, namely, the nested family of sublevel sets $X_r = \{x \in \R^3 ~|~ d(x) \leq r\}_{r \in \R}$,
	\[\Xb : \quad X_a \subseteq X_b, \quad \forall\ a \leq b \in \R\]
	endowed with the inclusion maps.
	Applying the homology functor with coefficients in $\mathbb{Z}_2$ to the filtration $\Xb$ gives the \textit{persistence module}
	\[\PH(\Xb) : \quad \H(X_a) \to \H(X_{b}), \quad a \leq b.\]
	In $\R^3$, we are interested in the persistent homologies in dimensions $k = 0, 1, 2$.
	
	$\PH(\Xb)$ is in fact determined by a finite sequence:
	\[\PH(\Xb) : \H(X_{r_1}) \to \ldots \to \H(X_{r_N})\]
	where $r_1,\ldots,r_N$ are the critical values of $d$. Indeed, by combining \Cref{thm:generic} (signed distances are generically Morse with finitely many critical points)
	with \Cref{thm:dist_isotopy_lemma} (isotopy lemma) and \Cref{thm:dist_handle_lemma} (handle attachment lemma), we obtain that the persistence module is of finite type in the sense of \cite[Section 3.8]{chazal_stability_2007}. 
	
	\begin{cor} \label{cor:gendistPH_finite}
		For a generically embedded surface $\Surf$ (as defined in \Cref{thm:generic}), the corresponding signed distance $d$ is a proper, lower-bounded, topological Morse function (\Cref{def:topological_morse_function}) that admits finitely many critical points. By \Cref{cor:PM_topological_morse_function_tame}, the persistence module $\PH(\Xb)$ is pointwise finite-dimensional; it can be decomposed into a finite direct sum of birth--death intervals in closed--open form $[b,d) \subset \bar{\R}$;
		and a critical point with index $\lambda$ corresponds to either a birth in homology dimension $\lambda$, or a death in homology dimension $\lambda - 1$.
	\end{cor}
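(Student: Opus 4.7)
The plan is to assemble this corollary directly from the ingredients already collected: Theorem \ref{thm:generic} upgrades the generic embedding hypothesis into the structural statement that $d$ is a topological Morse function with finitely many critical points, and then Corollary \ref{cor:PM_topological_morse_function_tame} converts that structural statement into the persistence-theoretic conclusions. So the proof is essentially a verification that the hypotheses of Corollary \ref{cor:PM_topological_morse_function_tame} are met by $d$, together with a check of the two ``quality of life'' properties (properness and lower-boundedness) that are needed for the sublevel set filtration to behave nicely.

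First, I would note that $\Surf = \iota(M)$ is compact since $M$ is closed and $\iota$ is a continuous embedding. This gives properness of $d$: the pure distance $\dist(\cdot,\Surf)$ is proper because its sublevel sets $\{\dist(\cdot,\Surf) \leq R\}$ are closed and bounded (contained in the $R$-neighborhood of the compact $\Surf$), and signing the distance preserves properness since $\{d \leq R\} \subseteq \{\dist(\cdot,\Surf) \leq |R|\} \cup \overline{\W^-}$, with both terms bounded. Lower-boundedness follows because $d \geq -\sup_{x \in \overline{\W^-}} \dist(x,\Surf)$, a finite quantity by compactness of $\overline{\W^-}$.

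Second, I would invoke Theorem \ref{thm:generic}: for a generic embedding $\iota \in \Emb$, every critical point of $d$ is a non-degenerate Min-type critical point satisfying conditions \ref{def:NDGC_Finite}--\ref{def:NDGC_MorseGerm}, hence by Theorem \ref{thm:normal_form_dist} admits a topological Morse normal form and in particular is a non-degenerate topological critical point with a well-defined index. Since all other points of $\R^3 \setminus \Surf$ are topologically regular (this is the content of the isotopy argument in the proof of Theorem \ref{thm:dist_isotopy_lemma}, using that non-critical points admit a gradient-like vector field and hence a local half-space product structure), and points on $\Surf$ itself fit into the same regular picture via the smoothness of $d$ on the tubular neighborhood of $\Surf$ (from \Cref{sec:properties_signed_dist}), the function $d$ is a topological Morse function in the sense of \Cref{def:topological_morse_function}. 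Theorem \ref{thm:generic} furthermore guarantees that the critical set is finite.

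Finally, with $d : \R^3 \to \R$ a proper topological Morse function with finitely many critical points, Corollary \ref{cor:PM_topological_morse_function_tame} applies verbatim, yielding the three claimed conclusions: the persistence module $\PH(\Xb)$ is pointwise finite-dimensional, its barcode is a finite multiset of half-open intervals $[b,d) \subseteq \bar\R$, and each critical point of index $\lambda$ contributes either a birth in $\Barc{\lambda}{d}$ or a death in $\Barc{\lambda-1}{d}$. I do not expect a genuine obstacle here; the only delicate point is confirming that the proof of \Cref{thm:topological_isotopy_lemma} (which underlies Corollary \ref{cor:PM_topological_morse_function_tame}) really only uses properness plus the topological Morse property, which the preceding sections establish explicitly.
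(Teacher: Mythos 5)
Your proposal is correct and follows essentially the same route as the paper: verify that $d$ is proper and lower-bounded from compactness of $\Surf = \iota(M)$ and boundedness of $\W^-$, invoke \Cref{thm:generic} to get that $d$ is a topological Morse function with finitely many critical points, and then apply \Cref{cor:PM_topological_morse_function_tame}. Your remarks about topological regularity off the critical set (via the gradient-like vector field of \Cref{thm:dist_isotopy_lemma}) and on $\Surf$ itself (via $C^k$-smoothness of $d$ on a tubular neighborhood together with the Eikonal equation) correctly fill in the details that the paper treats implicitly when it combines \Cref{thm:generic} with the isotopy and handle attachment lemmas.
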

	Recall from \Cref{sec:PH_and_Morse_theory} that the collections of birth--death pairs in each homology dimension are called the \textit{persistence diagrams}. 
	Here, the birth and death times correspond to the critical values $r_1, \ldots, r_N$.
	
	If the critical level $d^{-1}(r_i)$ contains a single critical point with index $\lambda$, then crossing that level consists of attaching a $\lambda$-cell, which either creates a new homology generator in dimension $\lambda$ (birth), or kills a homology generator in dimension $\lambda - 1$ (death). Similarly, if the critical level contains multiple critical points, each of them contributes to a handle. Therefore, any birth--death point in the persistence diagrams corresponds to a pair of creator--destroyer critical points.  Conversely, any critical point is associated to a birth--death point of the diagram, either as a creator or as a destroyer, except one critical point that creates the homological component in $\PH_0$ that has infinite lifetime and is paired with no other critical point.
	
	This allows for a rigorous mathematical definition of the SDPH methodology.
	\begin{defi}
		Given a generic input binary shape $\W$ with boundary $\Surf$, the \emph{signed distance persistent homology} (SDPH) methodology consists of building the sublevel set filtration $\Xb$ and computing the persistence diagrams of $\PH(\Xb)$.
	\end{defi}
	
	\subsection{Six Types of Critical Points}
	\label{sec:6_types_crit_pts}
	
	Based on the results of \Cref{sec:morse_theory_signdist}, we provide now a geometric interpretation of the non-degenerate critical points of $d$ and investigate the role of their index. This will enable us to describe SDPH diagrams precisely, since they consist of pairings of critical points across consecutive homology dimensions. Recall that the index is also the maximal dimension of a submanifold around $q$ such that the restriction of the function has a maximum at $q$.
	
	Non-degenerate critical points may be classified into $6$ different types, denoted by $\lambda^\sgn$, where $\lambda = 0,1,2,3$ is the index of the critical point $q$ (\Cref{def:ndg_crit_dist}), and $\sgn = \pm 1$ is the sign of $d(q)$.
	By definition, if $\mu$ refers to the index defined relative  to the pure distance function (\Cref{def:ndg_crit_dist}), we have that $\lambda = 3 - \mu$ if $q \in \W^-$ and $\lambda = \mu$ if $q \in \W^+$. 
	Locally, the shape of the surface around a critical point $q \in \W^+$ of type $\lambda^+$ is the same as for a critical point $q \in \W^-$ of type $(3-\lambda)^-$, as can be seen by exchanging the roles of $\W^-$ and $\W^+$ locally.
	
	For each type, one or several subtypes may be distinguished further, knowing that $\mu = (m - 1) + \mu'$, where $m = 2,3,4$ is the number of contact points and $\mu'$ is the Morse index of $\dist(\cdot, \Surf)$ restricted to the stratum of medial axis $G(q)$ (\Cref{def:ndg_crit_dist}). For example, $\mu = 3$ leads to three subtypes corresponding to $3 = 1 + 2$, $2 + 1$, $3 + 0$. Because the index roughly corresponds to the number of directions in which the function decreases from the point, the subtypes encode how many of these directions belong to the subspace spanned by the contact points versus the stratum of medial axis. For instance, points of type $2^+$ (subtype $2 + 0$) have two decreasing directions in the contact subspace and none along the medial axis, which locally corresponds to a ``bottleneck" shape.
	
	Finally, there are no critical points of type $0^+$ or $3^-$. Indeed, by definition $\mu = (m - 1) + \mu' \geq 1$ since $m \geq 2$. Intuitively, this is because the signed distance admits no local minimum in $\W^+$ (and equivalently, no local maximum in $\W^-$).
	
	In summary, we have $6$ different types and a total of $12$ subtypes, reported in Table \ref{tab:critical_types} and depicted in Figures \ref{fig:index_1_and_2} and \ref{fig:index_3}. The plots were generated with Geogebra \citep{hohenwarter_geogebra_2013}.
	
	\begin{table}[h!]
		\centering
		\fontsize{10pt}{10pt}\selectfont
		\begin{tabu}{|[1.2pt]c|[1pt]c|c|c|c|[1.2pt]}
			\tabucline[1.2pt]{-}
			& $\lambda = 0$ & $\lambda = 1$ & $\lambda = 2$ & $\lambda = 3$ \\ \tabucline[1pt]{-}
			$d < 0$ & \specialcell{\textbf{type $0^-$} \\ 3 subtypes \\ $\mu = 1 + 2$, $2 + 1$, $3 + 0$} & \specialcell{\textbf{type $1^-$} \\ 2 subtypes \\ $\mu = 1 + 1$, $2 + 0$} & \specialcell{\textbf{type $2^-$} \\ 1 subtype \\ $\mu = 1 + 0$} & \diagbox[innerwidth=1cm, height=\line]{}{} \\\hline
			$d > 0$ & \diagbox[innerwidth=1cm, height=\line]{}{} & \specialcell{\textbf{type $1^+$} \\ 1 subtype \\ $\mu = 1 + 0$} & \specialcell{\textbf{type $2^+$} \\ 2 subtypes \\ $\mu = 1 + 1$, $2 + 0$} & \specialcell{\textbf{type $3^+$} \\ 3 subtypes \\ $\mu = 1 + 2$, $2 + 1$, $3 + 0$} \\
			\tabucline[1.2pt]{-}
		\end{tabu}
		\caption{\textbf{Classification of non-degenerate critical points of the signed distance function in dimension $3$}. There are $6$ types and $12$ subtypes. The index $\lambda = \idx(q; d)$ is related to the index $\mu = \idx(q;\dist(\cdot, \Surf))$ through $\lambda = 3 - \mu$ if $q \in \W^-$ and $\lambda = \mu$ if $q \in \W^+$ (see \Cref{def:ndg_crit_dist}). Geometrically, the local shape of the surface around points of type $\lambda^+$ is the same as around those of type $(3-\lambda)^-$.}
		\label{tab:critical_types}
	\end{table}

	\begin{figure}[h!]
		\centering
		\includegraphics[clip, width=\linewidth]{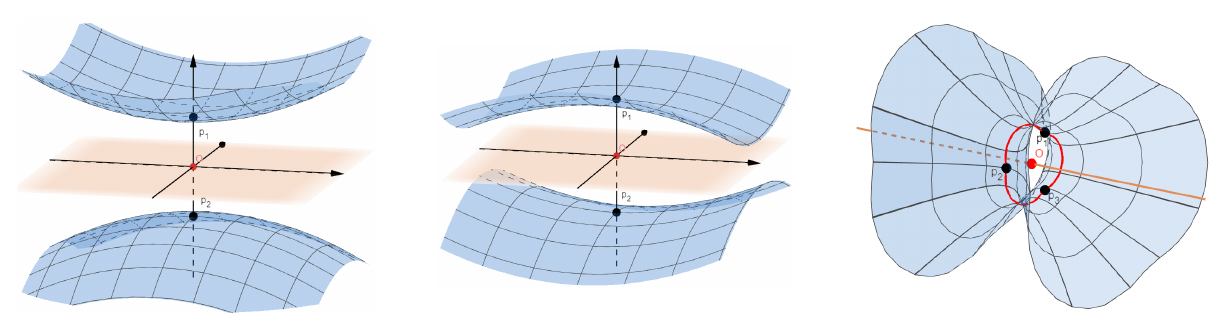}
		\caption{\textbf{Local shape of non-degenerate critical points of type $1^+$ or $2^-$ (subtype $1 + 0$), type $2^+$ or $1^-$ (subtype $1 + 1$) and type $2^+$ or $1^-$ (subtype $2 + 0$) respectively.}
			The red dot is $q$, black dots are $p_1,\ldots,p_m \in \Gamma(q)$, the orange plane or line is $G(q)$. The type of point $\lambda^+$ or $(3 - \lambda)^-$ depends on whether we consider $q \in \W^+$ or $q \in \W^-$.
		}
		\label{fig:index_1_and_2}
	\end{figure}

	\begin{figure}[h!]
		\centering
		\includegraphics[clip, width=\linewidth]{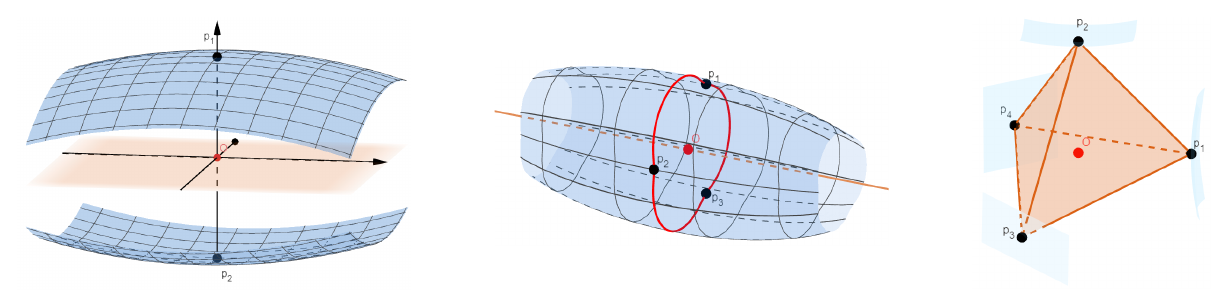}
		\caption{\textbf{Local shape of non-degenerate critical points of type $3^+$ or $0^-$, with subtypes $1 + 2$, $2 + 1$ and $3 + 0$ respectively.}
			The red dot is $q$, black dots are $p_1,\ldots,p_m \in \Gamma(q)$, the orange plane or line is $G(q)$. For subtype $3 + 0$, $G(q) = {q}$. The type of point $3^+$ or $0^-$ depends on whether we consider $q \in \W^+$ or $q \in \W^-$.
		}
		\label{fig:index_3}
	\end{figure}

	\subsection{Seven Types of Birth--Death Persistence Pairs}
	\label{sec:7_types_pairings}
	In SDPH, the critical points are paired into different types of birth--death persistence intervals, which we explain now using the classification of critical points from the previous section.
	Crossing a critical value with index $\lambda$ is topologically equivalent to attaching a $\lambda$-cell, which either creates a new homological component in dimension $\lambda$, or kills a previously-existing component in dimension $\lambda - 1$. Therefore, we obtain the following diagram (\Cref{fig:diagram_pairings}), where we can distinguish seven types of persistence pairings.
	
	\begin{figure}[h!]
		\centering
		\begin{tikzcd}
			0^- \arrow[rd, "\mathrm{II}"] \arrow[r, "\mathrm{I}"] & 1^- \arrow[rd, "\mathrm{IV}"] \arrow[r, "\mathrm{III}"] & 2^- \arrow[rd, "\mathrm{VI}"] & \\
			& 1^+\arrow[r, "\mathrm{V}"] & 2^+ \arrow[r, "\mathrm{VII}"] & 3^+
		\end{tikzcd}
		\caption{\textbf{$7$ different types of birth--death persistence pairs in the SDPH diagrams,} based on the classification of non-degenerate critical points given in Table \ref{tab:critical_types}.}
		\label{fig:diagram_pairings}
	\end{figure}
	
	Pairings of types $\mathrm{I}$ and $\mathrm{II}$ contribute to $\PH_0$, types $\mathrm{III}$, $\mathrm{IV}$, $\mathrm{V}$ contribute to $\PH_1$, and types $\mathrm{VI}$ and $\mathrm{VII}$ contribute to $\PH_2$. Recall that no critical points belong to $\Surf$, hence there is no zero critical value, and no birth or death point belonging to a main axis $b = 0$ and $d = 0$ of the diagrams. Furthermore, the points on the persistence diagrams are bounded away from the axes by the positive reach of surface, as the critical values of the distance functions must be greater than the reach \citep{niyogi_finding_2008}. Using the sign of the critical values, we can break down the types into further relationships with respect to the quadrants of the persistence diagrams.  Before doing so, we observe that unlike usual persistence diagrams (where filtrations are parameterized by some $\eps > 0$, as for Vietoris--Rips filtrations) that contain birth--death points in the NE quadrants only of each $\PH_k$, SDPH diagrams may contain points in the other quadrants as well since the signed distance has negative critical values.  Pairs of type $\mathrm{I}$ and  $\mathrm{II}$ are located in $\PH_0$ $\mathrm{SW}$ and $\mathrm{NW}$ respectively; $\mathrm{III}$, $\mathrm{IV}$, $\mathrm{V}$ pairs are in $\PH_1$ $\mathrm{SW}$, $\mathrm{NW}$, and $\mathrm{NE}$ respectively; and $\mathrm{VI}$, $\mathrm{VII}$ pairs are in $\PH_1$ $\mathrm{NW}$, $\mathrm{NE}$ respectively. 
	In particular, the absence of critical points of type $0^+$ or $3^-$ implies that no persistence pairs are located in the quadrants $\PH_0$ $\mathrm{NE}$ and  $\PH_2$ $\mathrm{NW}$. Because death times are strictly larger than birth times, all birth--death points lie strictly above the diagonal.
	Additionally, there is a point with infinite death time in $\PH_0$. Note that the number of components and the genus of $\Surf$ correspond to the number of birth--death points of type $\mathrm{II}$ and $\mathrm{IV}$, respectively. 
	These properties are summarized in \Cref{fig:pairings_quadrants} (generated with Geogebra \citep{hohenwarter_geogebra_2013}). 
	
	\begin{figure}[h!]
		\centering
		\includegraphics[clip, width=\linewidth]{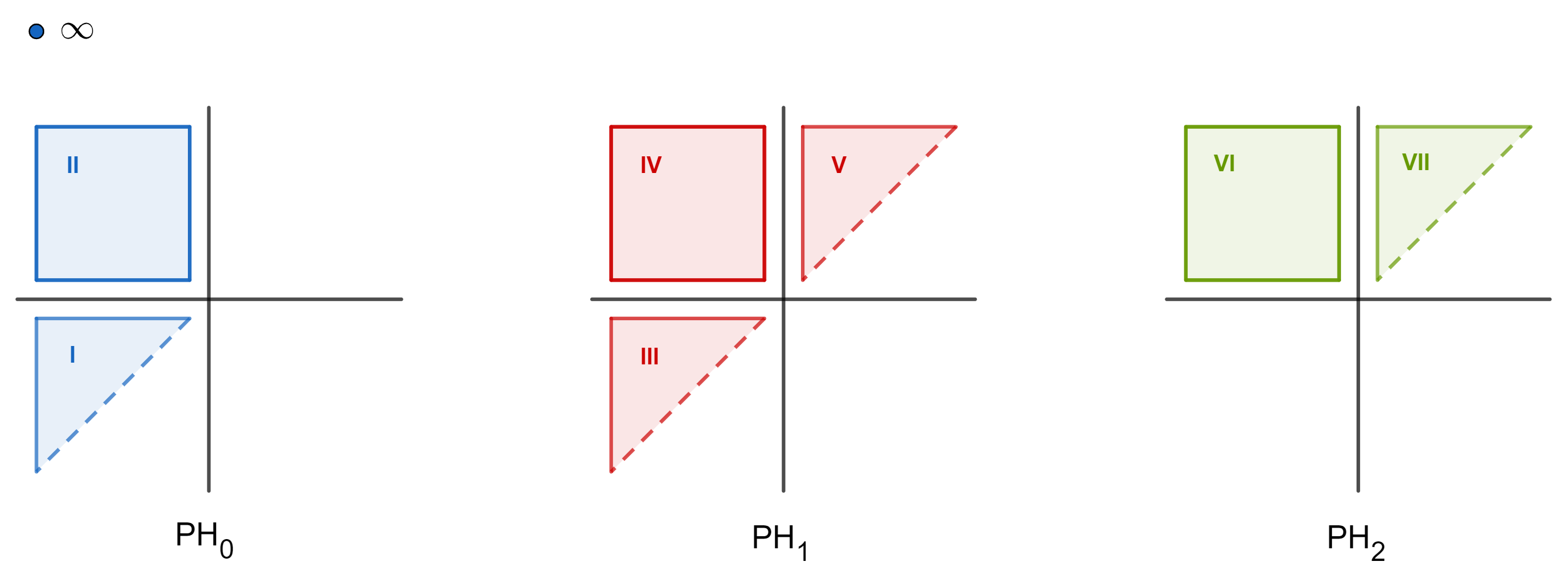}
		\caption{\textbf{Pairing types and quadrants in the SDPH persistence diagrams.} birth--death points (not represented) are located in the quadrants accordingly to their type (\Cref{fig:diagram_pairings}). 
		}
		\label{fig:pairings_quadrants}
	\end{figure}

	\subsection{Numerical Implementation}
	\label{sec:SDPH_numerical_implementation}

	To compute SDPH diagrams, we follow Algorithm \ref{algo:SDPH}.
	
	\begin{algorithm}
		\small
		\begin{algorithmic}[1]
			\Procedure{SDPH}{$\W$} \newline
			\textbf{output: persistence diagrams $\PH_0,\PH_1,\PH_2$ } \newline
			\State{$d \leftarrow \dist(\cdot, \W) - \dist(\cdot, \W^c)$}
			\Comment{build signed distance field}
			\State{$\Kb \leftarrow$ cubical complex filtered by sublevels of $d$}
			\Comment{T-constructed}
			\State{$\PH_0,\PH_1,\PH_2 \leftarrow \Dgm(\Kb)$}
			\Comment{compute persistence diagrams}
			\EndProcedure
		\end{algorithmic}
		\caption{SDPH: quantify shape texture of binary shape $\W$}
		\label{algo:SDPH}
	\end{algorithm}
	
	In practice, a 3D binary shape $\W$ is encoded as a 3D binary digital image (i.e., discrete array) with black and white values. We first used the \texttt{scikit-fmm}\footnote{Available at \url{https://github.com/scikit-fmm/scikit-fmm}.} Python package to convert $\W$ into a discrete signed distance field $d$ in the form of a 3D array. Next, we considered a finite \textit{cubical complex} filtered by the sublevel sets of $d$ \citep{kaczynski_cubical_2004, peikert_efficient_2012} :
	\[\Kb : K_1 \to \ldots \to K_N = K.\]
	Cubical complexes are the natural representation of digital images.
	We then computed the persistence diagrams (with coefficients in $\mathbb{Z}_2$)
	\[ \PH(\Kb) : \H(K_1) \to \ldots \to \H(K_n) \]
	in dimensions $0,1,2$ with the \texttt{giotto-tda} Python package \citep{tauzin_giotto-tda_2022}. 
	
	\texttt{Giotto-tda} uses the \texttt{GUDHI} package \citep{the_gudhi_project_gudhi_2015, dlotko_cubical_2021} as a back-end, and hence computes a T-constructed filtration \citep{bleile_persistent_2022}. Informally, this corresponds to increasing a level and considering each time the groups of voxels in $d$ having value less than this level. By adding groups of voxels at a time, components, loops, and voids are formed or destroyed. In vascular patterns, for instance, loops outlining cycles in the vessels start to form at negative values of $d$, before being filled out at positive values, which corresponds to pairings of type $\mathrm{IV}$ in $\PH_1$ $\mathrm{NW}$ (\Cref{fig:pairings_quadrants}).
	
	Note that the \texttt{CubicalRipser} \citep{kaji_cubical_2020} package may also be used to keep track of the critical voxels giving rise to or terminating homological components. Hence, a spatial analysis is also possible.
	
	\begin{rmk}
		As seen in the Introduction, \cite{delgado-friedrichs_skeletonization_2015}, \cite{herring_topological_2019} and \cite{moon_statistical_2019} studied SDPH directly in the setting of cubical complexes. A natural question to explore as future work is to relate the discrete Morse theories \citep{banchoff_critical_1967,forman_morse_1998, bauer_persistence_2011, lewiner_critical_2013, bloch_polyhedral_2013, saucan_discrete_2020} to our setting. In particular, it remains to be understood how critical points in the discrete settings relate to the original critical points, as the sampling resolution goes to infinity. Furthermore, how the gradient flow in the continuous case 
		relates to the gradient flow induced by a point-cloud distance field $d_\mathcal{P}$ may be studied. The latter, which was studied by \cite{giesen_medial_2006, giesen_flow_2008, giesen_parallel_2013}, is useful in applications such as surface reconstruction based on surface sampling.
	\end{rmk}
	
	\section{Quantifying Texture in Porous Shapes}
	\label{sec:quantifying_texture_applis}
	
	In this section, we showcase several applications of SDPH for quantifying texture in porous shapes. Such shapes may exhibit high morphological complexity and irregularity, including tubular, membranous and spherical elements, or mixtures of them, forming pores, cavities, channels, and so on.
	
	We computed SDPH diagrams for three case studies; two entail synthetic data and the third entails real data. 
	For the two case studies pertaining to synthetic data, the first involved simulated data from realizations of Gaussian random fields (specifically, level surfaces of GRFs), while the second involved simulated shapes generated using curvatubes \citep{song_generation_2022}; the third case study involves real samples of bone marrow vessels segmented from 3D confocal images (data courtesy of Antoniana Batsivari and Dominique Bonnet, The Francis Crick Institute, London).  The choice of these three case studies is to investigate the behavior where the randomness of the  underlying data generating process derives from a classical probability distribution, from a phase-field approximation, and from real-world noise (such as measurement, errors), respectively.  The data from each of the case studies are described in Sections \ref{sec:GRF_applis}, \ref{sec:cvtub_applis}, and \ref{sec:bio_applis} respectively. Their SDPH diagrams are compared in \Cref{sec:interpret_applis}, where a general interpretation of SDPH diagrams is given. The case studies involving synthetic data suggest in \Cref{sec:stability_SDPH} that SDPH diagrams may be stable with respect to the texture of the input shape, which in turn is related to the variability of its curvature.
	
	\textbf{Note on the Output Figures.}
	The same layout is used in Figures \ref{fig:GRF_panel_gau}, \ref{fig:cvtub_panel_1}, \ref{fig:cvtub_panel_2}, \ref{fig:BM_panel}, and \ref{fig:stab_panel}.
	The first column shows a 3D view of the surface bounding the binary shape, plotted with \textit{ParaView} \citep{ayachit_paraview_2015}. The second column shows a 2D section taken in the middle of the volume, after the shape has been smoothly closed near the boundaries of the simulation domain. A scalar field (random field, phase-field, or binary segmentation, depending on the data) is represented in there, whose zero level set gives the input surface. Finally, the three last columns display the SDPH diagrams $\PH_0$, $\PH_1$, and $\PH_2$ in the form of scatter plots, where birth--death points are colored by density (Gaussian kernel with $\sigma = 0.5$). Birth--death points $(\birth,\death)$ with persistence $\death - \birth$ less than $0.5$ were thresholded out.
	
	\subsection{Gaussian Random Fields}
	\label{sec:GRF_applis}
	We first computed SDPH diagrams on the zero level set of a \textit{Gaussian random field} (GRF) \citep{adler_geometry_2010, gaetan_spatial_2010}. GRFs may be used to model the random morphology of porous materials, such as rocks.
	
	A GRF on $\mathbb{R}^d$ is a real-valued stochastic process $f(t)$ indexed over points $t \in \mathbb{R}^d$, such that any finite dimensional distribution of $f$ indexed over any arbitrary finite set of points $T = \left\{t_1, \ldots, t_N\right\} \subset \mathbb{R}^d$ is a multivariate normal distribution:
	$$f(t_1), \ldots, f(t_N) \sim N(\mu_T, \Sigma_{T}).$$
	A classical GRF model is the zero-mean \emph{squared exponential covariance} model (a.k.a.~\textit{``Gaussian covariance model"}), where $\mu = 0$ and $\Sigma(s,t) = \mathrm{Cov}\left(f(s),\, f(t)\right) = e^{\frac{\|s-t\|^2}{2\lambda^2}}.$
	The covariance between $f(s)$ and $f(t)$ is controlled by a lengthscale parameter $\lambda > 0$: for a given spatial separation $\| s-t \|$, a larger $\lambda$ implies stronger correlation between $f(s)$ and $f(t)$, hence lower frequency features in the level sets at this scale.
	
	For GRFs with Gaussian covariance model, any realization $f$ (viewed as a function $\R^d \to \R$) admits a smooth version $\tilde{f}$ such that $\mathbb{P}(f(t) = \tilde{f}(t)) = 1$ for all $t$. Indeed, the continuity and differentiability of GRF samples are dictated by conditions on the covariance function \citep[Theorem 3.4.1]{adler_geometry_2010}, which in this case are satisfied by the Gaussian covariance model.
	
	We used \textit{GSTools} \citep{muller_gstools_2022} to generate $5$ random fields $f_1, \ldots, f_5$ in a $100 \times 100 \times 100$ domain size, using the parameter values presented in Table \ref{tab:GRF_params}. The covariance kernel here is defined by $\mathrm{Cov}\left(f(s),\, f(t)\right) = C(s - t) = \sigma^2 \, \rho_0(\|\mathrm{A} \, (s - t) \|)$, where $\rho_0(r) = e^{- \frac{\pi}{4} r^2 }$, $\mathrm{A} = \mathrm{diag}(\frac{1}{\ell_1},\,\frac{1}{\ell_2},\,\frac{1}{\ell_3}) \times \mathrm{R}$ and $\mathrm{R}$ is a rotation matrix. The lengthscales $\ell_1,\ell_2,\ell_3$ are given in terms of voxel size units. The matrix $\mathrm{A}$ encodes the isotropy of the process: differences in the lengthscales represent the anisotropy w.r.t.~the orthonormal frame rotated by $\mathrm{R}$.
	In all simulations, we took $\sigma^2 = 1$, since changing the variance does not statistically affect the behavior of the zero level set.
	
	The SDPH diagrams were computed with input shape $\W = \{f \geq 0\}$, the region of non-negative values, after smoothly closing the shape near the boundaries of the simulation domain. The results are displayed in \Cref{fig:GRF_panel_gau}.
	
	\begin{table}[h!]
		\centering
		\begin{tabu}{|[1.2pt]c|[1pt]c|[1pt]c|[1pt]c|[1pt]c|[1.2pt]}
			\tabucline[1.2pt]{-}
			GRF model & $\ell_1$ & $\ell_2$ & $\ell_3$ & other \\ \tabucline[1pt]{-}
			$F_1$ & ~~~$8$~~~ & $8$ & $8$ &   \\\hline
			$F_2$ & $8$ & $\frac{\ell_2}{\ell_1} = 0.7$ & $\frac{\ell_3}{\ell_1} = 0.85$ & \\\hline
			$F_3$ & $5$ & $5$ & $5$ & \\\hline
			$F_4$ &  &  &  & $f_4 = f_3 - 0.5$ \\\hline
			$F_5$ &  &  &  & $f_5 = f_1 + f_3$ \\\hline
			\tabucline[1.2pt]{-}
		\end{tabu}
		\caption{\textbf{Parameters of the Gaussian covariance model used to generate the random fields of \Cref{fig:GRF_panel_gau}}.}
		\label{tab:GRF_params}
	\end{table}
	
	\subsection{Curvatubes}
	\label{sec:cvtub_applis}
	
	The \textit{curvatubes} model \citep{song_generation_2022} was constructed to randomly simulate a large variety of organic-looking porous shapes.  Here, the output of curvatubes is precisely the synthetic realization of the shapes and textures that we aim to study with our SDPH framework and its process to produce random realizations is different in nature to the GRF case study. The curvatubes framework assumes that a porous shape optimizes some curvature functional involving a second-degree polynomial of the principal curvatures of the surface,
	\begin{equation*}
		\label{eq:F}
		\begin{split}
			\mathbf{F}(\Surf) &= \int_{\Surf} \big(a_{2,0} ~\kap_1^2 + a_{1,1} ~\kap_1 \kap_2 + a_{0,2} ~\kap_2^2 + a_{1,0} ~\kap_1 + a_{0,1} ~\kap_2 + a_{0,0} \big) ~dA,
		\end{split}
	\end{equation*}
	with a constraint of constant volume enclosed. Effectively, this geometric problem is approximated by a phase-field problem that is parameterized by a transition scale $\eps$ and a mass $m$ encoding the volume constraint. The optimization starts with random noise and converges towards a local optimum. The output surface is then implicitly represented as the zero level set of the phase-field.
	
	We used curvatubes to generate $10$ different shapes ($S_1, \ldots, S_{10}$) in a $100 \times 100 \times 100$ domain. The parameter values are presented in Tables \ref{tab:cvtub_params_1} and \ref{tab:cvtub_params_2}, where $\eps$ is defined relative to the side of the domain supposed to be $1$ unit. Their SDPH diagrams were then computed after smoothly closing the shapes near the boundaries of the simulation domain. The results are shown in Figures \ref{fig:cvtub_panel_1} and \ref{fig:cvtub_panel_2}.

	\begin{table}[h!]
		\centering
		\begin{tabu}{|[1.2pt]c|[1pt]c|[1pt]c|[1pt]c|[1.2pt]}
			\tabucline[1.2pt]{-}
			Shape & Coefficients & $m$ & $\eps$ \\ \tabucline[1pt]{-}
			
			$S_1$ & {\small $(1, 2, 6, -40, -40, 400)$} & $-0.6$ & $0.02$  \\\hline
			$S_2$ & {\small $(1, -1.284, 9.626, -77.283, 39.681, -633.849)$ } & $-0.421$ & $0.02$ \\\hline
			$S_3$ & {\small $(1, 2.8, 2, -10, -10, 25)$ } & $-0.25$ & $0.02$ \\\hline
			$S_4$ & {\small $(1, 3.225, 10.87, -146.309, 143.487, -2920.302)$} & $-0.499$ & $0.02$  \\\hline
			$S_5$ & {\small $(1, -0.238, 11.988, -175.909, -27.167, 2117.037)$} & $-0.648$ & $0.02$  \\\hline
			\tabucline[1.2pt]{-}
		\end{tabu}
		\caption{\textbf{Parameters of the curvatubes model used for \Cref{fig:cvtub_panel_1}}.}
		\label{tab:cvtub_params_1}
	\end{table}

	\begin{table}[h!]
		\centering
		\begin{tabu}{|[1.2pt]c|[1pt]c|[1pt]c|[1pt]c|[1.2pt]}
			\tabucline[1.2pt]{-}
			Shape & Coefficients & $m$ & $\eps$ \\ \tabucline[1pt]{-}
			
			$S_6$ & {\small $(1, 2.034, 11.166, 14.553, 28.829, -565.092 )$} & $-0.356 $ & $0.02$  \\\hline
			$S_7$ & {\small $(1, 0.63, 4.399, 132.459, 195.066, -2378.53 )$} & $-0.364 $ & $0.02$  \\\hline
			$S_8$ & {\small $( 1, 1, 1, 0, 0, 0)$} & $-0.4 $ & $0.02$  \\\hline
			$S_9$ & {\small $(1, 0.396, 1.095, -28.64, 190.906 ,2062.082 )$} & $-0.598 $ & $0.02$  \\\hline
			$S_{10}$ & {\small $(1, 4.185, 2.053, 19.375, 29.607, 120.265)$} & $-0.534 $ & $0.02$  \\\hline
			\tabucline[1.2pt]{-}
		\end{tabu}
		\caption{\textbf{Parameters of the curvatubes model used for \Cref{fig:cvtub_panel_2}}.}
		\label{tab:cvtub_params_2}
	\end{table}
	
	\subsection{Vascular Data}
	\label{sec:bio_applis}
	
	As our final case study, we considered $3$ samples $B_1, B_2, B_3$ of bone marrow vessels (``BM shapes") taken from a healthy control mouse. These samples were selected at different anatomical locations of the same femur bone, and segmented on 3D confocal images. The selected vascular crops were $100 \times 200 \times 200$ in size, where a voxel side represents $2 \, \mu m$ in physical length. Due to noise in the signal, and the complexity of the task, the segmentation may not be fully accurate.

	\subsection{Interpreting SDPH Diagrams}
	\label{sec:interpret_applis}
	
	From the classification of critical points and pairings described in Sections \ref{sec:6_types_crit_pts} and \ref{sec:7_types_pairings}, we may derive a \textit{general interpretation} of SDPH diagrams, supported by a comparison of the diagrams in Figures \ref{fig:GRF_panel_gau}, \ref{fig:cvtub_panel_1}, \ref{fig:cvtub_panel_2}, and \ref{fig:BM_panel}.
	
	SDPH may be interpreted as follows:
	\begin{enumerate}
		\item $\PH_0$ $\SW$ measures variations of thicknesses in tubular and membranous structures.
		\item $\PH_0$ $\NW$ counts the number of components and measures their characteristic size and separation. 
		\item $\PH_1$ $\SW$ detects the presence of dimples (similar to the hollowed shape of red blood cells). 
		\item $\PH_1$ $\NW$ counts the number of loops and measures their characteristic thickness and size. 
		\item $\PH_1$ $\NE$ measures variations in the proximity of the structures, creating at a distance curvy and non-convex loops. 
		\item $\PH_2$ $\NW$ indicates the presence of small voids trapped inside the shape, which do not exist in tubular shapes. 
		\item $\PH_2$ $\NE$ measures the characteristic sizes of interspaces separating plain structures and the density of tubular branching, so that larger sizes mean sparser structures. 
	\end{enumerate}
	
	Each of these properties may be observed in the following examples. For instance, 
	\begin{enumerate}
		\item Thicknesses are much more variable in the biological shapes $B_i$ than the synthetic shapes $S_i$. 
		\item Both of $S_2$ and $S_5$ have a large number of disconnected components, but in $S_2$ they are typically separated by the same distance. 
		\item Dimples are mostly found in the GRF shapes $F_i$ and the BM shapes $B_i$, but not in the tubular shapes $S_1, \ldots, S_4$.
		\item There is an increasing number of loops forming in $F_3$ than $F_1$, as the length scale parameter to generate the GRFs is decreased.
		\item When the shape is thinned, by considering different textures, as for $S_3$, $S_4$, $S_5$, or by considering different level sets, as for $F_3 = \{f_3 = 0\}$ and $F_4 = \{f_3 = 0.5\}$, homological loops tend to disappear from the initial structure, in favor of those that are formed at a distance in a non-convex curvy way.
		\item Only rarely small voids are spotted inside the shape, except for shapes $S_9$, $F_4$ and $F_5$. 
		\item Finally, while homological voids formed at a distance are mostly of the same size in the densely windowed shapes $S_9$ and $S_{10}$, they are found at much larger and variable sizes in the sparse vascular BM shapes $B_1$ and $B_3$.
	\end{enumerate}

	Finally, note that \textit{multi-scale texture} can be quantified by SDPH, as can be seen in the diagram of $S_9$, where most loops form at similar critical sizes, but a few of them appear at variable larger scales; or for $F_5$, which by construction mixes features taken from $F_1$ and $F_3$ at different scales. Generally, mixtures of different textures in space or in scale are apparent in the SDPH diagrams.
	Further observations can be made: In all quadrants, a concentration (or spread) of the scatter plot indicates \textit{(in)homogeneities}, not only in the distribution of the critical sizes, but particularly in terms of the morphological patterns described above for each respective quadrant. For instance, the shapes $S_7, S_8, S_9$ form cycles of irregular sizes, in contrast to the shapes $S_3$ and $S_4$; $B_3$ forms irregular-sized loops compared to $B_2$. Moreover, birth--death points close to the main axes arise in \textit{noisy} structures whose topological features have close to zero critical sizes, whereas \textit{smooth} structures induce a clearer gap from the axes. This effect is apparent here: curvatubes shapes are smooth and lead to clearly distinct diagrams, compared to GRF shapes which are topologically noisy and whose diagrams are more difficult to interpret, which may be expected due to the difference in the nature of their randomness.

	\begin{figure}[h!]
		\centering
		\includegraphics[clip, width=\linewidth]{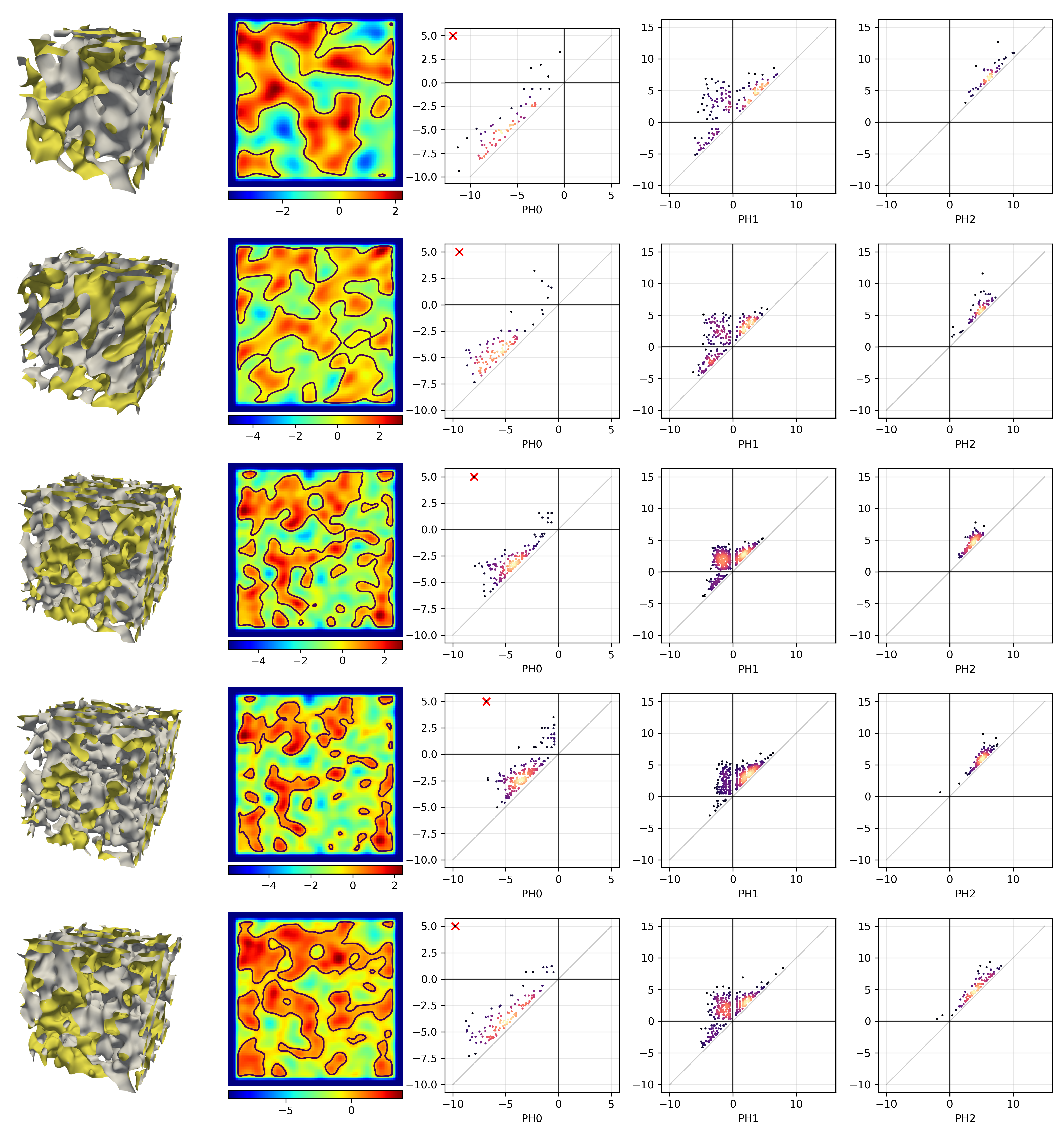}
		\caption{\textbf{SDPH diagrams of the zero level set of $5$ Gaussian random fields $F_1, \ldots, F_5$ generated with the parameters shown in Table \ref{tab:GRF_params}}.}
		\label{fig:GRF_panel_gau}
	\end{figure}

	\begin{figure}[h!]
		\centering
		\includegraphics[clip, width=\linewidth]{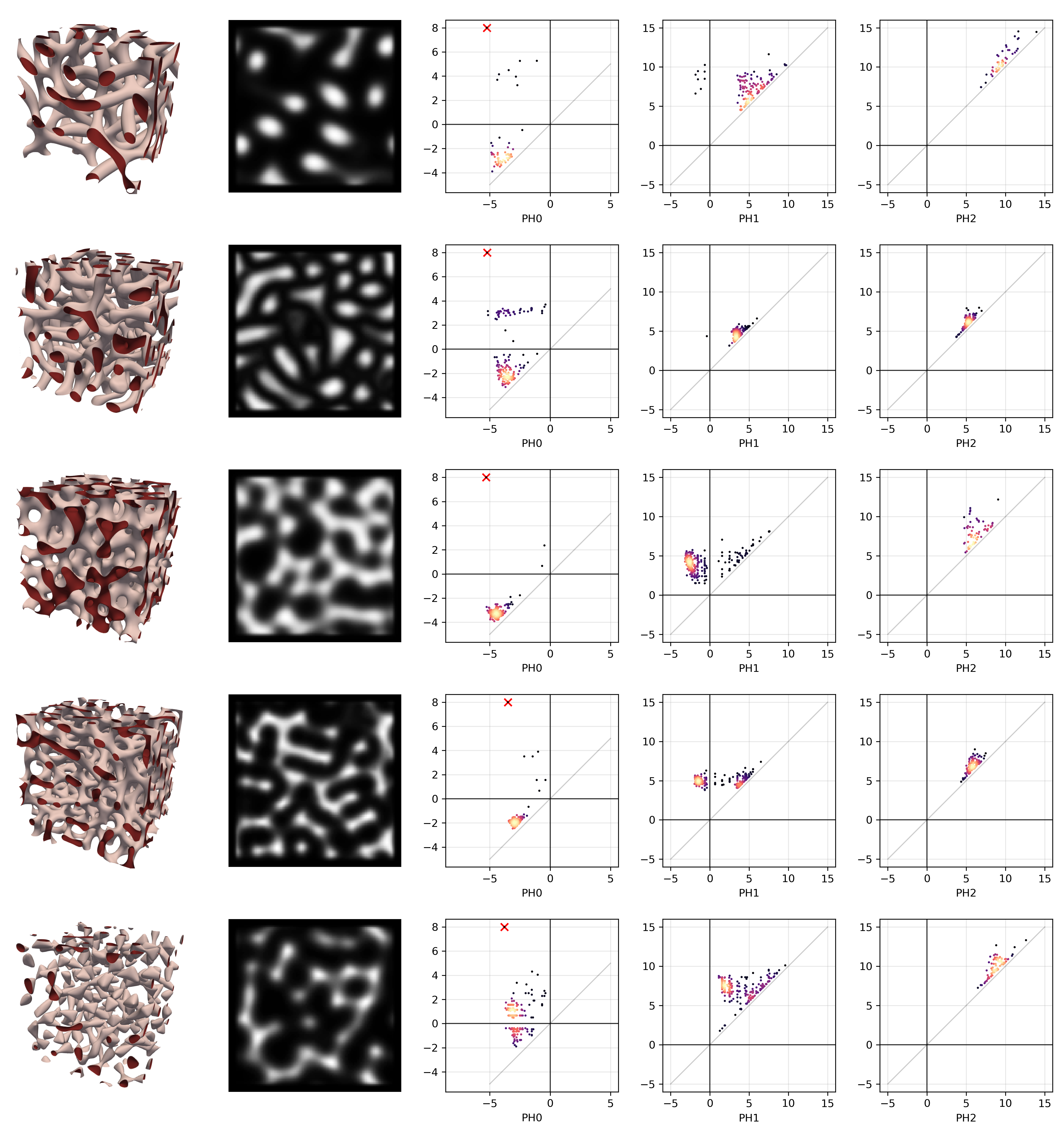}
		\caption{\textbf{SDPH diagrams of $5$ tubular shapes $S_1, \ldots, S_5$ generated with curvatubes using the parameters shown in Table \ref{tab:cvtub_params_1}}.}
		\label{fig:cvtub_panel_1}
	\end{figure}
	
	\begin{figure}[h!]
		\centering
		\includegraphics[clip, width=\linewidth]{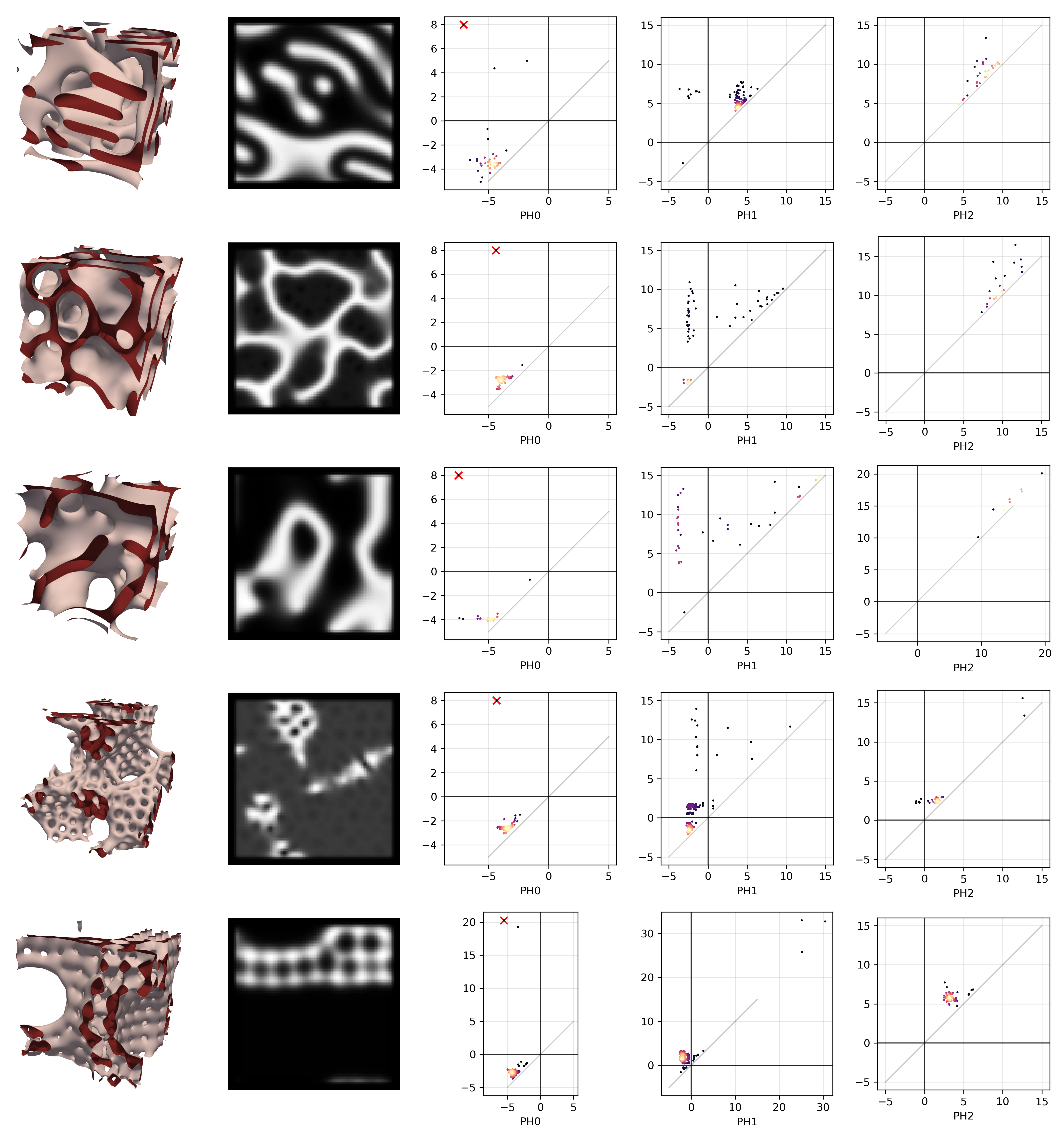}
		\caption{\textbf{SDPH diagrams of $5$ membranous and porous shapes $S_6, \ldots, S_{10}$ generated with curvatubes using the parameters shown in Table \ref{tab:cvtub_params_2}}.}
		\label{fig:cvtub_panel_2}
	\end{figure}

	\begin{figure}[h!]
		\centering
		\includegraphics[clip, width=\linewidth]{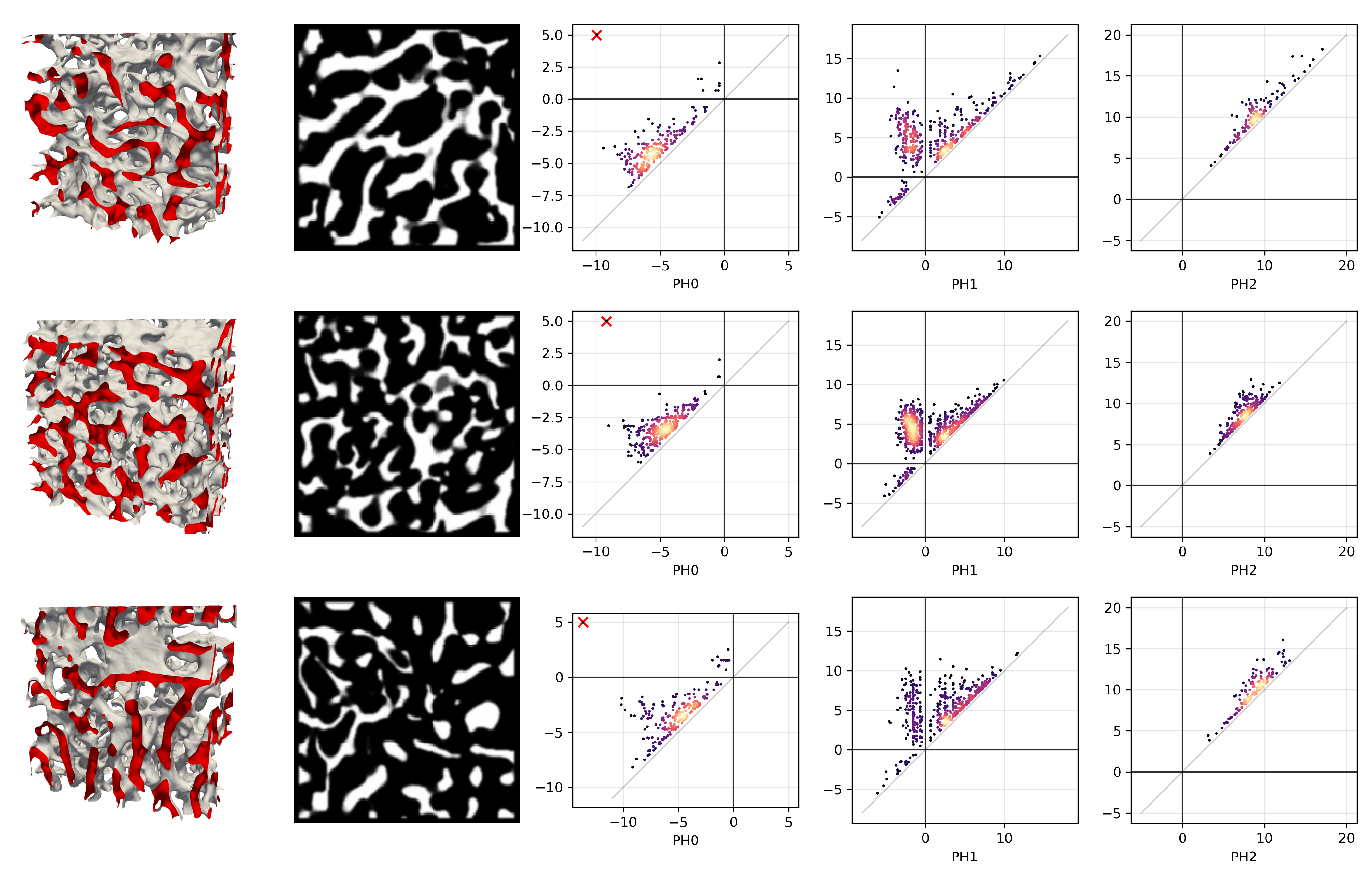}
		\caption{\textbf{SDPH diagrams of $3$ bone marrow vascular samples $B_1, B_2, B_3$ in a healthy mouse (data courtesy of Antoniana Batsivari and Dominique Bonnet, The Francis Crick Institute, London)}. }
		\label{fig:BM_panel}
	\end{figure}
	
	\subsection{Texture, Curvatures, and Stability of SDPH}
	\label{sec:stability_SDPH}
	
	Texture in shapes may be characterized as spatially repeated patterns of the surface, subject to some randomness, but not easily discriminated by visual perception \citep{julesz_visual_1962,portilla_parametric_2000,landy_visual_2004}.
	There is hence some quantity, left unchanged by some type of geometric randomness, that represents texture.
	In a previous work, we suggested that one possible texture descriptor is the 
	behavior of the curvatures on the surface, or ``curvature diagram" \citep{song_generation_2022}. Given the same choice of generation parameters $\mathbf{a}$, but with random initializations in curvatubes, we illustrated that the output curvature diagram was regularly stable with respect to $\mathbf{a}$.
	
	Here, it is natural to ask whether SDPH diagrams also quantify texture and exhibit a similar stability behavior. 
	We generated five synthetic shapes with the same choice of curvatubes parameters $\mathbf{a}$ but random initializations of the optimization flow. The results are shown in \Cref{fig:stab_panel},
	where we used a $150 \times 150 \times 150$ simulation grid, with mass $m = -0.5$, $\eps = 0.02$ (one pixel being of size $0.01$) and coefficients $\mathbf{a} = (6, 2.5, 6, -230, 20, 2350)$. These values correspond to the non-reduced curvature polynomial $p(\kap_1,\kap_2) = (H-15)^2 + .5 \, K + 5 \, (\kap_1 - 20)^2 + 5 \, (\kap_2 + 5)^2 $ where $H = \kap_1 + \kap_2$ and $K = \kap_1 \, \kap_2$.

	We found that, for a given choice of texture, the SDPH method produced very similar-looking output diagrams, especially in contrast to those produced for other generation parameters, shown in Figures \ref{fig:cvtub_panel_1} and \ref{fig:cvtub_panel_2}. Empirically, we observed the same consistent behavior for other choices of shape textures (data not shown here, but is available on our GitHub repository).
	In other words, despite the shapes being very different geometrically, the distribution of their topological patterns as measured by SDPH appears to be very similar. 
	
	The well-known Stability Theorem \citep{cohen-steiner_stability_2007} from classical persistent homology theory states that two functions which are close in infinity-norm lead to persistence diagrams which are close in bottleneck distance (see \Cref{sec:PH_and_Morse_theory}, \Cref{thm:stability_thm}); namely, that similar shapes lead to similar persistence diagrams. This guarantees stability when the input shapes are geometrically perturbed by noise (which results in a small infinity-norm between them), a property consequently satisfied by SDPH diagrams. Here, our observations suggest that \textit{``SDPH diagrams are stable with respect to texture"}, which may give rise to a new kind of stability result that extends beyond than existing stability results in classical persistent homology. The novelty of this kind of stability study lies in a more geometric and probabilistic direction, as well as the fact that there is no existing stability result entailing SDPH, to the best of our knowledge, since SDPH has been comparatively less studied than classical persistence. Finally, it will be interesting to explore as future work how curvature diagrams relate to SDPH diagrams.

	\begin{figure}[h!]
		\centering
		\includegraphics[clip, width=\linewidth]{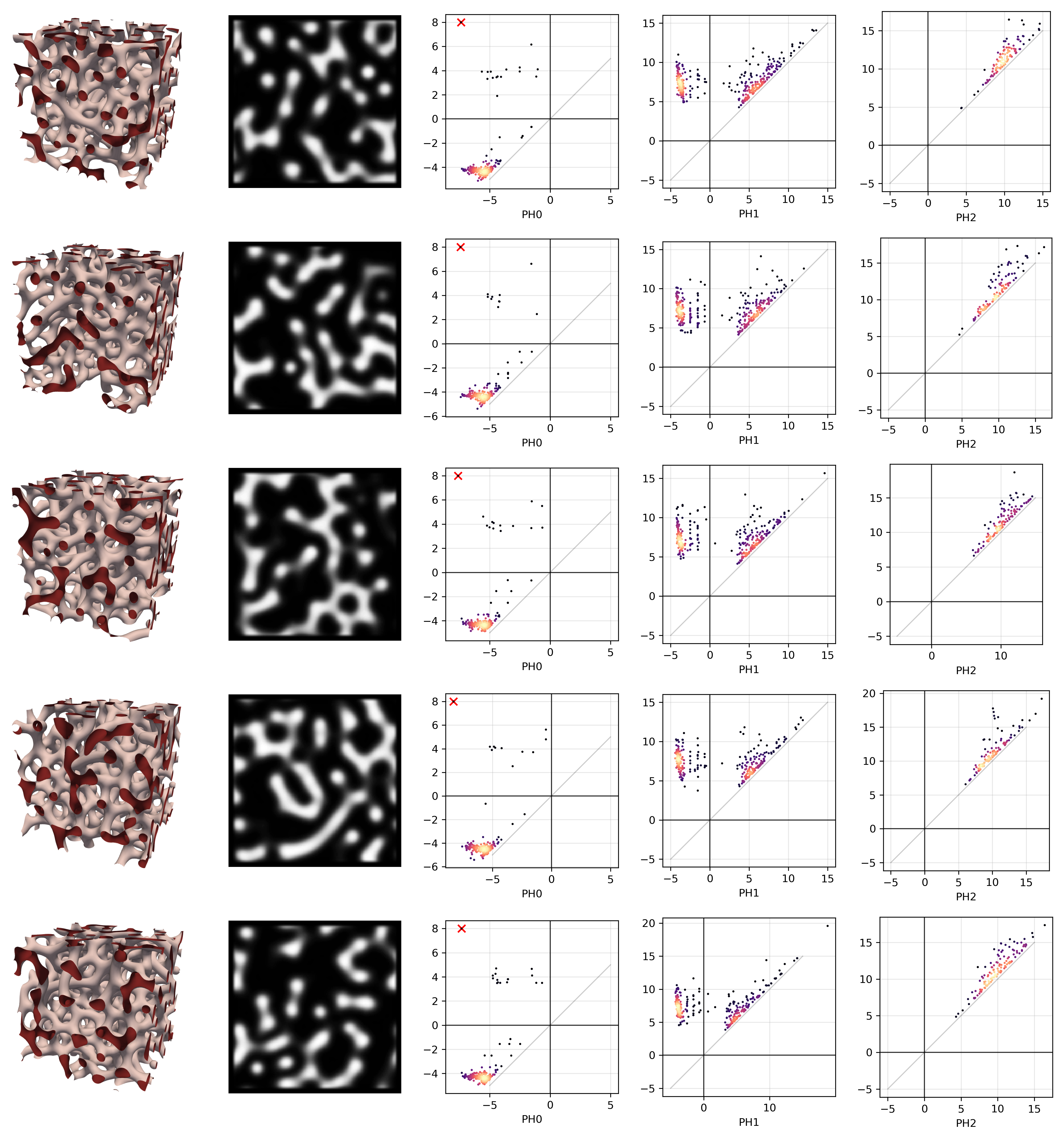}
		\caption{\textbf{Five random shapes sharing a similar texture and similar SDPH diagrams}. They were generated with curvatubes using the same parameters but different initializations.}
		\label{fig:stab_panel}
	\end{figure}
	
	\section{Discussion}
	
	Morse theory is about the study of critical points of a function, and how they characterize geometric and topological information from the underlying space. While classical Morse theory deals with smooth functions, this paper generalizes it to the broad class of Euclidean distance functions, signed or unsigned, and fills an important gap in building a complete Morse theory.
	
	We have successfully established the two basic Morse lemmas (isotopy lemma and handle attachment lemma), namely, that the topological changes encountered by the sublevel sets of Euclidean distance functions (signed or not) with non-degenerate critical points are exactly dictated by the latter ones. 
	To do this, we reframed Euclidean distances as Min-type functions (signed distances are then locally Min-type outside the shape and Max-type inside), which in turn may be recast as topological Morse functions.
	As a further contribution, we have also provided complete proofs extending the Morse lemmas to topological Morse functions.
	
	As a focal crux of our study, we have identified the correct geometric conditions corresponding to the Min-type non-degeneracy at a critical point,
	and shown that, in $\R^3$, for generic embeddings of the shape, the signed distance field only admits a finite number of critical points that are all non-degenerate. In other words, generic signed distances are topological Morse functions with finitely many critical points.
	This has important implications, since it guarantees that essentially any shape may be filtered by the sublevel sets of its signed distance field and that the persistent homology of this filtration may be computed, namely the ``signed distance persistent homology" (SDPH). 
	
	We have demonstrated how to use SDPH as a quantifier of \textit{texture} in shapes, based on the patterns of birth--death pairs in persistence diagrams.
	We have proposed a rigorous geometric interpretation of the critical points involved in the diagrams, alongside a classification of persistence pairs, which leads to a practical guide for interpreting SDPH diagrams in real applications. Furthermore, SDPH seems to attribute similar diagrams to shapes with similar texture.
	
	Case studies have been carried out on both simulated and real data,
	demonstrating that our proposed framework may be readily applied to several fields, including shape analysis, imaging, computer graphics, materials science, biology, environmental science and geospatial science to rigorously and interpretably quantify shapes and textures.
	
	\section*{Software and Data Availability}
	
	The code to compute SDPH, generate the synthetic data, and the real data are all publicly available on the GitHub repository \url{https://github.com/annasongmaths/SDPH}. Details of the numerical implementation are as above in \Cref{sec:SDPH_numerical_implementation}.
	
	\section*{Acknowledgments}
	
	The authors would like to warmly thank Dominique Bonnet and Antoniana Batsivari for providing the vascular data, which also motivated the study of the SDPH approach.
	
	A.S. is particularly grateful to Omer Bobrowski, Robert Adler, Antoine Song and Jean Feydy for helpful and interesting discussions.
	
	
	K.M.Y. thanks Vidit Nanda for insightful comments, as well as Peter Grindrod and John Harvey for their guidance and mentorship. 
	
	
	The authors wish to acknowledge The Francis Crick Institute, London
	for their computing resources which were used to implement the numerical experiments in this paper.
	
	{\footnotesize 
		\bibliographystyle{authordate3_modified}
		\bibliography{SDPH_Morse}
	}

	\appendix
	\section*{Appendices}
	\addcontentsline{toc}{section}{Appendices}
	\renewcommand{\thesubsection}{\Alph{subsection}}
	\label{sec:Appendix}
	
	\subsection{Technical Lemmas for the Genericity Theorem}

	In this Appendix, we present three important components that were used to prove \Cref{thm:generic}.
	As before, we make use of simplified notations for the Gauss map (see Remark \ref{rmk:gauss_map_notation}). 
	
	\begin{lem}[The functions $F$ are submersions]
		\label{lem:submersion}
		The functions $F$ introduced in the proof of \Cref{thm:generic} are sufficiently smooth submersions. More precisely,
		\begin{itemize}
			\item
			Let $X = M^{(2)} \times \R_{>0}$ and $Y = (\mathbb{S}^2)^2 \times (\R^3)^2$. Define 
			\begin{equation}
				F:
				\begin{array}{ccc}
					\Emb \times X & \to & Y \\
					(\io, m_1,m_2,r) & \mapsto & (n_1, n_2, p_1 + r \, n_1, p_2 + r \, n_2 )
				\end{array}
			\end{equation}
			where $p_i = \io(m_i)$ and $n_i = \nbf(p_i)$. In the expression $p_i + r \, n_i$, $n_i$ is viewed in $\R^3$.
			
			Then $F$ is a $C^{k-1}$ submersion.
			
			\item Likewise, define a function $F(\io, m_1,m_2,m_3,r) = (n_1, n_2, n_3, p_1 + r \, n_1, p_2 + r \, n_2 , p_3 + r \, n_3)$.
			Then $F$ is a $C^{k-1}$ submersion.
			
			\item Let $X = M^{(2)} \times \R_{>0}$ and $Y = \Xi \times \mathbb{S}^2 \times \R^3 \times \R^3$. Here, $(\Xi, \mathbb{S}^2, \pi, \mathrm{Sym}((\R \, n)^\perp))$ is a fiber bundle of base space $\mathbb{S}^2$ and fiber space $\mathrm{Sym}((\R \, n)^\perp)$, the space of symmetric endomorphisms defined on the subspace of $\R^3$ orthogonal to $n \in \mathbb{S}^2$.
			Define the map
			\begin{equation}
				F:
				\begin{array}{ccc}
					\Emb \times X & \to & Y \\
					(\io, m_1,m_2,r) & \mapsto & (n_1, \Id + r \, d_{p_1}\nbf, n_2, p_1 + r \, n_1, p_2 + r \, n_2 )
				\end{array}
			\end{equation}
			where $p_i = \io(m_i)$ and $n_i = \nbf(p_i)$.
			
			Then $F$ is a $C^{k-2}$ submersion.
		\end{itemize}
	\end{lem}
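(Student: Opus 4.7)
\textbf{Proof plan for \Cref{lem:submersion}.} The plan is to verify two separate claims for each of the three maps: first, that $F$ has the stated regularity, and second, that its differential is surjective at every point. The regularity claim in each case is a direct consequence of the way $F$ is assembled out of $\iota$, the Gauss map $\nbf$, and the shape operator $d\nbf$. Since embeddings in $\mathrm{Emb}^k(M,\R^3)$ are $C^k$, the Gauss map $\nbf_\iota$ depends $C^{k-1}$-smoothly on $\iota$ and on the base point (it involves one derivative of $\iota$), while $d_p\nbf_\iota$ depends $C^{k-2}$-smoothly. Evaluating and summing these quantities with the polynomial expression $p_i + r\,n_i$ preserves this regularity, giving $C^{k-1}$ in the first two cases and $C^{k-2}$ in the third.

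For the submersion claim, the plan is to fix a point $(\iota, m_1,\ldots,m_s, r)$ and show that the partial differential $D_\iota F$ alone already surjects onto the target tangent space. The idea is a standard ``independent local perturbation'' argument: since the $m_i$ are distinct points of $M$, we can pick pairwise disjoint open neighborhoods $U_i \ni m_i$ and pairwise disjoint charts on $\iota(U_i)$, and build variations $\xi_i \in T_\iota \mathrm{Emb}^k = C^k(M, \R^3)$ supported in $U_i$ that independently control data at $p_i$. Concretely, for each $i$ I would construct three families of compactly supported variations near $m_i$:
\begin{enumerate}
\item a \emph{translation variation}, equal to a prescribed vector $v \in \R^3$ on a small neighborhood of $m_i$ (cut off to zero outside $U_i$), which changes $p_i$ by $v$ while changing $\nbf(p_i)$ only by an amount determined by the cutoff's derivative — this can be compensated by the next family;
\item a \emph{tilt variation}, a smooth vector field on $M$ that vanishes at $m_i$ but whose first derivative at $m_i$ is prescribed, producing an arbitrary element of $T_{n_i}\mathbb{S}^2$ as $\delta n_i$ while keeping $p_i$ fixed;
\item (for the third map only) a \emph{second-order variation}, vanishing to first order at $m_1$, whose Hessian at $m_1$ is chosen to produce any prescribed symmetric endomorphism of $(\R n_1)^\perp$ as $\delta(d_{p_1}\nbf)$, without disturbing $p_1$ or $n_1$.
\end{enumerate}
Because the supports are disjoint, the resulting perturbation at $p_j$ for $j\neq i$ is zero, so all components decouple. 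Summing over $i$, the image of $D_\iota F$ contains an arbitrary element of $\prod_i T_{n_i}\mathbb{S}^2 \times \prod_i \R^3$ (in case three, supplemented by $\mathrm{Sym}((\R n_1)^\perp)$), which is exactly the tangent space of $Y$ at $F(\iota, \mathbf{m}, r)$.

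The main obstacle is the third case: producing a second-order variation that changes $d_{p_1}\nbf$ by an arbitrary symmetric endomorphism of $(\R n_1)^\perp$ while fixing both $p_1$ and $n_1$. The plan is to work in a chart where $\Surf$ is locally the graph $x_3 = \varphi(x_1,x_2)$ with $\varphi(0) = 0$ and $\nabla\varphi(0) = 0$, so the shape operator at $p_1$ is $-\mathrm{Hess}\,\varphi(0)$ (up to the standard sign convention). A perturbation $\iota + \xi$ whose normal component at $(x_1,x_2)$ equals a prescribed quadratic form $Q(x_1,x_2)$ modified by a radial cutoff vanishing outside a small disk leaves $\varphi(0)$ and $\nabla\varphi(0)$ unchanged but adds $\mathrm{Hess}\,Q(0)$ to $\mathrm{Hess}\,\varphi(0)$; choosing $Q$ to realize any symmetric $2\times 2$ matrix completes the construction. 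Once this is in place, the surjectivity of $D_\iota F$, and hence the submersion property, follows, and the hypotheses of \Cref{thm:param_trans} are met with room to spare.
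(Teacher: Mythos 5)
Your proposal is correct and follows essentially the same strategy as the paper: show $D_\io F$ alone is surjective by building compactly supported, pairwise-disjoint perturbations of the embedding near each $m_i$ — a local translation to control $p_i$, a first-order (tilt/rotation) variation fixing $p_i$ to control $n_i$, and, for the third map, a second-order graph perturbation $h \mapsto h + t\, g\, Q$ fixing $p_1$ and $n_1$ to realize any symmetric endomorphism. The only cosmetic differences are that the paper realizes the tilt by locally interpolating with a rigid rotation centered at $p_i$ (which makes each perturbation hit exactly one target component and leaves $d_{p_i}\nbf$ unchanged in the natural connection on $\Xi$), whereas you allow some cross-contamination and rely on a triangular compensation; and your worry that the translation variation disturbs $\nbf(p_i)$ is unnecessary once the cutoff is chosen identically equal to $1$ on a neighborhood of $m_i$, since then the variation is a genuine local translation there and $\delta n_i = 0$.
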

	\begin{proof}
		We show first that the following simplified function is a submersion; in fact, $D_\io F$ alone will already be surjective. Consider the fiber bundle $\Xi$ having base space $\mathbb{S}^2$ and fiber space $\mathrm{Sym}((\R \, n)^\perp)$.
		
		\begin{equation}
			F:
			\begin{array}{ccc}
				\Emb \times M & \to & \R^3 \times \Xi \\
				(\io, m) & \mapsto & (p, n, dn),
			\end{array}
		\end{equation}
		where $p = \io(m)$, $n = \nbf(p)$, $dn =  d_{p}\nbf$.
		
		First, $F$ is a $C^{k-2}$ map.
		Let us fix $\io_0$ and $m_0$.
		Now, given some arbitrary element $(v, N, A)$ tangential to $\R^3 \times \Xi$ at $(p_0,n_0, dn_0) = F(\io_0,m_0)$, where $v \in T_{p_0} \R^3 \simeq \R^3$, $N \in T_{n_0} \mathbb{S}^2$, $A \in T_{dn_0} \mathrm{Sym}((\R \, n_0)^\perp) \simeq \mathrm{Sym}((\R \, n_0)^\perp)$, the aim is to find some tangential element $V \in T_\io \Emb \simeq C^k(M,\R^3)$ such that $D_\io F(\io_0,m_0) [V] = (v,N,A)$, by local perturbations of the embedding. We can decompose this into three independent problems, by finding preimages of $(v,0,0)$, $(0,N,0),$ and $(0,0,A)$. 
		
		Set $p(t) = p_0 + t \, v$: we have $p(0) = p_0$ and $p'(0) = v$. We want to find a path $\io(t)$ such that $\io(0) = \io_0$, $\io'(t) = V$, and satisfying $F(\io(t),m_0) = (p(t),0,0)$.
		Let $U$ be some neighborhood of $m_0$ in $M$ and $K \subset U$ a compact subset. There exists a bump function $g \in C^k(M,\R)$ such that $g \equiv 1$ in $K$ and $g \equiv 0$ outside $U$.
		Apply a small translation in the direction of $v \in \R^3$ on $\io$ in the neighborhood of $m_0$, by introducing $\io(t) = \io_0 + t \, g \, v$, which for small $t$ belongs to $\Emb$ since it is open in $C^k(M,\R^3)$. Then $F(\io(t),m_0) = (p(t),0,0)$, and the first problem is solved by taking $V = g \, v \in C^k(M,\R^3)$.
		
		There exists a path $n(t)$ in the sphere such that $n(0) = n_0$ and $n'(0) = N$, which defines a rotation in $\R^3$, denoted by $R_t$, whose axis is $(n_0,N)^\perp$ and angle $t \, \|N\|$. Applying a small such rotation in the neighborhood of $m_0$, we define $\io(t) = (1-g) \, \io_0 + g \, R_t(\io_0)$, which brings the normal $n_0$ to $n(t)$ while fixing $p_0$ and $dn_0$ ($g$ is the same as in the previous paragraph). Thus, $F(\io(t),m_0) = (0,n(t),0)$, which solves the second problem.
		
		For the third problem, we can work with principal coordinate systems. There exists a $C^k$ function $h_0 : T_{p_0} \Surf \, \inter \, \mathcal{U} \to \R$ such that $\Surf$ is locally the graph of $h_0$, where $\mathcal{U}$ is an open neighborhood of $p$, and the embedding is described by $\io_0(m) = p = (a,b,h_0(a,b)) \in \Surf$ where $a,b \in T_{p_0} \Surf \inter \mathcal{U}$. We may assume that $\partial_a h = \partial_b h = 0$ at $(0,0)$, so that the normal is $(0,0,1)$, and that the principal directions at $p_0$ are $(1,0,0)$ and $(0,1,0)$, with $\partial_{a,b} h = \partial_{b,a} h = 0$ and $\partial^2_{a,a} h = \kap_1$, $\partial^2_{b,b} h = \kap_2$. Define a bump function (still denoted by $g$) relative to the neighborhood $T_{p_0} \Surf \, \inter \, \mathcal{U}$, which is in correspondance to some neighborhood $U$ of $m_0$ in $M$.
		
		Now, the tangential element $A$ is represented as a $2 \times 2$ symmetric matrix in the chosen basis of $T_{p_0} \Surf$ and has the form $A = R_\theta \, \mathrm{diag}(\lambda, \mu) \, R_\theta^{-1}$ with $\lambda, \mu \in \R$. Define $\phi : T_{p_0} \Surf \to \R$ such that $\phi(a,b) = \frac{\lambda}{2} \, a^2 + \frac{\mu}{2} \, b^2$, whose graph is a surface with curvatures $\lambda$ and $\mu$ at $0$. Set $h_t = h_0 + t\, g \, \phi \circ R_\theta$, and finally $\io_t(m) = (a(m),\, b(m),\, h_t(a(m),\, b(m)))$ if $m \in U$, $\io_t(m) = \io_0(m)$ elsewhere on $M$. Then one can show that the perturbed surface fixes $p_0$ and $n_0$, but that its shape operator is $dn_t = dn_0 + t \, A$, which is what we wanted.
		
		To conclude, one can see that the proofs for the various forms of $F$ are similar. If $m_1$ and $m_2$ are distinct points in $M$, one can choose non-intersecting neighborhoods and respective bump functions to apply local perturbations to the embedding.
	\end{proof}

	\begin{lem}[Shape operator of offset level surfaces]
		\label{lem:shape_operators_levels}
		Let $q$ be a point in $\R^3 \setminus \Surf$ and $p$ one contact point with normal $n$ and at distance $r$ such that $q$ satisfies the strict ball condition  \eqref{eq:strict_ball_condition} at $p$. We have $q = p + r\,n$. Let $\alpha$ be the distance function to an associated contact piece.
		Then the shape operator at $q$ of the offset surface $\{\alpha = r\}$ is
		\begin{equation}
			\label{eq:shape_operators_levels}
			d_q \nbf_{\alpha = r} = d_{p} \nbf \, (\Id + r \, d_{p} \nbf )^{-1}.
		\end{equation}
	\end{lem}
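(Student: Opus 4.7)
The plan is to derive the formula by directly parameterizing the offset surface $\{\alpha = r\}$ as a smooth image of $\Surf$ near $p$ under the normal offset map, and then differentiating the normal field along this parameterization.

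First, under the strict ball condition, \Cref{lem:dist_as_min_type} gives that $\alpha$ is $C^k$ in a neighborhood of $q$ and its gradient is $\grad \alpha = \nbf \circ \Pi$, where $\Pi$ is the nearest-neighbor projection onto the contact piece. In particular, the level set $\{\alpha = r\}$ is a smooth hypersurface through $q$, and its unit normal field $\nbf_{\alpha = r}$ at a point $\Psi(u)$ on the offset surface coincides with $\nbf(y(u))$ on the original surface, where $\Psi(u) = y(u) + r\,\nbf(y(u))$ is the offset map used in the proof of \Cref{lem:dist_as_min_type} (equation~\eqref{eq:from_piece_to_distlevel}). This is the key geometric observation: the Gauss map of the offset surface, expressed via the offset parameterization, is simply the Gauss map of $\Surf$.

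Next, I would differentiate the offset map $\Psi$ at the parameter $u_0$ corresponding to $p$. For $v \in T_{u_0}(T_p\Surf \cap U)$, identifying this tangent space with $T_p\Surf$ via $Dy(u_0)$, the differential of $\Psi$ is
\[ D\Psi(u_0)[v] = v + r\, d_p\nbf(v) = (\Id + r\, d_p\nbf)(v), \]
viewed as a map $T_p\Surf \to T_q\{\alpha = r\}$. By the strict ball condition \eqref{eq:strict_ball_condition}, the eigenvalues $1 - \kap_i\, r$ of $\Id + r\, d_p\nbf$ are all positive (up to sign conventions), so this map is a linear isomorphism between the two tangent planes, which are in fact equal as subspaces of $\R^3$ since $n$ is the common normal.

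Finally, to read off the shape operator, I would differentiate $\nbf_{\alpha = r} \circ \Psi = \nbf \circ y$ at $u_0$. The chain rule on the left gives $d_q\nbf_{\alpha = r} \circ D\Psi(u_0)$, while the right hand side is $d_p\nbf \circ Dy(u_0)$. Under the identification $Dy(u_0) = \Id$ on $T_p\Surf$, this yields
\[ d_q\nbf_{\alpha = r} \circ (\Id + r\, d_p\nbf) = d_p\nbf, \]
and composing on the right with the inverse (which exists by the strict ball condition) gives the claimed formula~\eqref{eq:shape_operators_levels}.

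The only non-routine point is justifying that $\nbf_{\alpha = r} \circ \Psi = \nbf \circ y$ as a genuine equality of unit normal fields (rather than up to sign). This follows from the Eikonal identity $\grad \alpha = \nbf \circ \Pi$ on the tubular neighborhood of the piece together with $\Pi \circ \Psi = y$, which identifies the outward normal to $\{\alpha = r\}$ at $\Psi(u)$ unambiguously with $\nbf(y(u))$. I do not anticipate any serious obstacle beyond keeping the identifications of tangent spaces consistent.
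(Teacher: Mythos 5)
Your proof is correct and takes a genuinely different route from the paper's. The paper works intrinsically: it diagonalizes in the principal frame $(n, e_1, e_2)$, cites the fact (from Mayost) that this frame is invariant along the normal ray, and solves the Riccati ODE $D_n(\kappa_i) = \kappa_i^2$ for the principal curvatures of the offset surfaces, with a separate check for the umbilical case. You instead differentiate the explicit offset parameterization $\Psi(u) = y(u) + r\,\nbf(y(u))$ (already introduced in the proof of \Cref{lem:dist_as_min_type}), observe that $\nbf_{\alpha=r}\circ\Psi = \nbf\circ y$ via the Eikonal identity $\grad\alpha = \nbf\circ\Pi$, and read off $d_q\nbf_{\alpha=r}\circ(\Id + r\,d_p\nbf) = d_p\nbf$ from the chain rule, with invertibility of $\Id + r\,d_p\nbf$ supplied by the strict ball condition. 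Your computation is arguably cleaner: it avoids the external citation and the umbilical/non-umbilical case split (the argument is basis-free), at the cost of some care in identifying $T_q\{\alpha=r\}$ with $T_p\Surf$ and fixing the orientation of $\nbf_{\alpha=r}$ — both of which you address correctly, since $n$ is the common normal and the Eikonal identity pins down the sign. The paper's ODE approach gives slightly more geometric insight into how the curvatures flow along normal rays, which is reused conceptually nearby, but for establishing the formula itself your direct chain-rule argument is the more economical of the two.
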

	\begin{proof}
		Suppose first that $p$ is not an umbilical point, i.e., $\Surf$ admits two distinct principal curvatures there. A remarkable property of the principal directions $e_1(\rho), e_2(\rho)$ of the offset surfaces $\{\alpha = \rho\}$ for $\rho \in [0,r]$ is that the orthonormal frame $(n, e_1, e_2)$ is unchanged when travelling in the normal direction from $p$ to $q$ \citep{mayost_applications_2014}. On the other hand, the principal curvatures obey the following differential equation in the normal direction:
		\begin{align*}
			D_n(\kap_1) &= \kap_1^2, \\
			D_n(\kap_2) &= \kap_2^2.
		\end{align*}
		If $\kap_i(0) = 0$ at the contact point $p$, then $\kap_i(\rho) \cong 0$ for any offset, and \cref{eq:shape_operators_levels} holds. If $\kap_i(0) \neq 0$, by assumption $\kap_i(0) \neq r$ at $p$, then $\kap_i(\rho) = \frac{1}{\frac{1}{\kap_i(0)} - \rho}$ for $\rho \in [0,r]$. In particular,
		in the basis $(n, e_1, e_2)$,
		\[d_q \nbf_{\alpha = r} = \Hess \, \alpha (q) = 
		\begin{pmatrix}
			0 & 0 & 0 \\
			0 & -\kap_1(r) & 0 \\
			0 & 0 & -\kap_2(r) 
		\end{pmatrix} = d_{p} \nbf \, (\Id + r \, d_{p} \nbf )^{-1}.\]
		Even if $p$ is umbilical, all offset surfaces $\{\alpha = \rho\}$ are umbilical at $p + \rho \, n$ and we can choose consistent frames so that \cref{eq:shape_operators_levels} still holds.
	\end{proof}
	
	\begin{lem}[Riemannian Hessian of the restricted distance function]
		\label{lem:riem_hess}
		Let $\io$ be a generic embedding in the sense of \Cref{thm:generic_cut_locus}. Let $q$ be a critical point of the pure distance $\dist(\cdot, \Surf)$, with value $r$, contact points $\Gamma(q) = \{p_1,\ldots,p_m\}$ and normals $n_1,\ldots,n_m$ at these contacts. Denote by $\{\alpha_1,\ldots,\alpha_m\}$ the distance functions to some associated contact pieces, and by $g = \dist(\cdot, \Surf)_{|G(q)}$ the restriction of this function to $G(q)$ (\Cref{def:G(x)}).
		\begin{itemize}
			\item If $q$ is of type $A_1^2$, then on the $A_1^2$ sheet stratum $G(q)$ orthogonal to $n_1 = - n_2$,
			\begin{align}
				\Hess^{\mathrm{Riem}} g(q) &= \frac{1}{2} (d_q \nbf_{\alpha_1 = r} + d_q \nbf_{\alpha_2 = r}) \\
				&= \frac{1}{2} \left( d_{p_1} \nbf \, (\Id + r \, d_{p_1} \nbf )^{-1} + d_{p_2} \nbf \, (\Id + r \, d_{p_2} \nbf )^{-1} \right),
			\end{align}
			where $d_q \nbf_{\alpha_1 = r}$ is the shape operator of the submanifolds $\{\alpha_i = r\}$.
			\item If $q$ is of type $A_1^3$, then the $A_1^3$ curve stratum $G(q)$ is directed by some unit vector $t$ orthogonal to the \textit{plane} spanned by $n_1,n_2,n_3$. Then,
			\begin{equation*}
				\Hess^{\mathrm{Riem}} g(q) \text{ is non-degenerate } \Leftrightarrow t_{2,3} \cdot u_1(t) + t_{3,1} \cdot u_2(t) + t_{1,2} \cdot u_3(t) \neq 0
			\end{equation*}
			where $t_{i,j} = n_i \times n_j$ and $u_i = d_{p_i} \nbf \circ (\Id + r \, d_{p_i} \nbf)^{-1} = d_q \nbf_{\alpha_i = r}$.
		\end{itemize}
	\end{lem}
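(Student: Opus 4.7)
My plan is to exploit the fact that on $G(q)$ the function $g$ coincides with every $\alpha_i$, so I may replace $g$ by any convex combination $\widetilde{g} := \sum_i c_i \alpha_i$ with $\sum c_i = 1$: the restriction of such a $\widetilde{g}$ to $G(q)$ still equals $g$. The decisive observation is that the Min-type critical point condition (\Cref{def:ndg_crit_dist}) furnishes barycentric coefficients $c_i \geq 0$ summing to one with $\sum c_i n_i = 0$; taking these coefficients gives $\grad \widetilde{g}(q) = \sum c_i \, n_i = 0$. For any smooth function with vanishing ambient gradient at a point $q$ on a $C^2$ submanifold $S$, the Riemannian Hessian of the restriction at $q$ equals the ambient Hessian restricted to $T_q S \times T_q S$, with no second fundamental form correction. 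Hence for $v, w \in T_q G(q)$, $\Hess^{\mathrm{Riem}} g(q)(v,w) = \sum_i c_i \, \Hess \alpha_i(q)(v,w)$.

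Next I would identify $\Hess \alpha_i(q)$ restricted to $n_i^\perp$ with the shape operator $u_i := d_q \nbf_{\alpha_i = r}$ of the offset level surface $\{\alpha_i = r\}$. Since $\grad \alpha_i$ is a unit normal field to $\{\alpha_i = r\}$, its derivative at $q$ is by definition the shape operator; the eikonal equation $|\grad \alpha_i|^2 = 1$ additionally ensures $\Hess \alpha_i \cdot n_i = 0$, so the identification is clean on $n_i^\perp$. \Cref{lem:shape_operators_levels} then rewrites $u_i = d_{p_i}\nbf \circ (\Id + r \, d_{p_i}\nbf)^{-1}$.

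For the $A_1^2$ case, the critical point condition forces $n_1 = -n_2$, hence $c_1 = c_2 = \tfrac12$, and $T_q G(q)$ is the common tangent plane $n_1^\perp = n_2^\perp$; substitution yields $\Hess^{\mathrm{Riem}} g(q) = \tfrac12(u_1 + u_2)$. For the $A_1^3$ case, $T_q G(q) = \R t$ is orthogonal to $\Pi = \mathrm{span}\{n_1, n_2, n_3\}$, so the Riemannian Hessian is the scalar $\sum_i c_i \langle u_i t, t \rangle$. Standard planar barycentric formulas give $c_i$ proportional to the signed areas of the subtriangles $0\,n_j\,n_k$ in $\Pi$, which equal $\tfrac12 (n_j \times n_k) \cdot t$ for cyclic $(i,j,k)$. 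Because $u_i(t) \in n_i^\perp$ and $t_{j,k} = n_j \times n_k$ is parallel to $t$, we have $t_{j,k} \cdot u_i(t) = \bigl((n_j \times n_k) \cdot t\bigr)\,\langle t, u_i(t)\rangle$, so the cyclic sum $t_{2,3} \cdot u_1(t) + t_{3,1} \cdot u_2(t) + t_{1,2} \cdot u_3(t)$ is a nonzero scalar multiple of $\Hess^{\mathrm{Riem}} g(q)(t,t)$.

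The main obstacle I expect lies in sign and orientation bookkeeping: the identification $\Hess \alpha_i|_{n_i^\perp} = u_i$ must use the same sign convention as in \Cref{lem:shape_operators_levels}, and the planar triangle $n_1 n_2 n_3$ must be oriented so that the three scalar coefficients of $t$ in the $t_{j,k}$ share a sign matching the positivity of the $c_i$. Since the statement only asserts equivalence of non-vanishing between two proportional linear combinations, any common positive scalar cancels, so the bookkeeping needs only to be consistent rather than absolutely fixed. A minor technicality in the $A_1^2$ case is confirming $T_q G(q) = n_1^\perp$, which follows from $G(q) = \{\alpha_1 = \alpha_2\}$ having tangent space $\ker d(\alpha_2 - \alpha_1)(q) = \ker \langle n_2 - n_1, \cdot \rangle = n_1^\perp$.
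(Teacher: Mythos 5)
Your proof is correct, and it takes a genuinely different route to the ambient-to-intrinsic Hessian comparison than the paper does. The paper applies the extrinsic Hessian formula (from Absil et al.) with a \emph{single} extension $\bar f = \alpha_1$, so the Weingarten correction term $W_q(v,\Pi_q^\perp \grad\alpha_1(q))$ does not vanish; it then eliminates that term by exploiting the linear relation among the normals $n_i$: averaging $\nbf^1_G = -\nbf^2_G$ in the $A_1^2$ case, and summing the three expressions with weights $-\lambda,-\mu,1$ (where $n_3 = \lambda n_1 + \mu n_2$) in the $A_1^3$ case, so that the $d_q\nbf^i_G$ terms cancel algebraically. You instead choose the extension $\widetilde g = \sum_i c_i\alpha_i$ adapted to the critical-point condition $\sum c_i n_i = 0$, which kills the ambient gradient at $q$ and hence the Weingarten term at the outset, reducing everything to $\Pi_q(\sum c_i\Hess\alpha_i(q))\Pi_q$. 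The two moves do the same cancellation but in opposite order: the paper cancels the second-fundamental-form contributions \emph{after} applying the formula, whereas you cancel them \emph{before} by building a gradient-free extension. Your route avoids introducing the local normal fields $\nbf^i_G$ on $G(q)$ entirely and is arguably cleaner to state, at the modest cost of having to identify the barycentric coefficients with the signed subtriangle areas $\tfrac12 t_{j,k}\cdot t$; the paper instead obtains the triple-product factors via the trigonometric identities $\lambda = \sin\theta_{2,3}/\sin\theta_{2,1}$, $\mu = \sin\theta_{1,3}/\sin\theta_{1,2}$. Both produce the same scalar condition, differing by the same nonvanishing total-area factor $\sum_{\mathrm{cyc}} t_{j,k}\cdot t = 2\,\mathrm{Area}(n_1n_2n_3) \neq 0$ that you cite, so the nonvanishing equivalence is preserved in each case.
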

	\begin{proof}
		
		We apply an extrinsic formula for the Riemannian Hessian of a smooth function defined on a submanifold $\mathcal{M}$ of $\R^3$, given by \cite{absil_extrinsic_2013}. If $f : \mathcal{M} \to \R$ is defined on the submanifold and $\bar{f} : \R^3 \to \R$ extends $f$ to a neighborhood of $\mathcal{M}$, then for $p \in \mathcal{M}$ and $v \in T_p \mathcal{M}$,
		\[\Hess^{\mathrm{Riem}} f(p)[v] = \Pi_p \, \Hess \bar{f}(p)[v] + W_p(v, \Pi_p^\perp \grad \bar{f}(p) ),\]
		where the Weingarten map $W_p$ of the submanifold $\mathcal{M}$ at $p$ is the operator that associates to $v \in T_p \mathcal{M}$ and $ \mathbf{N}_0 \in T_p^\perp \mathcal{M}$ the value $W_p(v, \mathbf{N}_0) = - \Pi_p D_v \mathbf{N}$ for any local extension $\mathbf{N}$ of $\mathbf{N}_0$ to a normal vector field on $\mathcal{M}$. Here, $D_v \mathbf{N}$ designates the directional derivative of $\mathbf{N}$ along the direction $v$, $\Pi_p$ is the projection onto the tangent space $T_p \mathcal{M}$, and $\Pi_p^\perp$ the projection onto the normal space $T_p^\perp \mathcal{M}$.
		$\Hess \, \bar{f}(p)$ and $\grad \bar{f} (p)$ are the usual Hessian and gradient of a scalar function defined in $\R^3$.
		
		Reformulating the previous equality, we obtain
		\[\Hess^{\mathrm{Riem}} f(p)[v] = \Pi_p \, \Hess \bar{f}(p)[v] - 
		\Pi_p D_v (\Pi_{(\cdot)}^\perp \grad \bar{f}(\cdot)).\]
		We can now express the Riemannian Hessian at $q$ of the function $\dist(\cdot, \Surf)_{|G(q)}$ defined on the (germ of) submanifold $G(q) = \{\alpha_1 = \ldots = \alpha_m = \dist(\cdot, \Surf)\}$. 
		
		Here, the extension to consider is any function $\alpha_i$, for which we take advantage of well-known properties of distance functions.
		In particular, $\Hess \, \alpha_i(q) [n_i] = 0$ and $\grad \alpha_i = n_i \in T_q^\perp G(q)$.  Observe that $\Hess \, \alpha_i(q)$ is just the usual shape operator $d_q \nbf_{\alpha_i = r} : T_q \{\alpha_i = r\} \to T_q \{\alpha_i = r\}$ of the surface defined by $\{\alpha_i = r\}$, with orientation given by $n_i$ at $q$.
		Also, let $\nbf^i_G$ denote a local extension of $\grad \alpha_i$ to a normal vector field on the stratum $G(q)$. 
		
		In particular, if $G(q)$ is a $A_1^2$ sheet, $\nbf^i_G$ is the Gauss map with the same orientation as $n_i$ at $q$, so that $d_q \nbf^i_G : T_q G(q) \to T_q G(q)$ will just be the usual shape operator, but this time of the submanifold $G(q)$. The operators  $d_q \nbf_{\alpha_i = r}$ and $d_q \nbf^i_G$ are defined on the same tangent space: $T_q \{\alpha_i = r\} = T_q G(q)$.
		If $G(q)$ is a $A_1^3$ curve, then its tangent direction $t$ is contained in any $T_q \{\alpha_i = r\} \supsetneq T_q G(q) = \R\, t$.
		
		\textbf{$q$ is of Type $A_1^2$.}
		In this case, we get
		\begin{align*}
			\Hess^{\mathrm{Riem}} g(q)[v] &= \Hess \, \alpha_1(q)[v] - D_v(\grad \alpha_{1|G(q)})) \\
			&= d_q \nbf_{\alpha_i = r}[v] - d_q \nbf^i_G[v].
		\end{align*}
		Since $n_1 = -n_2$, we can choose $\nbf^1_G = -\nbf^2_G$, to obtain
		\begin{align*}
			\Hess^{\mathrm{Riem}} g(q) &= d_q \nbf_{\alpha_1 = r} - d_q \nbf^1_G \\
			&= d_q \nbf_{\alpha_2 = r} - d_q \nbf^2_G \\
			&= \frac{d_q \nbf_{\alpha_1 = r} + d_q \nbf_{\alpha_2 = r}}{2}.
		\end{align*}
		Using that $d_q \nbf_{\alpha_1 = r} = d_{p_i} \nbf \, (\Id + r \, d_{p_i} \nbf )^{-1}$ thanks to \Cref{lem:shape_operators_levels}, we establish the first case.
		
		\textbf{$q$ is of Type $A_1^3$.}
		Now, the expression involves a projection on $\R \, t$. We use the concise notation $u_i(t) = d_q \nbf_{\alpha_i = r}[t]$. Applying the formula above, we get
		\begin{align*}
			h = \Hess^{\mathrm{Riem}} g(q)[t] \cdot t &= t \cdot (u_1(t) - d_q \nbf^1_G[t]) \\
			&= t \cdot(u_2(t) - d_q \nbf^2_G[t]) \\
			&= t \cdot(u_3(t) - d_q \nbf^3_G[t]) \in \R.
		\end{align*} 
		Two of the vectors $n_i$, say $n_1$ and $n_2$, form a basis of the plane spanned by $n_1,n_2,n_3$. In that basis, $n_3 = \lambda \, n_1 + \mu \, n_2$. Importantly, we can choose the local extensions in such a way that $\nbf^3_G = \lambda \, \nbf^1_G + \mu \, \nbf^2_G$, hence $d_q \nbf^3_G[t] = \lambda \, d_q \nbf^1_G[t] + \mu \, d_q \nbf^2_G[t]$.
		
		After summing the three lines with the weights $-\lambda, -\mu, 1$ respectively, the second terms of each line cancel out and we obtain
		\[(1 - \lambda - \mu) \, h = t \cdot \left(u_3(t) - \lambda \, u_1(t) - \mu \, u_2(t) \right).\]
		
		By some trigonometric computations, one can see that
		\[\lambda = \frac{\sin \theta_{2,3}}{\sin \theta_{2,1}} \quad \mu = \frac{\sin \theta_{1,3}}{\sin \theta_{1,2}}\]
		where $\theta_{i,j} = (\widehat{n_i,n_j})$ is the algebraic angle in the plane orthogonal to $t$, with sign convention given by $t$. This way, $t_{i,j} = n_i \times n_j = \sin \theta_{i,j} \, t$. Also, $1 \neq \lambda + \mu$ because $n_3 \neq n_1$ and $n_3 \neq n_2$. 
		Therefore, we are led to the desired inequality:
		\begin{align*}
			h \neq 0 & \Leftrightarrow t \cdot \left(u_3(t) - \frac{\sin \theta_{2,3}}{\sin \theta_{2,1}}  \, u_1(t) - \frac{\sin \theta_{1,3}}{\sin \theta_{1,2}} \, u_2(t) \right) \neq 0 \\
			& \Leftrightarrow t \cdot \left( \sin \theta_{1,2} \, u_3(t) + \sin \theta_{2,3} \, u_1(t) + \sin \theta_{3,1} \, u_2(t) \right) \neq 0 \\
			& \Leftrightarrow t_{1,2} \cdot u_1(t) + t_{2,3} \cdot u_2(t) + t_{3,1} \cdot u_3(t) \neq 0.
		\end{align*}
		
	\end{proof}

\end{document}